\newtheorem{theorem}{Theorem}[subsection]
\numberwithin{equation}{theorem}
\newtheorem{corollary}[theorem]{Corollary}
\newtheorem{lemma}[theorem]{Lemma}
\newtheorem{prop}[theorem]{Proposition}
\theoremstyle{definition}
\newtheorem{caution}[theorem]{Caution}
\newtheorem{construction}[theorem]{Construction}
\newtheorem{convention}[theorem]{Convention}
\newtheorem{defn}[theorem]{Definition}
\newtheorem{example}[theorem]{Example}
\newtheorem{notation}[theorem]{Notation}
\newtheorem{remark}[theorem]{Remark}
\newtheorem{situation}[theorem]{Situation}
\newtheorem{hypo}[theorem]{Hypothesis}
\def\bbe{\mathbf{e}}
\def\bbv{\mathbf{v}}
\def\CC{\mathbb{C}}
\def\NN{\mathbb{N}}
\def\QQ{\mathbb{Q}}
\def\RR{\mathbb{R}}
\def\TT{\mathbb{T}}
\def\ZZ{\mathbb{Z}}
\def\calO{\mathcal{O}}
\def\calR{\mathcal{R}}
\def\gothm{\mathfrak{m}}
\def\gotho{\mathfrak{o}}
\def\gothI{\mathfrak{I}}
\def\gothR{\mathfrak{R}}
\newcommand{\serie}[2]{{#11},\dots,{#1{#2}}}
\newcommand{\seriezero}[2]{{#10}, \dots, {#1{#2}}}
\newcommand{\mat}[4]{\left(\begin{matrix}#1&#2\\#3&#4\end{matrix}\right)}
\newcommand{\exact}[5]{#1\rightarrow#2\rightarrow#3\rightarrow#4\rightarrow#5}
\newcommand{\rar}{\rightarrow}
\newcommand{\Rar}{\Rightarrow}
\newcommand{\LRar}{\Leftrightarrow}
\newcommand{\inj}{\hookrightarrow}
\newcommand{\surj}{\twoheadrightarrow}
\newcommand{\bs}{\backslash}
\newcommand{\GL}{\mathrm{GL}}
\newcommand{\Hom}{\mathrm{Hom}}
\newcommand{\rank}{\mathrm{rank}\,}
\newcommand{\Ker}{\mathrm{Ker}\,}
\newcommand{\Coker}{\mathrm{Coker}\,}
\newcommand{\dual}{\vee}
\DeclareMathOperator{\Frac}{Frac}
\newcommand{\alg}{\mathrm{alg}}
\newcommand{\disc}{\mathrm{disc}}
\newcommand{\cpltotimes}{\widehat{\otimes}}
\newcommand{\NP}{\mathrm{NP}}
\renewcommand{\sp}{\mathrm{sp}}
\newcommand{\gen}{\mathrm{gen}}
\DeclareMathOperator{\inte}{int}
\DeclareMathOperator{\Max}{Max}
\begin{document}

\title{Differential modules on $p$-adic polyannuli}

\author{Kiran S. Kedlaya \\ Department of Mathematics, Room 2-165 \\ 
Massachusetts
Institute of Technology \\ 77 Massachusetts Avenue \\
Cambridge, MA 02139 \\
\texttt{kedlaya@mit.edu} \\ \\
Liang Xiao \\ Department of Mathematics, Room 2-090 \\ Massachusetts Institute of Technology \\ 77 Massachusetts Avenue \\ Cambridge, MA 02139 \\
\texttt{lxiao@mit.edu}}

\date{December 15, 2008}

\maketitle

\begin{abstract}
We consider variational properties of some numerical invariants, measuring
convergence of local horizontal sections,
associated to differential modules on polyannuli over a nonarchimedean field
of characteristic zero.
This extends prior work in the one-dimensional case
of Christol, Dwork, Robba, Young, et al. Our results do not require
positive residue characteristic; thus besides their relevance to
the study of Swan conductors for isocrystals, 
they are germane to the formal classification
of flat meromorphic connections on complex manifolds.
\end{abstract}

\tableofcontents

\section*{Introduction}

Differential equations involving $p$-adic analytic functions have a nasty
habit of failing to admit global solutions even in the absence of
singularities; for instance, the exponential series fails to be entire.
To measure this, Dwork and his collaborators 
introduced the notion of
the \emph{generic radius of convergence} of a $p$-adic differential
module over a one-dimensional space (for simplicity, we restrict attention
here to discs and annuli). The modern definition of this
concept was given and studied in depth by Christol and Dwork 
\cite{christol-dwork}. A further refinement, the collection of
\emph{subsidiary generic radii of convergence}, was introduced (under
different terminology) by Young \cite{young}.

Given a differential module over a $p$-adic disc or annulus of the
form $\alpha \leq |t| \leq \beta$, one obtains
a generic radius of convergence and some subsidiary radii for each
radius $\rho \in [\alpha, \beta]$, and one would like to be able to say 
something about how these quantities vary with $\rho$. 
(In fact, one also obtains these data for each point of the Berkovich1
analytic space; this is the point of view adopted in
ongoing work of Baldassarri and di Vizio, starting with
\cite{baldassarri-divizio}.) By pulling together techniques from the literature
and adding one or two new ideas, one can make fairly definitive statements
about the nature of this variation; this was done by the first author
in a course given in fall 2007, whose compiled notes constitute
the volume \cite{kedlaya-course}.

The course \cite{kedlaya-course} was deliberately restricted to
the study of $p$-adic \emph{ordinary} differential equations. One could view
the extension of the variational results to higher-dimensional spaces
as an implied exercise in \cite{kedlaya-course}. This paper constitutes
a partial solution of this implied exercise, in which we
obtain variational properties for differential modules over certain
higher-dimensional $p$-adic analytic spaces. The spaces we consider
are what one might call \emph{generalized polyannuli}: such a space
is an analytic subspace of an affine space in some variables $t_1,
\dots, t_n$, defined by the restriction $(|t_1|, \dots, |t_n|) \in S$
for some set $S$ such that $\log S$ is convex. (In order for this to actually
define an analytic space, one must impose some polyhedrality conditions on
$\log S$. For an example of what happens when such conditions are missing,
see the treatment of ``fake annuli'' in \cite{kedlaya-fake}.)

The strategy we adopt is to proceed in three stages. 
We start with some formalism for differential modules over differential
fields (corresponding to zero-dimensional spaces), in somewhat greater generality
than in \cite{kedlaya-course}. We then make a series of calculations
on a one-dimensional
annulus over a nonarchimedean field which itself carries one or more
commuting derivations. We consider modules equipped with commuting actions of 
both the base derivations and the derivation in the geometric direction,
and obtain results in the spirit of those in \cite{kedlaya-course}.
We finally extend these results
to higher-dimensional spaces (which may still
carry derivations on the base field) by using some careful analysis of
convex functions on polyhedral subsets of $\RR^n$.

The original 
intended application of these results is to the study of differential
Swan conductors for isocrystals, as introduced by the first
author in \cite{kedlaya-swan1}.
(The extra work in Section~\ref{sec:over field} is needed to obtain a
common generalization of the hypotheses in \cite{kedlaya-course} and
\cite{kedlaya-swan1}.)
The deployment of these results in the study of differential Swan
conductors takes place in \cite{kedlaya-swan2},
following up on earlier investigations by
Matsuda \cite{matsuda-dwork}.
Since our results do not require positive residual characteristic,
they are also relevant to formal classification of flat meromorphic
connections on complex manifolds, as in the work of 
Sabbah \cite{sabbah} for complex analytic surfaces.
For instance, the first author
\cite{kedlaya-goodformal1} recently used the results of this paper to
resolve the main conjecture of \cite{sabbah}.

\subsection*{Acknowledgments}
Both authors were supported by 
NSF CAREER grant DMS-0545904. 
The first author was also supported by 
a Sloan Research Fellowship.

\section{Differential modules over a field}
\label{sec:over field}

In this section, we assemble  a slightly more comprehensive collection of definitions
and basic results  concerning differential modules over a field
than was given in \cite[\S 1]{kedlaya-part1}. This is done
in order to state results applicable in the context of \cite{kedlaya-swan1}.

\subsection{Setup}
\label{S:setup-1}

\begin{convention}
Let $f^*: R_1 \rar R_2$ be a homomorphism of rings.  For an $R_1$-module $M_1$, we write $f^*M_1$ to denote the extension of scalars $M_1 \otimes_{R_1, f^*} R_2$.  For an $R_2$-module $M_2$ we write $f_* M_2$ to mean $M_2$ viewed as an $R_1$-module via $f^*$ (i.e., the restriction of scalars).
\end{convention}

\begin{convention}
For any nonarchimedean field $K$ of characteristic zero, denote its ring of integers and residue field by $\gotho_K$ and $k$, respectively.  
We reserve the letter $p$ for the residual characteristic of $K$.
If $p>0$,
we normalize the norm $|\cdot|$ on $K$ so that $|p| = 1/p$.  For an element $a \in \gotho_K$, we denote its reduction in $k$ by $\bar a$.  In case $K$ is discretely valued, let $\pi_K$ denote a uniformizer of $\gotho_K$.
\end{convention}

\begin{defn}
A finite extension $L$ of a complete nonarchimedean field $K$ is \emph{unramified} if $L$ and $K$ have the same value group, and the residue field extension is separable of degree $[L:K]$.  It is \emph{tamely ramified} if the index $e_{L/K}$ of the value group of $K$ in that of $L$ is not divisible by $p$,
and the residue field extension is separable of degree $[L:K]/e_{L/K}$;
we call $e_{L/K}$ the \emph{ramification degree}.  
If $p=0$, then any finite extension of $K$ is tamely ramified,
by a theorem of Ostrowski (see \cite[Chapter~6]{ribenboim}).
For $L$ the completion of an infinite algebraic extension of $K$, 
we say that $L$ is unramified or tamely ramified if the same is true of
each finite subextension of $L$ over $K$; we define the ramification degree
to be the supremum of the ramification degrees of the finite subextensions.
\end{defn}

\begin{convention}
Let $J$ be a finite index set.  We will write $e_J$ for a tuple $(e_j)_{j \in J}$.  For another tuple $u_J$, write $u_J^{e_J} = \prod_{j \in J} u_j^{e_j}$. We also use $\sum_{e_J = 0}^{n}$ to mean the sum over $e_j \in \{0, 1, \dots, n\}$ for each $j \in J$; for notational simplicity, we may suppress the range of the summation when it is clear.  Write $|e_J| = \sum_{j \in J}|e_j|$ and $(e_J)!$ for $\prod_{j \in J} (e_j)!$.
\end{convention}

\begin{convention}
For a matrix $A = (A_{ij})$ with coefficients in a nonarchimedean ring, we use $|A|$ to denote the supremum norm over entries.
\end{convention}

\begin{hypo}
For the rest of this subsection, we assume that $K$ is a complete nonarchimedean field.
\end{hypo}

\begin{notation}\label{N:affinoids}
Let $I \subset [0, +\infty)$ be an interval. Let $A_K^1(I)$ denote
the annulus with radii in $I$.
(We do not impose any rationality condition on the endpoints of $I$, so this
space should be viewed as an analytic space in the sense of 
Berkovich \cite{berkovich}.)
If $I$ is written explicitly in terms of its
endpoints (e.g., $[\alpha, \beta]$),
we suppress the parentheses around $I$ (e.g., 
$A_K^1[\alpha, \beta]$).
For $0 \leq \alpha \leq \beta < \infty$, let 
$K \langle \alpha/t, t/\beta \rangle$ denote the ring of analytic functions
on $A_K^1[\alpha, \beta]$. (If $\alpha = 0$, we write $K \langle t/\beta \rangle$
instead.)
\end{notation}

\begin{defn}
We have the ring of \emph{series with bounded coefficients}
\[
K \llbracket t/\beta \rrbracket_0 
= \left\{ \sum_{i=0}^\infty a_i t^i \in K \llbracket t \rrbracket: \sup_i \{|a_i| \beta^i\} < \infty\right\};
\]
these are the power series which converge and take bounded values on the open disc $|t| < \beta$.
Note that for any $\delta \in (0,\beta)$,
\[
K \langle t/\beta \rangle \subset
K \llbracket t/\beta \rrbracket_0 
\subset K \langle t/\delta \rangle.
\]
In particular, when $\beta = 1$, we have
\[
K \llbracket t \rrbracket_0 = \gotho_K \llbracket t \rrbracket \otimes_{\gotho_K} K.
\]

An analogue of this construction for an annulus is
\[
K \langle \alpha/t, t/\beta \rrbracket_0
= \left\{ \sum_{i \in \ZZ}
a_i t^i: a_i \in K, 
\lim_{i \to - \infty} |a_i| \alpha^i = 0,
\sup_i \{|a_i| \beta^i \} < \infty \right\};
\]
these are the Laurent series which converge and take bounded values on the half-open annulus $\alpha \leq |t| < \beta$.
For any $\delta \in [\alpha, \beta)$, this ring satisfies
\[
K \langle \alpha/t, t/\beta \rangle \subset
K \langle \alpha/t, t/\beta \rrbracket_0 
\subset K \langle \alpha/t, t/\delta \rangle.
\]
\end{defn}

\begin{defn}
Define the ring
$$
K \{\{ t/\beta \}\} = \bigcap_{\delta \in (0,\beta)} K \langle t/\delta \rangle = \left\{ \sum_{i=0}^\infty a_i t^i: a_i \in K, \lim_{i \to \infty} |a_i| \rho^i =0 \quad(\rho \in (0,\beta)) \right\};
$$
these are the power series convergent on the open disc $|t| < \beta$,
with no boundedness restriction. In particular, for any $\delta \in (0, \beta)$,
\[
K \llbracket t/\beta \rrbracket_0 \subset
K \{\{t/\beta\}\} \subset K \langle t/\delta \rangle.
\]

An analogue of the previous construction for an annulus is
$$
K \{\{ \alpha/t, t/\beta \}\} = \left\{ \sum_{i \in \ZZ} a_i t^i: a_i \in K, 
\lim_{i \to \pm \infty} |a_i| \eta^i = 0 \quad(\eta \in (\alpha, \beta)) \right\};
$$
these are the Laurent series convergent on the open annulus $\alpha < |t| < \beta$.
\end{defn}

\begin{defn}
Put $I = \{\serie{}n\}$.  For $(\eta_i)_{i \in I} \in (0, +\infty)^n$, the \emph{$\eta_I$-Gauss norm} on $K[t_I]$ is the norm $|\cdot|_{\eta_I}$ given by
$$
\left| \sum_{e_I} a_{e_I} t_I^{e_I}\right|_{\eta_I} = \max \left\{ |a_{e_I}| \cdot \eta_I^{e_I}\right\};
$$
this norm extends uniquely to $K(t_I)$.

For $\eta \in [\alpha, \beta]$ and $\eta \neq 0$, let $x = \sum_{i = -\infty}^\infty a_i t^i$
be an element of $K \langle \alpha / t, t / \beta \rangle$, $K \langle \alpha / t, t / \beta \rrbracket_0$, or (if $\eta \neq \alpha,\beta$)
$K \{\{ \alpha / t, t / \beta \}\}$.  
We define the \emph{$\eta$-Gauss norm} of $x$ to be 
$$
|x|_\eta = \sup \left\{ |a_i| \cdot \eta^i \right\}.
$$
\end{defn}

\begin{convention}\label{N:G-maps}
By a \emph{$G$-map}, we will mean a 
morphism of affinoid or Stein ($K$-)analytic spaces with $G$-topology, which 
need not respect the $K$-space structure. 
This amounts to a homomorphism between the corresponding rings
of global sections, which need not be $K$-linear.
 For example, the homomorphism $f_\gen^*$ defined in Lemma~\ref{L:generic-point}
below gives rise to a $G$-map $f_\gen: A_K^1[0,R_\partial(K)) \rar \Max(K)$.
\end{convention}

\begin{convention}
Throughout this paper, all derivations on topological modules will
be assumed to be continuous; moreover, any derivation considered on a ring
equipped with a nonarchimedean norm will be assumed to be bounded (i.e.,
to have bounded operator norm). All connections considered will be
assumed to be integrable. 
We may suppress the base ring from a module of continuous differentials when it is 
unambiguous.
\end{convention}

\subsection{Differential fields and differential modules}
\label{S:dif-field}

\begin{defn}\label{D:twisted-poly}
Let $K$ be a differential ring of order 1, i.e., a ring equipped with a derivation $\partial$.  Let $K\{T\}$ denote the (noncommutative) ring of twisted polynomials over $K$ \cite{ore}; its elements are finite formal sums $\sum_{i \geq 0} a_iT^i$ with $a_i \in K$, multiplied according to the rule $Ta = aT + \partial(a)$ for $a \in K$.
\end{defn}

\begin{defn}
A \emph{$\partial$-differential module} over $K$ is a finite projective $K$-module $V$ equipped with an action of $\partial$ (subject to the Leibniz rule); any $\partial$-differential module over $K$ inherits a left action of $K\{T\}$ where $T$ acts via $\partial$.  The module dual $V^\dual  = \Hom_K(V,K)$ of $V$
may be viewed as a $\partial$-differential module
by setting $(\partial f)(\bbv) = \partial(f(\bbv)) - f(\partial(\bbv))$.
We say $V$ is \emph{free} if $V$ as a module is free over $K$.  We say $V$ is \emph{trivial} if it is free and there exists a $K$-basis $\serie{\bbv_}d \in V$ such that $\partial(\bbv_i) = 0$ for $i = \serie{}d$.

For $V$ a differential module over $K$, we say $\bbv \in V$ is a \emph{cyclic vector} if $\bbv, \partial \bbv, \dots, \partial^{\rank(V) - 1}\bbv$ form a basis of $V$.  A cyclic vector defines
an isomorphism $V \simeq K\{T\} / K \{T\}P$ of differential modules for some twisted polynomial
$P \in K\{T\}$, where the $\partial$-action on $K\{T\}/K\{T\}P$ is the left multiplication by $T$.
\end{defn}

\begin{defn}
For a differential module $V$ over $K$, define
$$
H^0_\partial(V) = \Ker \partial, \quad H^1_\partial(V) = \Coker \partial = V / \partial (V).
$$
The latter computes Yoneda extensions; see, e.g.,
\cite[Lemma~5.3.3]{kedlaya-course}.
\end{defn}

\begin{lemma} \label{L:cyclic vector}
If $K$ is a field of characteristic zero, every differential module over $K$ contains a cyclic vector.
\end{lemma}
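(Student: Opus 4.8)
The plan is to exhibit a cyclic vector inside a fixed coordinate basis by a ``generic'' construction: the cyclicity condition is a non-vanishing of a Wronskian-type determinant, which is visibly a nonzero differential polynomial, and the characteristic-zero hypothesis enters only at the end to guarantee that it has a non-vanishing specialization.

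\emph{First reductions.} We may assume $\partial$ is not identically zero on $K$ (this is implicit in the statement; over a field with trivial derivation the assertion already fails for the rank-$2$ trivial module). Choosing $z \in K$ with $\partial z \neq 0$ and replacing $\partial$ by $(\partial z)^{-1}\partial$ — still a derivation of $K$ under which $V$ is a differential module — does not alter the span of $\bbv, \partial\bbv, \dots, \partial^{k}\bbv$ for any $\bbv$ and $k$, hence does not alter the notion of cyclic vector; so we may assume there is $t \in K$ with $\partial t = 1$. Fix a $K$-basis $\bbe_1, \dots, \bbe_n$ of $V$, write $\partial \bbe_j = \sum_i N_{ij}\bbe_i$ with $N \in \Mat_n(K)$, and for $\bbf = (f_1, \dots, f_n) \in K^n$ put $\bbv_\bbf = \sum_i f_i \bbe_i$. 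The coordinate vector of $\partial^j \bbv_\bbf$ is $(\partial + N)^j \bbf$, so $\bbv_\bbf$ is a cyclic vector precisely when
\[
W(\bbf) \ :=\ \det\bigl[\, (\partial+N)^{0}\bbf \ \big|\ \cdots\ \big|\ (\partial+N)^{n-1}\bbf \,\bigr]
\]
is nonzero in $K$. Thus it suffices to produce one $\bbf \in K^n$ with $W(\bbf) \neq 0$.

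\emph{Genericity.} Regard $f_1, \dots, f_n$ as differential indeterminates, so that $W$ becomes a polynomial over $K$ in the symbols $\partial^i f_k$ ($0 \le i \le n-1$, $1 \le k \le n$). Grading a monomial by the total number of applied derivatives (so $\partial^i f_k$ has weight $i$ and the entries of $N$ have weight $0$), the column $(\partial+N)^j \bbf$ has top-weight part $\partial^j\bbf$, whence the top-weight part of $W$ is the ``formal Wronskian'' $\det[\partial^j f_k]_{k,j}$. Its expansion $\sum_\sigma \mathrm{sgn}(\sigma)\prod_j \partial^j f_{\sigma(j)}$ is a sum of pairwise distinct monomials (each monomial contains exactly one factor differentiated $j$ times, for each $j$, so it recovers $\sigma$), hence does not vanish; therefore $W$ is a nonzero differential polynomial.

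\emph{Specialization --- the main obstacle.} It remains to specialize the $f_k$ back to elements of $K$ without killing $W$, and this is where characteristic zero and $\partial t = 1$ are genuinely used. Try $f_k = \sum_{l=0}^{n-1} c_{kl}\,t^{l}$ with the $c_{kl}$ constants to be chosen. Assigning weights $\mathrm{wt}(t) = -1$, $\mathrm{wt}(c_{kl}) = l$, $\mathrm{wt}(N_{ij}) = 0$ makes each $\partial^i f_k = \sum_{l \ge i} c_{kl}\tfrac{l!}{(l-i)!}t^{\,l-i}$ homogeneous of weight $i$, so by the previous paragraph the top-weight part of $W(\bbf)$ is the classical Wronskian $\mathrm{Wr}(f_1, \dots, f_n)$ with respect to $d/dt$. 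A short computation --- the monomial $c_{1,0}c_{2,1}\cdots c_{n,n-1}$ survives in $\mathrm{Wr}(f_1,\dots,f_n)$ with coefficient $\prod_{j=0}^{n-1} j!$, nonzero in characteristic zero --- shows that $\mathrm{Wr}(f_1,\dots,f_n)$, viewed as a polynomial in the $c_{kl}$ over $K$, is nonzero, hence so is $W(\bbf)$. Since $\QQ \subseteq K$ is infinite, we may pick $c_{kl} \in \QQ$ making $W(\bbf) \neq 0$, and then $\bbv_\bbf = \sum_k \bigl(\sum_l c_{kl}t^{l}\bigr)\bbe_k$ is the desired cyclic vector. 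The delicate points are exactly the two just flagged --- that the leading contribution to $W(\bbf)$ really is the polynomial Wronskian and is not cancelled by the lower-weight corrections coming from $N$, and that $\prod_{j<n}j! \neq 0$ --- both of which fail in positive characteristic; everything else is bookkeeping.
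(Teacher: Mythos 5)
The paper itself offers no proof for this lemma; it simply cites \cite[Theorem~III.4.2]{dgs} and \cite[Theorem~5.4.2]{kedlaya-course}. Your reduction, coordinatization, and ``genericity'' step are correct and reflect the standard Wronskian argument. But the specialization step has a genuine gap at ``hence so is $W(\bbf)$.''

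The weight bookkeeping ($\mathrm{wt}(t)=-1$, $\mathrm{wt}(c_{kl})=l$, $\mathrm{wt}(N_{ij})=0$) gives a grading only when $t$ and the $N_{ij}$ are treated as \emph{formal} symbols; once they are specialized to their actual values in $K$, the grading collapses, and the lower derivative-weight pieces of $W$ contribute, after substitution, to the very same $c$-monomials as the Wronskian, with coefficients involving $N$, its derivatives, and powers of $t$. Concretely, for $n=2$, a direct computation gives
\[
W(\bbf) = c_{1,0}c_{2,1}\bigl(1 + (N_{22}-N_{11})t\bigr) + c_{1,1}c_{2,0}\bigl(-1 + (N_{22}-N_{11})t\bigr) + (\text{other terms}),
\]
so the coefficient of $c_{1,0}c_{2,1}$ is $1+(N_{22}-N_{11})t$, not $\prod_j j!=1$, and it \emph{can} vanish for unlucky $N$ and $t$. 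You even flag this danger (``not cancelled by the lower-weight corrections coming from $N$'') but the weight argument as written does not rule it out.

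The conclusion you want---that $W(\bbf)$ is a nonzero polynomial in the $c_{kl}$ over $K$---is nonetheless true, and the fix is simpler than the weight argument. For each fixed $k$, the substitution $\partial^i f_k = \sum_{l\ge i} c_{kl}\frac{l!}{(l-i)!}t^{l-i}$ is a $K$-linear change of variables from $(c_{k,0},\dots,c_{k,n-1})$ to $(\partial^0 f_k,\dots,\partial^{n-1}f_k)$, given by a triangular matrix with diagonal entries $0!,1!,\dots,(n-1)!$, all nonzero in characteristic zero. Assembling over $k$, the map $(c_{kl})\mapsto(\partial^i f_k)$ is an invertible $K$-linear change of variables on $K^{n^2}$. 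Since $W$, as a polynomial over $K$ in the formal symbols $\partial^i f_k$, is nonzero (its top derivative-weight part is the formal Wronskian $\det[\partial^j f_k]$, which you have shown is a nonzero $\ZZ$-polynomial), precomposing with an invertible linear map yields a nonzero polynomial in the $c_{kl}$ over $K$. Then choose $c_{kl}\in\QQ\subset\ker\partial$ (a specific infinite field of constants) off the zero locus, which is the only place where characteristic zero actually enters besides the invertibility of the diagonal. With that substitution, the argument is complete, and it matches the classical ``simple algorithm for cyclic vectors'' strategy used in the cited references.
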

\begin{proof}
See, e.g., \cite[Theorem~III.4.2]{dgs} or \cite[Theorem~5.4.2]{kedlaya-course}.
\end{proof}

\begin{hypo}
For the rest of this subsection,
we assume that $K$ is a field of characteristic zero complete for a
nonarchimedean norm $|\cdot|$ and equipped with a derivation $\partial$ with
operator norm $|\partial|_K < \infty$,
and that $V$ is a nonzero $\partial$-differential module over $K$.
\end{hypo}

\begin{defn}\label{D:spectral-norms}
The \textit{spectral norm of $\partial$ on $V$} is defined to be
$$
|\partial|_{\sp, V} = \lim_{n \rar \infty} |\partial^n|_V^{1/n}
$$
for any fixed $K$-compatible norm $|\cdot|_V$
on $V$.  Any two such norms on $V$ are equivalent \cite[Proposition~4.13]{schneider}, so the spectral norm
does not depend on the choice \cite[Proposition~6.1.5]{kedlaya-course}.  One can show that $|\partial|_{\sp, V} \geq |\partial|_{\sp, K}$ \cite[Lemma~6.2.4]{kedlaya-course}.

Explicitly, if one chooses a basis of $V$ and lets $D_n$ denote the matrix via which $\partial^n$ acts on this basis, then
$$
|\partial|_{\sp, V} = \max\{|\partial|_{\sp, K}, \lim_{n \rar \infty} |D_n|^{1/n} \}.
$$
\end{defn}

\begin{remark}\label{R:base-change-sp-norm}
If $K \rar K'$ is an isometric embedding of complete nonarchimedean differential fields,
then 
for a differential module $V$ over $K$, $V' = V \otimes_K K'$ is a differential module over $K'$, and $|\partial|_{\sp, V'} = \max \left\{ |\partial|_{\sp, K'}, |\partial|_{\sp, V} \right\}$.
\end{remark}

\begin{defn}\label{D:cvgt-radii}
Let $p$ denote the residual characteristic of $K$; we conventionally write
\[
\omega = \begin{cases} 1 & p = 0 \\ p^{-1/(p-1)} & p > 0 \end{cases}.
\]
Define the \emph{generic $\partial$-radius of convergence} (or for short, the \emph{generic $\partial$-radius}) of $V$ to be
$$
R_\partial(V) = \omega|\partial|_{\sp, V}^{-1};
$$
note that $R_\partial(V) > 0$.  We will see later (Proposition~\ref{P:generic-point}) that this indeed computes the radius of convergence of Taylor series on a ``generic disc".  In some situations, it is more natural to consider the \emph{intrinsic generic $\partial$-radius of convergence}, or for short the \emph{intrinsic $\partial$-radius}, defined as
$$
IR_\partial(V) = \frac {|\partial|_{\sp, K}} {|\partial|_{\sp, V}};
$$
note that this is a number in $(0,1]$.

Let $\serie{V_}d$ be the Jordan-H\"older constituents of $V$.  We define the \emph{(extrinsic) subsidiary generic $\partial$-radii of convergence}, or for short the \emph{subsidiary $\partial$-radii}, to be the multiset $\gothR_\partial(V)$ consisting of $R_\partial(V_i)$ with multiplicity $\dim V_i$ for $i = \serie{}d$.  Let $R_\partial(V;1) \leq \cdots \leq R_\partial(V;\dim V)$ denote the elements in $\gothR_\partial(V)$ in increasing order.
We similarly define \emph{intrinsic subsidiary (generic) $\partial$-radii of convergence} $\gothI\gothR_\partial(V)$, or for short \emph{intrinsic subsidiary $\partial$-radii}, by aggregating the intrinsic $\partial$-radii of $V_i$ for $i = \serie{}d$.  Let $IR_\partial(V;1) \leq \cdots \leq IR_\partial(V;\dim V)$ denote the elements in $\gothI\gothR_\partial(V)$ in increasing order.

We say that $V$ has \emph{pure $\partial$-radii} if $\gothR(V)$ consists of $d$ copies of $R_\partial(V)$.
\end{defn}

\begin{lemma}\label{L:basic-IR-prop}
Let $V$, $V_1$, $V_2$ be nonzero $\partial$-differential modules over $K$.

(a) For $\exact 0 {V_1} V {V_2} 0$ exact,
$$
R_\partial(V) = \min \left\{ R_\partial(V_1),\ R_\partial(V_2) \right\}; \quad IR_\partial(V) = \min \left\{ IR_\partial(V_1),\ IR_\partial(V_2) \right\}.
$$
More precisely,
$$
\gothR_\partial(V) = \gothR_\partial(V_1) \cup \gothR_\partial(V_2); \quad \gothI\gothR_\partial(V) = \gothI\gothR_\partial(V_1) \cup \gothI\gothR_\partial(V_2).
$$

(b) We have
\begin{eqnarray*}
R_\partial(V^\dual) = R_\partial(V ); && IR_\partial(V^\dual) = IR_\partial(V ); \\
\gothR_\partial(V^\dual) = \gothR_\partial(V ); && \gothI\gothR_\partial(V^\dual) = \gothI\gothR_\partial(V );
\end{eqnarray*}

(c) We have
$$
R_\partial(V_1 \otimes V_2) \geq \min \left\{ R_\partial(V_1),\ R_\partial(V_2) \right\}; \quad IR_\partial(V_1 \otimes V_2) \geq \min \left\{ IR_\partial(V_1),\ IR_\partial(V_2)\right\},
$$
with equality when $R_\partial(V_1) \neq R_\partial(V_2)$, or equivalently, 
when $IR_\partial(V_1) \neq IR_\partial(V_2)$.

(d) If $V_1$ and $V_2$ are irreducible and $IR_\partial(V_1) \neq IR_\partial(V_2)$, then $\gothI\gothR_\partial(V_1 \otimes V_2)$ is just $\dim V_1 \cdot \dim V_2$ copies of $\min\{IR_\partial(V_1), IR_\partial(V_2)\}$
\end{lemma}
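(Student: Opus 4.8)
The plan is to reduce the assertion to a statement about the individual Jordan--H\"older constituents of $V_1 \otimes V_2$, to pin down the radii of the constituents occurring in the socle and cosocle by a tensor--Hom adjunction, and then to reach all the remaining constituents via the canonical decomposition of a differential module according to its subsidiary radii. By symmetry I may assume $s := IR_\partial(V_1) < IR_\partial(V_2) =: t$. Since an irreducible differential module has pure radii (being its own unique Jordan--H\"older constituent), part (a), applied along a Jordan--H\"older filtration of $V_1 \otimes V_2$, reduces the claim to showing $IR_\partial(W) = s$ for every Jordan--H\"older constituent $W$ of $V_1 \otimes V_2$; and parts (a) and (c) already yield $IR_\partial(W) \geq IR_\partial(V_1 \otimes V_2) = s$, so only the inequality $IR_\partial(W) \leq s$ is at issue.

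First I would treat the case where $W$ is an irreducible \emph{submodule} of $V_1 \otimes V_2$. Tensor--Hom adjunction gives $\Hom_{K\{T\}}(W, V_1 \otimes V_2) \cong \Hom_{K\{T\}}(W \otimes V_2^\dual, V_1)$, which is nonzero; since $V_1$ is irreducible, any nonzero morphism here is surjective, so $V_1$ is a Jordan--H\"older constituent of $W \otimes V_2^\dual$ and hence $IR_\partial(W \otimes V_2^\dual) \leq IR_\partial(V_1) = s$. On the other hand, parts (b) and (c) give $IR_\partial(W \otimes V_2^\dual) \geq \min\{IR_\partial(W), IR_\partial(V_2^\dual)\} = \min\{IR_\partial(W), t\}$; since $t > s$, this forces $IR_\partial(W) \leq s$, hence $IR_\partial(W) = s$. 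Dualizing and using (b), the same conclusion holds whenever $W$ is an irreducible \emph{quotient} of $V_1 \otimes V_2$.

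To pass from the socle and cosocle to all constituents, I would invoke the canonical decomposition of $V_1 \otimes V_2$ according to its subsidiary radii: a filtration $0 = M_0 \subsetneq M_1 \subsetneq \cdots \subsetneq M_\ell = V_1 \otimes V_2$ by differential submodules whose successive quotients $M_i/M_{i-1}$ are pure with strictly increasing radii $\rho_1 < \cdots < \rho_\ell$ (this may be cited from \cite{kedlaya-course}, or obtained by pulling $V_1$ and $V_2$ back along the generic-point map $f_\gen$ of Convention~\ref{N:G-maps} and applying the one-dimensional Dwork--Robba decomposition, under which subsidiary $\partial$-radii become subsidiary radii of convergence). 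Every irreducible submodule of $M_1$ is an irreducible submodule of $V_1 \otimes V_2$, so it has radius $s$ by the previous paragraph; as $M_1$ is pure, $\rho_1 = s$. Dually, every irreducible quotient of $M_\ell/M_{\ell-1}$ is an irreducible quotient of $V_1 \otimes V_2$, so $\rho_\ell = s$. Since the $\rho_i$ strictly increase, $\ell = 1$, whence $V_1 \otimes V_2$ is pure of radius $s$, i.e., $\gothI\gothR_\partial(V_1 \otimes V_2)$ is $\dim V_1 \cdot \dim V_2$ copies of $s$.

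The main obstacle is exactly this last step: the adjunction argument only controls the constituents visible in the socle and cosocle, and reaching the interior constituents genuinely seems to require the decomposition theorem (equivalently, the reduction to the generic disc). A purely formal induction on $\dim V_1 \cdot \dim V_2$ disposes of the cases where $V_2$ or $V_1 \otimes V_2$ is decomposable, via (a), but stalls precisely when $V_1$, $V_2$, and $V_1 \otimes V_2$ are all irreducible, which is the heart of the matter.
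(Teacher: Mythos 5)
Your proposal addresses only part (d), treating (a)--(c) as given; that is a reasonable focus, since (d) is the one nontrivial assertion. The socle/cosocle argument via tensor--Hom adjunction is clean and correct: from an irreducible $W \hookrightarrow V_1 \otimes V_2$ one gets a nonzero (hence surjective) map $W \otimes V_2^\dual \to V_1$, whence $IR_\partial(W \otimes V_2^\dual) \leq s$, while (b)+(c) give $IR_\partial(W \otimes V_2^\dual) \geq \min\{IR_\partial(W), t\}$, and $t > s$ forces $IR_\partial(W) \leq s$; you also correctly note that this only reaches constituents occurring in the socle (and dually the cosocle). The paper itself offers no worked argument, merely citing \cite[Lemma~6.2.8, Corollary~6.2.9]{kedlaya-course}, so there is no intra-paper proof to compare against; but your overall architecture (reduce to constituents, control the extremes by adjunction, extend to the interior by a decomposition) is the standard one and is sound in spirit.

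The genuine issue is the decomposition you invoke to reach the interior constituents. Within this paper, the decomposition over a field appears only later, and in two flavors: Proposition~\ref{P:part-decomp-over-field} is \emph{partial} (it separates only those subsidiary radii strictly below the threshold $r_0 = \omega\,|\partial|_K^{-1}$, leaving a single undecomposed block $V_+$ for radii $\geq r_0$), and Theorem~\ref{T:decomp-over-field-complete} is full but is proved only under the rational-type hypothesis of Subsection~\ref{S:Frob}, which Lemma~\ref{L:basic-IR-prop} does not assume. So ``invoke the canonical decomposition by subsidiary radii'' is not a single available citation here, and in the regime where $IR_\partial(V_1 \otimes V_2)$ lies above the threshold your argument as stated does not close. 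Your fallback via the generic-point map $f_\gen$ is the right instinct --- over a disc one does have the full Dwork--Robba decomposition --- but it needs more than a parenthetical: one must explain why the resulting decomposition of $f_\gen^* (V_1 \otimes V_2)$ over $A_K^1[0,r)$ descends to a decomposition over $K$ (e.g.\ by uniqueness and functoriality of the decomposition, or by restricting to the generic fibre $F_\eta$ and invoking the direct-sum splitting there together with the isometric embedding $K \hookrightarrow F_\eta$). Also, as a minor stylistic point, your ``filtration $M_0 \subsetneq \cdots \subsetneq M_\ell$ with strictly increasing pure radii'' is more naturally a direct-sum decomposition; once you have a direct sum of pure pieces, each piece is simultaneously a submodule and a quotient, and your socle/cosocle argument applies to each piece at once, avoiding the asymmetric $\rho_1$ / $\rho_\ell$ bookkeeping.
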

\begin{proof}
As in \cite[Lemma~6.2.8]{kedlaya-course} and \cite[Corollary~6.2.9]{kedlaya-course}.
\end{proof}

\begin{defn}\label{D:Taylor-series}
Let $R$ be a complete $K$-algebra.  For $\bbv \in V$ and $x \in R$, define the \emph{$\partial$-Taylor series} to be
$$
\TT (\bbv; \partial, x) = \sum_{n = 0}^\infty \frac {\partial^n (\bbv)}{n!}x^n \in V \cpltotimes_K R
$$
in case this series converges.
\end{defn}

\begin{remark}
If $V=K$, the $\partial$-Taylor series gives a ring homomorphism $K \rar R$ if it converges.  For general $V$, the $\partial$-Taylor series gives a homomorphism of modules $V \rar V \cpltotimes R$ via the aforementioned ring homomorphism, if it converges.
\end{remark}

\begin{lemma}\label{L:generic-point}
The Taylor series $x \mapsto \TT(x; \partial, T)$ gives a continuous 
homomorphism $f_\gen^*: K \rar K \llbracket T / R_\partial(K) \rrbracket_0$, 
which induces a $G$-map $f_\gen: A_K^1[0, R_\partial(K)) \rar \Max(K)$.  
Moreover, for $\eta \in [0, R_\partial(K)]$, $f^*_\gen$ is isometric for the
$\eta$-Gauss norm on the target.
\end{lemma}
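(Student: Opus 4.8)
The plan is to take $f^*_\gen$ to be literally the $\partial$-Taylor map $x\mapsto\TT(x;\partial,T)$ of Definition~\ref{D:Taylor-series}, to check convergence and the ring-homomorphism property by hand, and then to pin down all of its Gauss norms by a multiplicativity trick. For convergence: by Definition~\ref{D:spectral-norms} we have $|\partial^n|_K^{1/n}\to|\partial|_{\sp,K}$, while $|n!|^{-1/n}\to\omega^{-1}$ (for $p>0$ this is $v_p(n!)=(n-s_n)/(p-1)$ with $s_n$ the base-$p$ digit sum of $n$; for $p=0$ both quantities are $1$). Hence for each fixed $\eta\in(0,R_\partial(K))$,
$$\limsup_n\bigl(|\partial^n/n!|_K\,\eta^n\bigr)^{1/n}=\eta\,\omega^{-1}|\partial|_{\sp,K}=\eta/R_\partial(K)<1,$$
so $|\partial^n/n!|_K\,\eta^n\to0$ and $C_\eta:=\sup_n|\partial^n/n!|_K\,\eta^n<\infty$. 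Thus $\TT(x;\partial,T)$ converges in $K\langle T/\eta\rangle$ with $|\TT(x;\partial,T)|_\eta\le C_\eta|x|$; letting $\eta\uparrow R_\partial(K)$ shows $\TT(x;\partial,T)\in K\{\{T/R_\partial(K)\}\}$ and that $f^*_\gen$ is continuous. It is a ring homomorphism because the Leibniz rule gives the formal identity $\partial^n(xy)/n!=\sum_{i=0}^n(\partial^i(x)/i!)\,(\partial^{n-i}(y)/(n-i)!)$ in $K\llbracket T\rrbracket$, i.e.\ $f^*_\gen(xy)=f^*_\gen(x)f^*_\gen(y)$; additivity and $f^*_\gen(1)=1$ are immediate.

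Next I would prove the isometry on the open disc. Since the constant term of $f^*_\gen(x)$ is $x$, always $|f^*_\gen(x)|_\eta\ge|x|$. For the reverse bound with $\eta\in(0,R_\partial(K))$, use that the $\eta$-Gauss norm is multiplicative on $K\langle T/\eta\rangle$ (it is the restriction of the valuation on the completion of $K(T)$ for $|\cdot|_\eta$). Then for $x\neq0$ and every $m\ge1$,
$$|f^*_\gen(x)|_\eta^m=|f^*_\gen(x^m)|_\eta\le C_\eta|x^m|=C_\eta|x|^m,$$
so $(|f^*_\gen(x)|_\eta/|x|)^m\le C_\eta$ for all $m$; since this ratio is $\ge1$, it must equal $1$. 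Hence $f^*_\gen$ is isometric for the $\eta$-Gauss norm for every $\eta\in[0,R_\partial(K))$ (the case $\eta=0$ being the statement about constant terms).

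For the endpoint and the $G$-map: fix $x$ and put $b_n=|\partial^n(x)/n!|$. For $\eta\in(0,R_\partial(K)]$ the quantity $\sup_n b_n\eta^n$ is nondecreasing in $\eta$, and $\lim_{\eta\uparrow R_\partial(K)}\sup_n b_n\eta^n=\sup_n b_nR_\partial(K)^n$ (the inequality $\le$ is termwise; $\ge$ holds since $\sup_n b_n\eta^n\ge b_N\eta^N\to b_NR_\partial(K)^N$ for each $N$). By the previous step the left-hand side equals $|x|$, so $|f^*_\gen(x)|_{R_\partial(K)}=|x|<\infty$; in particular $f^*_\gen(x)\in K\llbracket T/R_\partial(K)\rrbracket_0$, and $f^*_\gen$ is isometric for every $\eta\in[0,R_\partial(K)]$. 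Finally, the ring of global functions on $A_K^1[0,R_\partial(K))$ is $K\{\{T/R_\partial(K)\}\}$, which contains $K\llbracket T/R_\partial(K)\rrbracket_0$, so the continuous ring homomorphism $f^*_\gen$ is precisely the datum of a $G$-map $f_\gen\colon A_K^1[0,R_\partial(K))\to\Max(K)$ in the sense of Convention~\ref{N:G-maps}; it fails to be $K$-linear precisely because $\partial$ does.

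The main obstacle, and essentially the only nonformal point, is the behaviour at the endpoint $\eta=R_\partial(K)$: that $f^*_\gen$ lands in the \emph{bounded} series ring $K\llbracket T/R_\partial(K)\rrbracket_0$ rather than merely in $K\{\{T/R_\partial(K)\}\}$, equivalently that $\sup_n|\partial^n/n!|_K\,R_\partial(K)^n\le1$. Estimating those operator norms head-on is delicate (the naive bounds $|\partial^n|_K\le|\partial|_K^n$ are far too weak unless $|\partial|_K=|\partial|_{\sp,K}$), so the step above instead deduces the endpoint statement from the open-disc isometry by monotonicity of the suprema. Everything else is standard nonarchimedean bookkeeping together with multiplicativity of Gauss norms on one-variable Tate algebras over a field.
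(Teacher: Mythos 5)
Your proof is correct and takes essentially the same route as the paper's: the key step in both is the multiplicativity trick of applying the bound $|f^*_\gen(x)|_\eta\le c|x|$ to $x^m$ and letting $m\to\infty$ to upgrade boundedness to a contraction, whence isometry since the constant term of $f^*_\gen(x)$ is $x$. You spell out the convergence estimate and the endpoint argument (which the paper dispatches as ``straightforward'' and ``by continuity'' respectively), and your monotone-limit argument for $\eta=R_\partial(K)$ is a correct unpacking of what the paper means by continuity; there is no substantive difference in strategy.
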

\begin{proof}
It is straightforward to check that $f_\gen^*$ is bounded for the $\eta$-Gauss
norm for any $\eta \in [0, R_\partial(K))$; that is, there exists
$c>0$ such that for all $x \in K$, $|f^*_\gen(x)|_\eta \leq c|x|$. For any
positive integer $n$, we can plug $x^n$ into the previous inequality to deduce
$|f^*_\gen(x)|_\eta \leq c^{1/n}|x|$. Consequently,
$|f^*_\gen(x)|_\eta \leq |x|$ for any $\eta \in [0, R_\partial(K))$, and by
continuity also for $\eta = R_\partial(K)$.
\end{proof}

\begin{corollary} \label{C:bound using spectral norm}
For each positive integer $n$,
we have $|\partial^n/n!|_K \leq R_\partial(K)^{-n} = \omega^{-n}
|\partial|^n_{\sp,K}$. In particular (by taking $n=1$), 
$|\partial|_{\sp,K} \geq \omega
|\partial|_K$.
\end{corollary}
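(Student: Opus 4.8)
The plan is to extract the coefficientwise bound directly from the isometry assertion in Lemma~\ref{L:generic-point}, with essentially no extra work. Fix a nonzero $x \in K$. By the definition of the $\partial$-Taylor series (Definition~\ref{D:Taylor-series}), the homomorphism $f_\gen^*$ sends
\[
x \longmapsto \TT(x;\partial,T) = \sum_{n=0}^\infty \frac{\partial^n(x)}{n!} T^n \in K \llbracket T/R_\partial(K) \rrbracket_0.
\]
I would then invoke the final clause of Lemma~\ref{L:generic-point} with $\eta = R_\partial(K)$, which gives $|f_\gen^*(x)|_{R_\partial(K)} = |x|$. Unwinding the definition of the $\eta$-Gauss norm on the target ring, this equality reads
\[
\sup_{n \geq 0} \left\{ \left| \frac{\partial^n(x)}{n!} \right| R_\partial(K)^n \right\} = |x|,
\]
so in particular each individual term is at most $|x|$; that is, $|\partial^n(x)/n!| \leq R_\partial(K)^{-n}|x|$ for all $n \geq 0$ and all $x \in K$.

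Taking the supremum over $x$ with $|x| \leq 1$ yields $|\partial^n/n!|_K \leq R_\partial(K)^{-n}$. The identity $R_\partial(K)^{-n} = \omega^{-n}|\partial|_{\sp,K}^n$ is then immediate from the defining relation $R_\partial(K) = \omega|\partial|_{\sp,K}^{-1}$ in Definition~\ref{D:cvgt-radii}. Specializing to $n = 1$ gives $|\partial|_K = |\partial/1!|_K \leq R_\partial(K)^{-1} = \omega^{-1}|\partial|_{\sp,K}$, which rearranges to $|\partial|_{\sp,K} \geq \omega|\partial|_K$, as claimed.

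I do not anticipate a genuine obstacle: the entire content has already been absorbed into Lemma~\ref{L:generic-point}, and the corollary is just the remark that isometry for the Gauss norm at the outer radius forces every Taylor coefficient to obey the expected size estimate. The one point deserving a moment of attention is that the argument really does need the radius $\eta = R_\partial(K)$ itself (the ``boundary'' value), which is permissible precisely because Lemma~\ref{L:generic-point} states the isometry on the closed interval $[0, R_\partial(K)]$, not merely on $[0, R_\partial(K))$; this in turn is what the continuity step at the end of that lemma's proof secures.
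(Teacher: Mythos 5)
Your proof is correct and is exactly the intended argument: the paper states the corollary with no proof precisely because it is an immediate unwinding of the Gauss-norm isometry in Lemma~\ref{L:generic-point}, which is what you do (and in fact only the inequality $|f_\gen^*(x)|_{R_\partial(K)} \leq |x|$ is needed, not the full isometry). Your observation that the closed endpoint $\eta = R_\partial(K)$ is what matters, secured by the continuity step in the lemma's proof, is also on point.
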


We have the following geometric interpretation of generic radii.
This is slightly different from, but essentially equivalent to,
the treatments in 
\cite[Section~2.2]{kedlaya-swan1} and \cite[Section~9.7]{kedlaya-course}.

\begin{prop}\label{P:generic-point}
With notation as in Lemma~\ref{L:generic-point},
the pullback $f_\gen^*V$ becomes a $\partial_T$-differential module 
over $A_K^1[0, R_\partial(K))$, where $\partial_T = \frac d{dT}$.  Then for any $r \in (0, R_\partial(K)]$, $R_\partial(V) \geq r$ if and only if $f_\gen^*V$ restricts to a trivial $\partial_T$-differential module over $A_K^1[0, r)$.
\end{prop}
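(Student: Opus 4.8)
The plan is to extract a differential-module statement from the isometry in Lemma~\ref{L:generic-point} together with the identification of $R_\partial(V)$ as a spectral-radius invariant. First I would make the geometric picture precise: via $f_\gen^*$ we regard $K \llbracket T/R_\partial(K) \rrbracket_0$, and more generally $K\langle T/r\rangle$ for $r < R_\partial(K)$, as a $K$-algebra, and $f_\gen^* V = V \cpltotimes_K K\langle T/r\rangle$ carries the derivation $\partial_T = d/dT$ defined by $\partial_T(\bbv \otimes 1) = 0$ on a chosen $K$-basis and extended by the Leibniz rule; integrability and continuity are immediate. The key analytic input, already available to us, is that for $\eta \in [0,R_\partial(K)]$ the map $f_\gen^*$ is isometric for the $\eta$-Gauss norm (Lemma~\ref{L:generic-point}) and that $|\partial^n/n!|_K \leq R_\partial(K)^{-n}$ (Corollary~\ref{C:bound using spectral norm}).

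For the ``only if'' direction, suppose $R_\partial(V) \geq r$. Choose a $K$-basis $\bbe_1,\dots,\bbe_d$ of $V$ and let $D_n$ be the matrix of $\partial^n$ in this basis. By Definition~\ref{D:spectral-norms} and the hypothesis, $\limsup_n |D_n|^{1/n} \leq \omega^{-1}|\partial|_{\sp,K} \cdot \omega/r$... more cleanly: $R_\partial(V) \geq r$ means $\omega|\partial|_{\sp,V}^{-1} \geq r$, i.e. $|\partial|_{\sp,V} \leq \omega/r$, so $\limsup_n |D_n/n!|^{1/n} \leq (\omega/r)\cdot\omega^{-1} = 1/r$ after absorbing the $|n!|$ factor (here one uses the standard estimate $|n!| \geq \omega^n$, equivalently $|1/n!| \leq \omega^{-n}$, which follows from Corollary~\ref{C:bound using spectral norm} applied with $\partial$ the trivial derivation on $K$, or is classical). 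Then the $\partial$-Taylor series $\TT(\bbe_i; \partial, T) = \sum_n \frac{\partial^n \bbe_i}{n!} T^n$ converges in $V \cpltotimes_K K\langle T/\rho\rangle$ for every $\rho < r$, and defines horizontal elements $\bbf_i \in f_\gen^* V$ over $A^1_K[0,r)$: horizontality is the formal identity $\partial_T \TT(\bbv;\partial,T) = \TT(\partial\bbv;\partial,T) - \TT(\partial\bbv;\partial,T) = 0$, which one checks by differentiating the series term by term and shifting the index. Finally the $\bbf_i$ form a basis: modulo $T$ they reduce to the $\bbe_i$, and a change-of-basis matrix that is the identity modulo the maximal ideal (more precisely, invertible over $K$ after specializing $T=0$) is invertible in $\Mat_d(K\langle T/\rho\rangle)$ since that ring is the $\rho$-Gauss-norm completion of a localization of $K[T]$ and its residue situation at $T=0$ controls invertibility. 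Hence $f_\gen^* V$ is trivial over $A^1_K[0,r)$.

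For the ``if'' direction, suppose $f_\gen^* V$ is trivial over $A^1_K[0,r)$, so over $K\langle T/\rho\rangle$ for each $\rho < r$ there is a basis $\bbf_1,\dots,\bbf_d$ killed by $\partial_T$. Writing $\bbe_j = \sum_i U_{ij} \bbf_i$ with $U \in \GL_d(K\langle T/\rho\rangle)$, the matrix of $\partial_T^n$ on $f_\gen^* V$ in the basis $\bbe_j$ is $U^{-1}\partial_T^n(U)$... rather, one recovers $D_n$ (the matrix of $\partial^n$ on $V$) by evaluating the solution matrix: the Taylor-isomorphism identifies the action of $\partial$ on $V$ with the action of $\partial_T$ on the ``constant'' sections, and specializing the horizontal-section construction at a generic $\eta$-Gauss norm point gives $|D_n|_K = |D_n|_\eta \leq C \cdot \eta^{-n}$ for $\eta < r$ by the isometry of Lemma~\ref{L:generic-point} and boundedness of $U, U^{-1}$; taking $n$-th roots and then $\eta \to r^-$ yields $|\partial|_{\sp,V} \leq \omega/r$, i.e. $R_\partial(V) \geq r$. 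Concretely I would phrase this as: the columns of $U^{-1}$ are exactly the $\partial$-Taylor series $\TT(\bbe_j;\partial,T)$ evaluated appropriately, so convergence of $U^{-1}$ on $|T| < r$ forces the coefficient bounds $|\partial^n \bbe_j / n!|_K \leq C\rho^{-n}$ for all $\rho < r$, hence $|\partial|_{\sp,V} \leq \omega/r$.

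I expect the main obstacle to be the clean bookkeeping in the ``if'' direction: one must argue that a trivializing basis over the relative annulus, a priori only some abstract $K\langle T/\rho\rangle$-basis, actually has its change-of-basis matrix given by the $\partial$-Taylor series so that convergence translates back into the spectral-norm bound on $V$ itself. The cleanest route is to invoke that over a differential ring with no nonzero horizontal constants beyond the expected ones (here $\Ker(\partial_T$ on $K\langle T/\rho\rangle) = K$), any two trivializations differ by a matrix in $\GL_d(K)$, so one may assume the trivializing basis is exactly $\bbf_i = \TT(\bbe_i;\partial,T)$; then everything reduces to reading off coefficient growth, and the isometry of $f_\gen^*$ for all $\eta \leq R_\partial(K)$ does the rest. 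The ``only if'' direction is essentially the convergence-and-invertibility argument sketched above, which is routine once the $|1/n!| \leq \omega^{-n}$ estimate is in hand.
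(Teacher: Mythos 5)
Your overall strategy is reasonable — you are in effect re-deriving Dwork's transfer theorem (Taylor-series construction of horizontal sections for one direction; extracting spectral-norm bounds from a trivialization for the other). The paper takes the shorter route: it first uses the isometry of $f_\gen^*$ to identify $R_\partial(V)$ with the generic $\partial_T$-radius of $f_\gen^*V$ over the disc, and then cites the transfer theorem from the course notes. Re-proving the transfer theorem is legitimate, but it requires getting the pullback connection right, and that is where your write-up breaks.

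The claimed formula $\partial_T(\bbv\otimes 1)=0$ for $\bbv$ in a $K$-basis of $V$ is not the pullback $\partial_T$-structure on $f_\gen^*V = V\otimes_{K,f_\gen^*}K\langle T/r\rangle$. Were it so, $f_\gen^*V$ would be trivially a trivial $\partial_T$-module regardless of $V$, and the proposition would be vacuous. The correct formula, forced by well-definedness of the tensor product over the twisted $K$-algebra structure given by $f_\gen^*$ together with the intertwining $\partial_T\circ f_\gen^* = f_\gen^*\circ\partial$, is $\partial_T(\bbv\otimes c) = \partial(\bbv)\otimes c + \bbv\otimes\partial_T(c)$. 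This error propagates into your horizontality computation: the identity $\partial_T\TT(\bbv;\partial,T) = \TT(\partial\bbv;\partial,T) - \TT(\partial\bbv;\partial,T)$ does not follow from your own definition (which would give only the single term $\TT(\partial\bbv;\partial,T)$, with nothing to cancel), and with the correct pullback structure one in fact computes $\partial_T\TT(\bbv;\partial,T) = 2\TT(\partial\bbv;\partial,T)\neq 0$. The horizontal sections are $\TT(\bbv;\partial,-T)$, with the sign reversed; the same issue affects the identification of the trivializing basis in your ``if'' direction. These are sign/definition slips that a careful rewrite would repair, but as written the core construction in the ``only if'' direction does not produce horizontal sections. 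A small side note: $|1/n!|\leq\omega^{-n}$ is correct but does not follow from Corollary~\ref{C:bound using spectral norm} with $\partial=0$, since that specializes to $0\leq 0$.
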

\begin{proof}
Since $f_\gen^*$ is an isometry and 
$|\partial_T|_{K \llbracket T/R_{\partial}(K) \rrbracket_0} = 
R_{\partial}(K)^{-1}$, we have $R_\partial(V) = R_{\partial_T}(f_\gen^* V
\otimes \Frac K \llbracket T/R_{\partial}(K) \rrbracket_0)$.
It then suffices to check that $R_{\partial_T}(f_\gen^* V) \geq r$ if and only
if $f_{\gen}^* V$ restricts to a trivial $\partial_T$-differential module
over $A^1_K[0,r)$; this is the content of Dwork's transfer theorem
\cite[Theorem~9.6.1]{kedlaya-course}.
\end{proof}

\subsection{Newton polygons}
\label{S:NP}

In this subsection, we summarize some results in 
\cite[Chapter~5~and~6]{kedlaya-course} and \cite[Section~1]{kedlaya-swan1}.
Throughout this subsection, let $K$ be a complete nonarchimedean differential field of
characteristic zero.

\begin{defn}
For $P(T) = \sum_i a_iT^i \in K\{T\}$ a nonzero twisted polynomial, define the \emph{Newton polygon} of $P$ as the lower convex hull of the set $\{(-i, -\log|a_i|)\} \subset \RR^2$. This Newton polygon obeys the usual additivity rules only for slopes less than $-\log |\partial|_K$.
\end{defn}

\begin{prop}[Christol-Dwork]
\label{P:spec-norm-from-NP}
Suppose $V \simeq K\{T\} / K\{T\}P$,
and let $s$ be the lesser of $-\log |\partial|_K$ and the
least slope of $P$. Then
$$
\max\{|\partial|_K, |\partial|_{\sp, V} \} = e^{-s}.
$$
\end{prop}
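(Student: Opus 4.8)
The plan is to relate the spectral norm of $\partial$ on $V \simeq K\{T\}/K\{T\}P$ directly to the Newton polygon of $P$ via an explicit norm estimate on the companion-type action of $\partial$. Writing $P(T) = \sum_{i=0}^d a_i T^i$ with $a_d = 1$ (we may normalize), the module $V$ has the basis $1, T, \dots, T^{d-1}$, on which $\partial = T$ acts by the companion matrix $C$ whose only nontrivial column records the coefficients $-a_0, \dots, -a_{d-1}$. By Definition~\ref{D:spectral-norms}, $|\partial|_{\sp,V} = \max\{|\partial|_{\sp,K}, \lim_n |C_n|^{1/n}\}$ where $C_n$ is the matrix of $\partial^n$; and by Corollary~\ref{C:bound using spectral norm}, $|\partial|_{\sp,K} \geq \omega|\partial|_K \geq |\partial|_K$ (in fact the relevant comparison is with $e^{-s}$, whose definition already builds in $|\partial|_K$). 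So the content is the claim $\lim_n |C_n|^{1/n}$, suitably combined with $|\partial|_K$, equals $e^{-s}$ where $s$ is the least slope of $P$ (capped by $-\log|\partial|_K$).

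First I would establish the upper bound $\max\{|\partial|_K, |\partial|_{\sp,V}\} \leq e^{-s}$. By definition of $s$, every slope of $P$ is $\geq s$ up to the cap, which translates into $|a_i| \leq e^{s(d-i)}$ for the relevant coefficients — equivalently, each root-type quantity has the expected size. One then checks by a direct induction on $n$ that $|C_n| \leq \max\{|\partial|_K, e^{-s}\}^n$ times a polynomially-bounded fudge factor (the Leibniz rule contributes derivatives of the entries of $C$, whose norms are controlled by $|\partial|_K$ and the $|a_i|$), so that $\lim_n |C_n|^{1/n} \leq e^{-s}$; combined with $|\partial|_K \leq e^{-s}$ this gives the inequality. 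The subtlety here is exactly why the additivity of the Newton polygon only works for slopes below $-\log|\partial|_K$: the recursion for $C_n$ mixes the companion action with $\partial$ acting on coefficients, and one must be careful that the ``geometric'' slope contribution $e^{-s}$ genuinely dominates, which is where the cap $s \leq -\log|\partial|_K$ enters.

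For the reverse inequality $\max\{|\partial|_K, |\partial|_{\sp,V}\} \geq e^{-s}$, I would argue that $V$ cannot have spectral norm too small. If $s = -\log|\partial|_K$ (the cap is active) this is immediate from $|\partial|_{\sp,V} \geq |\partial|_{\sp,K} \geq \omega|\partial|_K$... but note $\omega \leq 1$, so this needs a little more care; in the capped case one instead uses that $|\partial|_K$ itself appears in the max, and $|\partial|_K = e^{-s}$ when $s = -\log|\partial|_K$. So the real work is the uncapped case, $s < -\log|\partial|_K$: here the least slope of $P$ corresponds, under the Christol–Dwork dictionary, to a one-dimensional (over a suitable extension) subquotient of $V$ on which $\partial$ acts by a scalar $\lambda$ with $|\lambda| = e^{-s}$; since $|\lambda| > |\partial|_K$ the twisted-polynomial factorization behaves like an ordinary one, one extracts the factor of $P$ of least slope, and the corresponding quotient differential module has spectral norm exactly $|\lambda| = e^{-s}$, forcing $|\partial|_{\sp,V} \geq e^{-s}$.

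The main obstacle I expect is the bookkeeping in the factorization step: legitimizing the decomposition of $P$ according to the slopes of its Newton polygon in the twisted (noncommutative) setting, where one only has the analogue of Hensel's lemma / slope factorization for slopes exceeding $-\log|\partial|_K$. I would handle this by citing the slope-factorization results for twisted polynomials from \cite[Chapter~6]{kedlaya-course} (this Proposition is attributed to Christol–Dwork and is stated there), reducing the claim to the rank-one case, where $|\partial|_{\sp,V}$ for $V = K\{T\}/K\{T\}(T-\lambda)$ is computed directly as $\max\{|\partial|_{\sp,K}, |\lambda|\}$ from the formula $\partial^n(1) = (\text{something with leading term } \lambda^n)$, and then $\max\{|\partial|_K,|\lambda|\} = |\lambda| = e^{-s}$ since $|\lambda| > |\partial|_K \geq \omega^{-1}$ is impossible — rather, $|\lambda| = e^{-s} > |\partial|_K$ in the uncapped case by definition of $s$. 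Assembling the per-slope contributions via Lemma~\ref{L:basic-IR-prop}(a) then yields the stated equality.
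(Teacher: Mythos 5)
The paper does not prove this proposition; it cites \cite[Th\'eor\`eme~1.5]{christol-dwork} and \cite[Theorem~6.5.3]{kedlaya-course}. Your architecture --- a companion-matrix norm estimate driven by the Newton polygon of $P$ for the upper bound, and slope factorization (Proposition~\ref{P:NP-decomp-field}) for the lower bound --- is in the spirit of those references. However, the lower bound as written has a genuine gap. After extracting the right-hand factor $P_1$ of pure slope $s < -\log|\partial|_K$, you propose to factor $P_1$ further into degree-one twisted factors $T-\lambda$ ``over a suitable extension.'' That is not a routine step: Proposition~\ref{P:NP-decomp-field} separates \emph{distinct} slopes but says nothing about splitting within a single slope, and producing an extension over which $P_1$ splits into linear twisted factors (a Picard--Vessiot type extension) while preserving the norm hypotheses on $\partial$ is a nontrivial project one cannot simply invoke. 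The reduction is also unnecessary. Once $P_1$ has pure slope $s$, pass to the scaled basis $\bbe_i = c^i T^i$, $i = 0, \dots, \deg P_1 - 1$, with $|c| = e^{s}$ (enlarging $K$ if need be). The companion matrix $N$ of $\partial = T$ on $K\{T\}/K\{T\}P_1$ then has $|N| = e^{-s}$; moreover $|\det N| = |a_0| = e^{-s\deg P_1}$ and cofactors have norm at most $e^{-s(\deg P_1 - 1)}$, so $|N^{-1}| \leq e^{s}$. Writing $D_{n+1} = \partial(D_n) + N D_n$ and hence $D_n = N^{-1}\bigl(D_{n+1} - \partial(D_n)\bigr)$, the hypothesis $|\partial|_K < e^{-s}$ forces $|D_n| \leq e^{s}|D_{n+1}|$ for all $n$, whence $|D_n| \geq e^{-sn}$ and $|\partial|_{\sp, K\{T\}/K\{T\}P_1} \geq e^{-s}$ --- all without leaving $K$.

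Two smaller corrections. The Newton-polygon constraint should read $|a_i| \leq e^{-s(d-i)}$, not $e^{s(d-i)}$ (if every slope is at least $s$ then $-\log|a_i| \geq s(d-i)$). And for the upper bound, the estimate $|C| \leq e^{-s}$ is \emph{false} in the unscaled basis $1, T, \dots, T^{d-1}$: there $|C| = \max_i |a_i|$, which is as large as $e^{-sd}$ when $s<0$. You must again use the scaled basis, where $|N| = e^{-s}$; then the recursion $D_{n+1} = \partial(D_n) + N D_n$ with $|\partial|_K \leq e^{-s}$ gives $|D_n| \leq e^{-sn}$ exactly, with no fudge factor needed.
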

\begin{proof}
See \cite[Th\'eor\`eme~1.5]{christol-dwork} or \cite[Theorem~6.5.3]{kedlaya-course}.
\end{proof}

\begin{prop}[Robba]\label{P:NP-decomp-field}
Any monic twisted polynomial $P\in K\{T\}$ admits a unique factorization
$$
P = P_+ P_n \cdots P_1
$$
such that for some $s_1 < \cdots < s_n < -\log |\partial|_K$, each $P_i$ is monic with all slopes equal to $s_i$, and $P_+$ is monic with all slopes at least $-\log |\partial|_K$.
\end{prop}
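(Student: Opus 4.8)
The plan is to reduce the factorization over the twisted polynomial ring $K\{T\}$ to a factorization over the ring $K\{T\}_{\geq s}$ obtained by suitably completing with respect to one slope at a time, following the classical Hensel-type argument of Robba (cf.\ the commutative analogue for ordinary polynomials over a complete valued field). First I would dispose of the slopes at least $-\log|\partial|_K$: collecting all such slopes into a single factor $P_+$ requires no decomposition, so the real content is the factorization of the ``non-spectral part'' into pieces of a single slope each $s_i < -\log|\partial|_K$. For these slopes, the Newton polygon is genuinely additive (as recorded in the definition of the Newton polygon in Section~\ref{S:NP}), so the multiset of slopes of $P$ below $-\log|\partial|_K$, with multiplicities given by horizontal lengths, is well defined and is exactly $\{s_1^{(m_1)}, \dots, s_n^{(m_n)}\}$ for the claimed data.

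Next I would carry out the inductive step: given the least slope $s_1 < -\log|\partial|_K$ of $P$ with horizontal multiplicity $m_1$, produce a monic right factor $P_1$ of degree $m_1$, all of whose slopes equal $s_1$, with $P = Q P_1$ and $Q$ monic having all slopes $> s_1$ (still within the range where additivity holds). This is where a Hensel-style successive-approximation argument is needed: one normalizes by rescaling $T$ (replacing $T$ by $\lambda T$ for a suitable scalar $\lambda$ with $-\log|\lambda| = s_1$, which changes $\partial$ to $\lambda\partial$ and has the effect of moving the slope $s_1$ to slope $0$ while keeping $-\log|\partial|_K$ strictly above it), reduces modulo the maximal ideal to get a factorization of the ``leading'' commutative polynomial into the part supported at slope $0$ and the rest (which are coprime because the slopes are distinct), and then lifts this factorization by iteration, using the coprimality to solve the linear Bezout-type equations at each stage and the slope gap to guarantee convergence of the approximations in the $T$-adically-graded norm on $K\{T\}$. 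The noncommutativity is harmless here because the relation $Ta = aT + \partial(a)$ only perturbs lower-order terms, and the estimate $|\partial|_K < e^{-s_1}$ (after rescaling, $|\partial|_K < 1$) is exactly what controls the error terms.

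Iterating this extraction strips off $P_1$, then $P_2$, and so on, terminating after finitely many steps with the residual factor $P_+$ carrying all slopes $\geq -\log|\partial|_K$; this proves existence. For uniqueness I would argue that in any such factorization the product $P_+ P_n \cdots P_{i+1}$ is forced to be the unique monic right factor of $P$ of the appropriate degree whose slopes all exceed $s_i$: two distinct such factors would give, via a Bezout combination, a factor of smaller degree with the same slope constraint, contradicting the additivity of the Newton polygon in the relevant slope range. Equivalently, one can phrase uniqueness via the kernel filtration: $P_+ P_n \cdots P_{i+1}$ is the characteristic polynomial of the operator $T$ restricted to the subspace of $K\{T\}/K\{T\}P$ on which $T$ has spectral norm $< e^{-s_i}$, and such spectral subspaces are canonical. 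The main obstacle I expect is the convergence analysis in the lifting step: one must set up the graded/filtered norm on $K\{T\}$ carefully, track how the twisted multiplication interacts with it, and verify that the iteration is contracting precisely because the slope being extracted is separated from both $-\log|\partial|_K$ and from the other slopes — the bookkeeping is routine in spirit but delicate in the noncommutative setting, and this is presumably why the paper simply cites \cite[Theorem~6.5.3]{kedlaya-course} or the analogous statement rather than reproving it.
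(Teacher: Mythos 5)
Your strategy is the standard one and matches the approach in the references the paper cites (the paper itself only cites \cite[Proposition~1.1.10]{kedlaya-swan1} and \cite[Corollary~3.2.4]{kedlaya-part3} rather than reproving the result): first extract the lowest slope $s_1 < -\log|\partial|_K$ via a Hensel/Newton iteration after rescaling, then induct, collecting everything with slope at least $-\log|\partial|_K$ into $P_+$, and argue uniqueness via the canonical spectral filtration on $K\{T\}/K\{T\}P$. One small but substantive slip in your normalization: replacing $T$ by $\lambda T'$ with $|\lambda| = e^{-s_1}$ sends the derivation to $\lambda^{-1}\partial$, not $\lambda\partial$ (since $T'a = aT' + \lambda^{-1}\partial(a)$), so after rescaling one has $|\partial'|_K = e^{s_1}|\partial|_K$; this is still $<1$ precisely because $s_1 < -\log|\partial|_K$, which is the point you need, but the sign should be corrected. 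With that fix, the outline is sound; the remaining work is exactly the convergence bookkeeping in the twisted Hensel step, which you correctly identify as the delicate part and which is carried out in the cited sources.
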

\begin{proof}
See \cite[Proposition~1.1.10]{kedlaya-swan1} or \cite[Corollary~3.2.4]{kedlaya-part3}.
\end{proof}

\begin{prop}\label{P:part-decomp-over-field}
Suppose that $\omega \cdot |\partial|_K^{-1} = r_0$.  Then there is a unique decomposition 
$$
V = V_+ \oplus \bigoplus_{r < r_0} V_r
$$
of differential modules, such that $V_r$ has pure $\partial$-radii $r$, and 
the subsidiary radii of $V_+$ are all at least $r_0$.
\end{prop}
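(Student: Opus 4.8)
The plan is to reduce the statement to the factorization of twisted polynomials in Proposition~\ref{P:NP-decomp-field} via a cyclic vector, and then translate that factorization back into a direct sum decomposition of $V$. First I would choose a cyclic vector for $V$, which exists by Lemma~\ref{L:cyclic vector}, giving an isomorphism $V \simeq K\{T\}/K\{T\}P$ for a monic $P \in K\{T\}$. Applying Proposition~\ref{P:NP-decomp-field}, write $P = P_+ P_n \cdots P_1$ with slopes $s_1 < \cdots < s_n < -\log|\partial|_K$ and $P_+$ monic with all slopes at least $-\log|\partial|_K$. For each factor, set $r_i = e^{s_i}$ (so $r_i < r_0$) and use Proposition~\ref{P:spec-norm-from-NP} applied to the subquotients: the slope-$s_i$ part $P_i$ corresponds to a differential module with pure $\partial$-radii $e^{s_i}\cdot\omega$-adjusted appropriately — more precisely, one checks that $K\{T\}/K\{T\}P_i$ has spectral norm $e^{-s_i}$, hence generic $\partial$-radius $\omega e^{s_i}$, and irreducibility of its Jordan--H\"older constituents plus Lemma~\ref{L:basic-IR-prop}(a) force pure $\partial$-radii; similarly $P_+$ gives a module whose subsidiary radii are all $\geq \omega|\partial|_K^{-1} = r_0$.

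Next I would promote the \emph{filtration} coming from $P = P_+ P_n \cdots P_1$ to a \emph{direct sum} decomposition. The key point is that differential submodules with distinct pure radii, or with radii separated by the threshold $r_0$, have no nonzero morphisms between them: by Lemma~\ref{L:basic-IR-prop}(b) and (c), $\Hom(V_r, V_{r'}) \cong (V_r^\dual \otimes V_{r'})^{\partial=0}$, and if $r \neq r'$ this tensor product has all subsidiary radii equal to $\min\{r,r'\} < r_0$, so it admits no horizontal sections (a trivial module would contribute radius $r_0$ or $\infty$); the same computation handles $V_+$ against each $V_r$. Hence $\Ext^1$ between non-matching pieces also vanishes (via the $H^1_\partial$ interpretation and duality), so the filtration splits canonically, yielding $V = V_+ \oplus \bigoplus_{r < r_0} V_r$ with the stated purity and the bound on the subsidiary radii of $V_+$.

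For uniqueness, I would argue that any such decomposition must agree with the canonical one: given $V = V_+' \oplus \bigoplus V_r'$ with the stated properties, the projection maps and inclusions are morphisms of differential modules, and by the same $\Hom$-vanishing as above the component $V_r'$ maps isomorphically onto $V_r$ (any cross-terms vanish), and likewise $V_+' \cong V_+$. Equivalently, $V_r$ is characterized intrinsically as the largest differential submodule of $V$ with pure $\partial$-radii $r$, and $V_+$ as the largest with all subsidiary radii $\geq r_0$, which pins them down.

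The main obstacle I anticipate is the bookkeeping in the first paragraph: matching the combinatorial data of the Newton polygon factorization (slopes $s_i$, the threshold $-\log|\partial|_K$) precisely with the radius-of-convergence data (values $r_i$, threshold $r_0 = \omega|\partial|_K^{-1}$, the factor of $\omega$), and in particular verifying that each $K\{T\}/K\{T\}P_i$ genuinely has \emph{pure} radii rather than merely least radius $r_i$. This requires knowing that a twisted polynomial all of whose slopes equal $s_i$ gives a module all of whose Jordan--H\"older constituents have generic $\partial$-radius $\omega e^{s_i}$, which follows by applying Proposition~\ref{P:spec-norm-from-NP} to each constituent together with Lemma~\ref{L:basic-IR-prop}(a); everything else is formal once the $\Hom$-vanishing lemma is in hand.
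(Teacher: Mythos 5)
Your proposal takes a genuinely different route from the paper. The paper's proof is purely algebraic: after choosing a cyclic vector, it applies Robba's factorization (Proposition~\ref{P:NP-decomp-field}) to $P$ in $K\{T\}$ \emph{and also} in the opposite twisted-polynomial ring, and lets the two resulting transversal filtrations split each other. You instead build a single filtration and then attempt to split it cohomologically via $\Hom$- and $\Ext^1$-vanishing. Both strategies are standard in the subject; the paper's has the advantage of never needing to invoke $H^1$ at all.

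There is, however, a real gap in the $\Ext^1$ step. You correctly obtain $H^0$-vanishing from Lemma~\ref{L:basic-IR-prop}(a),(b),(d) together with Remark~\ref{R:part-decomp-over-field}: if $W_1, W_2$ are pure of distinct radii (or one is $V_+$), then every subsidiary radius of $W_1^\dual \otimes W_2$ is strictly less than $r_0$, and a nonzero horizontal section would generate a trivial submodule of radius $\geq r_0$, a contradiction. But you then assert ``Hence $\Ext^1$ \dots also vanishes (via the $H^1_\partial$ interpretation and duality).'' Duality does not do this: $\Ext^1(W_1,W_2) = H^1_\partial(W_1^\dual\otimes W_2)$ while $\Hom(W_2,W_1) = H^0_\partial((W_1^\dual\otimes W_2)^\dual)$, and there is no Serre/Poincar\'e duality between $H^0$ and $H^1$ of a differential module over a field in this setting. (Nor can you use rank--nullity, since $\partial$ is only $K^\partial$-linear, not $K$-linear.) What you actually need is the separate fact that if every subsidiary $\partial$-radius of a module $W$ is strictly less than $r_0 = \omega |\partial|_K^{-1}$ --- equivalently, every Jordan--H\"older factor has spectral norm exceeding $|\partial|_K$ --- then $\partial\colon W \to W$ is \emph{bijective}, so $H^0_\partial(W) = H^1_\partial(W) = 0$. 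This is true (it is proved by a contraction/Newton-iteration argument on a cyclic model, as in \cite[\S6]{kedlaya-course}), but it is a real lemma, it is not among the preliminaries assembled in \S\ref{S:NP}, and it is not a formal consequence of the $H^0$ computation. You should either cite that bijectivity result explicitly or, better, adopt the paper's double-factorization argument, which sidesteps the issue entirely. The rest of your outline (the translation between Newton-polygon slopes and radii, including the purity of each $K\{T\}/K\{T\}P_i$ via Remark~\ref{R:part-decomp-over-field}, and the uniqueness argument from $\Hom$-vanishing) is sound.
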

\begin{proof}
Apply Lemma~\ref{L:cyclic vector}
to write $V \simeq K\{T\} / K\{T\}P$ for $P$ a twisted polynomial.  Then the statement may be deduced
from Proposition~\ref{P:NP-decomp-field}, applied first to
$P$ in $K\{T\}$ and then to $P$ in the opposite ring.  
For more details, one may consult \cite[Theorem~6.6.1]{kedlaya-course}.
\end{proof}

\begin{remark}\label{R:part-decomp-over-field}
If $V \simeq K\{T\} / K\{T\}P$ for $P$ a twisted polynomial, then 
Propositions~\ref{P:spec-norm-from-NP} and~\ref{P:NP-decomp-field} imply that the 
multiplicity of any $s < -\log |\partial|_K$ as a slope of the Newton polygon 
of $P$ coincides with the multiplicity of $\omega e^s$ in $\calR_\partial(V)$.
\end{remark}

\subsection{Moving along Frobenius}
\label{S:Frob}

As discovered originally by Christol-Dwork \cite{christol-dwork},
and amplified by the first author \cite{kedlaya-course}, 
in the situation of Definition~\ref{D:admissible-operator}, one can
overcome the limitation on subsidiary radii imposed by 
Proposition~\ref{P:spec-norm-from-NP} by using the pushforward 
along the Frobenius.
In this subsection, we imitate the techniques in 
\cite[Chapter~10]{kedlaya-course} 
and obtain Theorems~\ref{T:Frob-push-sp-norm} and 
\ref{T:decomp-over-field-complete}
as analogues of \cite[Theorems~10.5.1~and~10.6.2]{kedlaya-course}.

\begin{defn}\label{D:admissible-operator}
Let $K$ 
be a complete nonarchimedean differential field of characteristic zero
and residual characteristic $p$.
The derivation $\partial$ on $K$ is of \emph{rational type} if there exists
$u \in K$ such that the following conditions hold. (If these hold,
we call $u$ a \emph{rational parameter} for $\partial$.)
\begin{enumerate}
\item[(a)]
We have $\partial(u) = 1$ and $|\partial|_K = |u|^{-1}$.
\item[(b)]
For each positive integer $n$, $|\partial^n/n!|_K \leq |\partial|_K^n$.
\end{enumerate}
It is equivalent to formulate (b) as follows. 
\begin{enumerate}
\item[(b$'$)]
We have $|\partial|_{\sp,K} \leq \omega |\partial|_K$.
\end{enumerate}
(It is clear that (b) implies
(b$'$); the reverse implication holds by 
Corollary~\ref{C:bound using spectral norm}.)
For $p>0$, in 
the presence of (a), yet another equivalent formulation of (b) is as follows.
\begin{enumerate}
\item[(b$''$)]
For each polynomial $P \in \QQ_p[T]$ such that $P(\ZZ_p) \subseteq \ZZ_p$,
$|P(u \partial)|_K \leq 1$.
\end{enumerate}
This relies on the fact that the $\ZZ_p$-module of such $P$ is freely generated
by the binomial polynomials
\[
\binom{T}{n} = \frac{T(T-1)\cdots(T-n+1)}{n!} \qquad (n=0,1,\dots).
\]
\end{defn}

\begin{remark}
Note that in Definition~\ref{D:admissible-operator}, the inequality in 
(b$'$) is forced to be an equality by Corollary~\ref{C:bound using spectral norm},
while the inequality in (b) is forced to be 
an equality if (a) holds because then $(\partial^n/n!)(u^n) = 1$.
In particular, for any nonzero $\partial$-differential module $V$,
$IR_\partial(V) = |u| \cdot R_\partial(V)$.
Similarly, if (a) holds and $p>0$, then the inequality in (b$''$) becomes
an equality whenever $P(\ZZ_p)\not\subset p\ZZ_p$.
\end{remark}

\begin{remark}
If $u'$ is a second rational parameter for $\partial$, then
$u - u' \in \ker(\partial)$ and $|u-u'| \leq |u|$. The converse is also true;
that is, if $u$ is a rational parameter,
$u - u' \in \ker(\partial)$, and $|u-u'| \leq |u|$, then
$u'$ is also a rational parameter.
The only nonobvious part of this statement is the fact that these two
conditions imply $|u'| = |u|$. It is clear that $|u'| \leq |u|$; on the other hand,
since $\partial(u') = 1$, 
$1 \leq |\partial|_K |u'| = |u'|/|u|$, so $|u'| \geq |u|$.
\end{remark}

\begin{remark}
The simplest case of Definition~\ref{D:admissible-operator} is the derivation
$d/dt$ on the completion of the rational function field $\QQ_p(t)$ for any
Gauss norm if $p>0$, or on the ring of Laurent series $\CC((t))$ if $p=0$. For more cases, see 
Situation~\ref{Sit:K-in-SwanI} and the following
remarks.
\end{remark}

\begin{lemma}\label{L:H-under-unram-extn}
Let $L/K$ be a complete tamely ramified extension of $K$. Then the unique extension of
$\partial$ to $L$ is of rational type (with $u$ again as rational parameter).
\end{lemma}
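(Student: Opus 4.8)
The plan is to reduce the assertion about $L$ to the already-known properties of $\partial$ on $K$, using that the extension is tamely ramified. First I would handle the unramified case, or more precisely the situation where $e_{L/K}=1$: here $L$ has the same value group as $K$, and one can write $L = K(\bar L)$ in a suitable sense (lifting a separable generator of the residue field extension). The key point is that $u$ still satisfies $\partial(u) = 1$; what needs checking is that $|\partial|_L = |u|^{-1}$ and that $|\partial^n/n!|_L \le |\partial|_L^n$. For this I would use condition (b$''$) from Definition~\ref{D:admissible-operator}: since $P(u\partial)$ is a $K$-linear operator with $|P(u\partial)|_K \le 1$, and since $L/K$ is unramified (hence $\gotho_L$ is generated over $\gotho_K$ by elements on which $u\partial$ can be controlled — indeed $\partial$ kills a lift of a separable residue generator up to elements of positive valuation, by an approximation argument), the operator norm does not increase. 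Concretely, a separable residue-field generator lifts to $z \in \gotho_L$ whose minimal polynomial over $K$ has unit discriminant, which forces $\partial(z)$ to lie in $\gotho_L$ with controllable norm, and then Leibniz gives $|P(u\partial)|_L \le 1$ as well.

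Next I would treat the tamely ramified case proper, i.e., $e = e_{L/K} > 1$ with $p \nmid e$. After the unramified reduction I may assume the residue extension is trivial, so $L = K(\pi_L)$ with $\pi_L^e = \pi_K \cdot v$ for a unit $v$; after adjusting we may arrange $L = K(\varpi)$ with $\varpi^e \in K$. The rational parameter $u$ for $\partial$ on $K$ should remain a rational parameter on $L$: again $\partial(u) = 1$ and $|\partial|_L \ge |u|^{-1}$ automatically, so the content is the upper bound $|\partial|_L \le |u|^{-1}$ together with (b). I would verify this via (b$''$) once more: $L$ is spanned over $K$ by $1, \varpi, \dots, \varpi^{e-1}$, and $\partial(\varpi^j) = j \varpi^{j-1} \partial(\varpi)$. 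Here $\partial(\varpi) = \frac{1}{e}\varpi^{1-e}\partial(\varpi^e)$, and since $p \nmid e$ the factor $1/e$ is a unit; moreover $\partial(\varpi^e) \in K$ has $|\partial(\varpi^e)| \le |\partial|_K |\varpi^e| = |u|^{-1}|\varpi|^e$, so $|\partial(\varpi)| \le |u|^{-1}|\varpi|$, i.e. $|\partial(\varpi^j)| \le |u|^{-1}|\varpi^j|$. Thus $u\partial$ maps the $\gotho_K$-span of the $\varpi^j$ into itself, and an induction on the degree of $P$ (or directly using the $\gotho_K$-algebra generation) shows $|P(u\partial)|_L \le |P(u\partial)|_K \cdot 1 \le 1$. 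This gives (b$''$) over $L$, hence (b), and combined with $\partial(u)=1$ and the evident norm bounds, $\partial$ is of rational type on $L$ with the same $u$.

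The remaining case is $L$ the completion of an infinite tamely ramified algebraic extension. By the definition of rational type for such $L$, and since the notion of ramification and the property ``(a) and (b) hold with parameter $u$'' are both checked on finite subextensions, this follows formally from the two finite cases above: every finite subextension $K \subseteq L' \subseteq L$ is tamely ramified, so $\partial$ is of rational type on $L'$ with parameter $u$; the bounds $|\partial|_{L'} = |u|^{-1}$ and $|\partial^n/n!|_{L'} \le |u|^{-n}$ are uniform in $L'$, hence pass to the completion $L$ by continuity of $\partial$ and density of $\bigcup L'$ in $L$.

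The main obstacle I anticipate is the tamely ramified step, specifically making rigorous the claim that $u\partial$ preserves the relevant $\gotho_K$-lattice in $L$ — one must be a little careful that $\partial(\varpi^e) \in K$ genuinely has the norm bound claimed (this is just boundedness of $\partial$ on $K$ applied to the element $\varpi^e$, which lies in $K$) and that the unit $1/e$ does not interact badly with the $p$-adic normalization; the hypothesis $p \nmid e$ is exactly what rescues this. If $p = 0$ the condition $p \nmid e$ is vacuous and every finite extension is tamely ramified by Ostrowski's theorem (as recalled in the paper), so the argument covers that case uniformly as well. A secondary, more bookkeeping-level obstacle is the unramified reduction: one wants that adjoining a lift $z$ of a separable residue generator does not spoil boundedness, which I would settle by noting that $\gotho_L = \gotho_K[z]$ (by unramifiedness and Hensel/Nakayama) and that $\partial(z) \in \gotho_L$ because differentiating the minimal polynomial relation $f(z) = 0$ and dividing by the unit $f'(z)$ expresses $\partial(z)$ in terms of $\partial$ of the coefficients, which lie in $\gotho_K$.
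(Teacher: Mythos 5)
Your overall three‑step plan (unramified, totally tamely ramified, pass to completion of an infinite extension) matches the paper's structure, and the infinite‑extension reduction and the unramified step are in the same spirit as the paper's proof (minimal polynomial, differentiate, divide by the unit $h'(z)$, Leibniz, induct on $n$). Two small cautions on the unramified case: the parenthetical ``$\partial$ kills a lift of a separable residue generator up to elements of positive valuation'' is not right — what is true, and what you later use, is only $\partial(z)\in\gotho_L$, not $\partial(z)\in\gothm_L$; and, as in the paper, you genuinely need to carry out the induction on $n$ to control $\partial^n(z)/n!$, not just $n=1$, since (b) is a statement about all $n$.

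The totally tamely ramified step is where your route diverges from the paper, and it is also where there is a real gap. Setting aside the use of uniformizers $\pi_K,\pi_L$ (the hypothesis on $K$ does not include discreteness of the valuation, so you would need to phrase the reduction as the paper does, via an element $x_0$ with $|x_0^i|\notin|K^\times|$ for $1\le i<d$, and by Hensel pass to $\varpi$ with $\varpi^d\in K$; also, $|L^\times|/|K^\times|$ need not be cyclic, so an additional induction on $[L:K]$ is needed), the essential problem is your assertion
$$|P(u\partial)|_L \le |P(u\partial)|_K \cdot 1 \le 1.$$
The only computation you actually carry out is for $P=T$, namely $|u\partial(\varpi^j)| \le |\varpi^j|$; this establishes condition (a), i.e.\ $|\partial|_L\le|u|^{-1}$. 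But (b) is precisely the assertion that the \emph{higher} binomial operators $\binom{u\partial}{n}=u^n\partial^n/n!$ also have norm $\le 1$, and this does not follow from $|u\partial|_L\le 1$ alone, since $\binom{T}{n}$ has denominators. ``Induction on the degree of $P$'' does not close this: to bound $\binom{u\partial}{n}$ on $\gotho_L$ you would be citing exactly the statement you are trying to prove. The paper sidesteps this difficulty entirely by a base change: adjoin a variable $t$ with $\partial(t)=0$, pass to the $|y|^{1/d}$-Gauss norm completion $K'$, and observe that $L'=K'(z/t)$ is then \emph{unramified} over $K'$, so the unramified case applies. Your direct approach can be repaired — e.g.\ show that the formal Taylor homomorphism $\psi(x)=\sum_n\binom{u\partial}{n}(x)T^n$ sends $\gotho_K\to\gotho_K\llbracket T\rrbracket$, write $\psi(c)=c(1+a_1T+\cdots)$ with $a_i\in\gotho_K$, and extract the $e$-th root $(1+a_1T+\cdots)^{1/e}\in\gotho_K\llbracket T\rrbracket$ (using $p\nmid e$) to get $\psi(\varpi)\in\varpi\cdot\gotho_K\llbracket T\rrbracket$, which is exactly the desired bound — but this argument is missing from your writeup, and without it the tamely ramified case is not proved.
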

\begin{proof}
We reduce immediately to the case of a finite tamely ramified extension.
The extension of $\partial$ to $L$ is obtained from the isomorphism
$\Omega^1_L \cong L \otimes \Omega^1_K$.
We need to prove that for each positive integer $n$ and each 
$x \in L$,
$|u^n \partial^n(x) / n!| \leq |x|$.  
We may consider the unramified extension and the totally tamely ramified extension separately.

Suppose first that $L/K$ is unramified. 
Since every element of $L$ equals an element of $K$ times an element of
$\gotho^\times_L$, we need only check the inequality
$|u^n \partial^n(x) / n!| \leq |x|$ for $x \in \gotho^\times_L$.
We do this by induction on $n$.
Let $h(T) = T^d + a_{d-1} T^{d-1} + \cdots + a_0 \in \gotho_K[T]$
be the minimal polynomial of $x$; thus $h'(x) \in \gotho_L^\times$. 
For the base case $n = 1$ of the induction, applying $u\partial$ to 
the equation $h(x) = 0$ gives
$$
u\partial(x) = -\frac {u\partial(a_{d-1}) x^{d-1} + \cdots + u\partial(a_0)}{h'(x)} \in \gotho_L.
$$
Assume the statement is proved for $n-1$.  Applying $u^n \partial^n / n!$ to 
the equation $h(x) = 0$ gives
$$
\sum_{i = 0}^d \sum_{\lambda_0 + \cdots + \lambda_i = n} \frac{u^{\lambda_0}\partial^{\lambda_0}} {\lambda_0!}(a_i) \frac{u^{\lambda_1}\partial^{\lambda_1}} {\lambda_1!}(x)\cdots \frac{u^{\lambda_i}\partial^{\lambda_i}} {\lambda_i!}(x) = 0,
$$
where $a_d = 1$ by convention.
Each summand belongs to $\gotho_L$ by the induction hypothesis except for those
in which $\lambda_j = n$ for some $j > 0$; those terms add up to
$h'(x) u^n \partial^n(x)/n!$. Therefore 
$u^n\partial^n(x) / n! \in \gotho_L$, completing the induction.

Now suppose that $L/K$ is totally tamely ramified. We induct on
$[L:K]$, which we may assume is greater than 1.
Then we can find $d>1$ and $x_0 \in \gotho_L$ 
such that $|x_0^i| \notin |K^\times|$ 
for $i = 1, \dots, d-1$.  Choose an element
$y \in \gotho_K$ with $|y-x_0^d| < |x_0^d|$. By Hensel's lemma, 
$y$ has a $d$-th root $z$ in $L$.
Let $K'$ be the completion of $K(t)$ for the $|y|^{1/d}$-Gauss norm,
and extend $\partial$ to $K'$ by setting $\partial(t) = 0$.
The residue field of $K'$ is $k(y/t^d)$.
Put $L' = K' \otimes_K K(z)$; then $L' = K'(z) = K'(z/t)$.
Now $z/t$ is a $d$-th root of the quantity $y/t^d \in \gotho_{K'}$,
whose image in the residue field has no $i$-th root for any
$i > 1$ dividing $d$. Hence $L'/K'$ is unramified, so
by the previous paragraph,
$\partial$ extends to $L'$ and is of rational type with respect to $u$.
We may then read off the same conclusion for $K(z)$; applying the
induction hypothesis to $L/K(z)$ yields the claim.
\end{proof}

\begin{hypo}
For the rest of this subsection, we assume that $K$ is a complete nonarchimedean field 
of characteristic zero and residual characteristic $p$, 
equipped with a differential operator 
$\partial$ of rational type with respect to the rational parameter $u$.
We also assume $p>0$ unless otherwise specified.
\end{hypo}

\begin{construction}\label{Cstr:j-frobenius}
If $K$ contains a primitive $p$-th root of unity $\zeta_p$, we may define an action of the group $\ZZ/p\ZZ$ on $K$ using $\partial$-Taylor series:
$$
x^{(i)} = \TT(x; \partial, (\zeta_p^i - 1) u), \qquad (i \in \ZZ / p\ZZ, x \in K).
$$
It is clear that $|x^{(i)}| = |x|$ for $i \in \ZZ / p\ZZ$.  Let $K^{(\partial)}$ be the fixed subfield of $K$ under this action; in particular, $u^p \in K^{(\partial)}$.  By simple Galois theory, $K$ is a Galois extension of $K^{(\partial)}$ generated by $u$ with Galois group $\ZZ / p\ZZ$.  Moreover, $K^{(\partial)}$ is stable under the action of $u\partial$ because $(u\partial x)^{(i)} = u \partial(x^{(i)})$ for $x \in K$.
(If $K$ does not contain a primitive $p$-th root of unity, we may still define $K^{(\partial)}$ using Galois descent.)

We call the inclusion $\varphi^{(\partial)*}: K^{(\partial)} \inj K$ 
the \emph{$\partial$-Frobenius morphism}.
We view $K^{(\partial)}$ as being equipped with the derivation
$\partial' = \partial / (pu^{p-1})$; we will see below
(Lemma~\ref{L:Kj-satisfies-hypo}) that $\partial'$ is of rational type with parameter $u^p$.

It is worth pointing out that $K^{(\partial)}$ depends on the choice of the rational parameter $u$, not just the derivation $\partial$.

Occasionally, we use $K^{(\partial,n)}$ to denote the subfield of $K$ obtained by applying the above construction $n$ times; if $K$ contains a primitive
$p^n$-th root of unity, this is the same as the fixed field for the natural
action of $\ZZ/p^n \ZZ$ on $K$.
\end{construction}

\begin{lemma} \label{L:norm of Frob derivative}
We have $|\partial'|_{K^{(\partial)}} = |u|^{-p}$.
\end{lemma}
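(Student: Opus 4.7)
The lower bound is immediate from $\partial'(u^p) = \partial(u^p)/(pu^{p-1}) = 1$: since $|u^p| = |u|^p$, we get $|\partial'|_{K^{(\partial)}} \geq |u|^{-p}$.

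For the upper bound, it suffices to show $|u\partial(x)| \leq |p| \cdot |x|$ for every $x \in K^{(\partial)}$, since then $|\partial'(x)| = |u\partial(x)|/(|p|\,|u|^p) \leq |u|^{-p}|x|$. After possibly enlarging $K$ by the tame extension $K(\zeta_p)$ (rational type is preserved by Lemma~\ref{L:H-under-unram-extn}), I may assume $\zeta_p \in K$. Fix $x \in K^{(\partial)}$ and set $c_n := \partial^n(x)/n!$, $y_n := c_n u^n$; by rational type (b), $|y_n| \leq |x|$. Writing $\alpha_i := \zeta_p^i - 1$, each of norm $\omega$, the defining conditions $x^{(i)} = x$ for $i = 1,\ldots,p-1$ become $\sum_{n \geq 1} y_n \alpha_i^n = 0$. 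Splitting off the tail,
\[
\sum_{n=1}^{p-1} y_n \alpha_i^n \;=\; -\sum_{n \geq p} y_n \alpha_i^n \qquad (i = 1, \ldots, p-1),
\]
and the right-hand side has absolute value at most $\omega^p|x|$.

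This is a linear system in the unknowns $(y_1,\ldots,y_{p-1})$ with matrix $A=(\alpha_i^n)_{i,n}$ (a diagonal rescaling of a classical Vandermonde) and with right-hand side of sup-norm bounded by $\omega^p|x|$. I would invert $A$ using the classical Vandermonde inverse formula, exploiting the critical observation that every nonzero difference $\alpha_i - \alpha_j = \zeta_p^j(\zeta_p^{i-j} - 1)$ has exactly the norm $\omega$. This produces the entry-wise bound $|(A^{-1})_{n,i}| \leq \omega^{-n}$, and hence
\[
|y_n| \;\leq\; \omega^{p-n}|x| \qquad (n = 1, \ldots, p-1).
\]
Specializing to $n = 1$ and using $\omega^{p-1} = |p|$ gives $|u\partial(x)| \leq |p||x|$, as required.

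The main obstacle is the entry-wise Vandermonde inverse estimate $|(A^{-1})_{n,i}| \leq \omega^{-n}$. Since every entry of $A$ lies on the critical circle $|\cdot| = \omega$, naive estimates do not suggest any such clean geometric decay. The saving grace is the explicit formula $(V^{-1})_{n,i} = \pm\, e_{p-1-n}(\alpha_1,\ldots,\widehat{\alpha_i},\ldots,\alpha_{p-1})\big/\prod_{k\neq i}(\alpha_i-\alpha_k)$: the numerator is bounded by $\omega^{p-1-n}$ (an elementary symmetric of degree $p-1-n$ in $p-2$ variables of norm $\omega$), while the denominator has norm exactly $\omega^{p-2}$, yielding $|(V^{-1})_{n,i}| \leq \omega^{1-n}$ and then $|(A^{-1})_{n,i}| \leq \omega^{-n}$ after accounting for the diagonal rescaling. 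This delicate cancellation is precisely what recovers the sharp factor of $|p| = \omega^{p-1}$ in the final inequality.
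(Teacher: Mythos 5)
Your argument is correct, and it takes a genuinely different route from the paper's. The paper works from the averaging identity $x = \frac{1}{p}\sum_{i=0}^{p-1}x^{(i)}$: after applying $u\partial/p$ and re-expanding, the coefficients involve the character sums $T_n := \sum_{i=0}^{p-1}(\zeta_p^i-1)^n\zeta_p^i$, and the proof hinges on the exact evaluations $T_n = 0$ for $n < p-1$, $T_{p-1} = p$, and $T_n \in p^2\ZZ[\zeta_p]$ for $n \geq p$, combined with the bound $|\partial^n/n!|_K \leq |u|^{-n}$ termwise. You instead use the $p-1$ fixed-point equations $x^{(i)} = x$ as a Vandermonde-type linear system for the first $p-1$ rescaled Taylor coefficients $y_n = u^n\partial^n(x)/n!$, with the tail $\sum_{n\ge p}y_n\alpha_i^n$ as the forcing term; the key estimate $|(A^{-1})_{n,i}|\le\omega^{-n}$ follows from the classical Vandermonde inverse together with the fact that every nonzero $\alpha_i - \alpha_k$ has norm exactly $\omega$. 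Both approaches ultimately exploit the same two inputs — rational type condition (b) and $|\zeta_p^i - 1| = \omega$ — and both reduce to the case $\zeta_p\in K$. The paper's character-sum version is slightly more informative (it identifies the $u^p\partial^p/p!$ leading term explicitly, which feeds naturally into the subsequent Lemma~\ref{L:Kj-satisfies-hypo}), while your version is cleaner as a pure norm estimate and needs only the $n=1$ Taylor coefficient; the trade is that you must carry through the sharp Vandermonde-inverse bound rather than exact identities. Either route is sound.
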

\begin{proof}
We may assume that $K$ contains a primitive $p$-th root of unity $\zeta_p$.  We need only show that $u^p\partial'$ preserves $\gotho_{K^{(\partial)}}$.  
For any $x \in \gotho_{K^{(\partial)}}$, we have
$$
x = \frac 1p (x + x^{(1)} + \cdots + x^{(p-1)}) = \frac 1p \sum_{n =0}^\infty \frac{\partial^n(x)} {n!} u^n \sum_{i = 0}^{p-1}(\zeta_p^i - 1)^n.
$$
Applying $u^p\partial' = u\partial / p$ gives
\begin{eqnarray}
\nonumber u^p\partial'(x) &=& \frac u{p^2} \sum_{n=0}^\infty \left( \frac{\partial^{n+1}(x)} {n!} u^n\sum_{i = 0}^{p-1}(\zeta_p^i - 1)^n + \frac{\partial^n(x)} {(n-1)!} u^{n-1}\sum_{i = 0}^{p-1}(\zeta_p^i - 1)^n\right)\\
\label{E:partial-on-Kj} &=& \frac u{p^2} \sum_{n=0}^\infty \frac{\partial^{n+1}(x)} {n!} u^n \sum_{i = 0}^{p-1} (\zeta_p^i - 1)^n\zeta_p^i.
\end{eqnarray}
The sum $\sum_{i = 0}^{p-1} (\zeta_p^i - 1)^n\zeta_p^i$ equals 0 for
$n=0,\dots,p-2$; it equals $p$ for $n=p-1$; and it is 
a multiple of $p^2$ for any $n \geq p$
(because the quantity belongs both to $\ZZ$ and to the ideal
$(\zeta_p-1)^p$ in $\ZZ[\zeta_p]$).
Hence by \eqref{E:partial-on-Kj}, $u^p \partial'(x)$ equals
$u^p \partial^p(x)/p!$ plus an element of $\gotho_K$,
yielding $u^p \partial'(x) \in \gotho_K \cap K^{(\partial)} = \gotho_{K^{(\partial)}}$.
\end{proof}

\begin{lemma}\label{L:Kj-satisfies-hypo}
The differential operator $\partial'$ on $K^{(\partial)}$ is of rational
type, with parameter $u^p$.
\end{lemma}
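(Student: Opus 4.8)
The plan is to verify the two defining conditions (a) and (b) of Definition~\ref{D:admissible-operator} for $\partial'$ on $K^{(\partial)}$ with the candidate parameter $u^p$. Condition (a) has two parts. First, $\partial'(u^p) = \partial(u^p)/(pu^{p-1}) = (pu^{p-1}\cdot 1)/(pu^{p-1}) = 1$, using $\partial(u)=1$; this is immediate. Second, we need $|\partial'|_{K^{(\partial)}} = |u^p|^{-1} = |u|^{-p}$, which is exactly the content of Lemma~\ref{L:norm of Frob derivative}, already proved. So (a) follows formally from the previous lemma together with a one-line computation.

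For condition (b) I would use the formulation (b$''$) (legitimate here since $p>0$ and (a) already holds): for every polynomial $P \in \QQ_p[T]$ with $P(\ZZ_p)\subseteq \ZZ_p$, we must show $|P(u^p\partial')|_{K^{(\partial)}} \leq 1$. Now $u^p\partial' = u\partial/p$ as operators on $K$, hence also on the subfield $K^{(\partial)}$. So the claim is that $|P(u\partial/p)|$ is bounded by $1$ on $K^{(\partial)}$. The natural approach is to relate $u\partial/p$ acting on $K^{(\partial)}$ to the operator $u\partial$ acting on the full field $K$, where we already control things: since $\partial$ is of rational type on $K$, by (b$''$) for $K$ every $Q\in\QQ_p[T]$ with $Q(\ZZ_p)\subseteq\ZZ_p$ satisfies $|Q(u\partial)|_K \leq 1$. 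The key observation is that on $K^{(\partial)}$ the operator $u\partial$ acts \emph{through multiplication by $p$}: indeed $K^{(\partial)}$ is the fixed field of the $\ZZ/p\ZZ$-action, and one computes (as in Construction~\ref{Cstr:j-frobenius}, via $(u\partial x)^{(i)} = u\partial(x^{(i)})$) that the derivation $u\partial$ maps $K^{(\partial)}$ into itself; more to the point, restricted to $K^{(\partial)}$ the operator $u\partial = p\cdot(u^p\partial')$, so $u\partial/p = u^p\partial'$ genuinely preserves $\gotho_{K^{(\partial)}}$ by Lemma~\ref{L:norm of Frob derivative}. Thus for a \emph{single} binomial coefficient the bound $|\binom{u^p\partial'}{n}|_{K^{(\partial)}} \leq 1$ must be extracted; since the $\binom{T}{n}$ freely generate the $\ZZ_p$-module of such $P$ over $\ZZ_p$, and the Gauss norm is nonarchimedean, it suffices to prove $|\binom{u^p\partial'}{n}|_{K^{(\partial)}} \leq 1$ for each $n$.

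To get this, I would argue as follows. Write $E = u^p\partial' = u\partial/p$ on $K^{(\partial)}$. I want $|\binom{E}{n}|_{K^{(\partial)}} \leq 1$. Consider instead $D = u\partial$ on $K$; then on $K^{(\partial)}$ we have $E = D/p$. The cleanest route is to observe that $K^{(\partial)}$, with $\partial'$ and parameter $u^p$, is itself a complete nonarchimedean differential field of the same residual characteristic $p$, so the equivalence of (b) and (b$''$) applies once (a) is known; hence it is enough to verify the original (b): $|\partial'^n/n!|_{K^{(\partial)}} \leq |\partial'|_{K^{(\partial)}}^n = |u|^{-pn}$, i.e. $|(u^p\partial')^n/n!|_{K^{(\partial)}} \leq 1$. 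But $(u^p\partial')^n = (u\partial/p)^n$, and expanding $(u\partial)^n$ in terms of $u^j\partial^j$ (Stirling-type identity, all coefficients in $\ZZ$), one reduces to bounding $|u^j\partial^j/(p^n)|$ on $K^{(\partial)}$; the precise bookkeeping here is the analogue of the computation in the proof of Lemma~\ref{L:norm of Frob derivative}, where the exact same type of sum $\sum_i(\zeta_p^i-1)^m\zeta_p^i$ and its $p$-adic valuation govern the cancellation of the powers of $p$ in the denominator. I expect \textbf{this last estimate — controlling $|(u\partial)^n/(p^n n!)|$ on the fixed field $K^{(\partial)}$ — to be the main obstacle}, as it is where one must actually use that one is on $K^{(\partial)}$ rather than on all of $K$ (on $K$ the factor $1/p^n$ would be fatal), and it requires the same $p$-adic valuation input about $\ZZ[\zeta_p]$ that appears in Lemma~\ref{L:norm of Frob derivative}; everything else is formal manipulation of the definitions and citation of the preceding two lemmas.
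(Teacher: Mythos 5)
Your verification of condition (a) is correct and matches what the paper does: $\partial'(u^p)=1$ is a one-line computation, and $|\partial'|_{K^{(\partial)}}=|u|^{-p}$ is precisely Lemma~\ref{L:norm of Frob derivative}. The gap is in condition (b), where you correctly identify the target and correctly identify that the difficulty lies in the factor $1/p^n$, but do not actually close the argument.

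Two problems. First, a small but consequential conflation: the quantity you need to bound is $|u^{pn}\partial'^n/n!|_{K^{(\partial)}}$, which is \emph{not} the same operator as $|(u^p\partial')^n/n!|$ — since $\partial'(u^p)=1$, the operator $u^{pn}\partial'^n$ is the \emph{falling factorial} $(u^p\partial')(u^p\partial'-1)\cdots(u^p\partial'-(n-1))$, not the power $(u^p\partial')^n$. Recognizing this is not just pedantry; it is the entry point to the actual proof. Second, and more substantially, your proposed route — expand $(u\partial)^n$ via Stirling numbers and appeal to the $\ZZ[\zeta_p]$ valuation estimate from Lemma~\ref{L:norm of Frob derivative} — does not work. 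The Stirling expansion gives $\sum_j S(n,j)u^j\partial^j$ with integer coefficients, but termwise bounds fail: e.g.\ the $j=n$ term $u^n\partial^n/(p^n n!)$ already has norm $|p|^{-n}>1$ on $K$, so nothing is controlled without global cancellation, and the $\zeta_p$-sum argument of Lemma~\ref{L:norm of Frob derivative} only handles one application of $u\partial/p$, not iterates divided by $n!$.

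The missing idea, which is the heart of the paper's proof, is this. Write
\[
\frac{u^{pn}\partial'^n}{n!}
 = \frac{(u^p\partial')(u^p\partial'-1)\cdots(u^p\partial'-(n-1))}{n!}
 = \frac{(u\partial)(u\partial - p)\cdots(u\partial-(n-1)p)}{p^n\,n!}
\]
as an operator on $K^{(\partial)}$. Now observe, as a corollary of Lemma~\ref{L:norm of Frob derivative}, that $|u\partial(x)|=|p\,u^p\partial'(x)|\le |p|\,|x|<|x|$ for $x\in K^{(\partial)}$; hence for any integer $j$ coprime to $p$, the operator $u\partial - j$ is an \emph{isometry} on $K^{(\partial)}$ (and it also preserves $K^{(\partial)}$). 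Since all the operators $u\partial-j$ commute, one may insert the missing factors $u\partial - j$ for $j\in\{0,\dots,np-1\}$, $p\nmid j$, without changing the norm, obtaining
\[
\left|\frac{u^{pn}\partial'^n}{n!}(x)\right|
 = \left|\frac{(u\partial)(u\partial-1)\cdots(u\partial-(np-1))}{p^n\,n!}(x)\right|
 = \left|\frac{u^{np}\partial^{np}}{p^n\,n!}(x)\right|
 = \left|\frac{u^{np}\partial^{np}}{(np)!}(x)\right|,
\]
the last step using $v_p(p^n n!)=v_p\bigl((np)!\bigr)$. This is $\le |x|$ by rational type of $\partial$ on $K$. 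Without the isometry-of-$(u\partial - j)$ observation, there is no way to trade the $1/p^n$ for the correct factorial, and your sketch does not supply it.
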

\begin{proof}
Write
\begin{eqnarray*}
\frac {u^{pn}\partial'^n}{n!}(x) &=& \frac {(u^p\partial')(u^p \partial'-1) \cdots (u^p \partial' - (n-1))}{n!}(x) \\
&=& \frac {(u\partial)(u \partial - p) \cdots (u \partial - (n-1)p)}{n! \cdot p^n}(x)
\end{eqnarray*}
As a corollary of Lemma~\ref{L:norm of Frob derivative},
for any element $x \in K^{(\partial)}$ and $i \in \ZZ \setminus p\ZZ$, $|(u \partial - i)(x)| = |x|$.  Since $u\partial$ fixes $K^{(\partial)}$,
applying differential operators $u\partial - i$ for $i \in \ZZ \bs p\ZZ$ to the result will not change the norm, so
$$
\left|\frac {u^{pn}\partial'^n}{n!}(x) \right| = \left| \frac {(u\partial)(u \partial - 1) \cdots (u \partial - (np-1))}{n! \cdot p^n}(x) \right| = \left| \frac {u^{np}\partial^{np}} {(np)!} (x) \right|.
$$
The statement follows.
\end{proof}

\begin{defn}
Given a $\partial'$-differential module $V'$ over $K^{(\partial)}$, 
we may view $\varphi^{(\partial)*} V' = V' \otimes_{K^{(\partial)}} K$ 
as a $\partial$-differential module over $K$ by setting
$$
\partial (\bbv' \otimes x) = pu^{p-1} \partial'(\bbv') \otimes x + \bbv' \otimes \partial(x) \qquad (\bbv' \in V', x \in K).
$$
\end{defn}

\begin{lemma}\label{L:Frob-pull-IR-ineq}
Let $V'$ be a $\partial'$-differential module over $K^{(\partial)}$.  Then
$$
IR_\partial(\varphi^{(\partial)*} V') \geq \min \{IR_{\partial'}(V')^{1/p}, p\, IR_{\partial'}(V')\}.
$$
\end{lemma}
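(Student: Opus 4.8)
The plan is to decompose the Frobenius pullback $V := \varphi^{(\partial)*}V'$ into a direct sum indexed by powers of $u$, to recognize on each summand a twisted Euler operator of the shape $pF+i$, and thereby to reduce the claim to a $p$-adic growth estimate for the coefficients of the polynomials $\bigl((1+t)^p-1\bigr)^m$. First I would reduce to the case $\zeta_p\in K$ (as in the proof of Lemma~\ref{L:norm of Frob derivative}: the relevant base change is tamely ramified, hence by Lemma~\ref{L:H-under-unram-extn} and Remark~\ref{R:base-change-sp-norm}, together with $|\partial|_{\sp,W}\ge|\partial|_{\sp,K}$, it affects neither side of the asserted inequality). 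Then $K=\bigoplus_{i=0}^{p-1}K^{(\partial)}u^i$ with $u^p\in K^{(\partial)}$, so $V=\bigoplus_{i=0}^{p-1}V'\otimes u^i$. Setting $E:=u\partial$ (acting on $V$; it restricts to the derivation $p\cdot(u^p\partial')$ on $K^{(\partial)}$) and $F:=u^p\partial'$ (acting on $V'$), a short computation using $\partial|_{K^{(\partial)}}=pu^{p-1}\partial'$, $\partial(u^i)=iu^{i-1}$ and $u^p\in K^{(\partial)}$ shows that $E$ preserves each summand $V'\otimes u^i$, acting there as $G_i:=pF+i$. Using the Euler-operator identities $u^n\partial^n/n!=\binom{E}{n}$ and $u^{pm}\partial'^m/m!=\binom{F}{m}$ (with $\binom{A}{k}:=A(A-1)\cdots(A-k+1)/k!$; compare the display in the proof of Lemma~\ref{L:Kj-satisfies-hypo}), the fact that $R_\partial(W)$ is the radius of convergence of the generic Taylor series, so that $R_\partial(W)^{-1}=\limsup_n|\partial^n/n!|_W^{1/n}$ (cf.\ Proposition~\ref{P:generic-point}), and the boundedness of the projections $V\to V'\otimes u^i$ (these are $K^{(\partial)}$-linear maps between finite-dimensional $K^{(\partial)}$-vector spaces), one reduces to showing, with $\sigma:=IR_{\partial'}(V')^{-1}=\limsup_m|\binom{F}{m}|_{V'}^{1/m}\ge 1$, that
\[
\limsup_n\bigl|\tbinom{G_i}{n}\bigr|_{V'}^{1/n}\ \le\ \max\{\sigma^{1/p},\ \sigma/p\}\qquad(0\le i\le p-1);
\]
indeed this yields $IR_\partial(V)^{-1}=\max_i\limsup_n|\binom{G_i}{n}|_{V'}^{1/n}\le\max\{\sigma^{1/p},\sigma/p\}$, i.e.\ $IR_\partial(V)\ge\min\{IR_{\partial'}(V')^{1/p},\,p\,IR_{\partial'}(V')\}$.

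To establish the displayed estimate I would expand $\binom{G_i}{n}=\binom{pF+i}{n}=\sum_{m\ge 0}c_{n,m}\binom{F}{m}$, where the integers $c_{n,m}$ are read off from the generating-function identity $\sum_n c_{n,m}t^n=(1+t)^i\bigl((1+t)^p-1\bigr)^m$. From the factorization $\bigl((1+t)^p-1\bigr)/t=t^{p-1}+p\,\tilde g(t)$ with $\tilde g\in\ZZ_p[t]$, $\deg\tilde g=p-2$, $\tilde g(0)=1$, one gets $\bigl((1+t)^p-1\bigr)^m=\sum_{l=0}^m\binom{m}{l}p^l\,t^{pm-(p-1)l}\tilde g(t)^l$, so $[t^N]\bigl((1+t)^p-1\bigr)^m$ vanishes unless $m\le N\le pm$ and has $p$-adic valuation at least $\lceil(pm-N)/(p-1)\rceil$ when $pm\ge N$. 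Folding in the degree-$\le i$ factor $(1+t)^i$, one finds $c_{n,m}=0$ for $pm<n-i$, $|c_{n,m}|\le 1$ in general, and $v_p(c_{n,m})\ge(pm-n)/(p-1)$ whenever $pm\ge n$; hence $|\binom{G_i}{n}|_{V'}\le\max_{m:\ pm\ge n-i}p^{-\max\{0,\,(pm-n)/(p-1)\}}|\binom{F}{m}|_{V'}$. It then remains to optimize the right-hand side: fixing $\epsilon>0$ and using $|\binom{F}{m}|_{V'}\le(\sigma+\epsilon)^m$ for $m$ large, the logarithm of the term indexed by $m\in[n/p,n]$ is the affine function $\frac{n}{p-1}\log p+m\bigl(\log(\sigma+\epsilon)-\frac{p}{p-1}\log p\bigr)$, whose maximum over that interval is $\frac{n}{p}\log(\sigma+\epsilon)$ at $m=n/p$ if $\sigma+\epsilon\le p^{p/(p-1)}$ and $n\log\bigl((\sigma+\epsilon)/p\bigr)$ at $m=n$ otherwise, while the terms with $m<n/p$ contribute at most $(\sigma+\epsilon)^{n/p}$ (absorbing a bounded factor from small $m$) and those with $m>n$ vanish. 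Taking $n$-th roots and letting $\epsilon\to 0$ finishes it.

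I expect the main obstacle to be the $p$-adic valuation estimate $v_p(c_{n,m})\ge(pm-n)/(p-1)$ of the second step: this is the point at which the special arithmetic of the prime $p$ genuinely enters, and it is the abstract analogue of the classical control of Frobenius pullbacks on generic radii of convergence due to Christol and Dwork (cf.\ \cite[Chapter~10]{kedlaya-course}). The remaining ingredients are formal: the Euler-operator identities, the radius-of-convergence interpretation of $R_\partial$, the one-variable optimization, and the routine check that $|\binom{E}{n}|_V$ equals, up to a constant independent of $n$, the maximum over $i$ of $|\binom{G_i}{n}|_{V'}$, which follows from the boundedness of the projections onto the summands $V'\otimes u^i$.
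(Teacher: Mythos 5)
Your proposal is correct, but it takes a genuinely different route from the paper's own argument, and the comparison is worth recording.

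The paper proves the lemma \emph{geometrically}, via generic discs: it constructs the map $\tilde\varphi^{(\partial)*}\colon K^{(\partial)}\llbracket T'/u^p\rrbracket_0 \to K\llbracket T/u\rrbracket_0$ sending $T'\mapsto(u+T)^p-u^p$, verifies (by a formal binomial computation) that it fits into a commuting square with the two Taylor-series maps $f'^*_\gen$, $f^*_\gen$ and $\varphi^{(\partial)*}$, and then deduces the inequality directly from Dwork's transfer theorem (Proposition~\ref{P:generic-point}) together with the one-line nonarchimedean estimate $|u^p-T^p|\le\max\{r^p|u|^p,\,p^{-1}r|u|^p\}$ (\cite[Lemma~10.2.2]{kedlaya-course}). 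All of the $p$-adic arithmetic is packaged into that single estimate. Your argument is instead \emph{algebraic}: you decompose $\varphi^{(\partial)*}V'=\bigoplus_{i=0}^{p-1}V'\otimes u^i$, observe that the Euler operator $E=u\partial$ acts on the $i$-th block as $pF+i$ with $F=u^p\partial'$, pass to the spectral characterization $IR_\partial^{-1}=\limsup_n|\binom{E}{n}|^{1/n}$, and then carry out the binomial generating-function expansion $\binom{pF+i}{n}=\sum_m c_{n,m}\binom{F}{m}$ with $\sum_n c_{n,m}t^n=(1+t)^i\bigl((1+t)^p-1\bigr)^m$ together with the valuation bound $v_p(c_{n,m})\ge(pm-n)/(p-1)$ and a one-variable optimization. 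The underlying arithmetic input is the same in both proofs — the factorization $(1+t)^p-1=t(t^{p-1}+p\,\tilde g(t))$ — but the framework around it is different. The paper's version is shorter once the generic-disc machinery is in place and makes the ``geometry'' of the Frobenius transparent, which matches the surrounding development (Propositions~\ref{P:generic-point}, the antecedent theorem, etc.). Yours is more self-contained and exposes exactly which coefficient estimate drives the bound, at the cost of more bookkeeping: you need the decomposition into $u^i$-eigenblocks (valid after the tame base change to $\zeta_p\in K$, correctly justified via Lemma~\ref{L:H-under-unram-extn} and Remark~\ref{R:base-change-sp-norm}), the comparability of the $K$-compatible norm on $V$ with the max of the $K^{(\partial)}$-compatible norms on the blocks, and the asymptotic $|n!|^{1/n}\to\omega$ to pass between $|\partial|_{\sp}$ and $\limsup|\binom{E}{n}|^{1/n}$. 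All of these details check out.

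One small point worth flagging for precision: when you assert that $|\binom{E}{n}|_V$ agrees, up to a constant independent of $n$, with $\max_i|\binom{G_i}{n}|_{V'}$, the cleanest justification is that a $K$-compatible norm on $V$ is \emph{a fortiori} $K^{(\partial)}$-compatible, hence equivalent (by \cite[Proposition~4.13]{schneider}, as cited in Definition~\ref{D:spectral-norms}) to the max-norm on the $K^{(\partial)}$-module decomposition $\bigoplus_i V'\otimes u^i$; since $\binom{E}{n}$ is $K^{(\partial)}$-linear and block-diagonal for this decomposition, the operator norms compare as you claim. This is routine but should not be left entirely implicit.
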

\begin{proof}
This is essentially \cite[Lemma~10.3.2]{kedlaya-course}.
Consider the diagram
$$
\xymatrix{
K^{(\partial)} \ar[rr]^-{f'^*_\gen} \ar[d]^{\varphi^{(\partial)*}} && 
K^{(\partial)} \llbracket T' / u^p \rrbracket_0 \ar[d]^{\tilde \varphi^{(\partial)*}} \\
K \ar[rr]^-{f^*_\gen} && K \llbracket T / u \rrbracket_0
}
$$
where $\tilde \varphi^{(\partial)*}$ is a $K^{(\partial)}$-homomorphism extending $\varphi^{(\partial)*}$ by $\tilde \varphi^{(\partial)*} (T') = (u + T)^p - u^p$.  The diagram commutes because formally
\begin{eqnarray*}
(\tilde{\varphi}^{(\partial)*} \circ f'^*_\gen)(x) &=& \tilde{\varphi}^{(\partial)*} \left(
\sum_{n=0}^\infty \binom{u^p\partial'}{n} (x) \left(\frac {T'}{u^p}\right)^n
\right) = \sum_{n=0}^\infty \binom{u\partial/p}n (\varphi^{(\partial)*}(x)) \left(\left( 1+ \frac Tu \right)^p -1 \right)^n \\
&=& \left(1+ \left( 1+ \frac Tu \right)^p -1\right)^{u\partial/p}(\varphi^{(\partial)*}(x)) = \left(\left(1+\frac Tu\right)^p\right)^{u\partial/p}(\varphi^{(\partial)*}(x)) \\
&=& \left(1+ \frac Tu\right)^{u\partial}(\varphi^{(\partial)*}(x))
= \sum_{n=0}^\infty \binom{u\partial}{n} (\varphi^{(\partial)*}(x)) \left(\frac Tu \right)^n = (f^*_\gen \circ \varphi^{(\partial)*})(x).
\end{eqnarray*}
For $x \in K^{(\partial)}$, all of the series in this formal equation
converge, and we obtain correct equalities.

For $r' \in [0,1)$, set $r = \min \{ (r')^{1/p}, pr'\}$, or equivalently, $r' = \max \{ r^p, p^{-1}r\}$.  By Proposition~\ref{P:generic-point},
\begin{eqnarray*}
&& R_\partial(V') \geq r'|u|^p \\
&\LRar& f'^*_\gen V' \textrm{ is a trivial $\partial_{T'}$-differential module over } A_{K^{(\partial)}} [0, r'|u|^p)\\
&\Rar& \tilde{\varphi}^{(\partial)*} f'^*_\gen V' =
f^*_\gen \varphi^{(\partial)*} V'
\textrm{ is a trivial $\partial_T$-differential module over } A_K [0, r |u|)\\
&\LRar& R_\partial(\varphi^{(\partial)*}V') \geq r|u|.
\end{eqnarray*}
where the second implication is a direct corollary of the lemma below.  The statement follows.
\end{proof}

\begin{lemma}
\cite[Lemma~10.2.2]{kedlaya-course} Let $K$ be a nonarchimedean field. 
For $u,T \in K$ and $r \in (0,1)$, if $|u - T| < r |u|$, then
$$
|u^p - T^p| \leq \max \{ r^p|u|^p, p^{-1}r|u|^p \}.
$$
\end{lemma}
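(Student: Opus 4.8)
The plan is to expand $T^{p}$ around $u$ and bound the binomial expansion term by term via the ultrametric inequality. Set $w = T - u$, so that the hypothesis reads $|w| < r|u|$; we may assume $u \neq 0$, since otherwise $r|u| = 0$, the hypothesis cannot hold, and there is nothing to prove. By the binomial theorem,
\[
T^{p} - u^{p} = (u+w)^{p} - u^{p} = \sum_{i=1}^{p} \binom{p}{i} u^{p-i} w^{i},
\]
so by the ultrametric inequality it suffices to bound $\bigl|\binom{p}{i}\bigr| \cdot |u|^{p-i} |w|^{i}$ by $\max\{r^{p}|u|^{p},\, p^{-1}r|u|^{p}\}$ for each $i \in \{1,\dots,p\}$.

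I would then split into two cases according to the value of $i$. For $i = p$, the term equals $|w|^{p} < r^{p}|u|^{p}$, using the hypothesis and multiplicativity of $|\cdot|$. For $1 \leq i \leq p-1$, the prime $p$ divides $\binom{p}{i}$, so $\bigl|\binom{p}{i}\bigr| \leq |p| = p^{-1}$ by our normalization of the norm; since $r \in (0,1)$ and $i \geq 1$ we have $|w|^{i} < r^{i}|u|^{i} \leq r|u|^{i}$, and hence the term is bounded by $p^{-1}r|u|^{p}$. Taking the maximum over $i \in \{1,\dots,p\}$ yields the asserted inequality (in fact with strict inequality, though $\leq$ is all that is claimed).

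The computation is completely elementary, so there is no genuine obstacle here; the only points that warrant a moment's care are the normalization $|p| = p^{-1}$, the divisibility $p \mid \binom{p}{i}$ for $1 \leq i \leq p-1$, and the harmless remark that $u = 0$ is excluded by the hypothesis. One can also read off from the case split why the conclusion takes exactly this shape: the first term $r^{p}|u|^{p}$ is the contribution of the pure power $w^{p}$, while the second term $p^{-1}r|u|^{p}$ collects all the mixed terms $\binom{p}{i}u^{p-i}w^{i}$ with $1 \leq i \leq p-1$, whose binomial coefficients each supply the extra factor $|p| = p^{-1}$.
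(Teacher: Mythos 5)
Your proof is correct, and it is essentially the standard argument for this lemma (the paper cites it from \cite[Lemma~10.2.2]{kedlaya-course} without reproducing a proof; the binomial expansion is the same one used there). The three points you flag for care—the normalization $|p|=p^{-1}$, the divisibility $p\mid\binom{p}{i}$ for $1\le i\le p-1$, and the exclusion of $u=0$—are exactly the relevant ones, and your bound on the mixed terms (using $|w|^i<r^i|u|^i\le r|u|^i$ for $i\ge 1$ together with $|\binom{p}{i}|\le p^{-1}$) correctly accounts for the shape of the right-hand side.
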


\begin{defn}
For a $\partial$-differential module $V$ over $K$, define the \emph{$\partial$-Frobenius descendant} of $V$ as the $K^{(\partial)}$-module
$\varphi^{(\partial)}_* V$ obtained from $V$ by restriction along $\varphi^{(\partial)*}: K^{(\partial)} \rar K$, viewed as a $\partial'$-differential module over $K^{(\partial)}$ with differential $\partial' = \frac 1{pu^{p-1}} \partial$.  Note that this operation commutes with duals.
\end{defn}

\begin{defn}
For $n = \seriezero{}p-1$, let $W_n^{(\partial)}$ be the $\partial'$-differential module over $K^{(\partial)}$ with one generator $\bbv$, such that
$$
\partial'(\bbv) = \frac np u^{-p} \bbv.
$$
{}From the Newton polynomial associated to $\bbv$, we read off $IR_{\partial'}(W_n^{(\partial)}) = p^{-p/(p-1)}$ for $n \neq 0$.  (One may view the generator $\bbv$ as a proxy for $u^n$.)
\end{defn}

\begin{lemma}\label{L:Frob-pullback-vs-desc}
We have the following relations between $\partial$-Frobenius pullbacks and $\partial$-Frobenius descendants.
\begin{enumerate}
\item[(a)]
For $V$ a $\partial$-differential module over $K$, there are canonical isomorphisms
$$
\iota_n : (\varphi^{(\partial)}_* V) \otimes W_n^{(\partial)} \simeq \varphi^{(\partial)}_* V \qquad (n = \seriezero{}p-1).
$$
\item[(b)]
For $V$ a $\partial$-differential module over $K$, a submodule $U$ of $\varphi^{(\partial)}_* V$ is itself the $\partial$-Frobenius descendant of a submodule of $V$ if and only if $\iota_n(U \otimes W_n^{(\partial)}) = U$ for $n = \seriezero{}p-1$.
\item[(c)]
For $V$ a $\partial$-differential module over $K$, there is a canonical isomorphism
$$
\varphi^{(\partial)*}\varphi^{(\partial)}_* V \simeq V^{\oplus p}.
$$
\item[(d)]
For $V'$ a $\partial'$-differential module over $K^{(\partial)}$, there is a canonical isomorphism
$$
\varphi^{(\partial)}_* \varphi^{(\partial)*} V' \simeq \bigoplus_{n = 0}^{p-1} (V' \otimes W_n^{(\partial)}).
$$
\item[(e)]
For $V_1$, $V_2$ $\partial$-differential modules over $K$, there is a canonical isomorphism
$$
\varphi^{(\partial)}_* V_1 \otimes \varphi^{(\partial)}_* V_2 \simeq \bigoplus_{n = 0}^{p-1} W_n^{(\partial)} \otimes \varphi^{(\partial)}_* (V_1 \otimes V_2).
$$
\item[(f)]
For $V$ a $\partial$-differential module over $K$, there are canonical bijections
$$
H^i_{\partial}(V) \simeq H^i_{\partial'}(\varphi^{(\partial)}_* V) \qquad  (i = 0, 1).
$$
\end{enumerate}
\end{lemma}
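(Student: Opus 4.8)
The plan is to deduce all six statements from two ingredients: the cyclic degree-$p$ Galois structure of $K/K^{(\partial)}$ supplied by Construction~\ref{Cstr:j-frobenius}, and the decomposition of $\varphi^{(\partial)}_* K$ as a $\partial'$-module over $K^{(\partial)}$. For the latter, note that $1,u,\dots,u^{p-1}$ form a $K^{(\partial)}$-basis of $K$ and that $\partial'(u^n)=\tfrac{1}{pu^{p-1}}\partial(u^n)=\tfrac np u^{-p}\cdot u^n$, so each $K^{(\partial)}u^n$ is a $\partial'$-submodule of $\varphi^{(\partial)}_* K$ isomorphic to $W_n^{(\partial)}$ with $u^n$ as a generator, giving
\[
\varphi^{(\partial)}_* K \;\simeq\; \bigoplus_{n=0}^{p-1} W_n^{(\partial)};
\]
I will call $u^n$ a \emph{proxy} for $W_n^{(\partial)}$. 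I would also first record the projection formula: for $N$ a $\partial$-differential module over $K$ and $M'$ a $\partial'$-differential module over $K^{(\partial)}$, there is a canonical isomorphism $\varphi^{(\partial)}_*(N \otimes_K \varphi^{(\partial)*}M') \simeq (\varphi^{(\partial)}_* N) \otimes_{K^{(\partial)}} M'$ of $\partial'$-modules over $K^{(\partial)}$; this is clear at the level of modules, and one checks compatibility with the differentials directly against the defining formula for $\partial$ on a pullback, the factors $pu^{p-1}$ cancelling.

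Granting these, five of the six claims are short. For (a) I would take $\iota_n$ to be the $K^{(\partial)}$-linear map $\bbw \otimes \bbv \mapsto u^n\bbw$, where $\bbv$ is the distinguished generator of $W_n^{(\partial)}$ and $u^n\bbw$ uses the ambient $K$-action on $V$; it is bijective since $u^n \in K^\times$ (plus a dimension count), natural in $V$, and the same Leibniz computation that gave the decomposition above shows it is horizontal. For (d), apply the projection formula with $N = K$, $M' = V'$ and substitute $\varphi^{(\partial)}_* K \simeq \bigoplus_n W_n^{(\partial)}$. For (f): on the underlying group $\varphi^{(\partial)}_* V = V$ the operators $\partial$ and $\partial' = \tfrac{1}{pu^{p-1}}\partial$ differ by the unit $pu^{p-1} \in K^\times$, so they have the same kernel (giving $H^0$ on the nose) and images differing by that unit (so multiplication by $pu^{p-1}$ induces the isomorphism on $H^1$), and both are linear over the common field of constants $\ker\partial = \ker\partial'$. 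For (b): a $\partial'$-submodule $U$ of $\varphi^{(\partial)}_* V$ is a $\partial$-Frobenius descendant exactly when its underlying $K^{(\partial)}$-subspace of $V$ is actually a $K$-subspace --- if so it is automatically $\partial$-stable since $\partial = pu^{p-1}\partial'$ and a $K$-subspace absorbs $pu^{p-1}$, and conversely descendants are $K$-subspaces --- and being a $K$-subspace means being stable under multiplication by $u$, equivalently by $u^n$ for $n = 1,\dots,p-1$ (using $u^p \in K^{(\partial)}$ to get the reverse inclusions); under the description of $\iota_n$ from (a) this is precisely $\iota_n(U \otimes W_n^{(\partial)}) = U$ for all $n$.

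The main work is (c), from which (e) will follow. For (c) I would use the $K$-algebra isomorphism $K \otimes_{K^{(\partial)}} K \simeq K^{\oplus p}$ coming from the Galois structure and check that, with the pullback connection on the left and the trivial rank-$p$ connection on the right, it is an isomorphism of $\partial$-modules over $K$; then $\varphi^{(\partial)*}\varphi^{(\partial)}_* V = V \otimes_{K^{(\partial)}} K$ gets identified with $V^{\oplus p}$. The delicate point is that the Galois action is not $\partial$-linear --- using $\sigma(u) = \zeta_p u$ (when $\zeta_p\in K$; otherwise one descends along $K(\zeta_p)$ first) and $\partial(u)=1$ one sees how $\sigma$ intertwines $\partial$ up to roots of unity --- while the pullback formula carries the compensating factor $pu^{p-1}$, and one must check these twists cancel. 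Concretely I would reduce to $V = K$, where $\varphi^{(\partial)*}\varphi^{(\partial)}_* K \simeq \bigoplus_n \varphi^{(\partial)*}W_n^{(\partial)}$ by the decomposition above and each $\varphi^{(\partial)*}W_n^{(\partial)}$ is free of rank one on a generator $\bbv_n$ with $\partial(\bbv_n) = nu^{-1}\bbv_n$, hence trivial with horizontal generator $u^{-n}\bbv_n$, so $\varphi^{(\partial)*}\varphi^{(\partial)}_* K$ is trivial of rank $p$; then propagate to general $V$. This propagation --- keeping track of the $pu^{p-1}$-twist in the pullback connection --- is the step I expect to be the main obstacle. Finally (e): the projection formula with $N = V_1$, $M' = \varphi^{(\partial)}_* V_2$ gives $\varphi^{(\partial)}_* V_1 \otimes_{K^{(\partial)}} \varphi^{(\partial)}_* V_2 \simeq \varphi^{(\partial)}_*\bigl(V_1 \otimes_K \varphi^{(\partial)*}\varphi^{(\partial)}_* V_2\bigr)$; rewriting the inner term via (c) as $(V_1 \otimes_K V_2)^{\oplus p}$ and applying (a) yields $\bigoplus_{n=0}^{p-1} W_n^{(\partial)} \otimes \varphi^{(\partial)}_*(V_1 \otimes V_2)$.
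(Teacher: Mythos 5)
Your proof is correct, and since the paper dismisses this lemma with ``Straightforward,'' yours is a genuine fill-in of the details rather than a route that can be usefully compared with the authors'. All six parts check out: the decomposition $\varphi^{(\partial)}_* K \simeq \bigoplus_n W_n^{(\partial)}$ with $u^n$ as proxy for $W_n^{(\partial)}$, the projection formula, the identification of $\iota_n$ as $\bbw \otimes \bbv \mapsto u^n\bbw$, the characterization in (b) of descendants as $K$-subspaces via $u^n U = U$, the $V' = K$ case of (c) via $\varphi^{(\partial)*}W_n^{(\partial)}$ being trivial with horizontal generator $\bbv\otimes u^{-n}$, and the derivation of (e) from (c) and (a) by way of the projection formula.

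The one place I'd push back is your assessment that the ``propagation'' in (c) is the main obstacle, on account of the $pu^{p-1}$-twist. In fact the twist cancels automatically before you even introduce $V$: computing directly from the definitions, on $\varphi^{(\partial)*}\varphi^{(\partial)}_*V = V \otimes_{K^{(\partial)}} K$ the connection is
\[
\partial(\bbv \otimes x) \;=\; pu^{p-1}\,\bigl(\tfrac{1}{pu^{p-1}}\partial\bbv\bigr)\otimes x \;+\; \bbv\otimes\partial x \;=\; \partial\bbv\otimes x + \bbv\otimes\partial x,
\]
the plain diagonal connection (with $K$-module structure from the right factor). Once one sees this, the associativity isomorphism $V\otimes_{K^{(\partial)}}K \cong V\otimes_K(K\otimes_{K^{(\partial)}}K)$ is manifestly horizontal, and (c) for general $V$ follows immediately from the case $V=K$ (which you already did explicitly, and which also sidesteps the Galois/$\zeta_p$ discussion entirely). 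So (c) is in fact no harder than the others. I would also spell out in (b) that $u^n U \subseteq U$ for $n=1,\dots,p-1$ already forces equality, since $u^p U = U$ (as $u^p$ is a unit of $K^{(\partial)}$) makes all inclusions in the chain $u^p U \subseteq u^{p-1}U \subseteq\cdots\subseteq U$ collapse; you say this in passing but it is worth stating. Finally in (f) one might note that multiplication by $pu^{p-1}$ on $V$ is $K$-linear and hence $\Ker\partial$-linear, so the identification on $H^1$ is not merely a bijection of sets but of modules over the constant field $\Ker\partial = \Ker\partial'$.
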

\begin{proof}
Straightforward.
\end{proof}

\begin{defn}
Let $V$ be a $\partial$-differential module over $K$ such that $IR_\partial(V) > p^{-1/(p-1)}$. A \emph{$\partial$-Frobenius antecedent} of $V$ is a $\partial'$-differential module $V'$ over $K^{(\partial)}$ such that $V \simeq \varphi^{(\partial)*}V'$ and $IR_{\partial'}(V') > p^{-p/(p-1)}$.
\end{defn}

\begin{prop}[Christol-Dwork]
\label{P:CD-Frob-ante}
Let $V$ be a $\partial$-differential module over $K$ such that $IR_\partial(V) > p^{-1/(p-1)}$. Then there exists a unique $\partial$-Frobenius antecedent $V'$ of $V$.  Moreover, $IR_{\partial'}(V') = IR_\partial(V)^p$.
\end{prop}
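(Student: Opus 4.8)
Throughout write $\omega=p^{-1/(p-1)}$ and $D=u\partial$. The plan is: build a candidate antecedent by Galois descent, compute its intrinsic radius by comparing $\partial$- and $\partial'$-Taylor expansions across the degree-$p$ Frobenius of generic discs, and prove uniqueness cohomologically via the Frobenius descendant. Since $K(\zeta_p)/K$ is tamely ramified of degree prime to $p$, Lemma~\ref{L:H-under-unram-extn} keeps $\partial$ of rational type with the same $u$, Remark~\ref{R:base-change-sp-norm} preserves all intrinsic radii, and an antecedent over $K(\zeta_p)$ descends to one over $K$ along $\Gal(K(\zeta_p)/K)$; so I may assume $\zeta_p\in K$ and set $G=\Gal(K/K^{(\partial)})=\ZZ/p\ZZ$ as in Construction~\ref{Cstr:j-frobenius}. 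The hypothesis $IR_\partial(V)>\omega$ is exactly what makes the $\partial$-Taylor operators $\psi_i(\bbv)=\TT(\bbv;\partial,(\zeta_p^i-1)u)=\sum_n\binom{D}{n}(\bbv)(\zeta_p^i-1)^n$ converge, since $|(\zeta_p^i-1)u|=\omega|u|<R_\partial(V)$. One checks (as in Construction~\ref{Cstr:j-frobenius}) that the $\psi_i$ are semilinear over the $g_i$-action on $K$, satisfy $\psi_i\psi_j=\psi_{i+j}$, and commute with $D$ (they do not commute with $\partial$, but $\partial\psi_i=\zeta_p^i\psi_i\partial$ while $g_i(u)=\zeta_p^iu$, and the twists cancel). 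Hence $V':=V^G$ is a finite-dimensional $K^{(\partial)}$-space with $V'\otimes_{K^{(\partial)}}K\cong V$; since $D$ preserves $V'$ and $pu^p\in K^{(\partial)}$, the operator $\partial'=\tfrac{1}{pu^{p-1}}\partial=\tfrac{1}{pu^p}D$ preserves $V'$ and makes it a $\partial'$-differential module with $\varphi^{(\partial)*}V'\cong V$.

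\emph{Radius.} First, $IR_{\partial'}(V')\leq IR_\partial(V)^p$: otherwise $IR_{\partial'}(V')>IR_\partial(V)^p>\omega^p$, so $\min\{IR_{\partial'}(V')^{1/p},\,p\,IR_{\partial'}(V')\}=IR_{\partial'}(V')^{1/p}$, and Lemma~\ref{L:Frob-pull-IR-ineq} applied to $V'$ gives $IR_\partial(V)\geq IR_{\partial'}(V')^{1/p}>IR_\partial(V)$, absurd. For the reverse inequality, fix $\bbv'\in V'$ and consider $F(X)=\TT(\bbv';\partial,uX)=\sum_n\binom{D}{n}(\bbv')X^n$, a $V$-valued series converging for $|X|<IR_\partial(V)$. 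Using $\psi_j(\bbv')=\bbv'$ together with the identities $\binom{m}{k}\binom{x}{m}=\binom{x}{k}\binom{x-k}{m-k}$ and $(1+Z)^{D-k}=(1+Z)^{-k}(1+Z)^{D}$ (binomial expansions of the operator $(1+Z)^{D}:=\sum_n\binom{D}{n}(-)Z^n$), I would verify that $F$ is invariant under the deck transformations $X\mapsto\zeta_p^j(1+X)-1$ of the map $X\mapsto Y:=(1+X)^p-1$; since $\zeta_p\in K$ this forces $F$ to be a power series $G(Y)$, and the formal identity $(1+X)^{D}=\bigl((1+X)^p\bigr)^{D/p}$ (with $D/p=u^p\partial'$) identifies $G(Y)=\sum_n\binom{u^p\partial'}{n}(\bbv')Y^n$. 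Because $IR_\partial(V)>\omega$ one has $|(1+X)^p-1|=\max\{p^{-1}|X|,|X|^p\}$ with the second term dominant for $|X|$ near $IR_\partial(V)$, so $X\mapsto Y$ exhibits $\{|X|<IR_\partial(V)\}$ as a finite flat degree-$p$ cover of $\{|Y|<IR_\partial(V)^p\}$; hence $G$, and therefore $\sum_n\binom{u^p\partial'}{n}(\bbv')Y^n$, converges for $|Y|<IR_\partial(V)^p$. Running this over a basis of $V'$ and using $|\binom{u^p\partial'}{k}|_{K^{(\partial)}}\leq1$ (rational type of $\partial'$) to pass from basis vectors to the operator norm yields $IR_{\partial'}(V')\geq IR_\partial(V)^p$. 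Thus $IR_{\partial'}(V')=IR_\partial(V)^p$, which exceeds $\omega^p$, so $V'$ is a genuine $\partial$-Frobenius antecedent.

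\emph{Uniqueness.} If $V_1',V_2'$ are both antecedents, put $\calH=\Hom_{\partial'}(V_1',V_2')=(V_1')^\dual\otimes V_2'$ over $K^{(\partial)}$, so $\varphi^{(\partial)*}\calH\cong\Hom_\partial(V,V)$. By Lemma~\ref{L:Frob-pullback-vs-desc}(f) and~(d), $H^0_\partial(\varphi^{(\partial)*}\calH)\cong H^0_{\partial'}(\varphi^{(\partial)}_*\varphi^{(\partial)*}\calH)\cong\bigoplus_{n=0}^{p-1}H^0_{\partial'}(\calH\otimes W_n^{(\partial)})$. Since $IR_{\partial'}(V_1'),IR_{\partial'}(V_2')>\omega^p$, all subsidiary radii of $\calH$ exceed $\omega^p$, so by Lemma~\ref{L:basic-IR-prop} the module $\calH\otimes W_n^{(\partial)}$ has all subsidiary radii equal to $\omega^p$ for $n\neq0$, hence no trivial submodule and $H^0_{\partial'}(\calH\otimes W_n^{(\partial)})=0$; therefore $\Hom_\partial(V,V)=H^0_{\partial'}(\calH)=\Hom_{\partial'}(V_1',V_2')$. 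The composite isomorphism $\varphi^{(\partial)*}V_1'\cong V\cong\varphi^{(\partial)*}V_2'$ is thus $\varphi^{(\partial)*}$ of some $f\in\Hom_{\partial'}(V_1',V_2')$, and $f$ is an isomorphism because $\varphi^{(\partial)*}=(-)\otimes_{K^{(\partial)}}K$ is faithfully flat.

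The main obstacle is the reverse radius inequality $IR_{\partial'}(V')\geq IR_\partial(V)^p$: Lemma~\ref{L:Frob-pull-IR-ineq} alone runs the wrong way and leaves open the a priori possibility $IR_{\partial'}(V')<\omega^p$, so one genuinely needs the deck-invariance of $F(X)$ and a careful accounting of exactly which disc the descended series $G(Y)$ occupies. This is where $|\zeta_p-1|=\omega$ and the hypothesis $IR_\partial(V)>\omega$ are essential — together they make the degree-$p$ covering $X\mapsto(1+X)^p-1$ behave tamely on $|X|<IR_\partial(V)$, so that radii transform by exact $p$-th powers rather than merely satisfying the inequalities of Lemma~\ref{L:Frob-pull-IR-ineq}.
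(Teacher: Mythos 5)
The paper defers the proof to \cite[Theorem~10.4.2]{kedlaya-course}, which uses the same Christol--Dwork strategy you follow: Galois descent along the $\ZZ/p\ZZ$-action of Construction~\ref{Cstr:j-frobenius} to build $V'=V^G$, the generic-disc covering argument via the formal identity $(1+X)^{u\partial}=\bigl((1+X)^p\bigr)^{u^p\partial'}$ (combined with the fact that $X\mapsto(1+X)^p-1$ sends $|X|<\rho$ onto $|Y|<\rho^p$ when $\rho>\omega$) for the radius, and the cohomological computation via $\varphi^{(\partial)}_*$ and the characters $W_n^{(\partial)}$ for uniqueness. Your reconstruction is essentially the same argument and is correct.
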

\begin{proof}
As in \cite[Theorem~10.4.2]{kedlaya-course}.
\end{proof}

\begin{remark}
As in \cite[Theorem~10.4.4]{kedlaya-course}, one can form a version of 
Proposition~\ref{P:CD-Frob-ante}
for differential modules over discs and annuli.
\end{remark}

\begin{theorem}\label{T:Frob-push-sp-norm}
Let $V$ be a $\partial$-differential module over $K$.  Then
$$
\gothI\gothR_{\partial'}(\varphi^{(\partial)}_*V) = 
\bigcup_{r \in \gothI\gothR_\partial(V)} \left\{
\begin{array}{ll}
\{r^p,\ p^{-p/(p-1)}\ (p-1 \textrm{ times})\} & r > p^{-1/(p-1)}\\
\{p^{-1} r\ (p \textrm{ times})\} & r \leq p^{-1/(p-1)}.
\end{array}
\right.
$$
In particular, $IR_{\partial'}(\varphi^{(\partial)}_* V) = \min \{p^{-1} IR_\partial(V),\ p^{-p/(p-1)}\}$.
\end{theorem}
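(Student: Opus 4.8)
The plan is to adapt the proof of \cite[Theorem~10.5.1]{kedlaya-course}. First I would reduce to the case that $V$ is irreducible. Since $\varphi^{(\partial)}_*$ is exact (being restriction of scalars along the finite extension $\varphi^{(\partial)*}$), a Jordan--H\"older filtration of $V$ pushes forward to a filtration of $\varphi^{(\partial)}_*V$ with successive quotients $\varphi^{(\partial)}_*V_i$, where $V_i$ runs over the Jordan--H\"older constituents of $V$. By Lemma~\ref{L:basic-IR-prop}(a), $\gothI\gothR_{\partial'}(\varphi^{(\partial)}_*V)=\bigcup_i\gothI\gothR_{\partial'}(\varphi^{(\partial)}_*V_i)$, while $\gothI\gothR_\partial(V)=\bigcup_i\gothI\gothR_\partial(V_i)$, so it suffices to treat $V$ irreducible. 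Then $V$ has pure $\partial$-radii $r:=IR_\partial(V)$ (immediate from the definition of $\gothI\gothR_\partial$), and I must show that $\gothI\gothR_{\partial'}(\varphi^{(\partial)}_*V)$ consists of $\dim V$ copies of the set $\{r^p,\ p^{-p/(p-1)}\ (p-1\text{ times})\}$ when $r>\omega$, and of $p\dim V$ copies of $p^{-1}r$ when $r\le\omega$; here $\omega=p^{-1/(p-1)}$, so $\omega^p=p^{-p/(p-1)}$.

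In the case $r>\omega$, Proposition~\ref{P:CD-Frob-ante} furnishes the $\partial$-Frobenius antecedent $V'$ over $K^{(\partial)}$ with $V\simeq\varphi^{(\partial)*}V'$ and $IR_{\partial'}(V')=r^p$; moreover $V'$ is irreducible (because $\varphi^{(\partial)*}$ is exact and preserves dimension) and $\dim_{K^{(\partial)}}V'=\dim_K V$. By Lemma~\ref{L:Frob-pullback-vs-desc}(d),
\[
\varphi^{(\partial)}_*V\ \simeq\ \varphi^{(\partial)}_*\varphi^{(\partial)*}V'\ \simeq\ \bigoplus_{n=0}^{p-1}\bigl(V'\otimes W_n^{(\partial)}\bigr).
\]
The summand $n=0$ is isomorphic to $V'$, since $W_0^{(\partial)}$ is trivial, and contributes $\dim V'$ copies of $r^p$. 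For $n\ne0$ the module $W_n^{(\partial)}$ is irreducible of rank one with $IR_{\partial'}(W_n^{(\partial)})=\omega^p\ne r^p$, so Lemma~\ref{L:basic-IR-prop}(d) shows that $V'\otimes W_n^{(\partial)}$ contributes $\dim V'$ copies of $\min\{r^p,\omega^p\}=\omega^p$. Summing over $n$ and recalling $\dim V'=\dim V$ gives the claim in this case.

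In the remaining case $r\le\omega$, I would establish two complementary bounds on the subsidiary $\partial'$-radii of $\varphi^{(\partial)}_*V$. For the upper bound, let $U'$ be a Jordan--H\"older constituent of $\varphi^{(\partial)}_*V$, with $s:=IR_{\partial'}(U')$. Then $\varphi^{(\partial)*}U'$ is a subquotient of $\varphi^{(\partial)*}\varphi^{(\partial)}_*V\simeq V^{\oplus p}$ (Lemma~\ref{L:Frob-pullback-vs-desc}(c)), hence has pure $\partial$-radii $r$ by Lemma~\ref{L:basic-IR-prop}(a); but Lemma~\ref{L:Frob-pull-IR-ineq} gives $IR_\partial(\varphi^{(\partial)*}U')\ge\min\{s^{1/p},ps\}$, and since $r\le\omega$ one checks elementarily that $\min\{s^{1/p},ps\}>r$ whenever $s>p^{-1}r$; hence $s\le p^{-1}r$ for every constituent. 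For the lower bound I would compute $|\partial'|_{\sp,\varphi^{(\partial)}_*V}$ directly. Since $u^p\partial'=u\partial/p$ and $\partial'(u^p)=1$, the Euler-operator identity yields $u^{pn}(\partial')^n=\prod_{j=0}^{n-1}(u^p\partial'-j)=p^{-n}\prod_{j=0}^{n-1}(u\partial-jp)$, so, working with a fixed $K$-compatible norm on $V$,
\[
|(\partial')^n|_{\varphi^{(\partial)}_*V}\ =\ p^{n}|u|^{-pn}\left|\,\prod_{j=0}^{n-1}(u\partial-jp)\,\right|_{V}.
\]
Expanding $\prod_{j=0}^{n-1}(u\partial-jp)$ in powers of $u\partial$, the coefficient of $(u\partial)^k$ has norm at most $p^{-(n-k)}$; expanding each $(u\partial)^k=\sum_l S(k,l)\,u^l\partial^l$ with Stirling numbers $S(k,l)$ of norm at most $1$, and using $\lim_l|\partial^l|_V^{1/l}=|\partial|_{\sp,V}$ together with $|u|\,|\partial|_{\sp,V}\ge1$ (which is exactly the hypothesis $r\le\omega$), a routine nonarchimedean estimate gives $\lim_n\bigl|\prod_{j=0}^{n-1}(u\partial-jp)\bigr|_V^{1/n}\le|u|\,|\partial|_{\sp,V}$. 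Therefore $|\partial'|_{\sp,\varphi^{(\partial)}_*V}\le p\,|u|^{-(p-1)}|\partial|_{\sp,V}$, which unwinds to $IR_{\partial'}(\varphi^{(\partial)}_*V)\ge p^{-1}r$. Combining the two bounds, every subsidiary $\partial'$-radius of $\varphi^{(\partial)}_*V$ equals $p^{-1}r$, and since $\dim_{K^{(\partial)}}\varphi^{(\partial)}_*V=p\dim_K V$ this is the asserted multiset.

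Finally, the ``in particular'' statement follows by taking the minimum of the computed multiset over the constituents of $V$: that minimum equals $p^{-1}r_*$ if $r_*:=IR_\partial(V)\le\omega$ and $\omega^p$ otherwise, and in both cases this is $\min\{p^{-1}IR_\partial(V),\,p^{-p/(p-1)}\}$. The main obstacle is the low-radius case, and within it the spectral-norm estimate: verifying $\lim_n\bigl|\prod_{j=0}^{n-1}(u\partial-jp)\bigr|_V^{1/n}\le|u|\,|\partial|_{\sp,V}$ requires careful bookkeeping of the $p$-adic sizes of the binomial- and Stirling-type coefficients arising when $(\partial')^n$ is rewritten in terms of $u\partial$, and is the only genuinely computational point.
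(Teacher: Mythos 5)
Your proposal is correct and follows essentially the same approach as the paper's referenced proof (\cite[Theorem~10.5.1]{kedlaya-course}): reduce to $V$ irreducible by Jordan--H\"older additivity (Lemma~\ref{L:basic-IR-prop}(a)), handle $IR_\partial(V)>\omega$ via the Christol--Dwork antecedent together with Lemma~\ref{L:Frob-pullback-vs-desc}(d) and Lemma~\ref{L:basic-IR-prop}(d), and handle $IR_\partial(V)\le\omega$ by pairing the upper bound from Lemma~\ref{L:Frob-pull-IR-ineq} applied to constituents of $\varphi^{(\partial)}_*V$ with a direct spectral-norm estimate coming from the Euler-operator identity $u^{pn}(\partial')^n = p^{-n}\prod_{j=0}^{n-1}(u\partial - jp)$. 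The computational core (the bound $\lim_n|\prod_{j=0}^{n-1}(u\partial-jp)|_V^{1/n}\le|u|\,|\partial|_{\sp,V}$ using integrality of Stirling numbers and $|u|\,|\partial|_{\sp,V}\ge 1$) is carried out correctly.
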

\begin{proof}
The proof is identical to that of  \cite[Theorem~10.5.1]{kedlaya-course}.
\end{proof}

\begin{corollary} \label{C:pullback radius}
Let $V'$ be a $\partial'$-differential module over $K^{(\partial)}$ such that
$IR_{\partial'}(V') \neq p^{-p/(p-1)}$. 
Then $IR_\partial(\varphi^{(\partial)*} V') = \min\{IR_{\partial'}(V')^{1/p}, p \, IR_{\partial'}(V')\}$.
\end{corollary}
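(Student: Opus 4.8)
The plan is to bootstrap from the one-sided estimate already available and then use a descendant computation to pin down the value. Write $V=\varphi^{(\partial)*}V'$, and set $s=IR_\partial(V)$ and $t=IR_{\partial'}(V')$. By Lemma~\ref{L:Frob-pull-IR-ineq} we already have $s\ge\min\{t^{1/p},\,pt\}$, so it suffices to produce the reverse inequality; equivalently, to produce enough information to force the stated equality.

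For an upper bound on $s$, I would apply Theorem~\ref{T:Frob-push-sp-norm} to $V$ itself, which gives $IR_{\partial'}(\varphi^{(\partial)}_*V)=\min\{p^{-1}s,\,p^{-p/(p-1)}\}$. On the other hand, $\varphi^{(\partial)}_*V=\varphi^{(\partial)}_*\varphi^{(\partial)*}V'\cong\bigoplus_{n=0}^{p-1}(V'\otimes W_n^{(\partial)})$ by Lemma~\ref{L:Frob-pullback-vs-desc}(d), so by Lemma~\ref{L:basic-IR-prop}(a) its intrinsic radius is $\min_{0\le n\le p-1}IR_{\partial'}(V'\otimes W_n^{(\partial)})$. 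The $n=0$ summand is $V'$ itself (as $W_0^{(\partial)}$ is trivial), contributing $t$. For $n\ne 0$ we have $IR_{\partial'}(W_n^{(\partial)})=p^{-p/(p-1)}$, and the hypothesis $t\ne p^{-p/(p-1)}$ is exactly what makes the equality clause of Lemma~\ref{L:basic-IR-prop}(c) apply, giving $IR_{\partial'}(V'\otimes W_n^{(\partial)})=\min\{t,\,p^{-p/(p-1)}\}$. Altogether $\min\{p^{-1}s,\,p^{-p/(p-1)}\}=\min\{t,\,p^{-p/(p-1)}\}$.

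It remains to unwind this identity, splitting on the position of $t$ relative to $p^{-p/(p-1)}$ (which is precisely the threshold where $t^{1/p}$ and $pt$ exchange order). If $t<p^{-p/(p-1)}$, the right-hand side equals $t$, which forces $p^{-1}s=t$, hence $s=pt$; and since $t\le p^{-p/(p-1)}$ gives $pt\le t^{1/p}$, this is exactly $\min\{t^{1/p},pt\}$. If $t>p^{-p/(p-1)}$, then the lower bound of Lemma~\ref{L:Frob-pull-IR-ineq} gives $s\ge\min\{t^{1/p},pt\}=t^{1/p}>p^{-1/(p-1)}=\omega$, so $V$ admits a unique $\partial$-Frobenius antecedent by Proposition~\ref{P:CD-Frob-ante}; since $V'$ is itself a $\partial$-Frobenius antecedent of $V$ (its intrinsic radius exceeds $p^{-p/(p-1)}$), uniqueness together with the last assertion of Proposition~\ref{P:CD-Frob-ante} yields $t=IR_{\partial'}(V')=IR_\partial(V)^p=s^p$, i.e.\ $s=t^{1/p}=\min\{t^{1/p},pt\}$.

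I expect the main obstacle to be the case $t>p^{-p/(p-1)}$: the descendant identity by itself only forces $s\ge\omega$ there, and one must recognize this regime as exactly that of the Christol-Dwork Frobenius antecedent and invoke its uniqueness to conclude $s^p=t$; the strict inequality $IR_\partial(V)>\omega$ needed to apply Proposition~\ref{P:CD-Frob-ante} is supplied by Lemma~\ref{L:Frob-pull-IR-ineq}. The remaining manipulations with the constants $p^{-1/(p-1)}$ and $p^{-p/(p-1)}$ and with the ordering of $t^{1/p}$ versus $pt$ are routine but should be carried out carefully.
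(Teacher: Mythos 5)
Your proof is correct and takes essentially the same approach as the paper. The paper also splits into the two cases $IR_{\partial'}(V') \gtrless p^{-p/(p-1)}$; for the large-radius case it simply cites the Frobenius-antecedent corollary \cite[Corollary~10.4.3]{kedlaya-course}, which you reconstruct from Proposition~\ref{P:CD-Frob-ante} plus the strict lower bound furnished by Lemma~\ref{L:Frob-pull-IR-ineq}, and for the small-radius case it uses exactly the descendant identity obtained by comparing Theorem~\ref{T:Frob-push-sp-norm} against Lemma~\ref{L:Frob-pullback-vs-desc}(d) (the paper phrases the end as deriving a contradiction from $IR_{\partial'}(V')=p^{-p/(p-1)}$, which is the same unwinding you carry out).
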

\begin{proof}
In case $IR_{\partial'}(V') > p^{-p/(p-1)}$, this holds by \cite[Corollary~10.4.3]{kedlaya-course}. Otherwise, by Lemma~\ref{L:Frob-pullback-vs-desc}(d), 
$\varphi_*^{(\partial)} \varphi^{(\partial)*} V'
\cong \oplus_{m=0}^{p-1} (V' \otimes W_m^{(\partial)})$ and $IR_{\partial'}(V' \otimes W_m^{(\partial)})
= IR_{\partial'}(V')$ since $IR_{\partial'}(V') < IR_{\partial'}(W_m^{(\partial)})$. Hence by
Theorem~\ref{T:Frob-push-sp-norm},
\[
IR_{\partial'}(V') = IR_{\partial'}(\varphi^{(\partial)}_* \varphi^{(\partial)*} V') = \min\{p^{-1} IR_\partial(\varphi^{(\partial)*} V'),
p^{-p/(p-1)}\}.
\]
We get a contradiction if the right side equals $p^{-p/(p-1)}$,
so we must have $IR_{\partial'}(V') = p^{-1} IR_\partial(\varphi^{(\partial)*} V') \leq p^{-p/(p-1)}$, 
proving the claim.
\end{proof}

For the following theorem, we do not assume $p>0$.
\begin{theorem}\label{T:decomp-over-field-complete}
Let $V$ be a $\partial$-differential module over $K$.  Then there exists a decomposition
$$
V = \bigoplus_{r \in (0, 1]}V_r,
$$
where every subquotient of $V_r$ has pure intrinsic $\partial$-radii $r$.  
Moreover, if $p=0$, then $r^{\dim V_r} \in |K^\times|$;
if $p>0$, then for any nonnegative integer $h$, we have
$$
r < p^{-p^{-h}/(p-1)} \quad \Longrightarrow \quad r^{\dim V_r} \in 
|(K^{(\partial, h)})^\times|^{p^{-h}}.
$$
\end{theorem}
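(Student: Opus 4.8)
The plan is to bootstrap from Proposition~\ref{P:part-decomp-over-field} --- which already yields the decomposition and the value-group control in the ``visible'' range of radii (those below $\omega$) --- and to use the Frobenius antecedent of Proposition~\ref{P:CD-Frob-ante} to drag the hidden radii (those $\geq\omega$) down into the visible range, one $p$-th power at a time, with an induction stitching the ranges together. Concretely, Proposition~\ref{P:part-decomp-over-field} (with $r_0=\omega|\partial|_K^{-1}$) splits $V=V_+\oplus\bigoplus_{r<r_0}V_r$; in intrinsic terms this isolates the summand $V_+$ on which all subsidiary intrinsic radii are $\geq\omega$ from a direct sum of pieces that are already pure of intrinsic radius $<\omega$. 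For each such pure piece, a cyclic vector (Lemma~\ref{L:cyclic vector}) presents it as $K\{T\}/K\{T\}P$ with $P$ monic; Proposition~\ref{P:NP-decomp-field} extracts the monic factor all of whose Newton-polygon slopes equal the slope attached to $r$ by Remark~\ref{R:part-decomp-over-field}, and the width-times-slope identity on that single Newton segment, together with the fact that the leading and constant coefficients of this factor lie in $K$, gives the $h=0$ case $r^{\dim V_r}\in|K^\times|$. When $p=0$ we have $\omega=1$ and every extrinsic radius is $\leq|\partial|_K^{-1}=r_0$, so $V_+$ is pure of intrinsic radius $1$ and the theorem follows; assume $p>0$ from here on.

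It remains to decompose $V_+$, i.e.\ the case of all intrinsic radii $\geq\omega$, and here the radius-exactly-$\omega$ constituents must be peeled off first, since Proposition~\ref{P:CD-Frob-ante} needs $IR_\partial>\omega$. I would pass to the Frobenius descendant $W=\varphi^{(\partial)}_* V_+$ over $K^{(\partial)}$, which carries a rational-type derivation by Lemma~\ref{L:Kj-satisfies-hypo}: by Theorem~\ref{T:Frob-push-sp-norm} the intrinsic radii of $W$ are the $p$-th powers of the radii $>\omega$ of $V_+$, together with several copies of $\omega^p$ coming from the radius-$\omega$ constituents and from the $W_n^{(\partial)}$. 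Since $\omega^p<\omega$ lies in the visible range for $\partial'$, Proposition~\ref{P:part-decomp-over-field} splits off the $\omega^p$-isotypic summand of $W$; invoking the isomorphisms $\iota_n$ of Lemma~\ref{L:Frob-pullback-vs-desc}(a) and the descent criterion (b) --- and using Lemma~\ref{L:basic-IR-prop}(c) to see that tensoring with a $W_n^{(\partial)}$, $n\neq0$, cannot lower the radius of a pure radius-$\omega^p$ module --- one descends the appropriate $\iota_n$-stable summand to write $V_+=V_\omega\oplus V_{>\omega}$, with $V_\omega$ pure of intrinsic radius $\omega$ and $V_{>\omega}$ of all intrinsic radii $>\omega$. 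The value-group statement for $V_\omega$ comes from applying the already-proved visible case over $K^{(\partial)}$ to $\varphi^{(\partial)}_* V_\omega$ (pure of radius $\omega^p<\omega$) and translating back via $K^{(\partial,1)}=K^{(\partial)}$ and the parameter $u^p$.

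For $V_{>\omega}$ I would use Proposition~\ref{P:CD-Frob-ante} to pass to its unique antecedent $V'$ over $K^{(\partial)}$, whose intrinsic-radius multiset is the $p$-th-power image of that of $V_{>\omega}$, so that constituents coming from radii in $(\omega,\omega^{1/p}]$ are now visible for $\partial'$; then apply the whole construction recursively to $V'$. This recursion is well-founded because the visible threshold $p^{-p^{-h}/(p-1)}=\omega^{p^{-h}}$ strictly increases to $1$, so after finitely many Frobenius steps every radius $<1$ has been brought below the current threshold, while any constituent of intrinsic radius exactly $1$ is already pure. Pulling the resulting decomposition of $V'$ back along $\varphi^{(\partial)*}$ and using Corollary~\ref{C:pullback radius} --- a summand of $V'$ of intrinsic radius $\rho>\omega^p$ (hence $\neq\omega^p$) pulls back to one of intrinsic radius $\rho^{1/p}=\min\{\rho^{1/p},p\rho\}$ --- recovers the pieces $V_r$ with $r>\omega$; and because $\rho=r^p$ and $K^{(\partial,h)}=(K^{(\partial)})^{(\partial',\,h-1)}$, the membership for $V'$ of the corresponding power of $\rho$ in $|(K^{(\partial,h)})^\times|^{p^{-(h-1)}}$ translates into exactly $r^{\dim V_r}\in|(K^{(\partial,h)})^\times|^{p^{-h}}$. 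Combining the visible range, $V_\omega$, and the $V_r$ with $r>\omega$ gives $V=\bigoplus_r V_r$ and the value-group assertions; the strengthening to arbitrary subquotients is automatic, since by Lemma~\ref{L:basic-IR-prop}(a) any subquotient of a module with pure intrinsic radii $r$ again has pure intrinsic radii $r$.

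The main obstacle is the bookkeeping of the last two steps. Splitting off the radius-$\omega$ part is subtle because the $\omega^p$-isotypic summand of $\varphi^{(\partial)}_* V_+$ is strictly larger than $\varphi^{(\partial)}_* V_\omega$ --- it also absorbs the ``junk'' $W_n^{(\partial)}$-contributions of $V_{>\omega}$ --- so one must argue through Lemma~\ref{L:Frob-pullback-vs-desc}(b) precisely which $\iota_n$-stable summand descends, and keep track of that junk throughout. In the induction one then has to keep perfectly aligned, across every application of $\varphi^{(\partial)*}$ and $\varphi^{(\partial)}_*$, the simultaneous changes of rational parameter ($u\mapsto u^p$), of radius ($r\mapsto r^p$, resp.\ $r\mapsto r^{1/p}$), of the exponent $p^{-h}$ on the value group, and of the level in the Frobenius tower $K\supseteq K^{(\partial)}\supseteq\cdots$, verifying in particular that $\omega^{p^{-h}}$ is exactly the threshold that makes $h$ Frobenius steps do the job.
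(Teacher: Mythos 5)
Your overall strategy --- the visible decomposition via Proposition~\ref{P:part-decomp-over-field}, then Frobenius to reach the hidden range $[\omega,1]$, with the recursion on $h$ and the value-group tracking you describe --- is the same template the paper intends (the proof given in the paper simply defers to the analogues in \cite{kedlaya-course}). The $p=0$ reduction, the antecedent recursion for $V_{>\omega}$, the use of Lemma~\ref{L:basic-IR-prop}(a) for subquotients, and the translation of the value-group statement through the Frobenius tower are all correct.

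However, the step where you split $V_+ = V_\omega \oplus V_{>\omega}$ by descending ``the appropriate $\iota_n$-stable summand'' of $\varphi^{(\partial)}_* V_+$ is a genuine gap, not just a bookkeeping subtlety to be tracked. Neither the $\omega^p$-isotypic summand $W_{\omega^p}$ of $W = \varphi^{(\partial)}_* V_+$ nor its complement $W_{>\omega^p}$ is $\iota_n$-stable, and there is no intermediate candidate to descend. One can see the obstruction cleanly by a dimension count: writing $d_1 = \dim V_\omega$ and $d_2 = \dim V_{>\omega}$, Theorem~\ref{T:Frob-push-sp-norm} gives $\dim W_{\omega^p} = p d_1 + (p-1) d_2$, which is not divisible by $p$ unless $p \mid d_2$, whereas any submodule of $W$ of the form $\varphi^{(\partial)}_* U_0$ has dimension $p\dim U_0$. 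Concretely, if $V_+ = \varphi^{(\partial)*}V'$ is irreducible of intrinsic radius $r>\omega$, then by Lemma~\ref{L:Frob-pullback-vs-desc}(d) we have $W = \bigoplus_{n} V' \otimes W_n^{(\partial)}$ and $W_{\omega^p} = \bigoplus_{n\ne 0} V' \otimes W_n^{(\partial)}$; the maps $\iota_m$ cyclically permute the factors $V' \otimes W_n^{(\partial)}$, so no proper nonzero sub-sum of them is $\iota_m$-stable, and in particular neither $W_{\omega^p}$ (of dimension $(p-1)\dim V_+$) nor $W_{>\omega^p}=V'$ descends.

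A correct way to obtain the $V_\omega$-versus-$V_{>\omega}$ split without descent is a Krull--Schmidt argument. Decompose $V_+$ into indecomposable differential modules over $K$. It suffices to show that an indecomposable $U$ with all intrinsic radii $\geq \omega$ has pure radii. If $IR(U) = \omega$ but $U$ also has a radius $>\omega$, split $\varphi^{(\partial)}_* U = A \oplus B$ visibly over $K^{(\partial)}$, with $A$ pure of radius $\omega^p$ and $B\neq 0$ having all radii $>\omega^p$; by Lemma~\ref{L:Frob-pullback-vs-desc}(c), $\varphi^{(\partial)*}A \oplus \varphi^{(\partial)*}B \cong U^{\oplus p}$, so Krull--Schmidt forces $\varphi^{(\partial)*}B \cong U^{\oplus j}$ for some $j>0$; but $\varphi^{(\partial)*}B$ has all radii $>\omega$ by Corollary~\ref{C:pullback radius}, contradicting $IR(U)=\omega$. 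If instead $IR(U)>\omega$, pass to the antecedent (which is again indecomposable, since any splitting of it would pull back to one of $U$) and recurse exactly as in your last step, the recursion terminating once the least radius drops below $\omega^{p^{-h}}$. Grouping the indecomposable summands by their common radius then yields $V = \bigoplus_r V_r$, and the value-group statements follow by the translation you already described.
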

\begin{proof}
The proof is similar to those of  \cite[Theorem~10.6.2]{kedlaya-course} and \cite[Theorem~10.7.1]{kedlaya-course}.
\end{proof}

\begin{remark}\label{R:partial_j-no-integrality}
In the case when $K$ is the completion of $K_0(u)$ with respect to the $\eta$-Gauss norm, 
$K^{(\partial, h)}$ is the completion of $K_0(u^{p^h})$ with respect to the
$\eta^{p^h}$-Gauss norm.  
We deduce thus from Theorem~\ref{T:decomp-over-field-complete} that
$r^{\dim V_r} \in |K_0^\times|^{p^{-h}}\eta^\ZZ$.
\end{remark}

\begin{remark}
Let $K'$ be a complete extension of $K$ equipped with an extension
of $\partial$ which is again of rational type with parameter $u$.
Then the intrinsic radii of a $\partial$-differential module over $K$
are the same as that of its base extension to $K'$: namely,
this is clear from Remark~\ref{R:part-decomp-over-field} for those radii
less than $\omega$, but we can reduce to this case using
Theorem~\ref{T:Frob-push-sp-norm}.
\end{remark}

\subsection{Multiple derivations}
\label{S:multi-derivations}

In this subsection, we introduce differential fields of higher order.

\begin{defn}
Let $K$ denote a differential ring of order $m$, i.e., a ring $K$ equipped with $m$ commuting derivations $\serie{\partial_}m$.  For $j \in J = \{1, \dots, m\}$, a \emph{$\partial_j$-differential module} is a finite projective $K$-module $V$ equipped with the action of $\partial_j$.  In other words, we view $K$ as a differential ring of order 1 by forgetting the derivations other than
$\partial_j$.  A \emph{$(\serie{\partial_}m)$-differential module} 
(or \emph{$\partial_J$-differential module},
or simply a \emph{differential module})
is a finite projective $K$-module $V$ equipped with commuting 
actions of $\serie{\partial_}m$.  We may apply the results above by singling out one of $\serie{\partial_}m$.
\end{defn}

\begin{defn}\label{D:multi-op-cvgt-radii}
Let $K$ be a complete nonarchimedean
differential field of order $m$ and characteristic zero,
and let $V$ be a nonzero $(\serie{\partial_}m)$-differential module over $K$.
Define the \emph{intrinsic generic radius of convergence},
or for short the \emph{intrinsic radius}, of $V$ to be
$$
IR(V) = \min_{j \in J} \left\{IR_{\partial_j}(V) \right\} = \min_{j \in J} \left\{ |\partial_j|_{\sp, K} \big/ |\partial_j|_{\sp, V}\right\}.
$$
For $j \in J$, we say $\partial_j$ is \emph{dominant} for $V$ if 
$IR_{\partial_j}(V) = IR(V)$.  We define the \emph{intrinsic subsidiary radii} 
$\gothI\gothR(V) = \{IR(V;1), \dots, IR(V;\dim V)\}$ by collecting and ordering 
intrinsic radii from Jordan-H\"older factors, as in
Definition~\ref{D:cvgt-radii}. We again say that $V$ has \emph{pure intrinsic
radii} if the elements of $\gothI\gothR(V)$ are all equal to $IR(V)$.
\end{defn}

\begin{defn} \label{D:rational type multi}
Let $K$ be a complete nonarchimedean differential field of order $m$ and
characteristic zero.
We say that $K$ is of \emph{rational type} with respect to a set of parameters
$\{u_j: j \in J\}$ if each $\partial_j$ is of rational type with respect to
$u_j$, and $\partial_i(u_j) = 0$ for $i \neq j$.
\end{defn}

\begin{remark}\label{R:enlarge-K}
Set notation as in Definition~\ref{D:rational type multi}.
Let $K'$ be the completion of $K(t)$ for the $\eta$-Gauss norm; then
$K'$ is again of rational type with respect to $u_1,\dots,u_m,t$.
\end{remark}

\begin{remark}
Recall that if $p>0$, we have a $\partial_j$-Frobenius $\varphi^{(\partial_j)*}: K^{(\partial_j)} \inj K$ for $j \in J$.  Since the elements $u_{J\setminus \{j\}}$ are killed by $\partial_j$, they are elements in $K^{(\partial_j)}$.  Hence by Lemma~\ref{L:Kj-satisfies-hypo}, the differential operators $\partial_{J\setminus \{j\}}$ and $\partial'_j$ are of rational type over $K^{(\partial_j)}$ 
with respect to the parameters  $u_{J \setminus\{j\}}$ and $u_j^p$.
\end{remark}

\begin{theorem}\label{T:multi-decomp-over-field}
Let $K$ be a complete nonarchimedean differential field of order $m$ and characteristic zero, of rational type.
Let $V$ be a $\partial_J$-differential module over $K$.  Then there exists a decomposition
$$
V = \bigoplus_{r \in (0, 1]}V_r,
$$
where every subquotient of $V_r$ has pure intrinsic radii $r$.
Moreover, if $p=0$, then $r^{\dim V_r} \in |K^\times|$;
if $p>0$, then 
\[
r < p^{-p^{-h}/(p-1)} \quad \Longrightarrow \quad
r^{\dim V_r} \in |K^{\times}|^{1/p^h}.
\]
\end{theorem}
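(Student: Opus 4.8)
The plan is to reduce Theorem~\ref{T:multi-decomp-over-field} to its one-variable version, Theorem~\ref{T:decomp-over-field-complete}: we decompose $V$ along each derivation $\partial_j$ separately (using that, by hypothesis, each $\partial_j$ individually is of rational type over $K$), check that each such decomposition is one of $\partial_J$-differential modules, pass to the common refinement, and regroup the resulting pieces according to which of the radii is smallest.

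The crux is the following compatibility statement: for any $\partial_J$-differential module $W$ over $K$ and any $j\in J$, the decomposition $W=\bigoplus_s W_{j,s}$ furnished by Theorem~\ref{T:decomp-over-field-complete} applied to $W$ with the single derivation $\partial_j$ is a decomposition of $\partial_J$-differential modules. (Recall that, by that theorem, every $\partial_j$-subquotient of $W_{j,s}$ has pure intrinsic $\partial_j$-radii $s$.) To see this, fix $s$ and $i\in J$ and consider the map $\overline{\partial_i}\colon W_{j,s}\to W/W_{j,s}$ sending $w\mapsto\partial_i(w)\bmod W_{j,s}$. Because $\partial_i(fw)=\partial_i(f)w+f\partial_i(w)$ with $\partial_i(f)w\in W_{j,s}$, this map is $K$-linear; because $\partial_i$ commutes with $\partial_j$ and $W_{j,s}$ is $\partial_j$-stable, it commutes with $\partial_j$. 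Thus $\overline{\partial_i}$ is a morphism of $\partial_j$-differential modules, so its image is a $\partial_j$-quotient of $W_{j,s}$ and therefore has pure intrinsic $\partial_j$-radii $s$; but that image also lies inside $W/W_{j,s}\cong\bigoplus_{s'\neq s}W_{j,s'}$, whose intrinsic $\partial_j$-radii are all different from $s$ (Lemma~\ref{L:basic-IR-prop}(a)). A $\partial_j$-module which is at once a quotient of one with pure $\partial_j$-radii $s$ and a submodule of one with no $\partial_j$-radius equal to $s$ must vanish, so $\overline{\partial_i}=0$, that is, $\partial_i(W_{j,s})\subseteq W_{j,s}$. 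As $i$ was arbitrary, $W_{j,s}$ is stable under all of $\serie{\partial_}m$.

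Applying this with $W=V$ for each $j\in J$ gives $m$ decompositions $V=\bigoplus_s V_{j,s}$ of $\partial_J$-differential modules. The projection of $V$ onto any $V_{j',s'}$ is then a morphism of $\partial_J$-differential modules, in particular of $\partial_j$-differential modules for every $j$; and a $\partial_j$-endomorphism of $V$ preserves each $V_{j,s}$, by the same quotient-of-pure/submodule-of-pure reasoning used above. Hence the $m$ decompositions pairwise commute, and their common refinement
\[
V \;=\; \bigoplus_{\tau\in(0,1]^J} V_\tau, \qquad V_\tau \;:=\; \bigcap_{j\in J} V_{j,\tau_j},
\]
is again a decomposition of $\partial_J$-differential modules in which, for each $j$, $V_\tau$ is a $\partial_j$-subquotient of $V_{j,\tau_j}$ and hence every $\partial_j$-subquotient of $V_\tau$ has pure intrinsic $\partial_j$-radii $\tau_j$. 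Set $V_r:=\bigoplus_{\tau:\,\min_j\tau_j=r}V_\tau$, so that $V=\bigoplus_{r\in(0,1]}V_r$. If $W$ is a $\partial_J$-subquotient of $V_r$, then each Jordan--H\"older constituent $W'$ of $W$ is a constituent of some $V_\tau$ with $\min_j\tau_j=r$, so $IR_{\partial_j}(W')=\tau_j$ for all $j$ and therefore $IR(W')=\min_j\tau_j=r$; thus $W$ has pure intrinsic radii $r$.

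It remains to verify the integrality. Fix $r$ with $V_r\neq0$, and when $p>0$ fix a nonnegative integer $h$ with $r<p^{-p^{-h}/(p-1)}$. For each $\tau$ with $\min_j\tau_j=r$, choose $j$ with $\tau_j=r$; then $V_\tau$ has pure intrinsic $\partial_j$-radii $r$, so Theorem~\ref{T:decomp-over-field-complete} applied to $V_\tau$ with the derivation $\partial_j$ (whose $\partial_j$-radius decomposition of $V_\tau$ is then trivial) gives $r^{\dim V_\tau}\in|K^\times|$ if $p=0$ and $r^{\dim V_\tau}\in|(K^{(\partial_j,h)})^\times|^{p^{-h}}\subseteq|K^\times|^{1/p^h}$ if $p>0$, the inclusion because $K^{(\partial_j,h)}$ is a subfield of $K$. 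Since $|K^\times|$ and $|K^\times|^{1/p^h}$ are subgroups of the multiplicative group $\RR_{>0}$, multiplying these relations over all $\tau$ contributing to $V_r$ gives $r^{\dim V_r}\in|K^\times|$ if $p=0$ and $r^{\dim V_r}\in|K^\times|^{1/p^h}$ if $p>0$, as required. The substantive point is the compatibility statement of the second paragraph — that the one-derivation radius decomposition is stable under the ambient connection — and its two essential ingredients are the commutation of the derivations (which lets reduction modulo $W_{j,s}$ turn the derivation $\partial_i$ into an honest $\partial_j$-linear map) and the fact, built into Theorem~\ref{T:decomp-over-field-complete}, that subquotients of the pure pieces stay pure.
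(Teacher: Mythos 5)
Your proof is correct and follows exactly the approach the paper indicates in its one-line proof (apply Theorem~\ref{T:decomp-over-field-complete} to each $\partial_j$ and take the common refinement), filling in the key details the paper leaves implicit. In particular, your argument that each one-derivation decomposition piece $W_{j,s}$ is stable under the remaining derivations $\partial_i$ — by observing that the induced map $\overline{\partial_i}\colon W_{j,s}\to W/W_{j,s}$ is a $\partial_j$-morphism from a module of pure $\partial_j$-radius $s$ to one with no constituent of $\partial_j$-radius $s$ and hence vanishes — is the precise justification needed for the refinement to make sense.
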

\begin{proof}
Since the $\partial_J$ commute with each other, the theorem follows by applying Theorem~\ref{T:decomp-over-field-complete} to each $\partial_j$ and forming
a common refinement of the resulting decompositions.
\end{proof}

\begin{defn}
For $l/k$ an extension of fields of characteristic $p>0$, we say the extension is \emph{separable} if $l \cap k^{p^{-1}} = k$.  A \textit{$p$-basis} of $l$ over $k$ is a set $B = \{u_j\}_{j \in J} \subset l$ such that the products $u_J^{e_J}$, where $e_j \in \{0, 1, \dots, p-1\}$ for all $j \in J$ and $e_j = 0$ for all but finitely many $j$, form a basis of the vector space $l$ over $kl^p$. By a \textit{$p$-basis} of $l$ we mean a $p$-basis of $l$ over $l^p$.  (For more details, see \cite[p. 565]{eisenbud} or 
\cite[Ch.0,~\S21]{ega41}.)

For an extension $L/K$ of complete nonarchimedean fields with residue fields $l,k$
of characteristic $p>0$, with $l/k$ separable, 
a \emph{$p$-basis} of $L$ over $K$ will mean a set of elements $(u_J) \subset \gotho_L^\times$ whose images $(\bar u_J) \subset l$ form a $p$-basis of $l$ over $k$.
\end{defn}

One important instance of Definition~\ref{D:rational type multi}
is the following.

\begin{situation}\label{Sit:K-in-SwanI}
Let $m$ be a nonnegative integer and $J = \{\serie{}m\}$.  
Let $F$ be a complete discrete valuation field of characteristic $0$
with residue field $\kappa$ of characteristic $p>0$.  
Let $K_1$ be a complete extension of $F$ with the same value group and residue
field $k_1$ separable over $\kappa$.
Assume $K_1$ has a finite $p$-basis $(\serie{u_}{m})$ over $F$.  
Let $F'$ be an extension of $F$ complete for a (not necessarily discrete) 
nonarchimedean norm $|\cdot|$, with the same residue field $\kappa$.  
Let $K_2$ be the completion of $K_1 \otimes_F F'$.  
Let $k$ be a (possibly infinite) separable algebraic
extension of $k_1$, 
and let $K$ be the completion of the unramified extension of $K_2$ with residue field $k$.
\end{situation}

\begin{lemma}\label{L:K-in-SwanI}
In Situation~\ref{Sit:K-in-SwanI}, the natural projection $\Omega^1_K \surj \bigoplus_{j=1}^m K \cdot du_j$ gives derivations
$(\partial_j = \partial_{u_j})_{j \in J}$ of rational type with respect to $u_1,\dots,
u_m$.
\end{lemma}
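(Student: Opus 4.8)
The plan is to build the derivations, reduce the only nontrivial part of ``rational type'' to the one-variable Gauss situation by combining Remark~\ref{R:enlarge-K} and Lemma~\ref{L:H-under-unram-extn}, and — crucially — to identify $K$ as a completed unramified extension of a Gauss field so that those two results actually apply.

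First I would handle the differentials and the cheap part of rational type. Since $k/k_1$ is separable algebraic and $(\bar u_j)_{j\in J}$ is a $p$-basis of $k_1$ over $\kappa$, the tuple $(\bar u_j)$ remains a $p$-basis of $k$ over $\kappa$ (standard field theory; see \cite{eisenbud} or \cite[Ch.~0, \S21]{ega41}), and being a $p$-basis of the \emph{separable} extension $k/\kappa$ it is a separating transcendence basis. Hence the $u_j \in \gotho_K^\times$ are algebraically independent over $F'$, so there is no nonzero polynomial relation among the $\bar u_j$ modulo the maximal ideal, so the norm on $K$ restricts on $F'(u_1,\dots,u_m)$ to the $(1,\dots,1)$-Gauss norm; let $K^\flat \subseteq K$ be the completion of $F'(u_1,\dots,u_m)$ for that norm. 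Now $\widehat\Omega^1_{\gotho_{K^\flat}/\gotho_{F'}}$ is free over $\gotho_{K^\flat}$ on $du_1,\dots,du_m$, and once we know (see the next paragraph) that $K/K^\flat$ is a completed (ind-)unramified extension, base change of continuous differentials gives $\widehat\Omega^1_{\gotho_K/\gotho_{F'}}\cong\bigoplus_{j\in J}\gotho_K\,du_j$, which furnishes the asserted natural projection $\Omega^1_K \surj \bigoplus_j K\,du_j$. The dual derivations $\partial_j = \partial_{u_j}$ are continuous and $F'$-linear, satisfy $\partial_i(u_j) = \delta_{ij}$ (so $\partial_j(u_j)=1$ and $\partial_i(u_j)=0$ for $i\neq j$), commute (their commutator is a continuous $F'$-derivation killing every $u_j$, hence $0$), and preserve $\gotho_K$ because the above isomorphism has $\gotho_K$-coefficients; since $|u_j|=1$ this yields $|\partial_j|_K = 1 = |u_j|^{-1}$. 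So, to conclude that $K$ is of rational type with respect to $\{u_j : j\in J\}$ in the sense of Definition~\ref{D:rational type multi}, it remains only to check condition~(b) of Definition~\ref{D:admissible-operator}, namely $|\partial_j^n/n!|_K \leq 1$ for all $n$ and all $j$.

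For condition~(b) I would use the reduction to $K^\flat$. Iterating Remark~\ref{R:enlarge-K} $m$ times, starting from $F'$ and adjoining one $u_j$ at a time with Gauss radius $1$ (forced since $|u_j|=1$), shows that $K^\flat$ is of rational type with respect to $u_1,\dots,u_m$, with $\partial_j = d/du_j$; in particular each $d/du_j$ satisfies (b) on $K^\flat$. Applying Lemma~\ref{L:H-under-unram-extn} to each $\partial_j$ in turn, its unique extension to the completed tamely (indeed un-)ramified extension $K$ is again of rational type with parameter $u_j$; by compatibility of $\widehat\Omega^1$ with the Gauss construction and with unramified base change these extensions are exactly the $\partial_j$ of the previous paragraph, and they still commute and satisfy $\partial_i(u_j)=\delta_{ij}$, giving the claim. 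What remains is to justify that $K/K^\flat$ is a completed (ind-)unramified extension. Here I would argue: $F$ is discretely valued, so $K_1$ (complete over $F$ with the same value group) is a complete discretely valued field, hence spherically complete; it contains $\widehat{F(u_1,\dots,u_m)}$ with the Gauss norm and, inside it, the completed maximal unramified extension of that field with residue field $k_1$, of which $K_1$ is an immediate extension — so, $K_1$ having no proper immediate extension, the two coincide. Tracing through Situation~\ref{Sit:K-in-SwanI}, $K_2 = \widehat{K_1\otimes_F F'}$ is then a completed unramified extension of the Gauss completion of $F'(u_1,\dots,u_m)$, i.e.\ of $K^\flat$, and $K$ is a completed unramified extension of $K_2$; the composite is a completed (ind-)unramified extension of $K^\flat$.

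I expect two steps to carry the real content. The first is the identification just sketched — that $K$ is a completed unramified extension of the Gauss field $K^\flat$; without it, Remark~\ref{R:enlarge-K} and Lemma~\ref{L:H-under-unram-extn} do not combine to give the result, and the argument genuinely needs the discreteness of $F$ (hence spherical completeness of $K_1$) to exclude wilder immediate extensions. The second is hidden inside those cited results: the integrality estimate $|\partial_j^n/n!|\leq 1$ itself. Over a Gauss completion this is the classical identity $\binom{u\,d/du}{n}(u^k) = \binom{k}{n}u^k$ with $\binom{k}{n}\in\ZZ_p$ — equivalently, the reduction of $u\,d/du$ modulo the maximal ideal satisfies the Euler-operator congruence $D^p = D$ on the residue field, which lifts to a congruence modulo $p$ among norm-$\leq 1$ operators precisely because the underlying congruence $k^p\equiv k$ holds already in $\ZZ$ and not merely modulo the maximal ideal — and under tame base change it is the Hensel-plus-induction argument in the proof of Lemma~\ref{L:H-under-unram-extn}. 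Granting the (standard) structure theory of continuous differentials and these two inputs, the lemma follows; the only place where new care is needed beyond citing earlier results is the spherical-completeness reduction.
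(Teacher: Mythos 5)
Your reduction to the Gauss field hinges on the claim that, because $k/\kappa$ is separable and $(\bar u_j)_{j\in J}$ is a $p$-basis of $k$ over $\kappa$, the $\bar u_j$ form a \emph{separating transcendence basis} of $k$ over $\kappa$, so that $k/\kappa(\bar u_J)$ is separable algebraic. That implication is false. Separability (in the sense $k \cap \kappa^{1/p} = \kappa$) does not imply separably generated, and a $p$-basis of a separable extension need not be a separating transcendence basis. For a concrete counterexample to the step you need, take $\kappa = \FF_p$ and $k_1 = \FF_p(x^{1/p^\infty}, y)$. Then $k_1/\FF_p$ is separable (as $\FF_p$ is perfect), and $\{y\}$ is a finite $p$-basis of $k_1$ over $\FF_p$: indeed $\kappa k_1^p = \FF_p(x^{1/p^\infty}, y^p)$ and $1, y, \dots, y^{p-1}$ form a basis of $k_1$ over it. But $x$ is transcendental over $\FF_p(y)$, so $k_1/\FF_p(y)$ has transcendence degree one and is not algebraic. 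A complete $K_1$ over $F=\QQ_p$ with this residue field and value group $\ZZ$ exists by the Cohen structure theorem, and $u_1$ lifting $y$ is a finite $p$-basis of $K_1$ over $F$, so this is a valid instance of Situation~\ref{Sit:K-in-SwanI}. In it there is simply no unramified extension of the Gauss field $K^\flat$ with residue field $k_1$, so the spherical-completeness comparison never starts, and the identification $K_1 \cong (\text{completed max.\ unram.\ ext.\ of }K^\flat)$ is not available. Since you yourself single this out as the step that carries the real content, the gap is fatal to the argument as written.

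The paper's proof bypasses the unramified reduction entirely. The $p$-basis hypothesis is used only to write, for each $n$, an arbitrary $x \in \gotho_{K_1}$ as
\[
x = \sum_{i \geq 0} \sum_{e_J = 0}^{p^n-1} \alpha_{n,i,e_J}^{p^n}\, u_J^{e_J}\, \pi_F^i, \qquad \alpha_{n,i,e_J} \in \gotho_{K_1}^\times \cup \{0\},
\]
which needs no algebraicity of $k_1$ over $\kappa(\bar u_J)$, only the iterated identity $k_1 = \kappa k_1^{p^n}(\bar u_J)$ coming from the $p$-basis property. One then applies the Leibniz rule to $\partial_{j_0}^n/n!$: the factors $\partial_{j_0}^{n-\beta}(u_J^{e_J})/(n-\beta)!$ are integral because they are binomial coefficients times unit monomials, and the factors $\partial_{j_0}^\beta(\alpha^{p^n})/\beta!$ for $\beta \leq n$ are integral because of the $p^n$-divisibility furnished by the $p^n$-th power (together with an induction on $n$ and the bound $v_p(n!) \leq n$). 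The passage to $K_2$ is a harmless base change, and the passage to $K$ is Lemma~\ref{L:H-under-unram-extn} as in your outline; it is only the base case $K_1$ that requires the direct computation rather than a reduction to the Gauss field. If you want to keep your reduction, you would have to restrict Situation~\ref{Sit:K-in-SwanI} to the case where $k_1/\kappa$ is separably generated (e.g.\ finitely generated), which is strictly weaker than what the lemma asserts.
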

\begin{proof}
It is enough to check for $K_1$: it is clear that the same conclusion
then holds for $K_2$, 
and then Lemma~\ref{L:H-under-unram-extn} implies
the same conclusion for $K$. 
That is, we must check that
$\gotho_{K_1}$ is stable under $\partial_j^n / n!$ for all nonnegative
integers $n$ and all $j \in J$.
For each $n \in \NN$, any element $x \in \gotho_{K_1}$ can be written (not uniquely) as
$$
x = \sum_{i = 0}^{+\infty} \sum_{e_J = 0}^{p^n-1} \alpha_{n ,i, e_J}^{p^n} u_J^{e_J} \pi_F^i,
$$
where $\alpha_{n ,i, e_J} \in \gotho_{K_1}^\times \cup \{0\}$.  Then for any $j_0 \in J$,
$$
\frac {\partial_{j_0}^n} {n!} (x) = \sum_{i = 0}^{+\infty} \sum_{e_J = 0}^{p^n-1} \sum_{\beta = 0}^n \frac {\partial_{j_0}^\beta}{\beta!} \left(\alpha_{n ,i, e_J}^{p^n}
\right) \frac {\partial_{j_0}^{n-\beta}} {(n-\beta)!} \left(u_J^{e_J}\right) \pi_F^i \in \gotho_{K_1}.
$$
The lemma follows.
\end{proof}

\begin{remark} \label{R:includes swan1}
Situation~\ref{Sit:K-in-SwanI} includes the two options in
\cite[Hypothesis~2.1.3]{kedlaya-swan1}.
(Note that \cite[Hypothesis~2.1.3(b)]{kedlaya-swan1} should require 
that $l/k$ be separable.)
We will see later 
(Theorem~\ref{T:swan1})
that the results in \cite{kedlaya-swan1} carry over to differential
fields of rational type.
\end{remark}

\section{Differential modules on one-dimensional spaces}
\setcounter{equation}{0}

Having considered differential modules over fields, we next consider
differential modules on a disc or annulus over a differential field.
This parallels \cite[Chapters~11 and~12]{kedlaya-course}.

\setcounter{theorem}{0}
\begin{hypo}\label{H:K}
Throughout this section, we assume that $K$ is a 
complete (not necessarily discretely valued) nonarchimedean differential field of order $m$, characteristic zero, and residual characteristic $p$
(not necessarily positive). We also assume $K$ is of rational type.
\end{hypo}

\begin{notation}\label{N:K-m-J}
Let $\serie{\partial_}m$ denote the derivatives on $K$
and let $\serie{u_}m$ denote a set of corresponding rational parameters.
Let $J = \{\serie{}m\}$.  We reserve $j$ and $J$ for indexing derivations.
\end{notation}

\subsection{Setup}
\label{S:setup-multi-dim}

\begin{notation}
For $\eta > 0$, let $F_\eta$ be the completion of $K(t)$ under the 
$\eta$-Gauss norm $|\cdot|_\eta$. Put $\partial_0 = \frac d {dt}$ on $F_\eta$;
by Remark~\ref{R:enlarge-K}, $F_\eta$ is of rational type for the derivations
$\partial_{J^+}$, where $J^+ = J \cup \{0\} = \{0,\dots,m\}$.
\end{notation}

\begin{remark}
For $I \subseteq [0, +\infty)$ an interval and $j \in J^+$, 
we may refer to differential modules or 
$\partial_{j}$-differential modules over
$A_K^1(I)$, meaning locally free coherent sheaves with the appropriate
derivations.
For $I=[\alpha,\beta]$ closed, these are just modules with appropriate
derivations over the principal ideal domain $K \langle \alpha/t,
t/\beta \rangle$; in particular,
any $\partial_j$-differential module over a closed annulus is free by
\cite[Proposition~9.1.2]{kedlaya-course}.
\end{remark}

\begin{remark}
For $I \subseteq [0, +\infty)$ an interval, and 
$M$ a nonzero $\partial_j$-differential module over $A_K^1(I)$,
it is unambiguous to refer to the intrinsic $\partial_j$-radius 
of convergence $IR_{\partial_j}(M \otimes F_\eta)$ 
of $M$ at $|t| = \eta$.
\end{remark}

The intrinsic radii are stable under tame base change.

\begin{prop}\label{P:tame-base-change}
Let $n$ be a (possibly negative) nonzero integer (coprime to $p$ if $p>0$), and let $f_n^* : F_\eta \rar F_{\eta^{1/n}}$ be the map $t \rar t^n$. Then for any $j \in J^+$, and for any $\partial_j$-differential module $V$ over $F_\eta$, $IR_{\partial_j}(V) = IR_{\partial_j}(f_n^*V)$ and hence $\gothI\gothR_{\partial_j}(V) = \gothI\gothR_{\partial_j}(f_n^*V)$.  
\end{prop}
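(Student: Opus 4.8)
The plan is to reduce the statement about the pushforward map $f_n^*: F_\eta \to F_{\eta^{1/n}}$, $t \mapsto t^n$, to the Frobenius-type machinery of Section~\ref{S:Frob}, since a tame pushforward along a power map behaves formally like a Frobenius descendant but with no loss of radius. First I would reduce to the case $n > 0$: since replacing $t$ by $t^{-1}$ is an isometry of the relevant Gauss norms intertwining $\partial_0$ with $-t^2\partial_0$ (up to a unit), and this change of variable affects neither $IR_{\partial_0}$ nor any $IR_{\partial_j}$ with $j \in J$, the case of negative $n$ follows from the positive case. Then I would factor $n$ into its prime factors (all coprime to $p$ if $p>0$) and reduce to the case where $n = \ell$ is a single prime $\ell \neq p$.

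For the core case, the key observation is that $F_{\eta^{1/\ell}}$ is a finite extension of $f_\ell^*(F_\eta)$: explicitly, writing $s = t^\ell$, the field $F_{\eta^{1/\ell}}$ is generated over $F_\eta$ (identified with the completion of $K(s)$ for the $\eta$-Gauss norm) by $t$, a root of $X^\ell - s$, and this extension is \emph{tamely ramified} of degree $\ell$. Hence for $j \in J$ (the base directions, which kill both $s$ and $t$), Lemma~\ref{L:H-under-unram-extn} tells us $\partial_j$ extends to $F_{\eta^{1/\ell}}$ and remains of rational type with the same parameter $u_j$; the remark at the end of Section~\ref{S:Frob} (intrinsic radii are insensitive to base extension along a rational-type-preserving extension) then gives $IR_{\partial_j}(V) = IR_{\partial_j}(f_\ell^*V)$ immediately. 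So the only real content is the geometric direction $j = 0$.

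For $j = 0$, I would argue via Proposition~\ref{P:generic-point} and the generic point $f_\gen$ exactly as in the Frobenius case (Lemma~\ref{L:Frob-pull-IR-ineq}), but now the computation is cleaner because $\partial_0(t) = 1$ gives $t$ as an honest rational parameter. Fix $r \in (0,1)$; by Proposition~\ref{P:generic-point}, $IR_{\partial_0}(V) \geq r$ iff $f_\gen^*V$ is trivial on $A_{F_\eta}^1[0, r\eta)$. Pulling back along $t \mapsto t^\ell$ and tracking the generic-disc parameter through the commuting square (as in Lemma~\ref{L:Frob-pull-IR-ineq}, with $\tilde\varphi^*(T') = (t + T)^\ell - t^\ell$ replaced by the analogous substitution adapted to $f_\ell^*$), the relevant estimate is that $|t^\ell - T^\ell| = |t|^{\ell-1}|t - T|$ when $\ell$ is coprime to $p$ and $|t - T| < |t|$ — here there is \emph{no} loss, unlike Lemma~10.2.2, because $\ell$ is a unit. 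This yields $IR_{\partial_0}(f_\ell^*V) \geq r$, and running the argument in the reverse direction (using that $f_\ell^*$ is surjective onto a finite tame extension, so the trivialization descends) gives the equality. The statement about $\gothI\gothR$ then follows by applying the radius statement to all Jordan--H\"older constituents, noting that $f_n^*$ is exact and preserves irreducibility up to the standard bookkeeping.

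The main obstacle I anticipate is the bookkeeping in the $j=0$ step: unlike the clean cyclic group action available for the $\partial$-Frobenius, here one must directly verify that triviality of $f_\gen^*V$ on a disc of the correct radius is equivalent before and after the substitution $t \mapsto t^\ell$, which requires carefully chasing the generic-point diagram and checking that the norm estimate $|t^\ell - T^\ell| = |t|^{\ell-1}|t-T|$ does the job with no slack. The descent direction (deducing $IR_{\partial_0}(V) \geq r$ from $IR_{\partial_0}(f_\ell^*V) \geq r$) also needs a word: triviality over the tamely ramified extension descends because the map on $H^0$ is faithfully flat, or alternatively one invokes the already-established fact that intrinsic radii are unchanged by tame base extension to move freely between $F_{\eta^{1/\ell}}$ and $f_\ell^*(F_\eta)$.
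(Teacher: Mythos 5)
Your proposal gives a direct argument, whereas the paper's proof is essentially two citations: for $j=0$ it invokes \cite[Proposition~9.7.6]{kedlaya-course}, and for $j\in J$ it applies Remark~\ref{R:base-change-sp-norm}. For $j\in J$ your route works but is substantially more roundabout than necessary: you pass through the tame-extension machinery of Lemma~\ref{L:H-under-unram-extn} and the final remark of \S\ref{S:Frob}, when in fact $f_n^*$ is already an isometric embedding of complete nonarchimedean differential fields commuting with each $\partial_j$ ($j\in J$), and both $F_\eta$ and $F_{\eta^{1/n}}$ are of rational type with the same parameter $u_j$, so $|\partial_j|_{\sp,F_\eta}=|\partial_j|_{\sp,F_{\eta^{1/n}}}=\omega|u_j|^{-1}$. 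Remark~\ref{R:base-change-sp-norm} then gives $IR_{\partial_j}(f_n^*V)=\min\{1,IR_{\partial_j}(V)\}=IR_{\partial_j}(V)$ in one line, with no mention of ramification.

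For $j=0$ your generic-disc argument is a legitimate reproof of the cited result, and the central point — that $|t^\ell-T^\ell|=|t|^{\ell-1}|t-T|$ exactly, with no slack, because $\ell\in\gotho_K^\times$ — is correctly identified as the analogue of \cite[Lemma~10.2.2]{kedlaya-course}. The forward implication via the commuting square and Proposition~\ref{P:generic-point} is sound. The descent direction is the place where your write-up is too vague: ``$f_\ell^*$ is surjective onto a finite tame extension, so the trivialization descends'' does not by itself explain why triviality of $f_\gen'{}^*(f_\ell^*V)$ on the disc of radius $r\eta^{1/\ell}$ forces triviality of $f_\gen^*V$ on the disc of radius $r\eta$. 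The fact that makes this work, and that you should state explicitly, is that the substitution $T'\mapsto (s+T')^\ell-s^\ell$ is an actual \emph{isomorphism} of the disc $|T'|<r\eta^{1/\ell}$ onto the disc $|T|<r\eta$ (it is a power series $g(T')$ with $g'(0)=\ell s^{\ell-1}$ a unit of norm $\eta^{(\ell-1)/\ell}$, and its compositional inverse converges on the full target disc); this is exactly where the $\ell$-coprime-to-$p$ hypothesis is used, and it is the point at which the tame case diverges from the Frobenius case, where the corresponding map of discs has degree $p$ and requires the antecedent construction. Once this is noted, the only remaining step is descent along the tame base-field extension $F_\eta\to F_{\eta^{1/\ell}}$, for which the cleanest justification is again Remark~\ref{R:base-change-sp-norm} applied to $\partial_T$ (rather than the alternative you sketch). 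With those two clarifications the argument closes; as written it leaves the reader to reconstruct the key disc-isomorphism observation.
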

\begin{proof}
The proof for $j = 0$ is in \cite[Proposition~9.7.6]{kedlaya-course},
 and the proof for $j \in J$ is to apply
Remark~\ref{R:base-change-sp-norm}.
\end{proof}
\begin{remark}
One may also consider off-centered tame base change, 
as in \cite[Exercise~9.8]{kedlaya-course}.
\end{remark}

\subsection{Variation of subsidiary radii}
\label{variation-partial_j}

In this subsection, we prove slightly weakened analogues of some results in 
\cite[Chapter~11]{kedlaya-course}.  
We begin by studying the variation of slopes of Newton polygons.

\begin{notation}
Let $P \in K \langle \alpha / t, t / \beta \rangle[T]$ be a polynomial of degree $d$.  For $r \in [-\log \beta, -\log \alpha]$, let $\NP_r(P)$ denote the Newton polygon of $P$ under $|\cdot|_{e^{-r}}$.
\end{notation}

\begin{prop}\label{P:newton}
For $r \in [-\log \beta, -\log \alpha]$, 
let $f_1(P, r), \dots, f_d(P,r)$ be the slopes of $\NP_r(P)$ in increasing 
order.  For $i = \serie{}d$, put $F_i(P, r) = f_1(P, r) + \cdots + f_i(P, r)$.
\begin{enumerate}
\item[(a)]
(Linearity) For $i = \serie{}d$, the functions $f_i(P, r)$ and $F_i(P, r)$ are continuous and piecewise affine in $r$.
\item[(b)]
(Integrality) If $i = d$ or $f_i(r_0) < f_{i+1}(r_0)$, then the slopes of $F_i(P, r)$ in some neighborhood of $r = r_0$ belong to $\ZZ$.  
Consequently, the slopes of each $f_i(P, r)$ and $F_i(P, r)$ belong to $\frac 11 \ZZ \cup \cdots \cup \frac 1d \ZZ$.
\item[(c)]
(Monotonicity)  Suppose that $P$ is monic and $\alpha = 0$.
For $i = \serie{}d$, the slopes of $F_i(P, r)$ are nonnegative.
\item[(d)]
(Concavity)  Suppose that $P$ is monic.  For $i = \serie{}d$, the function $F_i(P, r)$ is concave.
\item[(e)]
(Truncation) For any fixed $a \in \RR^+$ and $b \in \RR$, the statements (a), (c), and (d) are also true if we replace $f_i(P, r)$ by $\min \{f_i(P, r), ar+b\}$ for all $i \in \{\serie{}d\}$.
\end{enumerate}
\end{prop}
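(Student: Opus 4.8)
The plan is to transfer the whole statement to the $r$-dependence of the Gauss norms of the individual coefficients of $P$, and then to read off all five assertions from standard facts about lower convex hulls of finitely many points that move piecewise-affinely. Write $P = \sum_{j=0}^d a_j T^j$ with $a_d \neq 0$, and for a nonzero $a = \sum_{k \in \ZZ} c_k t^k \in K\langle \alpha/t, t/\beta\rangle$ put $v_r(a) = -\log|a|_{e^{-r}} = \min_k\bigl(-\log|c_k| + kr\bigr)$. I would first check that, on the compact interval $[-\log\beta, -\log\alpha]$, the convergence condition defining $K\langle\alpha/t,t/\beta\rangle$ forces $-\log|c_k| + kr \to +\infty$ as $k \to \pm\infty$, uniformly in $r$; hence only finitely many $k$ are ever relevant, so $r \mapsto v_r(a)$ is a finite minimum of affine functions with integer slopes, in particular continuous, piecewise affine with integer slopes, and concave. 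If $\alpha = 0$ only $k \geq 0$ occur, so $v_r(a)$ is moreover nondecreasing. Let $N_r$ denote the lower convex hull of the points $\{(-j, v_r(a_j)) : a_j \neq 0\}$; then the slopes of $\NP_r(P)$ in increasing order are the slopes of $N_r$ (the finitely many infinite slopes, present only when $P(0)=0$, are harmless and may be ignored), and $F_i(P,r) = N_r(-(d-i)) - v_r(a_d)$, so that $F_i(P,r) = N_r(-(d-i))$ when $P$ is monic. Finally, for a fixed abscissa $\xi$, the value $N_r(\xi)$ is the minimum, over pairs $j > k$ with $-j \leq \xi \leq -k$, of the interpolated value $\tfrac{(\xi+j)\,v_r(a_k) + (-k-\xi)\,v_r(a_j)}{j-k}$, a convex combination of $v_r(a_k)$ and $v_r(a_j)$.

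From these facts the structural properties of $N_r(\xi)$, for fixed $\xi$, follow at once: as a convex combination of concave functions of $r$, then a minimum of such, $N_r(\xi)$ is concave; since the $v_r(a_j)$ have only finitely many breakpoints and the combinatorial type of the hull changes at only finitely many $r$ (each change being a collinearity among three of the moving points, i.e.\ a single affine condition on $r$ within each interval of affinity of the $v_r(a_j)$), $N_r(\xi)$ is continuous and piecewise affine; and when $\alpha = 0$ it is nondecreasing, being a minimum of convex combinations of nondecreasing functions. This immediately gives \textbf{(a)} (with $f_i = F_i - F_{i-1}$), \textbf{(d)} (the monic case, where $F_i(P,r) = N_r(-(d-i))$), and \textbf{(c)} (the monic disc case).

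For \textbf{(b)}: if $i = d$ then $F_d(P,r) = v_r(a_0) - v_r(a_d)$ wherever finite, a difference of integer-slope functions; if instead $f_i(r_0) < f_{i+1}(r_0)$, then by continuity of $f_i$ and $f_{i+1}$ (part (a)) the same strict inequality holds on a neighborhood of $r_0$, so there $N_r$ has a corner at the abscissa $-(d-i)$, which must therefore be one of the data points (a vertex of a finite convex hull is one of the points), forcing $N_r(-(d-i)) = v_r(a_{d-i})$ and hence $F_i(P,r) = v_r(a_{d-i}) - v_r(a_d)$, again with integer slopes. The last clause follows because on any interval of affinity $-(d-i)$ lies on a segment of $N_r$ joining two vertices at horizontal distance $w$ with $1 \leq w \leq d$, so $f_i$ is the common slope of that segment and $F_i$ the corresponding interpolation, both having $w$ as the only possible denominator of their slope. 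Finally \textbf{(e)} is purely formal: writing $\ell(r) = ar+b$, the ordering $f_1 \leq \cdots \leq f_d$ gives the identity
$$
\sum_{j=1}^i \min\{f_j(P,r),\, \ell(r)\} \;=\; \min_{0 \leq k \leq i}\bigl\{F_k(P,r) + (i-k)\,\ell(r)\bigr\} \qquad (F_0 := 0),
$$
since, going from $k-1$ to $k$, the inner quantity changes by $f_k(P,r) - \ell(r)$, hence decreases while $f_k \leq \ell(r)$ and then increases. The right-hand side is a finite minimum of sums of a continuous piecewise-affine function (an $F_k$) and a \emph{nonnegative} multiple of the affine function $\ell$; such minima inherit continuity and piecewise-affinity, inherit concavity from the $F_k$ (using $i-k \geq 0$ and $\ell$ affine), and, when $a \in \RR^+$ and the $F_k$ are nondecreasing, inherit monotonicity (again using $i-k \geq 0$). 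This yields (a), (d), (c) for the truncated quantities.

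The content is entirely elementary; the only place that demands care is the bookkeeping in the first paragraph — genuinely establishing \emph{piecewise} affinity (finitely many breakpoints, not merely local affinity off a small set) both for the $v_r(a_j)$ and for the hull, and handling the degenerate situations where some $a_j$, and $a_0$ in particular, vanishes, so that the identity $F_i(P,r) = N_r(-(d-i)) - v_r(a_d)$ and the hull manipulations remain valid.
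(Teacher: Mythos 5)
Your proof is correct, and it is the standard argument (it is essentially the content of the cited reference \cite[Theorem~11.2.1 and Remark~11.2.4]{kedlaya-course}): write each coefficient's $r$-dependent valuation $v_r(a_j)$ as a finite minimum of integer-slope affine functions of $r$ (compactness of $[-\log\beta,-\log\alpha]$ giving the finiteness), express $F_i(P,r) = N_r(-(d-i)) - v_r(a_d)$ via the lower convex hull, and read off (a)--(d) from the interpolation formula for the hull value at a fixed abscissa; the reduction of (e) to (a),(c),(d) via the identity $\sum_{j\le i}\min\{f_j,\ell\} = \min_{0\le k\le i}\{F_k + (i-k)\ell\}$ is the standard trick as well. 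The paper itself gives no proof, only the citation, so there is nothing to contrast; your write-up fills in exactly what the citation points to, and the care you flag at the end (handling $a_j = 0$, and getting genuine piecewise affinity rather than local affinity) is precisely what the bookkeeping in the course-notes proof attends to.
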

\begin{proof}
See \cite[Theorem~11.2.1]{kedlaya-course} and \cite[Remark~11.2.4]{kedlaya-course}.
\end{proof}

\begin{lemma}[Lattice lemma] \label{L:lattice} 
Put $R = K \langle t \rangle$,
$\cup_{\alpha < 1} K \langle \alpha/t, t \rangle$, or
$\cup_{\alpha < 1 < \beta} K \langle \alpha/t, t/\beta \rangle$,
or (if $K$ is discrete)
$K \llbracket t \rrbracket_0$
or $\cup_{\alpha < 1} K \llbracket \alpha/t, t \rrbracket_0$
equipped with the norm $|\cdot|_1$.
Let $M$ be a finite free $R$-module of rank $n$, 
and let $|\cdot|_M$ be a norm on $M$ compatible with $R$.  Assume that either:
\begin{enumerate}
\item[(a)]
$c>1$, and the value group of $K$ is not discrete; or
\item[(b)]
$c\geq 1$, and the value groups of $K$ and $M$ coincide and are discrete.
\end{enumerate}
Then there exists a basis of $M$ defining a supremum norm $|\cdot|'_M$ for which $c^{-1} |m|_M \leq |m|'_M \leq c |m|_M$ for $m \in M$.
\end{lemma}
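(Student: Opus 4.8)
The plan is to break the statement into two pieces — first produce a basis of $M$ that is suitably \emph{orthogonal} for $|\cdot|_M$, then rescale its members — and to treat the first piece by induction on $n=\rank M$. Throughout I use two facts about the rings $R$ in question. First, $|\cdot|_1$ is multiplicative and $|R^\times|_1=|K^\times|$: for a unit of $R$ the $1$-Gauss norm is realized by a single monomial with coefficient in $K^\times$, by a Newton polygon argument. Second, any norm on a finite free $R$-module compatible with $|\cdot|_1$ is equivalent to the supremum norm attached to an arbitrary basis; one sees this by extending scalars to the $|\cdot|_1$-completion $\widehat R$ (which in each case is a Noetherian affinoid-type algebra) and invoking uniqueness of the module topology over $\widehat R$. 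In particular every direct summand of $M$ is closed for $|\cdot|_M$, so the quotient norm on $M$ modulo a direct summand is a genuine norm. Granting this, the reduction is: it suffices to produce a basis $g_1,\dots,g_n$ of $M$ with
\[
|a_1g_1+\cdots+a_ng_n|_M\ \geq\ c^{-1/2}\max_i\bigl(|a_i|_1\,|g_i|_M\bigr)\qquad(a_i\in R),
\]
and then to rescale each $g_i$ by a unit $\lambda_i\in K^\times$ so that $|\lambda_ig_i|_M$ lies in $[c^{-1/2},c^{1/2}]$; this is possible because in case (a) the value group of $K$ is dense, while in case (b) the value group of $M$ equals the discrete group $|K^\times|$, so one can even take $|\lambda_ig_i|_M=1$. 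The displayed inequality is unchanged in form when the $g_i$ are rescaled, so combining it with the automatic bound $|\sum a_ig_i|_M\leq\max_i|a_i|_1|g_i|_M$ yields $c^{-1}|m|_M\leq|m|'_M\leq c|m|_M$ for the supremum norm $|\cdot|'_M$ defined by $\lambda_1g_1,\dots,\lambda_ng_n$.

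For the orthogonal basis I prove, by induction on $n$, that for every $\gamma\geq1$ (with $\gamma>1$ unless case (b) holds) there is a basis $g_1,\dots,g_n$ of $M$ with $|\sum a_ig_i|_M\geq\gamma^{-(n-1)}\max_i|a_i|_1|g_i|_M$; the lemma then follows by taking $\gamma=c^{1/(2(n-1))}$ and rescaling. The case $n\leq1$ is trivial. For $n\geq2$, take a basis of $M$, let $M'$ be the span of all but the first member, so $M=Rg_1^{(0)}\oplus M'$, and let $\bar{|\cdot|}$ be the quotient norm on $M/M'\cong R$, which is a genuine compatible norm by the remark above. The \emph{split step} replaces $g_1^{(0)}$, within its coset $g_1^{(0)}+M'$, by a generator $g_1=g_1^{(0)}+m_0'$ with $|g_1|_M\leq\gamma\,\bar{|g_1|}$ (still $M=Rg_1\oplus M'$); since $\bar{|g_1|}=\inf_{m'\in M'}|g_1+m'|_M$, one just chooses $m_0'$ with $|g_1^{(0)}+m_0'|_M$ within a factor $\gamma$ of this infimum. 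One then equips $M'$ with the restriction of $|\cdot|_M$ — compatible, and still satisfying (a) or (b) relative to $K$, as one checks directly — applies the inductive hypothesis with the same $\gamma$ to get a basis $g_2,\dots,g_n$ of $M'$ with factor $\gamma^{-(n-2)}$, and verifies that $g_1,\dots,g_n$ has factor $\gamma^{-(n-1)}$ on $M$. This verification is a short case analysis of the ultrametric inequality for $a_1g_1+m'$ with $m'=\sum_{i\geq2}a_ig_i$: when $|a_1g_1|_M\neq|m'|_M$ the norm is the larger of the two, with room to spare; and when $|a_1g_1|_M=|m'|_M$ — the only situation permitting cancellation — the bound $|a_1g_1+m'|_M\geq\bar{|a_1g_1|}\geq\gamma^{-1}|a_1g_1|_M$, together with the sub-orthogonality estimate $\max_{i\geq2}|a_i|_1|g_i|_M\leq\gamma^{n-2}|m'|_M$, gives exactly the required inequality.

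In case (b) one takes $\gamma=1$ at every stage: since $M'$ is closed and the value group of $M$ is discrete, the infimum defining $\bar{|g_1|}$ is attained, so $g_1$ can be chosen with $|g_1|_M=\bar{|g_1|}$, and the final rescaling is likewise exact — one obtains a basis with $|\cdot|'_M=|\cdot|_M$ on the nose, which is why $c=1$ is allowed. The step I expect to be the main obstacle is precisely the split step together with the bookkeeping of the slack $\gamma$: one must check that a primitive vector can genuinely be moved within its coset so as to (approximately) realize the quotient norm, which rests on the closedness of the complement $M'$ and hence on the equivalence-of-norms input; and one must distribute the total slack as $\gamma=c^{1/(2(n-1))}$ so that the loss accrued over the $n-1$ splitting steps, plus the rescaling, telescopes to a factor at most $c$ — with no such slack available, and no attainment of the relevant infima, in the non-discrete case, which forces $c>1$ there. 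A last technical point concerns the non-complete union rings: there $M$, the norm $|\cdot|_M$, and the basis $g_i$ produced above are already defined over some closed-annulus (or bounded-Laurent-series) subring of $R$, and since that subring is dense in $R$ for $|\cdot|_1$, the desired inequality on all of $M$ follows by continuity from its validity on the submodule defined over the subring.
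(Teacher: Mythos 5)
The reduction to constructing an approximately orthogonal basis, the rescaling step, and the treatment of the non-complete and discrete cases are all sound, but the heart of your argument — the split step — has a gap. You fix $M'$ (the span of all but the first member of an \emph{arbitrary} initial basis) and then optimize $g_1$ within its coset modulo $M'$; the key estimate you invoke at the end, $\bar{|a_1 g_1|} \geq \gamma^{-1}|a_1 g_1|_M$, follows from your choice $|g_1|_M \leq \gamma\,\bar{|g_1|}$ only if the quotient norm on $M/M' \cong R$ is multiplicative in the sense that $\bar{|a\,\bar m|} = |a|_1\,\bar{|\bar m|}$. Over a complete field this is automatic (one substitutes $m' \mapsto a n'$ and uses that division is available), and indeed your argument is a correct route to the lattice lemma for vector spaces over a complete nonarchimedean field. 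Over the rings $R$ in question it fails: the substitution only gives $\bar{|a\,\bar m|} \leq |a|_1\,\bar{|\bar m|}$, with no reverse inequality when $a$ is a non-unit, because a general $m' \in M'$ need not be of the form $a n'$ with $n' \in M'$.

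Concretely, take $R = K\langle t\rangle$, $M = R^2$ with standard basis $e_1, e_2$, and the norm
\[
|(f,g)|_M = \max\{\,|f - tg|_1,\ \epsilon|g|_1\,\}, \qquad \epsilon \in (0,1).
\]
This is a compatible norm, even multiplicative ($|am|_M = |a|_1|m|_M$), so it falls squarely under the lemma's hypotheses. With $M' = Re_2$ one finds $|(1,g)|_M = \max\{1,|g|_1\}$, so the quotient norm gives $\bar{|e_1|} = 1$ with infimum attained at $g_1 = e_1$; but $\bar{|te_1|} = \inf_g|(t,g)|_M \leq |(t,1)|_M = \epsilon$, so $\bar{|a_1 g_1|} \geq \gamma^{-1}|a_1 g_1|_M$ fails for $a_1 = t$ as soon as $\epsilon < \gamma^{-1}$. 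Correspondingly, the basis $(e_1,e_2)$ your recursion outputs is not orthogonal: $|te_1 + e_2|_M = \epsilon$ while $\max(|t|_1|e_1|_M, |1|_1|e_2|_M) = 1$. The lemma itself is fine here — the basis $(1,0),(t,1)$ defines a supremum norm equal to $|\cdot|_M$ — but the right complement $M' = R(t,1)$ is not the span of all-but-one member of the starting basis, and your procedure never revises $M'$. What is really needed is to choose the splitting direction in terms of the norm, not a priori; the paper achieves this by passing to the completion $F$ of $\Frac R$ (where your style of argument does work, via the cited Lemma~1.3.7 of \cite{kedlaya-course}), obtaining an approximately orthogonal $F$-basis, and then invoking a separate descent lemma (8.6.1 of \cite{kedlaya-course}), plus a density argument in the union cases, to replace it by a basis of $M$ over $R$ defining the same supremum norm. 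That descent step is where the genuine ring-theoretic content lies, and it is absent from your argument.
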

\begin{proof}
Let $F$ be the completion of $\Frac R$ under $|\cdot|_1$.
By \cite[Lemma~1.3.7]{kedlaya-course},
we can construct a basis of $M \otimes F$ defining a supremum norm $|\cdot|'_M$
for which $c^{-1} |m|_M \leq |m|'_M \leq c |m|_M$ for $m \in M$.
If $R = K \langle t \rangle$,
or $K$ is discrete and $R = K \llbracket t \rrbracket_0$,
then \cite[Lemma~8.6.1]{kedlaya-course} gives
a basis of $M$ defining the same supremum norm $|\cdot|'_M$.
If $R = \cup_{\alpha < 1} K \langle \alpha/t, t \rangle$ or
$\cup_{\alpha < 1 < \beta} K \langle \alpha/t, t/\beta \rangle$,
then \cite[Lemma~8.6.1]{kedlaya-course} gives a basis of
$K \langle 1/t, t \rangle$ defining $|\cdot|'_M$.
However, we can approximate that basis arbitrarily closely
with a basis of $M$ itself,
because  $R$ is dense in $K \langle 1/t, t \rangle$ under $|\cdot|_1$,
and any element of $R$ with an inverse in $K \langle 1/t, t \rangle$
also has an inverse in $R$. Any sufficiently good approximation
will define the same supremum norm.
If $K$ is discrete and 
$R = \cup_{\alpha < 1} K \llbracket \alpha/t, t \rrbracket_0$,
then $R$ itself is a field, so we can approximate a basis of $M \otimes F$
with a basis of $M$ defining the same supremum norm.
\end{proof}

\begin{notation}\label{N:subsidiary-radii-and-sums}
Fix $j \in J^+$.  Let $M$ be a $\partial_j$-differential module of rank $d$
over $K \langle \alpha/t, t/\beta \rangle$.  For $r \in [-\log \beta, -\log \alpha]$ and $i \in \{\serie{}d\}$, define 
$$
f_i^{(j)}(M,r) = -\log R_{\partial_j}(M \otimes F_{e^{-r}}; i), \quad F_i^{(j)}(M,r) = f_1^{(j)}(M,r) + \cdots + f_i^{(j)}(M,r).
$$
\end{notation}

\begin{theorem} \label{T:subsidiary-partial-0}
\cite[Theorem~11.3.2]{kedlaya-course}
Let $M$ be a $\partial_0$-differential module of rank $d$ over $K \langle \alpha/t, t/\beta \rangle$.
\begin{enumerate}
\item[(a)]
(Linearity)
For $i=\serie{}d$, the functions $f_i^{(0)}(M,r)$ and $F_i^{(0)}(M,r)$ are continuous and piecewise affine.
\item[(b)]
(Integrality)
If $i=d$ or $f_i^{(0)}(M,r_0) > f_{i+1}^{(0)}(M,r_0)$, then 
the slopes of $F_i^{(0)}(M,r)$ in some neighborhood of $r_0$ belong to $\ZZ$. Consequently, the slopes of each $f_i^{(0)}(M,r)$ and $F_i^{(0)}(M,r)$ belong to
$\frac{1}{1} \ZZ \cup \cdots \cup \frac{1}{d} \ZZ$.
\item[(c)]
(Monotonicity)
Suppose that  $\alpha = 0$.  For any point $r_0$ where $f_i^{(0)}(M,r_0) > r_0$, the slopes of $F_i^{(0)}(M,r)$ are nonpositive in some neighborhood of $r_0$.  Also, $f_i^{(0)}(M , r_0) = r_0$ for $r_0$ sufficiently large.
\item[(d)]
(Convexity)
For $i=1, \dots, d$, the function $F_i^{(0)}(M,r)$ is convex.
\end{enumerate}
\end{theorem}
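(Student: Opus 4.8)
The plan is to reduce all four assertions to the combinatorial facts about Newton polygons of ordinary polynomials recorded in Proposition~\ref{P:newton}, translating subsidiary radii into slopes via the twisted-polynomial formalism, and to use the $\partial_0$-Frobenius descendant of Section~\ref{S:Frob} to bring into the range detected by such a Newton polygon those intrinsic radii that are too close to $1$ (this last only being needed when $p>0$). All four conclusions are local in $r$, so fix $r_0$ and work on a small closed sub-annulus about $e^{-r_0}$. There the coefficient ring is a principal ideal domain, so $M$ is free; passing to its fraction field and using Lemma~\ref{L:cyclic vector} gives a cyclic vector and hence an isomorphism $M\cong R\{T\}/R\{T\}P$ with $P$ monic, while the Lattice Lemma~\ref{L:lattice} lets one pass between the intrinsic norm data computing $|\partial_0|_{\sp,\,M\otimes F_{e^{-r}}}$ and honest supremum norms on $M$.

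\textbf{The range of small radii.} Track first only the radii $R_{\partial_0}(M\otimes F_{e^{-r}};i)$ at most $\omega e^{-r}$, i.e. the intrinsic $\partial_0$-radii below $\omega$. By Proposition~\ref{P:spec-norm-from-NP} (Christol--Dwork), Proposition~\ref{P:NP-decomp-field} (Robba) and Remark~\ref{R:part-decomp-over-field}, at $|t|=e^{-r}$ these are exactly the numbers $\omega e^{s}$, with $s$ ranging over the slopes of $\NP_r(P)$ less than $-r=-\log|\partial_0|_{F_{e^{-r}}}$, counted with multiplicity. So the partial sums built from this part of $\gothR_{\partial_0}(M\otimes F_{e^{-r}})$ differ from the functions $F_i(P,r)$ of Proposition~\ref{P:newton} only by the affine rescaling $s\mapsto-\log\omega-s$ and a reindexing: linearity~(a) and integrality~(b) are then immediate, the concavity of Proposition~\ref{P:newton}(d) becomes the convexity~(d) precisely because of the minus sign, and on a disc ($\alpha=0$) monotonicity~(c) follows from Proposition~\ref{P:newton}(c). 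The truncation clause Proposition~\ref{P:newton}(e) is what lets one write the part of $\gothR_{\partial_0}$ below the threshold as the pointwise minimum of the Newton-polygon data with the line $r\mapsto r-\log\omega$, so the conclusions persist across the threshold $IR_{\partial_0}=\omega$; and $f_i^{(0)}(M,r_0)=r_0$ for $r_0$ large is the standard estimate that an analytic connection matrix on $|t|<\beta$ has bounded $|\cdot|_\eta$-norm as $\eta\to 0$, so that Dwork's transfer theorem (used already in Proposition~\ref{P:generic-point}) forces $R_{\partial_0}(M\otimes F_\eta)=\eta$ for small $\eta$.

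\textbf{The range of large radii (only when $p>0$).} For intrinsic radii in $[\omega,1]$, invisible to $\NP_r(P)$, replace $M$ by its $h$-fold $\partial_0$-Frobenius descendant $\varphi_*^{(\partial_0,h)}M$ over the annulus in the coordinate $s=t^{p^h}$ (the annulus version of Proposition~\ref{P:CD-Frob-ante} and Theorem~\ref{T:Frob-push-sp-norm}). By Theorem~\ref{T:Frob-push-sp-norm} an intrinsic radius $\lambda>\omega$ of $M$ contributes, after $h$ iterations with $p^h$ large, only radii that are $\le\omega$ (together with at most a persistent solvable radius, for which the relevant function is already affine), while the radial coordinate transforms by the affine rule $r\mapsto p^h r$ since $|t|=\rho$ forces $|s|=\rho^{p^h}$. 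Applying the small-radii analysis to $\varphi_*^{(\partial_0,h)}M$ and pulling back along $r\mapsto p^h r$, together with the $p^h$-th-root rescaling of radii, transports continuity, piecewise affinity, convexity and (on the disc) monotonicity; integrality~(b) survives because, although $\varphi_*^{(\partial_0,h)}M$ has rank $p^h d$, the slopes rescaled by $p^{-h}$ are exactly those already integral by Proposition~\ref{P:newton}(b), so the net factor is $p^{-h}\cdot p^h=1$.

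\textbf{Assembly and the main difficulty.} By Proposition~\ref{P:part-decomp-over-field} applied to $M\otimes F_{e^{-r}}$, the multiset $\gothR_{\partial_0}(M\otimes F_{e^{-r}})$ is the disjoint union of the two parts just analyzed; ordering and taking partial sums, each $f_i^{(0)}(M,r)$ and $F_i^{(0)}(M,r)$ is, near $r_0$, a finite combination of minima and sums of the piecewise-affine functions produced above, hence continuous, piecewise affine with slopes in $\frac{1}{1}\ZZ\cup\cdots\cup\frac{1}{d}\ZZ$, convex, and --- on a disc --- satisfying the monotonicity of~(c). I expect the main obstacle to be the large-radii step: one must build the $\partial_0$-Frobenius descendant globally over the annulus (not just fiberwise over each $F_{e^{-r}}$), check its compatibility with restriction to sub-annuli and with the identification $|s|=\rho^{p^h}$ of radial coordinates, and track the two radius regimes carefully across the threshold $IR_{\partial_0}=\omega$; this is where the bulk of the work in \cite[Chapter~11]{kedlaya-course} lies.
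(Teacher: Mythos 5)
The paper does not actually prove Theorem~\ref{T:subsidiary-partial-0}: it cites it as \cite[Theorem~11.3.2]{kedlaya-course}, and the nearest the paper comes to exhibiting the argument is the adapted proof of the analogous Theorem~\ref{T:subsidiary-partial-j}, which reduces to Proposition~\ref{P:newton} via cyclic vectors, the Lattice Lemma~\ref{L:lattice}, and Frobenius pushforward. Your sketch follows that same skeleton, so the broad strategy is the right one.

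However, there is a concrete gap in your treatment of monotonicity~(c). Your proposal asserts that the Frobenius-descent step ``transports \dots (on the disc) monotonicity'' by pulling back along $r \mapsto p^h r$ with the centered $\partial_0$-Frobenius $t \mapsto t^p$. But precisely in the $j=0$ case, the centered Frobenius is ramified at $t = 0$: the derivative transforms as $\partial_s = (pt^{p-1})^{-1}\partial_t$, which is singular at the center of the disc. The paper flags this explicitly in the proof of Theorem~\ref{T:subsidiary-partial-j} (``by contrast, in the proof of \cite[Theorem~11.3.2]{kedlaya-course}, one must switch to an off-centered Frobenius to avoid a singularity at $t=0$''), and this off-centered replacement --- substituting $t \mapsto (t+c)^p$ for suitable nonzero $c$, which removes the critical point from the disc --- is what makes the monotonicity argument go through on $A_K^1[0,\beta]$. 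Your sketch never raises this issue, so as written the large-radii version of~(c) does not follow. Relatedly, your bookkeeping for integrality under Frobenius pushforward (``the net factor is $p^{-h}\cdot p^h = 1$'') is too compressed: what actually has to be checked is that the change of radial variable $r \mapsto pr$ composes with the rank inflation $d \mapsto pd$ and the radius transformation of Theorem~\ref{T:Frob-push-sp-norm} to leave the slopes of the \emph{partial sums} $F_i^{(0)}$ (at indices where $i = d$ or $f_i^{(0)} > f_{i+1}^{(0)}$) in $\ZZ$, and this requires tracking which of the pushforward's $pd$ radii come from each of the original $d$ radii under the two regimes of Theorem~\ref{T:Frob-push-sp-norm}. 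You identify the general vicinity of the difficulty (building the Frobenius descendant globally over the annulus, keeping track of the threshold crossing), but the specific obstruction you would actually hit --- the singularity at $t = 0$ --- is not the one you name.
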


We have a similar but slightly weaker result for 
$\partial_j$-differential modules when $j \in J$.

\begin{theorem} \label{T:subsidiary-partial-j}
Fix $j \in J$.  Let $M$ be a $\partial_j$-differential module of rank $d$
over $K \langle \alpha/t, t/\beta \rangle$.
\begin{enumerate}
\item[(a)]
(Linearity)
For $i=\serie{}d$, the functions $f_i^{(j)}(M,r)$ and $F_i^{(j)}(M,r)$ are continuous.  They are piecewise affine in the locus where $f_i^{(j)}(M, r) > -\log |u_j|$; if $p=0$, they are in fact piecewise affine everywhere.
\item[(b)]
(Weak integrality)
\begin{enumerate}
\item[(i)] Suppose $p=0$.
If $i=d$ or $f_i^{(j)}(M,r_0) > f_{i+1}^{(j)}(M,r_0)$, 
then the slopes of $F_i^{(j)}(M,r)$ in some neighborhood of $r_0$ belong to 
$\ZZ$.  
Consequently, the slopes of each $f_i^{(j)}(M,r)$ and $F_i^{(j)}(M,r)$ at $r=r_0$ belong to
$\frac{1}{1} \ZZ \cup \cdots \cup \frac{1}{d} \ZZ$.
\item[(ii)] Suppose $p>0$.
If $i=d$ or $f_i^{(j)}(M,r_0) > f_{i+1}^{(j)}(M,r_0)$, and $f_i^{(j)} (M, r_0) > \frac 1 {p^n(p-1)} \log p - \log |u_j|$ for some $n \in \ZZ_{\geq 0}$, then the slopes of $F_i^{(j)}(M,r)$ in some neighborhood of $r_0$ belong to $\frac 1 {p^n}\ZZ$.  Consequently, if $f_i^{(j)} (M, r_0) > \frac 1 {p^n(p-1)} \log p - \log |u_j|$ for some $n \in \ZZ_{\geq 0}$, the slopes of each $f_i^{(j)}(M,r)$ and $F_i^{(j)}(M,r)$ at $r=r_0$ belong to
$\frac{1}{p^n} \ZZ \cup \cdots \cup \frac{1}{p^nd} \ZZ$.
\end{enumerate}
\item[(c)]
(Monotonicity)
Suppose that  $\alpha = 0$. For $i=1, \dots, d$, the slopes of $F_i^{(j)}(M,r)$ are nonpositive.
\item[(d)]
(Convexity)
For $i=1, \dots, d$, the function $F_i^{(j)}(M,r)$ is convex.
\end{enumerate}
\end{theorem}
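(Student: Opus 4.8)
The plan is to follow the argument for the geometric derivation, \cite[Theorem~11.3.2]{kedlaya-course} (our Theorem~\ref{T:subsidiary-partial-0}), while keeping track of the two features that distinguish a base derivation $\partial_j$, $j\in J$, from $\partial_0$. First, by Remark~\ref{R:enlarge-K} each $F_\eta$ is of rational type with parameter $u_j$, so $|\partial_j|_{\sp,F_\eta}=\omega|u_j|^{-1}$ is \emph{independent of $\eta$} (whereas $|\partial_0|_{\sp,F_\eta}=\omega\eta^{-1}$); this is the source of the uniform cutoff at the value $-\log|u_j|$ and of the unconditional one-sided monotonicity in~(c). Second, the $\partial_j$-Frobenius acts in the $u_j$-direction and fixes $t$, so it does not rescale the parameter $r$; this keeps the transformation of the functions $f_i^{(j)}$ under Frobenius a clean affine rescaling, in contrast with the $\partial_0$-case where $t\mapsto t^p$ rescales $r$ by $p$.

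All four statements are local in $r$, so I would fix $r_0$, pass to a small closed subannulus (over which $M$ is free, hence acquires a cyclic vector over $\Frac(K\langle\alpha/t,t/\beta\rangle)$ by Lemma~\ref{L:cyclic vector}), and distinguish two regimes according to the size of the relevant subsidiary radius at $r_0$. Continuity of the $f_i^{(j)}$ and $F_i^{(j)}$ follows as in the geometric case. In the \emph{visible} regime, where that radius is $\le\omega|u_j|$ — equivalently $f_i^{(j)}(M,r_0)\ge\frac1{p-1}\log p-\log|u_j|$, which is the whole story when $p=0$ — I would present $M\otimes F_\eta$ as $F_\eta\{T\}/F_\eta\{T\}P$ for a fixed monic twisted polynomial $P$ and combine Proposition~\ref{P:spec-norm-from-NP} with Remark~\ref{R:part-decomp-over-field}: the subsidiary $\partial_j$-radii of $M\otimes F_{e^{-r}}$ that are $<\omega|u_j|$ are exactly the $\omega e^{s}$ for $s$ a slope of $\NP_r(P)$ with $s<-\log|\partial_j|_{F_\eta}=\log|u_j|$, while the larger ones equal $\omega|u_j|$ when $p=0$ (and are lumped together otherwise). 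Thus $F_i^{(j)}(M,r)=-i\log\omega-\sum_{k=1}^i\min\{f_k(P,r),\log|u_j|\}$ on this range, and (a), (c), (d), the truncation, and the $p=0$ part of (b) drop out of Proposition~\ref{P:newton}: the sign reversal turns concavity into convexity and nonnegative slopes into nonpositive ones, and the integrality in (b)(i) comes from part~(b) of that proposition once one checks the truncation is locally inactive when $f_i^{(j)}(M,r_0)>f_{i+1}^{(j)}(M,r_0)$. The Lattice Lemma~\ref{L:lattice} is used, as needed, to make the Gauss-norm computations with the cyclic vector legitimate.

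In the \emph{Frobenius} regime, where $p>0$ and the relevant subsidiary radius at $r_0$ exceeds $\omega|u_j|$, I would reduce to the visible regime by iterated $\partial_j$-Frobenius descent over the annulus — legitimate since $\partial_j$ does not involve $t$ — tracking radii through Theorem~\ref{T:Frob-push-sp-norm} exactly as in \cite[Chapters~10 and~11]{kedlaya-course}. Working with the intrinsic quantities $g_k=f_k^{(j)}+\log|u_j|\ge0$, the part of $M$ with intrinsic $\partial_j$-radius $>\omega$ contributes, in the descendant over $K^{(\partial_j)}$ (parameter $u_j^p$), values $pg_k$ below the new cutoff $\frac1{p-1}\log p$, and since $r$ is unchanged one gets $F_i^{(j)}(M,r)=\tfrac1p F_i^{(j)}(\varphi^{(\partial_j)}_*M,r)$ on this part; moreover $\tfrac1{p^{n}(p-1)}\log p=\tfrac1p\cdot\tfrac1{p^{n-1}(p-1)}\log p$, so the level-$n$ threshold of (b)(ii) for $M$ is the level-$(n-1)$ threshold for $\varphi^{(\partial_j)}_*M$. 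Since $f_i^{(j)}(M,r_0)>-\log|u_j|$ forces the intrinsic radius strictly below $1$, finitely many descents land in the visible regime, and pulling the visible-regime conclusions back through $F_i^{(j)}\mapsto\tfrac1p F_i^{(j)}$ preserves continuity, piecewise affinity, convexity and nonpositivity of slopes, while turning ``slopes in $\ZZ$'' into ``slopes in $\tfrac1{p^n}\ZZ$'' — precisely (b)(ii). Piecewise affinity at the boundary value $f_i^{(j)}(M,r_0)=\frac1{p-1}\log p-\log|u_j|$ then follows by gluing the one-sided statements, using convexity and the now-bounded slope denominators to bound the number of affine pieces.

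The hard part will be the bookkeeping in the Frobenius regime: one must separate, locally on a subannulus and compatibly in $r$, the visible and the large-radius parts of $M$ so that the descent transforms each the way Theorem~\ref{T:Frob-push-sp-norm} predicts and the ordering of subsidiary radii is respected; verify the descent construction genuinely goes through over the annulus rather than merely over a field; and confirm that no distortion of the $r$-variable creeps in, which is what makes the rescaling of $F_i^{(j)}$ by $\tfrac1p$ exact. By contrast the $p=0$ case is essentially Proposition~\ref{P:newton} with a single truncation and presents no real difficulty.
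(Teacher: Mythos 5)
Your broad outline matches the paper's: reduce to $|u_j|=1$, observe that $|\partial_j|_{\sp,F_\eta}$ is independent of $\eta$ (so there is a fixed cutoff $-\log|u_j|$), handle the range where radii are at most $\omega|u_j|$ by cyclic vector plus Proposition~\ref{P:newton} and Remark~\ref{R:part-decomp-over-field}, and use $\partial_j$-Frobenius descent (through Theorem~\ref{T:Frob-push-sp-norm}, with the crucial point that $\varphi^{(\partial_j)}$ fixes $t$ so $r$ is not rescaled) to push past the cutoff. Your observation that the $p=0$ case needs no Frobenius step is also correct.

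The genuine gap is exactly the one you flag at the end, and the paper resolves it by a device you have not reproduced. You propose to \emph{decompose} $M$ locally into its ``visible'' and ``large-radius'' parts before applying Frobenius, but no such decomposition is available at this point in the development: the decomposition theorems (Theorem~\ref{T:j-decomp-open-annulus} and its relatives) come later and their proofs themselves rely on the present theorem, so this route is circular, or at best requires a delicate simultaneous induction that you have not set up. The paper sidesteps the issue by never decomposing $M$. Instead it works with \emph{truncated} radius functions $\tilde f_i^{(j)}(M,r)=\max\{f_i^{(j)}(M,r),\epsilon\}$ and $\widetilde F_i^{(j)}=\tilde f_1^{(j)}+\cdots+\tilde f_i^{(j)}$, proving parts (a)--(d) for these by induction on $n$ over the ranges $\epsilon\in\bigl(\tfrac{1}{p^n(p-1)}\log p,\tfrac{1}{p^{n-1}(p-1)}\log p\bigr]$, with the base case $\epsilon>-\log\omega$ being the Newton-polygon step. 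The inductive step replaces $\epsilon$ by $p\epsilon$ for the Frobenius descendant and uses the case-split formula of Theorem~\ref{T:Frob-push-sp-norm} to relate $\{\tilde g_k^{(j)}\}$ for $\varphi^{(\partial_j)}_*M$ to $\{\tilde f_i^{(j)}\}$ for $M$ as a multiset identity of real-valued functions of $r$, which suffices to transfer linearity, integrality, and convexity without ever separating the module. The conclusions for $f_i^{(j)}$, $F_i^{(j)}$ then follow by letting $\epsilon\to 0^+$.

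Two smaller points. First, your scalar relation $F_i^{(j)}(M,r)=\tfrac{1}{p}F_i^{(j)}(\varphi^{(\partial_j)}_*M,r)$ is off even on a ``pure Frobenius'' piece: if such a piece has rank $d'$ then the descendant has rank $pd'$ whose largest $(p-1)d'$ values of $g$ are the constant $\tfrac{p}{p-1}\log p$, so the correct relation involves $F^{(j)}_{(p-1)d'+i}$ of the descendant plus an affine constant. This does not affect slope statements but does affect the bookkeeping you were worried about. Second, your ``glue the one-sided statements at the boundary value $f_i^{(j)}=\tfrac{1}{p-1}\log p-\log|u_j|$'' step is unnecessary under the truncation scheme: the limit $\epsilon\to 0^+$ handles it uniformly, and indeed part (a) of the theorem deliberately does \emph{not} assert piecewise affinity through the locus $f_i^{(j)}(M,r)=-\log|u_j|$ when $p>0$ (see Remark~\ref{R:no subharmonicity}), because the weak integrality of (b)(ii) fails to bound slope denominators there; your proposed gluing argument would need exactly such a bound.
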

\begin{proof}
We prove the theorem analogously to \cite[Theorem~11.3.2]{kedlaya-course}.  First 
of all, as in Remark~\ref{R:enlarge-K},
we may replace $K$ by the completion of $K(x)$ with respect to 
the
$|u_j|$-Gauss norm.
We may then replace $u_j$ by $u_j /x$ to reduce to the case $|u_j| = 1$.

We first show that the statements are true for $\tilde f_i^{(j)}(M, r) = \max \{f_i^{(j)}(M, r), \epsilon\}$ with $\epsilon > -\log \omega$ and $\widetilde F_i^{(j)}(M, r) = \tilde f_1^{(j)}(M, r) + \cdots + \tilde f_i^{(j)}(M, r)$.  Let $F = \Frac K \langle \alpha/t,t/\beta \rangle$. Choose a cyclic vector for $M \otimes F$ to obtain an isomorphism
$M \otimes F \cong F\{T\}/F\{T\}P$ for some monic twisted polynomial $P$ over $F$. We may then apply Proposition~\ref{P:newton} and Remark~\ref{R:part-decomp-over-field} to deduce (a) and (b), provided we omit the last assertion
in (a) (in case $p=0$); for that, see below.

For (c) and (d), 
it suffices to work in a neighborhood of some $r_0$.  Again by Remark~\ref{R:enlarge-K},
there is no harm in enlarging $K$ 
so that $e^{-r_0} \in |K^\times|$.  
We may reduce to the case $r_0 = 0$ by replacing $t$ by 
$\lambda t$ for some $\lambda \in K^\times$ with $|\lambda| = e^{-r_0}$.  
We then argue as in \cite[Lemma~11.5.1]{kedlaya-course} and deduce (c) and (d) from Proposition~\ref{P:newton}, as follows.  We may further enlarge $K$ to 
include $\lambda_{1},\dots, \lambda_{n} \in \ker(\partial_j)$
such that 
\[
-\log |\lambda_{j}| = \min \left\{-\log \omega - f_j(M,0), 0 \right\} \qquad (j=1,\dots,d).
\]
Let $B_0$ be the basis of $M \otimes F_{1}$ given by 
\[
\lambda_{d-1}^{-1}\cdots \lambda_{d-j}^{-1} T^j \qquad (j=0,\dots,d-1).
\]
Let $N_0$ be the characteristic polynomial of the matrix of action of $\partial_j$ on $B_0$.
Let $\mu_1, \dots, \mu_n$ be the eigenvalues of $N_0$, labeled so that
$|\mu_1| \geq \cdots \geq |\mu_n|$.
By \cite[Proposition~4.3.10]{kedlaya-course}, we
have $\max\{|\mu_j|, 1\} = \max\{\omega e^{f_j(M,0)},1\}$ for $j=1,\dots,d$. By Lemma~\ref{L:lattice}, for each $c>1$, we may construct a basis $B_c$ of $M$ such that the supremum norms $|\cdot|_0, |\cdot|_c$ defined by $B_0, B_c$ satisfy $c^{-1} |\cdot|_c \leq |\cdot|_0 \leq c |\cdot|_c$.  Let $N_c$ be the matrix of action of $\partial_j$ on $B_c$.  For $c > 1$ sufficiently small, \cite[Theorem~6.7.4]{kedlaya-course} implies that for $r$ close to 0, the visible spectrum of $M \otimes F_{e^{-r}}$ 
is the multiset of those
norms of eigenvalues of the characteristic polynomial of $N_c$ which exceed
$1$.
(Here the \emph{visible spectrum} of $M \otimes F_{e^{-r}}$ is
defined as in \cite[Definition~6.5.1]{kedlaya-course},
i.e., those spectral norms of subquotients of 
$M \otimes F_{e^{-r}}$ which exceed 1, with appropriate multiplicities.)
We may then deduce (c) and (d) from Proposition~\ref{P:newton}(c) and (d).

We next relax the truncation condition that we have imposed; we may assume
$p>0$ as otherwise there is nothing to check.
For each nonnegative integer $n$, 
we prove the claim for $\tilde f_i^{(j)}(M, r) = \max \{f^{(j)}_i(M, r), \epsilon\}$ and $\widetilde F^{(j)}_i(M, r) = \tilde f^{(j)}_1(M, r) + \cdots + \tilde f^{(j)}_i(M, r)$ with $\epsilon \in \left ( \frac 1{p^n(p-1)} \log p, \frac 1{p^{n-1}(p-1)} \log p \right]$, by induction on $n$;
the base case $n=0$ is proved above.  As above, 
we may reduce to the case $r_0=0$.

Consider the $\partial_j$-Frobenius $\varphi^{(\partial_j)*}: F_{e^{-r}}^{(\partial_j)} \inj F_{e^{-r}}$.  Put
$g_i^{(j)}(r) = -\log R_{\partial_j}(\varphi^{(\partial_j)}_*M \otimes F_{e^{-r}}^{(\partial_j)}; i)$ and $\tilde g_i^{(j)}(r) = \max \{g_i^{(j)}(r), p\epsilon\}$ for $i = \serie{}pd$.  By Theorem~\ref{T:Frob-push-sp-norm},
the list $\{g_1^{(j)}(r), \dots, g_{pd}^{(j)}(r)\}$ consists of
\[
\bigcup_{i=1}^d \begin{cases}
\{p f^{(j)}_i(M,r), \frac{p}{p-1} \log p \mbox{ ($p-1$ times)}
\} & f_i^{(j)}(M,r) \leq \frac 1{p-1} \log p \\
\{\log p + f_i^{(j)}(M,r) \mbox{ ($p$ times)}
\} & f_i^{(j)}(M,r) \geq \frac 1{p-1} \log p.
\end{cases}
\]
Thus, the list $\tilde g_1^{(j)}(r), \cdots, \tilde g_{pd}^{(j)}(r)$ consists of
\[
\bigcup_{i=1}^d \begin{cases}
\{p \tilde f^{(j)}_i(M,r), \frac{p}{p-1} \log p \mbox{ ($p-1$ times)}
\} & f_i^{(j)}(M,r) \leq \frac 1{p-1} \log p \\
\{\log p + \tilde f_i^{(j)}(M,r) \mbox{ ($p$ times)}
\} & f_i^{(j)}(M,r) \geq \frac 1{p-1} \log p.
\end{cases}
\]
We may thus
deduce (a) and (b) directly from the induction hypothesis. 
We similarly deduce (d) as
in \cite[Lemma~11.6.1]{kedlaya-course},
except that we are considering $\tilde g_i^{(j)}(r)$ but 
not $\tilde g_i^{(j)}(pr)$; this explains the weakened integrality
result. 
(See also Remark~\ref{R:partial_j-no-integrality}.)
Also, we can luckily deduce (c) directly, 
because $\varphi^{(\partial_j)*}$ does not
introduce a singularity on $A_K^1[0,\beta]$; by contrast, in the proof
of \cite[Theorem~11.3.2]{kedlaya-course}, one must switch to an off-centered
Frobenius to avoid a singularity at $t=0$.

We deduce that (a)-(d) hold for
$\tilde f^{(j)}_i(M, r) = \max \{f^{(j)}_i(M, r), \epsilon\}$ and $\widetilde F^{(j)}_i(M, r) = \tilde f^{(j)}_1(M, r) + \cdots + \tilde f^{(j)}_i(M, r)$ with $\epsilon>0$.  The desired results hold by taking $\epsilon \rar 0^+$.

This completes the proof except that if $p=0$,
we must still prove piecewise affinity everywhere.
In this case, the integrality of (b) is not burdened with an extra
denominator of $p^n$, so we may repeat the argument from 
\cite[Lemma~11.6.3]{kedlaya-course}; see Step 3 of
Theorem~\ref{T:1-dim-var} for essentially the same argument.
\end{proof}

\begin{example}
When $j \in J$, we do not expect an integrality result as in the 
$j = 0$ case; see Remark~\ref{R:partial_j-no-integrality}.  
One can easily generate an example in which the strong integrality statement 
for $\partial_j$ fails, as follows.
Suppose $p>0$, $\alpha \in (p^{-1/(p-1)}, 1)$, and $|u_j| = 1$.  We take the rank one $\partial_j$-differential module $M$ over $K \langle \alpha / t, t \rangle$ generated by $\bbv$ with $\partial_j (\bbv) = t^{-1} \bbv$.  Thus, $f_1^{(j)}(M, r) = r$ for $r \in [0, -\log \alpha]$.  
By Corollary~\ref{C:pullback radius},
$f_1^{(j)}(\varphi^{(\partial_j)*}M, r) = \frac rp$.
\end{example}

\begin{remark} \label{R:no subharmonicity}
Besides the weakening of the integrality condition, there are some other
aspects in which Theorem~\ref{T:subsidiary-partial-j} is weaker
than its counterpart \cite[Theorem~11.3.2]{kedlaya-course} if $p>0$.
For one, the latter includes a subharmonicity assertion, which refers to
the algebraic closure of the residue field of $K$. It is awkward to 
add a subharmonicity assertion here because the residue field of $K$
is crucially imperfect, so that it can admit a nontrivial $p$-basis.
(By contrast, if $p=0$, we can achieve a subharmonicity result;
see Theorem~\ref{T:subharmonicity}.)
For another, Theorem~\ref{T:subsidiary-partial-j}(a) does not apply
in a neighborhood of a point $r_0$ at which $f_i^{(j)}(M, r_0) = -\log |u_j|$.
The argument in \cite[Lemma~11.6.3]{kedlaya-course} does not extend to this case 
because the weak integrality result does not give a lower bound on slopes.
On the other hand, we do not have a counterexample against the claim that
$f_i^{(j)}(M,r)$ is everywhere piecewise affine.
\end{remark}

\subsection{Decomposition by subsidiary radii}
\label{S:decomposition}

In this subsection, we prove some decomposition theorems over
annuli and discs, as in \cite[Chapter~12]{kedlaya-course}.
We start by a technical lemma, copied from \cite[Lemma~1.2.7]{kedlaya-swan2}.

\begin{lemma} \label{L:proj-intersect0}
Let 
\[
\xymatrix{
R \ar[r] \ar[d] &  S \ar[d] \\
T \ar[r] & U
}
\]
be a commuting diagram of inclusions of integral domains,
such that the intersection
$S \cap T$ within $U$ is equal to $R$. Let $M$ be a finite
locally free $R$-module. Then the intersection of $M \otimes_R S$
and $M \otimes_R T$ within $M \otimes_R U$ is equal to $M$.
\end{lemma}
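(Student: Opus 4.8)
The plan is to prove the statement by a direct local computation, reducing to the case where $M$ is free and then tracking coordinates.

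First I would reduce to the case that $M$ is free over $R$. Since $M$ is finite locally free, there is a finite open cover of $\Spec R$ trivializing $M$; but in fact, because we only need an equality of submodules of $M \otimes_R U$, it suffices to check the statement after localizing at each prime of $R$, or even just to pick a surjection $R^n \twoheadrightarrow M$ and a splitting, writing $R^n \cong M \oplus M'$. Tensoring the claimed identity for $R^n$ (which would then follow from the free case) with the idempotent cutting out $M$ gives the claim for $M$; the point is that intersection of submodules commutes with applying a direct-summand projector, since for a decomposition $N = N_1 \oplus N_2$ one has $(A_1 \oplus A_2) \cap (B_1 \oplus B_2) = (A_1 \cap B_1) \oplus (A_2 \cap B_2)$ inside $N \otimes U$. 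So it is enough to treat $M = R^n$.

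For $M = R^n$, the statement unwinds completely: $M \otimes_R S = S^n$, $M \otimes_R T = T^n$, $M \otimes_R U = U^n$, and the inclusions are the componentwise ones induced by $R \hookrightarrow S$, $R \hookrightarrow T$, $R \hookrightarrow U$. An element of $U^n$ lies in both $S^n$ and $T^n$ if and only if each of its $n$ coordinates lies in both $S$ and $T$ inside $U$, i.e. (by hypothesis $S \cap T = R$) in $R$. Hence the intersection is exactly $R^n = M$, as desired. One should be mildly careful that the diagram consists of \emph{inclusions} of integral domains, so that all four rings embed compatibly in $U$ and "intersection within $U$" makes literal sense; this is given.

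The only genuine point requiring care — and the step I would expect to be the main obstacle, such as it is — is the passage from the free case to the locally free case, specifically justifying that cutting down by an idempotent commutes with the intersection. This is why it is cleanest to use a global direct-sum complement $R^n \cong M \oplus M'$ rather than a Zariski-local trivialization: with a local trivialization one would additionally need to argue that forming the intersection $M \otimes_R S \cap M \otimes_R T$ is compatible with localization and can be checked locally on $\Spec R$, which is true but slightly more delicate since $S, T, U$ are not $R$-algebras of finite type in the intended applications. Using the direct summand avoids this entirely and turns the whole proof into the one-line coordinate computation above.
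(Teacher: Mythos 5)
Your proof is correct, and it takes a genuinely different route from the paper's. The paper's argument exploits the integral-domain hypothesis directly: it picks $\bbe_1, \dots, \bbe_n \in M$ forming a basis of $M \otimes_R \Frac R$, finds a conductor $f \in R$ with $fM \subseteq R\bbe_1 + \cdots + R\bbe_n$, expands $f\bbv$ in this basis to see that the coefficients lie in $S \cap T = R$ (so $f\bbv \in M$), and then varies the basis so that the resulting conductors $f$ generate the unit ideal of $R$, forcing $\bbv \in M$. Your argument instead uses projectivity: since finite locally free is the same as finitely generated projective, you write $R^n \cong M \oplus M'$, dispose of the free case by a coordinate computation, and descend to $M$ via the elementary observation that forming intersections of submodules commutes with a direct-sum decomposition. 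Both are sound. The trade-off is that the paper's proof stays internal to $M$ (no choice of complement) but needs the fraction field and a small patching step with conductors, whereas yours sidesteps the conductor argument entirely and in fact does not use that $R,S,T,U$ are integral domains at all — only that the arrows are injective ring maps — so your proof establishes a slightly more general statement with less machinery. One small remark: in your final paragraph you attribute the sensibility of "intersection within $U$" to the integral-domain hypothesis, but in your argument what is really being used is just the injectivity of the maps together with the flatness of $M$ (so that $M \otimes_R S$ and $M \otimes_R T$ genuinely embed in $M \otimes_R U$); the domain hypothesis is what the \emph{paper's} proof needs, not yours.
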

\begin{proof}
Choose $\bbe_1, \dots, \bbe_n \in M$ which form a basis of $M \otimes_R
(\Frac R)$; then there exists $f \in R$ such that $fM \subseteq
R\bbe_1 + \cdots + R\bbe_n$. Given $\bbv \in M \otimes_R U$
which belongs to both $M \otimes_R S$ and $M \otimes_R T$,
we can uniquely write $f\bbv = c_1 \bbe_1 + \cdots + c_n \bbe_n$
with $c_i \in U$. From the intersection property, we have
$c_i \in R$ for $i=1, \dots, n$, whence $f\bbv \in M$.

Since $M$ is locally free, as we vary the basis $\bbe_1, \dots, \bbe_n$, the
values of $f$ obtained generate
the unit ideal of $R$. We thus have $\bbv \in M$, as desired.
\end{proof}

\begin{lemma} \label{L:proj-intersect}
Retain notation as in Lemma~\ref{L:proj-intersect0}. Then any direct
sum decompositions of $M \otimes_R S$ and $M \otimes_R T$ which agree
on $M \otimes_R U$ are induced by a unique direct sum decomposition of $M$.
\end{lemma}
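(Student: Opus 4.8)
The plan is to recast each direct sum decomposition as a complete system of orthogonal idempotents in an endomorphism ring, and then apply Lemma~\ref{L:proj-intersect0} with $M$ replaced by the $R$-module $N = \End_R(M)$. First I would observe that $N = \End_R(M) \cong M^\dual \otimes_R M$ is again a finite locally free $R$-module, and that for each of the $R$-algebras $A \in \{S, T, U\}$ there is a canonical identification $\End_A(M \otimes_R A) \cong N \otimes_R A$, compatible with all the maps in the square (this uses that $M$ is finite projective). Since each $R \to A$ is injective and $N$ is locally free, each base-change map $N \to N \otimes_R A$ is injective; thus the given commuting square of inclusions of domains induces a commuting square of inclusions of the corresponding $\End$'s, and Lemma~\ref{L:proj-intersect0} applied to $N$ yields $(N \otimes_R S) \cap (N \otimes_R T) = N$ inside $N \otimes_R U$.

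Next, write the two decompositions as $M \otimes_R S = \bigoplus_i M_{S,i}$ and $M \otimes_R T = \bigoplus_i M_{T,i}$, indexed compatibly, and let $e_{S,i} \in N \otimes_R S$ and $e_{T,i} \in N \otimes_R T$ be the associated projectors (projection onto the $i$-th summand along the others), which are uniquely determined by the summands. Because tensor products commute with direct sums, $M \otimes_R U = \bigoplus_i (M_{S,i} \otimes_S U) = \bigoplus_i (M_{T,i} \otimes_T U)$, and the hypothesis that the decompositions agree over $U$ says these two decompositions of $M \otimes_R U$ coincide summand by summand. Hence $e_{S,i}$ and $e_{T,i}$ have the same image in $N \otimes_R U$, namely the projector onto the common $i$-th summand; that common element therefore lies in $(N \otimes_R S) \cap (N \otimes_R T) = N$, producing $e_i \in N = \End_R(M)$ whose images over $S$ and $T$ are $e_{S,i}$ and $e_{T,i}$.

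It then remains to check that the $e_i$ form a complete system of orthogonal idempotents: the relations $e_i e_j = \delta_{ij} e_i$ and $\sum_i e_i = 1$ hold after applying the injection $N \hookrightarrow N \otimes_R S$ (where they hold for the $e_{S,i}$), hence already hold in $N$. Thus $M = \bigoplus_i e_i M$ is a direct sum decomposition of $M$; base change to $S$ sends $e_i$ to $e_{S,i}$, so $e_i M \otimes_R S = M_{S,i}$, recovering the given decomposition, and likewise over $T$. Uniqueness is immediate: any decomposition of $M$ inducing the given one over $S$ has projectors mapping to $e_{S,i}$ in $N \otimes_R S$, hence equal to $e_i$ by injectivity of $N \to N \otimes_R S$, so the decomposition is $\bigoplus_i e_i M$.

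The only real content beyond Lemma~\ref{L:proj-intersect0} is this dictionary between direct sum decompositions and complete systems of orthogonal idempotents, together with its compatibility with base change; the step requiring (mild) care is verifying that $\End_R(M)$ is still finite locally free, so that Lemma~\ref{L:proj-intersect0} genuinely applies to it, and that projectivity of $M$ makes $\End_A(M \otimes_R A) \cong \End_R(M) \otimes_R A$ hold without any flatness hypothesis on $A$ (which is essential, since $S$, $T$, $U$ are not assumed flat over $R$).
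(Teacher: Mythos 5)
Your proposal is correct and takes essentially the same route as the paper: the paper's proof is a one-line instruction to apply Lemma~\ref{L:proj-intersect0} to the idempotents in $M^\dual \otimes M$ corresponding to the projections onto the summands, which is exactly the dictionary you spell out. Your write-up is simply a careful expansion of that step, including the base-change compatibility of $\End$ for finite projective modules, which the paper leaves implicit.
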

\begin{proof}
Apply Lemma~\ref{L:proj-intersect} to the idempotents in $M^\dual \otimes M$
giving the projections onto the factors in the decompositions.
\end{proof}

\begin{lemma}\label{L:criterion-unit-1}
Given $\alpha < \beta$ and $x \in K \{\{ \alpha / t, t / \beta \}\}$
such that
the function $r \mapsto \log |x|_{e^{-r}}$ is affine for $r \in (-\log \beta, -\log \alpha)$, then $x$ is a unit in $K \{\{\alpha / t, t / \beta\}\}$.
\end{lemma}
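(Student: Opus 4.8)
The plan is to use the affineness of $r \mapsto \log|x|_{e^{-r}}$ to peel off a single dominant monomial $a_{i_0}t^{i_0}$ (an obvious unit), leaving a factor $1+y$ with $|y|_\rho < 1$ throughout the open annulus, and then invert $1+y$ by a geometric series computed on closed sub-annuli.

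\emph{Step 1: extracting a dominant monomial.} Write $x = \sum_{i\in\ZZ} a_it^i$. Then $v(r) := \log|x|_{e^{-r}}$ is the upper envelope $\sup_i \ell_i(r)$ of the affine functions $\ell_i(r) = \log|a_i| - ri$ (the usual Newton polygon picture). Fix a compact subinterval $[c,d] \subset (-\log\beta,-\log\alpha)$ and put $[\alpha',\beta'] = [e^{-d},e^{-c}]$. For $i \neq 0$ one has $\sup_{\rho \in [\alpha',\beta']} |a_i|\rho^i \leq \max\{|a_i|\alpha'^i, |a_i|\beta'^i\}$, which tends to $0$ as $|i|\to\infty$ by the convergence condition defining $K\{\{\alpha/t,t/\beta\}\}$; meanwhile $v$ is affine, hence bounded below on $[c,d]$, so $|x|_\rho$ is bounded below by some $m>0$ on $[\alpha',\beta']$. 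Thus only finitely many indices contribute to the envelope over $[c,d]$, i.e.\ $v = \max_{i\in S}\ell_i$ on $[c,d]$ for a finite set $S$. A finite maximum of affine functions that is itself affine must agree with one of them on a subinterval, hence everywhere (each set $\{v = \ell_i\}$ is an interval, and finitely many of them cover $[c,d]$); so $v = \ell_{i_0}$ on $[c,d]$ for some $i_0 \in S$, with $a_{i_0}\neq 0$. Since $v$ is globally affine and agrees with the affine function $\ell_{i_0}$ on $[c,d]$, we get $|x|_\rho = |a_{i_0}|\rho^{i_0}$ for all $\rho\in(\alpha,\beta)$, and in particular $|a_i|\rho^i \leq |a_{i_0}|\rho^{i_0}$ for every $i$ and every such $\rho$.

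\emph{Step 2: the complementary factor is topologically nilpotent plus one.} Factor $x = a_{i_0}t^{i_0}(1+y)$ with $y = \sum_{k\neq 0} b_k t^k$, $b_k = a_{k+i_0}/a_{i_0} \in K\{\{\alpha/t,t/\beta\}\}$. The bound from Step 1 gives $|b_k|\rho^k \leq 1$ for all $k\neq 0$ and all $\rho\in(\alpha,\beta)$. Sharpen this at a fixed $\rho_0$: by monotonicity of $\rho \mapsto |b_k|\rho^k$, letting $\rho$ run over $(\rho_0,\beta)$ for $k>0$ yields $|b_k|\leq \beta^{-k}$, hence $|b_k|\rho_0^k \leq (\rho_0/\beta)^k < 1$; symmetrically, running over $(\alpha,\rho_0)$ for $k<0$ gives $|b_k|\rho_0^k \leq (\rho_0/\alpha)^k < 1$. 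Since $|b_k|\rho_0^k \to 0$ as $|k|\to\infty$, the value $|y|_{\rho_0}$ is a \emph{finite} maximum of quantities each strictly less than $1$, so $|y|_\rho < 1$ for all $\rho\in(\alpha,\beta)$.

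\emph{Step 3: inversion and conclusion.} From the definitions, $K\{\{\alpha/t,t/\beta\}\} = \bigcap_{\alpha<\alpha'\leq\beta'<\beta} K\langle\alpha'/t,t/\beta'\rangle$. On each closed sub-annulus the spectral norm of $y$ equals $\max\{|y|_{\alpha'},|y|_{\beta'}\} < 1$, so $\sum_{n\geq 0}(-y)^n$ converges in the Banach algebra $K\langle\alpha'/t,t/\beta'\rangle$ to an inverse of $1+y$ there; these local inverses agree by uniqueness and glue to an inverse in $K\{\{\alpha/t,t/\beta\}\}$. Hence $1+y$ is a unit, and $x$ is the product of the units $a_{i_0}\in K^\times$, $t^{i_0}$, and $1+y$, so $x$ is a unit.

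The only genuinely delicate point is Step 1: deducing from the affineness hypothesis that a single Laurent coefficient dominates uniformly, which requires the finiteness observation that only finitely many coefficients matter over any compact range of radii. Steps 2 and 3 are routine nonarchimedean bookkeeping.
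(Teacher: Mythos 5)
Your argument is correct and is essentially the paper's proof written out in full: the paper's one-line observation that the affineness hypothesis means the Newton polygon of $x$ has no slope in $(-\log\beta,-\log\alpha)$ is exactly what your Step 1 establishes, and Steps 2--3 are the standard unpacking of why that implies $x$ is a unit (a single dominant monomial times $1+y$ with $|y|_\rho<1$, inverted by a geometric series on closed sub-annuli).
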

\begin{proof}
The condition is equivalent to saying that the Newton polygon of $x$ does not 
have any slopes in $(-\log \beta, -\log \alpha)$.  
This immediately implies the claim.
\end{proof}

\begin{lemma}\label{L:slope-decomp-coprime}
Let $P = \sum_i P_i T^i$ and $Q = \sum_i Q_i T^i$ be polynomials over 
$K \langle \alpha / t, t / \beta \rangle$ satisfying the following conditions.
\begin{enumerate}
\item[(a)]
We have $|P-1|_\gamma < 1$ for all $\gamma \in [\alpha,\beta]$.
\item[(b)]
For $d = \deg(Q)$, $Q_d$ is a unit and
$|Q|_\gamma = |Q_d|_\gamma$ for all $\gamma \in [\alpha,\beta]$.
\end{enumerate}
Then $P$ and $Q$ generate the unit ideal in
$K \langle \alpha / t, t / \beta \rangle [T]$.
\end{lemma}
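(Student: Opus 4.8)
The plan is to reduce the statement to showing that the resultant $\Res(P,Q)$ lies in $R^\times$, where $R = K\langle \alpha/t, t/\beta\rangle$, and then to deduce this from a size estimate on the roots of $Q$. The trivial cases come first: if $\deg Q = 0$ then $Q = Q_d \in R^\times$ and we are done, and if $\deg P = 0$ then $P = P_0$ with $\|P_0-1\| < 1$ for the Banach norm $\|\cdot\| = \sup_{\gamma\in[\alpha,\beta]} |\cdot|_\gamma = \max\{|\cdot|_\alpha,|\cdot|_\beta\}$ on the complete ring $R$, so $P_0^{-1} = \sum_{n\geq 0}(1-P_0)^n$ converges and $(P) = R[T]$. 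So assume $\deg P, \deg Q \geq 1$. By the Sylvester (resultant) identity there exist $A,B \in R[T]$ with $AP + BQ = \Res(P,Q)$; hence it suffices to prove $\Res(P,Q) \in R^\times$.

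Next I would estimate the roots of $Q$ one Gauss norm at a time. Fix $\gamma \in [\alpha,\beta]$, let $E_\gamma$ be the completion of $\Frac R$ for the (multiplicative) $\gamma$-Gauss norm $|\cdot|_\gamma$, and fix an extension of $|\cdot|_\gamma$ to an algebraic closure; factor $Q = Q_d\prod_{j=1}^d (T-\mu_j)$ there. Because the $\gamma$-Gauss norm on polynomials (with $T$ of weight $1$) is multiplicative, condition (b) gives
\[
|Q_d|_\gamma = |Q|_\gamma = |Q_d|_\gamma \prod_{j=1}^{d}\max\{1,|\mu_j|_\gamma\},
\]
which forces $|\mu_j|_\gamma \leq 1$ for every $j$. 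Since $|P-1|_\gamma$ is by definition the $\gamma$-Gauss norm of the polynomial $P-1$, condition (a) and $|\mu_j|_\gamma \le 1$ give $|P(\mu_j)-1|_\gamma \le |P-1|_\gamma < 1$, hence $|P(\mu_j)|_\gamma = 1$, and by the ultrametric inequality
\[
\Bigl|\, \prod_{j=1}^d P(\mu_j) - 1 \,\Bigr|_\gamma \leq \max_j |P(\mu_j)-1|_\gamma \leq |P-1|_\gamma .
\]

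Now I would globalize. Set $w = (-1)^{d\deg P}Q_d^{-\deg P}\Res(P,Q) = \prod_{j=1}^d P(\mu_j)$; being a symmetric function of the $\mu_j$, it is a polynomial expression in the coefficients of $P$ and of $Q_d^{-1}Q$, and since $Q_d \in R^\times$ this shows $w \in R$. The displayed bound holds for all $\gamma \in [\alpha,\beta]$, and $\gamma \mapsto |P-1|_\gamma$ is continuous on the compact interval $[\alpha,\beta]$ with values $<1$, hence bounded by some $c<1$. Therefore $\|w-1\| \le c < 1$, so $w^{-1} = \sum_{n\geq 0}(1-w)^n$ converges in the complete ring $R$ and $w \in R^\times$. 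Since also $Q_d \in R^\times$, we get $\Res(P,Q) = (-1)^{d\deg P}Q_d^{\deg P}w \in R^\times$, and $AP+BQ = \Res(P,Q)$ then exhibits $1 \in (P,Q)R[T]$.

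The main obstacle, as I see it, is picking the right normalization rather than any single hard step. The naive route — observing that $\Res(P,Q)$ has nonzero $\gamma$-Gauss norm for every $\gamma$ and concluding it is a unit — is simply false on a closed annulus, since boundary zeros are invisible to the Gauss norms (Lemma~\ref{L:criterion-unit-1} applies to $K\{\{\alpha/t,t/\beta\}\}$, not to $R$). The fix is to extract the sharper fact that $Q_d^{-\deg P}\Res(P,Q)$ is congruent to $1$ modulo a quantity that is \emph{uniformly} $<1$ on $[\alpha,\beta]$, so that a genuine geometric series converges in $R$; the compactness of $[\alpha,\beta]$ is what upgrades the pointwise inequalities to a uniform one. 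A secondary technical point to keep straight is that $|P-1|_\gamma<1$ must be read with $T$ of weight $1$, which is exactly what makes $|P(\mu_j)-1|_\gamma \le |P-1|_\gamma$ legitimate once we know $|\mu_j|_\gamma \le 1$.
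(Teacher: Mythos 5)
Your proof is correct, but it takes a genuinely different route from the paper's. The paper's argument is internal to $R[T]$ (where $R = K\langle \alpha/t, t/\beta\rangle$): after normalizing $Q$ to be monic, hypothesis~(b) implies that Euclidean division by $Q$ is norm-nonincreasing in every Gauss norm $|\cdot|_\gamma$; one then lets $S_i$ be the remainder of $(1-P)^i$ modulo $Q$, observes $|S_i|_\gamma \le |P-1|_\gamma^i$ with all degrees $\le d-1$, and sums the resulting geometric series to get $S \in R[T]$ with $PS \equiv 1 \pmod{Q}$. Your proof instead passes through the resultant: you reduce to showing $\Res(P,Q) \in R^\times$, extend each $|\cdot|_\gamma$ to an algebraic closure to read off the root bound $|\mu_j|_\gamma \le 1$ from hypothesis~(b) and the multiplicativity of the Gauss norm, then show $w = Q_d^{-\deg P}\Res(P,Q) = \prod_j P(\mu_j)$ satisfies $\|w-1\| < 1$ and invert it by a geometric series in $R$. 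Both proofs ultimately hinge on the same analytic fact — a uniform ($\gamma$-independent) bound $<1$ makes a geometric series converge in the Banach ring — but you sum the series in $R$ after collapsing to the resultant, whereas the paper sums it coefficientwise in $R[T]/(Q)$. The paper's version is a bit leaner: it never leaves $R[T]$, needs no field extensions or symmetric-function bookkeeping, and produces the B\'ezout cofactor $S$ explicitly. Your version has the virtue of isolating an explicit scalar obstruction ($\Res(P,Q)$) and of making the ``roots of $Q$ stay in the unit disc'' intuition fully visible; you are also right to emphasize that the naive argument (nonvanishing of the Gauss norms implies unit) fails on a closed annulus, which is exactly why the uniform estimate, not the pointwise one, is the crux. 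One tiny remark: you don't actually need the continuity-plus-compactness step to get a uniform bound — by the maximum modulus principle $\sup_{\gamma\in[\alpha,\beta]}|P-1|_\gamma = \max\{|P-1|_\alpha,\,|P-1|_\beta\}$, which is $<1$ directly from hypothesis~(a) — though invoking continuity does no harm.
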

\begin{proof}
We may assume without loss of generality that $Q_d = 1$.  
The hypothesis that $|Q|_\gamma = |Q_d|_\gamma$ for all
$\gamma \in [\alpha, \beta]$ implies that 
if
$S$ is the remainder upon dividing $R$ by $Q$, then $|S|_\gamma \leq |R|_\gamma$ for all
$\gamma \in [\alpha, \beta]$ (compare \cite[Lemma~2.3.1]{kedlaya-course}). 
If we then let
$S_i$ denote the remainder upon dividing $(1-P)^i$ by $Q$, the series 
$\sum_{i=0}^\infty S_i$ converges in $K \langle \alpha/t, t/\beta \rangle[T]$
(since the degrees of the $S_i$ are bounded by $d-1$)
and its limit $S$ satisfies 
$PS \equiv 1 \pmod{Q}$.
\end{proof}

\begin{theorem}\label{T:j-decomp-open-annulus}
Fix $j \in J^+$.  Let $M$ be a $\partial_j$-differential module of rank $d$ 
on $A_K^1(\alpha,\beta)$.
Suppose that the following conditions hold for some $i \in \{1,\dots,d-1\}$.
\begin{enumerate}
\item[(a)]
 The function 
$F^{(j)}_i(M,r)$ is affine for $-\log \beta < r < -\log \alpha$.
\item[(b)]
We have $f^{(j)}_i(M,r) > f^{(j)}_{i+1}(M,r)$ for $-\log \beta < r < -\log \alpha$.
\end{enumerate}
Then $M$ admits a unique direct sum 
decomposition separating the first $i$ subsidiary $\partial_j$-radii of $M \otimes F_\eta$ for any $\eta \in (\alpha, \beta)$.
\end{theorem}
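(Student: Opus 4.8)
We may assume $j\in J$: for $j=0$ the result is essentially \cite[Chapter~12]{kedlaya-course}, and the same argument works for $j\in J$ with the Frobenius apparatus of Subsection~\ref{S:Frob} in place of the classical one. Write $R = K\{\{\alpha/t,t/\beta\}\}$. Since $M$ restricts to a finite free module over each closed subannulus ring $K\langle\alpha'/t,t/\beta'\rangle$ (a principal ideal domain), and $R$ is the intersection of these rings as $[\alpha',\beta']$ ranges over closed subintervals of $(\alpha,\beta)$, it suffices to produce a direct sum decomposition of $M$ over $R$; by Lemma~\ref{L:proj-intersect} this amounts to a compatible family of decompositions over closed subannuli. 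Uniqueness is then automatic: by Proposition~\ref{P:part-decomp-over-field} each $M\otimes F_\eta$ has a canonical pure-slope decomposition, and by hypothesis (b) the first $i$ subsidiary $\partial_j$-radii are cut off cleanly, so there is a unique rank-$i$ submodule $N_\eta\subseteq M\otimes F_\eta$ realizing them; any decomposition of $M$ over $R$ separating the first $i$ radii induces $N_\eta$ on each fibre, pinning down its idempotent in $\End_R(M)$ because $R$ injects into each $F_\eta$. By Remark~\ref{R:enlarge-K} we may enlarge $K$ freely, and after rescaling $t$ and $u_j$ we reduce to the normalization $|u_j|=1$, so that $|\partial_j|_K=1$ and $|\partial_j|_{\sp,K}=\omega$.

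The construction is via twisted polynomials, following \cite[Chapter~12]{kedlaya-course}. Choose a cyclic vector for $M\otimes\Frac R$, giving $M\otimes\Frac R\cong (\Frac R)\{T\}/(\Frac R)\{T\}P$ with $P$ monic of degree $d$. Completing $\Frac R$ at the $\eta$-Gauss norm and applying Christol--Dwork (Proposition~\ref{P:spec-norm-from-NP}) together with Remark~\ref{R:part-decomp-over-field}, the slopes of $\NP_{-\log\eta}(P)$ that are negative record exactly the $f_k^{(j)}(M,-\log\eta)$ exceeding $-\log\omega$; equivalently, $\NP_{-\log\eta}(P)$ faithfully sees the subsidiary $\partial_j$-radii of $M\otimes F_\eta$ that are $<\omega$. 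Assume first that all subsidiary radii at every $\eta$ in the closed subannulus under consideration are $<\omega$. Then for each $\eta$, hypothesis (b) makes the break of $\NP_{-\log\eta}(P)$ at horizontal position $i$ a genuine vertex, and Robba's factorization (Proposition~\ref{P:NP-decomp-field}) over the $\eta$-Gauss completion gives $P = Q_{\mathrm{top}}\,Q_{\mathrm{rest}}$ with $Q_{\mathrm{top}}$ monic of degree $i$ carrying the $i$ largest slopes. Hypothesis (a) says $F_i^{(j)}(M,r)$ is affine; after the normalization this means the $e^{-r}$-Gauss norm of the constant coefficient of $Q_{\mathrm{rest}}$ has affine logarithm in $r$, so by Lemma~\ref{L:criterion-unit-1} this coefficient is a unit in $R$, and a little more bookkeeping places all coefficients of the patched $Q_{\mathrm{top}},Q_{\mathrm{rest}}$ in $R$. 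After rescaling $t$ so that $Q_{\mathrm{rest}}\equiv 1$ to within norm $<1$ while $Q_{\mathrm{top}}$ has norm governed by its leading term, Lemma~\ref{L:slope-decomp-coprime} shows $Q_{\mathrm{top}}$ and $Q_{\mathrm{rest}}$ generate the unit ideal in $R[T]$, so the splitting $(\Frac R)\{T\}/(\Frac R)\{T\}P\cong (\Frac R)\{T\}/(\Frac R)\{T\}Q_{\mathrm{top}}\oplus(\Frac R)\{T\}/(\Frac R)\{T\}Q_{\mathrm{rest}}$ descends to a direct sum $M = M_1\oplus M_2$ over $R$ with $M_1$ of rank $i$ realizing the first $i$ subsidiary $\partial_j$-radii at every $\eta$.

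It remains to drop the assumption that all subsidiary radii are $<\omega$, which can fail only when $p>0$. Here we pass to a $\partial_j$-Frobenius descendant $\varphi^{(\partial_j)}_*M$ (the annulus version of the construction; for $j\in J$ this introduces no singularity, exactly as in the proof of Theorem~\ref{T:subsidiary-partial-j}, since $\partial_j$ kills $t$). By Theorem~\ref{T:Frob-push-sp-norm} the subsidiary radii are rewritten explicitly, shrinking the largest ones, and after finitely many steps all of them land $\leq p^{-p/(p-1)}<\omega$, hence in the visible range; moreover hypotheses (a) and (b) pass to the descendant for the cut at the corresponding position, via the explicit list in Theorem~\ref{T:Frob-push-sp-norm} (compare the analogous step in the proof of Theorem~\ref{T:subsidiary-partial-j}). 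The decomposition of the descendant furnished by the previous paragraph is stable under tensoring with the modules $W_n^{(\partial_j)}$, so by Lemma~\ref{L:Frob-pullback-vs-desc}(b) it descends through each Frobenius layer to a decomposition of $M$; using Corollary~\ref{C:pullback radius} and Lemma~\ref{L:basic-IR-prop} one checks the resulting rank-$i$ summand still realizes the first $i$ subsidiary $\partial_j$-radii. Restricting this decomposition of $M$ over $R$ to closed subannuli gives the asserted decomposition, and uniqueness was settled in the first paragraph.

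The chief obstacle is the descent in the second paragraph: the slopewise factorization of $P$ exists a priori only over each individual Gauss completion of $\Frac R$, and the substance of the theorem is that hypotheses (a) and (b) force it to be defined already over the open-annulus ring $R$, with unit constant term. This is precisely what Lemmas~\ref{L:criterion-unit-1} (affineness of a norm $\Rightarrow$ no zeros or poles on the annulus) and~\ref{L:slope-decomp-coprime} (separation of slopes $\Rightarrow$ coprimality, hence a genuine summand rather than a nonsplit extension) are designed to supply; lining up the normalizations so that ``$F_i^{(j)}$ affine'' becomes literally ``a coefficient of $P$ has affine log-norm'' is the fussy part. A secondary difficulty, present only when $p>0$, is checking that hypotheses (a) and (b) and the $W_n^{(\partial_j)}$-stability survive passage to the Frobenius descendant, so that the invisible-radius case really does reduce to the visible one.
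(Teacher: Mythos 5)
Your overall strategy (reduce to $|u_j|=1$, handle visible radii directly, then use the $\partial_j$-Frobenius descendant for the general case since it introduces no singularity) matches the paper's, and the uniqueness argument is correct. However, the central construction has a genuine gap.

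You work with a cyclic vector for $M\otimes\Frac R$ and a twisted polynomial $P$ over $\Frac R$, factor $P=Q_{\mathrm{top}}Q_{\mathrm{rest}}$ over each Gauss completion, and then assert that ``a little more bookkeeping'' places the coefficients in $R$ and that the resulting splitting ``descends to a direct sum $M=M_1\oplus M_2$ over $R$.'' This last step is where the substance lies, and it is not justified. The cyclic vector furnishes an isomorphism $M\otimes\Frac R\cong(\Frac R)\{T\}/(\Frac R)\{T\}P$, but this isomorphism does \emph{not} respect the $R$-lattice $M\subset M\otimes\Frac R$: the cyclic basis $\bbv,\partial_j\bbv,\dots,\partial_j^{d-1}\bbv$ need not lie in $M$, let alone span it over $R$. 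So even if $Q_{\mathrm{top}},Q_{\mathrm{rest}}$ have coefficients in $R$, the resulting idempotent in $\End_{\Frac R}(M\otimes\Frac R)$ is not visibly in $\End_R(M)$. Moreover, a factorization $P=Q_{\mathrm{top}}Q_{\mathrm{rest}}$ of a \emph{twisted} polynomial yields, by itself, only a short exact sequence $0\to K\{T\}/K\{T\}Q_{\mathrm{top}}\to K\{T\}/K\{T\}P\to K\{T\}/K\{T\}Q_{\mathrm{rest}}\to0$, not a direct sum; the CRT-style argument via Lemma~\ref{L:slope-decomp-coprime} establishes freeness of the quotient (hence a sub-bundle), but the splitting of the extension is a separate issue that your argument never addresses.

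The paper sidesteps both problems by \emph{not} using a cyclic vector in the construction. Instead it sets up notation as in the proof of Theorem~\ref{T:subsidiary-partial-j}: the lattice lemma (Lemma~\ref{L:lattice}) produces a genuine basis $B_c$ of $M$ over $K\langle\gamma_1/t,t/\gamma_2\rangle$ on which $\partial_j$ acts by a matrix $N_c$, and the relevant polynomial $Q$ is its \emph{characteristic polynomial}, which has coefficients in the ring $K\langle\gamma_1/t,t/\gamma_2\rangle$ from the start. Applying \cite[Theorem~2.2.2]{kedlaya-course} factors $Q=Q_2Q_1$ over this ring at once (avoiding any $\eta$-by-$\eta$ patching), Lemma~\ref{L:slope-decomp-coprime} shows $\Coker(Q_2(N_c))$ is free, one obtains a differential \emph{submodule} via \cite[Lemma~6.7.1]{kedlaya-course}, and then the direct sum is produced by \emph{repeating the argument for $M^\dual$} — this last step is exactly what converts a sub-object into a complemented summand, and it has no analogue in your write-up. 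If you want to keep the twisted-polynomial flavor, you should still ground the construction in a basis of $M$ itself (via Lemma~\ref{L:lattice}) and supply the dual step; as it stands, the lattice descent and the splitting of the extension are both missing.
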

\begin{proof}
When $j = 0$, this is \cite[Theorem~12.4.2]{kedlaya-course}; we thus
assume hereafter that $j \in J$.
The proof is similar to those of \cite[Theorems~12.2.2~and~12.3.1]{kedlaya-course};
for the benefit of the reader, we fill in some of the key details.

By Lemma~\ref{L:proj-intersect}, 
we may enlarge $K$ as needed; in particular,
we may reduce to the case $|u_j| = 1$ 
as in the proof of Theorem~\ref{T:subsidiary-partial-j}.  
Since the decomposition is unique if it exists, it is sufficient
to exhibit it on an open cover of $(\alpha,\beta)$ and then glue.
That is, it suffices to work in a 
neighborhood of any fixed $\gamma \in (\alpha, \beta)$;
again, we may enlarge $K$ to reduce to the case $\gamma=1$.

Suppose first that $f_i^{(j)}(M, 0) > -\log \omega$.  
Set notation as in the proof of Theorem~\ref{T:subsidiary-partial-j}. 
For some sufficiently small $c> 1$, 
we can choose $\gamma_1 \in (\alpha,1)$ and $\gamma_2 \in (1,\beta)$
such that the coefficient of $T^{d-i}$ in the characteristic polynomial 
$Q(T)$ of $N_c$ computes $F_i^{(j)}(M,r)$ for $r \in [-\log \gamma_2, -\log
\gamma_1]$; by (a), we may apply
Lemma~\ref{L:criterion-unit-1} 
(after changing $\gamma_1, \gamma_2$ slightly)
to deduce
that this coeffficient is a unit in 
$K \langle \gamma_1 / t, t / \gamma_2 \rangle$. 
By (b), we can apply \cite[Theorem~2.2.2]{kedlaya-course}
to factor $Q = Q_2 Q_1$ so that the roots of $Q_1$ are the $i$ largest roots of $Q$ under $|\cdot|_\gamma$ for all $\gamma \in [\gamma_1, \gamma_2]$.  (This is true for all $\gamma$ simultaneously because the construction is purely algebraic and \cite[Theorem~2.2.2]{kedlaya-course} takes care of convergence of the procedure.)

Use the basis $B_c$ to identify $M$ with 
$K \langle \gamma_1 / t, t / \gamma_2 \rangle^d$. 
Then we obtain a short exact sequence
$$
\exact 0 {\Ker(Q_1(N_c))} M {\Coker(Q_2(N_c))} 0
$$
of free modules over $K \langle \gamma_1 / t, t / \gamma_2 \rangle$.  
(The quotient is free because by Lemma~\ref{L:slope-decomp-coprime} 
applied after rescaling, 
$Q_1$ and $Q_2$ generate the unit ideal in 
$K \langle \gamma_1/t, t/\gamma_2 \rangle[T]$.)
Applying Lemma~\ref{L:lattice} to both factors 
(again for $c>1$ sufficiently small, and a choice of
$\gamma_1, \gamma_2$ depending on $c$), 
we construct a basis of $M$ on which $\partial_j$ acts via a matrix
\[
N'_c = \mat {A_c} {B_c} {C_c} {D_c}
\]
for which the following conditions hold.
\begin{enumerate}
\item[(a)]
The matrix $A_c$ is invertible and $|A_c^{-1}|_\gamma \cdot \max\{|\partial_j|_\gamma,
|B_c|_\gamma, |C_c|_\gamma, |D_c|_\gamma \} < 1$ for all $\gamma \in [\gamma_1, \gamma_2]$.
\item[(b)]
The Newton slopes of $A_c$ under $|\cdot|_\gamma$
account for the first $i$ subsidiary radii of $M \otimes F_\gamma$ for 
all $\gamma \in [\gamma_1, \gamma_2]$.
\end{enumerate}

By \cite[Lemma~6.7.1]{kedlaya-course}, $M$ admits a differential submodule accounting for the last $n-i$ subsidiary radii of $M \otimes F_\gamma$ for all $\gamma \in [\gamma_1, \gamma_2]$.  By repeating this argument for $M^\dual$, we obtain the desired splitting.

To deduce the theorem in the case $p>0$
without assuming that $f_i^{(j)}(M, 0) > \frac{1}{p-1}
\log p$, we prove the theorem in the case when 
$f_i^{(j)}(M, 0) > \frac 1{p^n(p-1)} \log p$ by induction on $n$, 
using $\partial_j$-Frobenius pushforward. This is sufficient because (b)
forces $f_i^{(j)}(M,0) > 0$, so there exists some $n$ for which
$f_i^{(j)}(M, 0) > \frac 1{p^n(p-1)} \log p$.
\end{proof}

\begin{caution}
In Theorem~\ref{T:j-decomp-open-annulus},
$M$ is only a locally free coherent sheaf and need not be
free, because the annulus on which we are working is not closed. 
Even if $M$ is free, the summands need not be free unless $K$
is spherically complete, in which case any locally free coherent sheaf
on $A_K^1(\alpha,\beta)$ is free.
\end{caution}

\begin{remark} \label{R:no-closed-decomp}
In \cite[Chapter~12]{kedlaya-course}, the analogous development starts with
a full decomposition theorem over a closed annulus
\cite[Theorem~12.2.2]{kedlaya-course}. We cannot do this 
here because we have not established an
analogue of subharmonicity \cite[Theorem~11.3.2(c)]{kedlaya-course} for 
$\partial_j$-differential modules,
except in the case $p=0$ (see Theorems~\ref{T:decomp-annulus-partial-j}
and~\ref{T:decomp-disc-partial-j}).
We can however recover partial
decomposition theorems over a closed disc or annulus,
analogous to \cite[Theorems~12.5.1 and~12.5.2]{kedlaya-course},
as follows.
\end{remark}

\begin{lemma}\label{L:criterion-unit-2}
\begin{enumerate}
\item[(a)]
For $x \in K \llbracket t \rrbracket_0$ nonzero, 
$x$ is a unit if and only if $|x|_{e^{-r}}$ is constant in a neighborhood of 
$r=0$.
\item[(b)]
For $x \in \cup_{\alpha \in (0,1)} K \langle \alpha/t, t \rrbracket_0$ 
nonzero, $x$ is a unit if and only if the function $r \mapsto \log |x|_{e^{-r}}$ is affine in some neighborhood of $0$.
\end{enumerate}
\end{lemma}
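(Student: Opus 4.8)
The plan is to treat the two ``only if'' implications uniformly by a convexity argument on valuation polygons, and to treat the two ``if'' implications by explicit inversion: a formal geometric series in case (a), and a reduction to Lemma~\ref{L:criterion-unit-1} in case (b).

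First, for the ``only if'' directions, write $x = \sum_i a_i t^i$ and set $v(r) = \log |x|_{e^{-r}} = \sup_i (\log |a_i| - ri)$, which is a convex function of $r$ wherever it is finite, being a supremum of affine functions. If $x$ is a unit, then on the relevant range of $r$ --- all $r>0$ in case (a), and $r \in (0, -\log \alpha']$ with $\alpha'<1$ chosen so that $x, x^{-1} \in K\langle \alpha'/t, t\rrbracket_0$ in case (b) --- the $e^{-r}$-Gauss norm is multiplicative, since $K\langle t/e^{-r}\rangle$, resp.\ $K\langle \alpha'/t, t/e^{-r}\rangle$, embeds isometrically into $F_{e^{-r}}$, on which the Gauss norm is multiplicative. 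Hence $v(r) + \log |x^{-1}|_{e^{-r}} = \log |1|_{e^{-r}} = 0$. In case (b), a sum of two convex functions that is constant forces both to be affine, which is the claim. In case (a) the exponents $i$ are nonnegative, so each affine piece of $v$ has nonpositive slope and $v$ is in addition nonincreasing; a sum of two nonincreasing functions equal to a constant forces both to be constant (each is then also nondecreasing). Continuity up to $r=0$ is automatic since $|x|_{e^{-r}}$ increases to $|x|_1$ as $r \to 0^+$.

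For the ``if'' direction of (a), suppose $|x|_{e^{-r}}$ is constant on some interval $(0,\epsilon)$. Evaluating this at one small $r>0$ shows the terms of index $i \ge 1$ contribute strictly less than $|x|_1$, so $|a_0| = |x|_1$; in particular $a_0 \in K^\times$ and, writing $x = a_0(1-y)$ with $y = 1 - a_0^{-1}x$, every coefficient of $y$ lies in $\gotho_K$ and $y$ has zero constant term. Then the geometric series $\sum_{n\ge0} y^n$ converges coefficientwise, lies in $\gotho_K\llbracket t\rrbracket \subseteq K\llbracket t\rrbracket_0$ by the ultrametric inequality, and yields an inverse of $x$. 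For the ``if'' direction of (b), suppose $x \in K\langle \alpha_0/t, t\rrbracket_0$ and $r \mapsto \log |x|_{e^{-r}}$ is affine on $(0,\epsilon)$. Pick $\alpha_1 \in [\alpha_0, 1)$ with $-\log \alpha_1 < \epsilon$; then $x \in K\{\{\alpha_1/t, t\}\}$ and $\log|x|_{e^{-r}}$ is affine on $(0, -\log \alpha_1)$, so Lemma~\ref{L:criterion-unit-1} shows $x$ is a unit in $K\{\{\alpha_1/t, t\}\}$. It remains only to upgrade the inverse, a priori merely convergent on $\alpha_1 < |t| < 1$, to a \emph{bounded} function there: by multiplicativity $|x^{-1}|_{e^{-r}} = |x|_{e^{-r}}^{-1}$ for $r \in (0,-\log\alpha_1)$, and the right side is the reciprocal of the exponential of an affine function of $r$, hence bounded on $(0, -\log\alpha_2]$ for any $\alpha_2 \in (\alpha_1,1)$; together with the decay of the coefficients of $x^{-1}$ as $i \to -\infty$ (inherited from $x^{-1} \in K\{\{\alpha_1/t,t\}\}$), this places $x^{-1}$ in $K\langle \alpha_2/t, t\rrbracket_0$. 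Hence $x$ is a unit there, a fortiori in $\cup_{\alpha<1} K\langle \alpha/t, t\rrbracket_0$.

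The main obstacle is exactly this last step. Unlike the closed-annulus Tate algebras, the ring $K\langle \alpha/t, t\rrbracket_0$ is not complete for the supremum norm $|\cdot|_1$, so one cannot simply run a geometric series at the boundary radius $1$; one must either localize the inner radius (as above) or import the unit criterion on $K\{\{\alpha/t,t\}\}$ from Lemma~\ref{L:criterion-unit-1}. Everything else is routine bookkeeping with convexity and monotonicity of valuation polygons, multiplicativity of Gauss norms, and ultrametric estimates.
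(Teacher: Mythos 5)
Your proof is correct, and it takes a genuinely different (and more self-contained) route for part (b). The paper normalizes $|x|_1 = 1$ and then for (a) simply observes that $x \in \gotho_K\llbracket t\rrbracket$ is a unit in $K\llbracket t\rrbracket_0$ iff $a_0 \in \gotho_K^\times$, and for (b) appeals to an external criterion (\cite[Lemma~8.2.6(c)]{kedlaya-course}) identifying units of $\cup_\alpha K\langle \alpha/t, t\rrbracket_0$ with those $x$ whose image in $k((t))$ is nonzero. You instead argue purely with Gauss norms: for the ``only if'' directions, you exploit multiplicativity of $|\cdot|_{e^{-r}}$ together with convexity/monotonicity of $r\mapsto \log|x|_{e^{-r}}$; for the ``if'' direction of (b), you feed $x$ into Lemma~\ref{L:criterion-unit-1} on the open annulus $K\{\{\alpha_1/t,t\}\}$ and then upgrade the inverse from merely convergent to bounded, again using multiplicativity and the known decay of coefficients at $i\to-\infty$. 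Both approaches work; the paper's is shorter because it offloads the hard half of (b) to the residue-ring criterion, while yours stays internal to the tools already set up in this section. One remark: your closing comment that ``one cannot simply run a geometric series at the boundary radius $1$'' because $K\langle\alpha/t,t\rrbracket_0$ is not complete for $|\cdot|_1$ is accurate and correctly identifies why the extra boundedness argument (or the appeal to the residue criterion) is genuinely needed for (b), unlike for (a) where the geometric series converges coefficientwise and stays in $\gotho_K\llbracket t\rrbracket$.
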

\begin{proof}
We may assume that $|x|_1 = 1$.  For (a), this means that
$x \in \gotho_K \llbracket t \rrbracket$.  Hence, $x = \sum_{i = 0}^\infty a_i t^i$ is a unit if and only if $a_0$ is a unit in $\gotho_K$, which is equivalent to $|x|_{e^{-r}}$ being constant in a neighborhood of $r = 0$.  For (b), by \cite[Lemma~8.2.6(c)]{kedlaya-course}, $x$ is a unit if and only if its image modulo 
$\gothm_K$ in $k((t))$ is a unit or equivalently nonzero, which is 
equivalent to the function $r \mapsto \log |x|_{e^{-r}}$ being affine in some neighborhood of $0$.
\end{proof}

\begin{theorem} \label{T:part-decomp-annulus}
Fix $j \in J^+$.  Let $M$ be a $\partial_j$-differential module of rank $d$ over $A_K^1(\alpha, \beta]$.  Suppose that the following conditions hold for some $i \in \{1,\dots,d-1\}$.
\begin{enumerate}
\item[(a)]
The function 
$F^{(j)}_i(M,r)$ is affine for $-\log \beta \leq r < -\log \alpha$.
\item[(b)]
We have $f^{(j)}_i(M,r) > f^{(j)}_{i+1}(M,r)$ for $- \log \beta \leq r < -\log \alpha$.
\end{enumerate}
Then for any $\gamma \in (\alpha,\beta)$, $M \otimes K \langle \gamma/t, t/\beta \rrbracket_0$
admits a direct sum decomposition separating the first $i$ subsidiary $\partial_j$-radii of $M \otimes F_\eta$ for $\eta \in [\gamma, \beta)$.
\end{theorem}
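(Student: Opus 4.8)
The plan is to follow the proof of Theorem~\ref{T:j-decomp-open-annulus} closely; the new feature is that the Newton‑polygon factorization must be carried out over the ring $K\langle\gamma/t,t/\beta\rrbracket_0$ of \emph{bounded} sections rather than over functions on a slightly smaller closed sub‑annulus, and this is where the just‑proven Lemmas~\ref{L:criterion-unit-1}, \ref{L:criterion-unit-2}, and~\ref{L:slope-decomp-coprime} enter. The case $j=0$ is the verbatim analogue of \cite[Theorem~12.5.2]{kedlaya-course}, so I would assume $j\in J$. As before, Lemma~\ref{L:proj-intersect} permits enlarging $K$ at will, so I reduce to $|u_j|=1$; after rescaling $t$ I may take $\beta=1$, so the target ring becomes $R_\gamma:=K\langle\gamma/t,t\rrbracket_0$ for $\gamma\in(\alpha,1)$, and the hypotheses are imposed on $[0,-\log\alpha)$. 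Since the decomposition separating the first $i$ radii is unique when it exists, it suffices to exhibit it near each point of that interval and glue, so I may further enlarge $K$ and rescale near any given point.

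First I would treat the case $p=0$, or $p>0$ with $f_i^{(j)}(M,r)>-\log\omega$ throughout, in which no Frobenius descent is needed. Choosing a cyclic vector for $M$ over $\Frac K\langle\gamma/t,t/\delta\rangle$ for $\delta<1$ near $1$ gives $M\otimes\Frac K\langle\gamma/t,t/\delta\rangle\cong F\{T\}/F\{T\}P$ with $P$ monic, and Lemma~\ref{L:lattice} (plus \cite[Theorem~6.7.4]{kedlaya-course}) provides a basis on which $\partial_j$ acts by a matrix $N_c$, for $c>1$ near $1$, whose characteristic polynomial $Q$ computes the subsidiary radii of $M\otimes F_\eta$ for $\eta$ near $1$, exactly as in the proof of Theorem~\ref{T:subsidiary-partial-j}. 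Hypothesis~(a) says that $r\mapsto\log|\cdot|_{e^{-r}}$ applied to the coefficient of $T^{d-i}$ in $Q$ is affine as $|t|\to1^-$, so by Lemma~\ref{L:criterion-unit-1} (or Lemma~\ref{L:criterion-unit-2}) this coefficient is a unit in $R_\gamma$, after shrinking $\delta$ if necessary. Hypothesis~(b) together with \cite[Theorem~2.2.2]{kedlaya-course} then lets me factor $Q=Q_2Q_1$ over $R_\gamma[T]$ so that the roots of $Q_1$ are the $i$ largest under $|\cdot|_\eta$ for every $\eta\in[\gamma,1)$, and by Lemma~\ref{L:slope-decomp-coprime} (applied after rescaling) $Q_1$ and $Q_2$ generate the unit ideal in $R_\gamma[T]$; thus
\[
\exact{0}{\Ker(Q_1(N_c))}{M\otimes R_\gamma}{\Coker(Q_2(N_c))}{0}
\]
is a short exact sequence of finite projective $R_\gamma$‑modules. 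Applying Lemma~\ref{L:lattice} once more to the two factors and then \cite[Lemma~6.7.1]{kedlaya-course}, and repeating the argument for $M^\dual$, would produce a $\partial_j$‑stable direct summand accounting for the first $i$ subsidiary radii, as desired.

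For $p>0$ with larger radii, hypothesis~(b) forces $f_i^{(j)}(M,r)>0$, and by continuity (Theorem~\ref{T:subsidiary-partial-j}(a)) this stays bounded below by a positive constant on each compact subinterval; hence finitely many applications of $\partial_j$‑Frobenius pushforward reduce to the previous paragraph, tracking radii by Theorem~\ref{T:Frob-push-sp-norm}, exactly as at the end of the proof of Theorem~\ref{T:subsidiary-partial-j}. As there, the convenient feature is that $\varphi^{(\partial_j)*}$ introduces no singularity at $t=0$, so no off‑centered Frobenius is needed.

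The hard part will be ensuring that all of these constructions genuinely have bounded coefficients as $|t|\to1^-$, i.e.\ that the horizontal idempotent cut out in $\End M$ lands in $\End M\otimes R_\gamma$ and not merely in $\End M\otimes K\langle\gamma/t,t/\delta\rangle$ for each $\delta<1$ — reconciling the lattice machinery, which lives most naturally over closed sub‑annuli, with the bounded‑series ring. This is exactly why hypotheses~(a) and~(b) are imposed on the interval $[-\log\beta,-\log\alpha)$, which is \emph{closed} at its left endpoint: by Theorem~\ref{T:subsidiary-partial-j}(a) the functions $f^{(j)}_\bullet(M,\cdot)$ are continuous, so on a compact $[-\log\beta,-\log\gamma]$ the gap $f^{(j)}_i(M,r)-f^{(j)}_{i+1}(M,r)$ is bounded below by some $\epsilon>0$ while $f^{(j)}_1(M,r)$ is bounded above, whence by Lemma~\ref{L:basic-IR-prop} $R_{\partial_j}(\End M\otimes F_\eta)$ is bounded below by a positive constant for $\eta\in[\gamma,1)$. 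Once $1-\gamma$ is smaller than that constant, the horizontal idempotent has constant Gauss norm on $(\gamma,1)$ and therefore bounded coefficients; a general $\gamma$ is then recovered by combining this with the open‑annulus decomposition of Theorem~\ref{T:j-decomp-open-annulus} restricted to a closed sub‑annulus, via one further application of Lemma~\ref{L:proj-intersect}, using that $K\langle\gamma/t,t/\delta\rangle\cap K\langle\gamma_1/t,t\rrbracket_0=K\langle\gamma/t,t\rrbracket_0$ whenever $\gamma<\gamma_1<\delta<1$. Carrying out this uniform boundedness argument rigorously — and checking that the unit and factorization statements above persist up to $|t|=\beta$ — is the main obstacle.
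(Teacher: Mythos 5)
Your overall architecture matches the paper's: mimic the proof of Theorem~\ref{T:j-decomp-open-annulus} with Lemma~\ref{L:criterion-unit-2}(b) in place of Lemma~\ref{L:criterion-unit-1} to get a decomposition over the bounded-series ring near $|t|=\beta$, then invoke the uniqueness of the open-annulus decomposition from Theorem~\ref{T:j-decomp-open-annulus} and Lemma~\ref{L:proj-intersect} (together with the intersection identity $K\langle\gamma/t,t/\epsilon\rangle\cap K\langle\delta/t,t/\beta\rrbracket_0=K\langle\gamma/t,t/\beta\rrbracket_0$) to push $\delta$ down to an arbitrary $\gamma$. Those two moves are exactly the paper's proof. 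However, your penultimate paragraph introduces a spectral-gap/horizontal-idempotent argument (``Once $1-\gamma$ is smaller than \ldots the horizontal idempotent has constant Gauss norm'') that is neither in the paper nor clearly correct: for $j\in J$ the derivation $\partial_j$ acts only on coefficients, not on $t$, so a $\partial_j$-horizontal idempotent in $\End M$ is in no way forced to have $t$-independent or constant Gauss norm, and the bound on $R_{\partial_j}(\End M\otimes F_\eta)$ you extract from the gap $f_i^{(j)}-f_{i+1}^{(j)}$ does not have the Taylor-convergence consequence you want (that mechanism is the $\partial_0$-phenomenon of Dwork's transfer theorem, not a $\partial_j$-phenomenon).

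The detour is also unnecessary. The paper's point (flagged by its parenthetical ``so far we have not used condition~(a)'') is that the decomposition over $K\langle\delta/t,t/\beta\rrbracket_0$ for \emph{some uncontrolled} $\delta$ already follows from hypothesis~(b) plus the local one-sided piecewise affinity supplied by Theorem~\ref{T:subsidiary-partial-j}(a), since Lemma~\ref{L:criterion-unit-2}(b) only asks for affinity of the Gauss norm in a neighborhood of $r=-\log\beta$; no uniformity over all $r\in[-\log\beta,-\log\gamma]$ is needed at that stage. Hypothesis~(a) enters only through Theorem~\ref{T:j-decomp-open-annulus}, whose decomposition over $A_K^1(\alpha,\beta)$ you then glue with the near-boundary one via Lemma~\ref{L:proj-intersect}. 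If you replace your ``hard part'' paragraph with this division of labor (bounded-series decomposition only near $\beta$, then glue to reach all $\gamma$), the ``main obstacle'' you flag at the end disappears.
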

\begin{proof}
We first obtain a decomposition of $M \otimes K \langle \delta/t, t/\beta
\rrbracket_0$ for some uncontrolled $\delta \in (\alpha,\beta)$,
by arguing as in Theorem~\ref{T:j-decomp-open-annulus}, but using Lemma~\ref{L:criterion-unit-2}(b) instead of Lemma~\ref{L:criterion-unit-1}. (So far we
have not used condition (a).) To get the desired result, it 
suffices to do so for $\gamma \in (\alpha, \delta)$. For this,
we use the fact that the 
decomposition of $M$ over $A_K^1(\alpha,\beta)$ given by
Theorem~\ref{T:j-decomp-open-annulus} is unique, so we may thus glue together
the decomposition of $M \otimes K \langle \delta/t, t/\beta \rrbracket_0$
with the decomposition from Theorem~\ref{T:j-decomp-open-annulus}.
More explicitly, this involves applying Lemma~\ref{L:proj-intersect}
to the following situation: for any $\epsilon \in (\delta, \beta)$,
we have
\[
K \langle \gamma/t, t/\epsilon \rangle
\cap 
K \langle \delta/t, t/\beta \rrbracket_0 =
K \langle \gamma/t, t/\beta \rrbracket_0
\]
within $K \langle \delta/t, t/\epsilon \rangle$.
\end{proof}

\begin{theorem} \label{T:part-decomp-disc}
Fix $j \in J^+$.  Let $M$ be a $\partial_j$-differential module of rank $d$ over 
$K \langle t/\beta \rangle$.
Suppose that the following conditions hold for some $i \in \{1,\dots,d-1\}$.
\begin{enumerate}
\item[(a)]
The function $F^{(j)}_i(M,r)$ is constant in a neighborhood of $r= -\log \beta$.
\item[(b)]
We have $f^{(j)}_i(M, -\log \beta) > f^{(j)}_{i+1}(M, -\log \beta)$.
\end{enumerate}
Then $M \otimes K \llbracket t/\beta \rrbracket_0$ 
admits a direct sum decomposition separating the first $i$ subsidiary $\partial_j$-radii of $M \otimes F_\eta$ for $\eta \in (0,\beta)$.
\end{theorem}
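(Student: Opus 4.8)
The plan is to deduce Theorem~\ref{T:part-decomp-disc} from Theorem~\ref{T:part-decomp-annulus} (applied with $\alpha = 0$) by first promoting the purely local hypotheses (a) and (b) to statements valid over the whole range $[-\log\beta,+\infty)$, and then gluing the resulting decompositions of $M \otimes K\langle\gamma/t,t/\beta\rrbracket_0$ as $\gamma \downarrow 0$. When $j = 0$ the statement is the closed-disc analogue of Theorem~\ref{T:part-decomp-annulus} proved in \cite[Chapter~12]{kedlaya-course} (cf. \cite[Theorems~12.5.1 and~12.5.2]{kedlaya-course}), so I assume hereafter $j \in J$.

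I would first show that (a) and (b) force the apparently stronger assertions: (a$'$) $F^{(j)}_i(M,r)$ is affine on all of $[-\log\beta,+\infty)$; and (b$'$) $f^{(j)}_i(M,r) > f^{(j)}_{i+1}(M,r)$ for every $r \in [-\log\beta,+\infty)$. For (a$'$): by Theorem~\ref{T:subsidiary-partial-j}(a),(c),(d) the function $F^{(j)}_i(M,r)$ is continuous, convex, and nonincreasing on $[-\log\beta,+\infty)$ (monotonicity applies because $\alpha = 0$ here); a continuous convex nonincreasing function that is constant near the left endpoint of its domain is constant throughout, since convexity makes the one-sided slopes nondecreasing, hence $\geq 0$ beyond the initial interval of constancy, while monotonicity makes them $\leq 0$. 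For (b$'$): writing $f^{(j)}_i - f^{(j)}_{i+1} = 2F^{(j)}_i - F^{(j)}_{i-1} - F^{(j)}_{i+1}$ (with $F^{(j)}_0 = 0$), the right side is the sum of the now-affine function $2F^{(j)}_i$ and the concave functions $-F^{(j)}_{i-1}$, $-F^{(j)}_{i+1}$, hence continuous and concave on $[-\log\beta,+\infty)$; it is also nonnegative, since the subsidiary radii are listed in increasing order. A continuous concave function on a half-line that is bounded below is nondecreasing, so $f^{(j)}_i(M,r) - f^{(j)}_{i+1}(M,r) \geq f^{(j)}_i(M,-\log\beta) - f^{(j)}_{i+1}(M,-\log\beta)$, which is positive by (b).

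With (a$'$) and (b$'$) in hand, Theorem~\ref{T:part-decomp-annulus} (with $\alpha = 0$) applies to $M$ and yields, for every $\gamma \in (0,\beta)$, a decomposition $D_\gamma$ of $M \otimes K\langle\gamma/t,t/\beta\rrbracket_0$ separating the first $i$ subsidiary $\partial_j$-radii of $M \otimes F_\eta$ for all $\eta \in [\gamma,\beta)$. For $\gamma < \gamma'$ the restrictions of $D_\gamma$ and $D_{\gamma'}$ to the open annulus $A_K^1(\gamma',\beta)$ coincide, since by Theorem~\ref{T:j-decomp-open-annulus} — whose hypotheses again hold by (a$'$), (b$'$) — the decomposition there separating the first $i$ subsidiary radii is unique. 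Now $M$ is free over the principal ideal domain $K\langle t/\beta\rangle$; fixing a basis, write $D_\gamma$ as an idempotent matrix $E_\gamma$ over $K\langle\gamma/t,t/\beta\rrbracket_0$. Since the restriction map $K\langle\gamma/t,t/\beta\rrbracket_0 \hookrightarrow \calO(A_K^1(\gamma',\beta))$ is injective, the compatibility just noted forces all the $E_\gamma$ to be equal; their common value lies in $\bigcap_{\gamma \in (0,\beta)}\Mat_d(K\langle\gamma/t,t/\beta\rrbracket_0) = \Mat_d(K\llbracket t/\beta\rrbracket_0)$, because imposing the inner-radius conditions for all $\gamma > 0$ simultaneously collapses them to the vanishing of all negative-degree coefficients. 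This idempotent over $K\llbracket t/\beta\rrbracket_0$ defines the desired decomposition of $M \otimes K\llbracket t/\beta\rrbracket_0$: for $\eta \in (0,\beta)$, pushing it to $K\langle\gamma/t,t/\beta\rrbracket_0$ for any $\gamma \in (0,\eta)$ recovers $D_\gamma$, which separates the first $i$ subsidiary radii of $M \otimes F_\eta$ since $\eta \in [\gamma,\beta)$. (Equivalently, the gluing can be organized via Lemma~\ref{L:proj-intersect0} applied over this directed family.)

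The step I expect to be the real content is the upgrade in the second paragraph: once one sees that the convexity and monotonicity of Theorem~\ref{T:subsidiary-partial-j} propagate the hypotheses from a neighborhood of $r = -\log\beta$ to the whole of $[-\log\beta,+\infty)$, Theorem~\ref{T:part-decomp-annulus} carries the analytic weight and the remaining gluing is routine, paralleling the proof of Theorem~\ref{T:part-decomp-annulus} itself.
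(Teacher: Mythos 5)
Your route differs from the paper's: the paper proves Theorem~\ref{T:part-decomp-disc} directly by rerunning the argument of Theorem~\ref{T:j-decomp-open-annulus} over $K\llbracket t/\beta\rrbracket_0$, replacing the unit criterion Lemma~\ref{L:criterion-unit-1} by Lemma~\ref{L:criterion-unit-2}(a), so the idempotent is constructed at once with coefficients in $K\llbracket t/\beta\rrbracket_0$. Your first step is correct and is a nice observation that is implicit in the paper's statement: since $\alpha = 0$, Theorem~\ref{T:subsidiary-partial-j}(c),(d) make $F_i^{(j)}(M,\cdot)$ continuous, convex and nonincreasing on $[-\log\beta,\infty)$, so constancy near the left endpoint propagates to constancy everywhere, and the identity $f_i^{(j)}-f_{i+1}^{(j)} = 2F_i^{(j)} - F_{i-1}^{(j)} - F_{i+1}^{(j)}$ (a concave, nonnegative function) forces (b$'$). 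So the hypotheses of Theorem~\ref{T:part-decomp-annulus} with $\alpha = 0$ really do hold.

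The gap is in the gluing. Your claim that
$\bigcap_{\gamma\in(0,\beta)} K\langle\gamma/t, t/\beta\rrbracket_0 = K\llbracket t/\beta\rrbracket_0$
is false. The membership condition $\lim_{i\to-\infty}|a_i|\gamma^i = 0$ constrains only the tail in $i$ and never forces any individual negative-degree coefficient to vanish; for example $1/t$ lies in $K\langle\gamma/t, t/\beta\rrbracket_0$ for every $\gamma\in(0,\beta)$ (the limit condition is trivially satisfied and $\sup_i|a_i|\beta^i = \beta^{-1}<\infty$), yet $1/t\notin K\llbracket t/\beta\rrbracket_0$. Indeed the intersection is the much larger ring of bounded Laurent series whose principal part converges for all $t\neq 0$. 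Thus knowing that the idempotents $E_\gamma$ agree and lie in every $K\langle\gamma/t, t/\beta\rrbracket_0$ does not put their common value $E$ in $\Mat_d(K\llbracket t/\beta\rrbracket_0)$; one would still have to rule out a pole of $E$ at $t=0$, e.g.\ by a uniform bound on $|E|_\gamma$ as $\gamma\to 0^+$ (not supplied by Lemma~\ref{L:lattice}, whose constants depend on $\gamma$), or by producing a decomposition of $M$ over some subdisc $K\langle t/\epsilon\rangle$ to feed into Lemma~\ref{L:proj-intersect} (which is essentially what the target theorem asserts). This is exactly the step the paper's direct argument avoids by invoking Lemma~\ref{L:criterion-unit-2}(a).
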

\begin{proof}
Similar to Theorem~\ref{T:j-decomp-open-annulus}, but using Lemma~\ref{L:criterion-unit-2}(a) instead of Lemma~\ref{L:criterion-unit-1}.
\end{proof}

\begin{remark} \label{R:bounded-in-discrete-case}
In Theorems~\ref{T:part-decomp-annulus} and~\ref{T:part-decomp-disc},
if $K$ is discrete and $\beta \in |K^\times|^{\QQ}$, 
we can begin with free differential modules
over the rings $K \langle \alpha/t, t/\beta \rrbracket_0$
and $K \llbracket t/\beta \rrbracket_0$, respectively.
(The main reason for the restrictive hypotheses is to ensure that
is that the resulting rings are noetherian; among other reasons, this
is needed to ensure that we may freely pass between
finite projective modules and finite locally free modules.)
Note that this requires extending
the definition of $f_i^{(j)}(M,r)$ to $r = -\log \beta$,
using the completion of $\Frac K \llbracket t/\beta \rrbracket_0$
for the $\beta$-Gauss norm instead of $F_\beta$.
(Compare \cite[Remark~12.5.4]{kedlaya-course}.)
\end{remark}

\subsection{Variation for multiple derivations}

\label{S:var-intri-gen-radii}
In this subsection, we study the variation of intrinsic generic radii 
of a differential module over a disc or annulus.
The results here more closely match those of \cite{kedlaya-course} than
in the case of a $\partial_j$-differential module with $j \in J$.

We first introduce a rotation construction, in the manner of
\cite{kedlaya-swan1}.
\begin{notation}\label{N:generic-rotation}
Fix $\eta_+ \in \RR^+$.  Assume that $|u_J| = 1$.  
Denote $\widetilde K$ to be the completion of $K (x_J)$ with 
respect to the $(\eta_+^{-1}, \dots, \eta_+^{-1})$-Gauss norm; view $\widetilde K$ as a differential field of order $m$ with derivations $\serie{\partial_}m$.  We may use Taylor series (as in Lemma~\ref{L:generic-point})
to define, for any $\eta_- \in [0, \eta_+)$,
an injective homomorphism 
$\tilde f^*: K \langle \eta_- / t, t/\eta_+ \}\} 
\rar \widetilde K
\langle \eta_- / t, t/\eta_+ \}\}$
such that $\tilde f^* (u_j) = u_j + x_j t$.

For $\eta \in [0, \eta_+)$, we use 
$\widetilde F_\eta$ to denote the completion of 
$\widetilde K(t)$ with respect to the 
$\eta$-Gauss norm.  
Then $\tilde f^*$ extends to an injective isometric homomorphism $\tilde f^*: F_\eta \inj \widetilde F_\eta$.
\end{notation}

\begin{lemma}\label{L:generic-rotation}
For any subinterval $I$ of $[0, \eta_+)$ and
any $\partial_{J^+}$-differential module $M$ on $A_K^1 (I)$, $\tilde f^* M$ gives a $\partial_0$-differential module on 
$A_{\widetilde K}^1 (I)$.
Moreover, for $\eta \in I$,
$$
R_{\partial_0}(M \otimes \widetilde F_\eta) = \min \left\{ \eta IR_{\partial_0}(M \otimes F_\eta);\ \eta_+
IR_{\partial_j}(M \otimes F_\eta) \quad (j \in J) \right\}.
$$
\end{lemma}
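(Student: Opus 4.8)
The plan is to reduce the computation of the generic $\partial_0$-radius of $\tilde f^* M$ over $\widetilde F_\eta$ to the generic radii of $M$ over $F_\eta$ with respect to all the derivations $\partial_0, \partial_1, \dots, \partial_m$, by tracking how the rotation $\tilde f^*(u_j) = u_j + x_j t$ interacts with Taylor series in the $t$-direction. First I would recall that $R_{\partial_0}(M \otimes \widetilde F_\eta)$ is computed, via Proposition~\ref{P:generic-point} applied over the base field $\widetilde F_\eta$, as the largest $r$ for which the Taylor series along $\partial_0 = \tfrac{d}{dt}$ on a generic disc around the generic point of $|t| = \eta$ converges and trivializes $\tilde f^* M$. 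The key observation is that on $\widetilde K$ the derivation $\partial_0$ acts on the image of $\tilde f^*$ not just through $t$ but, because $\tilde f^*(u_j) = u_j + x_j t$, also through the $u_j$'s: formally, $\partial_0$ on the generic disc of $\widetilde F_\eta$ decomposes as $\partial_0^{\mathrm{geom}} + \sum_{j} x_j \partial_{u_j}$, where $\partial_0^{\mathrm{geom}}$ is the "pure $t$" derivation and $x_j \partial_{u_j}$ accounts for the twist. This is exactly the kind of bookkeeping done in \cite[Section~2.3]{kedlaya-swan1} for the rotation construction there.

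The key steps, in order, are: (1) set up the generic point $f_\gen$ for $\widetilde F_\eta$ over $\widetilde K$ using Lemma~\ref{L:generic-point}, so that $R_{\partial_0}(M \otimes \widetilde F_\eta)$ becomes a radius of convergence of an explicit Taylor series; (2) expand that Taylor series using $\partial_0 \circ \tilde f^* = \tilde f^* \circ \partial_0 + \sum_j x_j (\tilde f^* \circ \partial_{u_j})$, which follows by checking on the generators $t, u_j$ and using that $\tilde f^*$ is a ring homomorphism, so that the iterated operators $\partial_0^n$ on $\tilde f^* M$ are governed by a multinomial expansion in $\partial_0, \partial_1, \dots, \partial_m$ acting on $M$; (3) estimate the norm of the general term $\frac{\partial_0^n}{n!}$ after this expansion: a term involving $\partial_0^{a_0} \partial_1^{a_1} \cdots \partial_m^{a_m}/(\text{factorials})$ contributes $x_1^{a_1}\cdots x_m^{a_m}$, whose $\widetilde K$-norm is $\eta_+^{-(a_1 + \cdots + a_m)}$, together with $|\partial_j^{a_j}/a_j!|$ bounded by $\omega^{-a_j}|\partial_j|_{\sp, M\otimes F_\eta}^{a_j}$ up to the rational-type normalization; (4) assemble these into a geometric-series convergence condition and read off that the radius of convergence is the minimum over $j \in J^+$ of the individual contributions, which after matching normalizations (recall $IR_{\partial_j} = |u_j| \cdot R_{\partial_j}$ and we have reduced to $|u_j| = 1$) gives exactly $\min\{\eta IR_{\partial_0}(M\otimes F_\eta);\ \eta_+ IR_{\partial_j}(M \otimes F_\eta)\ (j \in J)\}$. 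The factor $\eta$ versus $\eta_+$ is the crucial asymmetry: the $t$-direction is measured on the generic disc of radius $\eta$, whereas the $x_j$'s live at "radius" $\eta_+$.

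The main obstacle I anticipate is step (3)–(4): proving that the minimum is actually achieved, not merely a lower bound. The inequality "$R_{\partial_0}(\tilde f^* M \otimes \widetilde F_\eta) \geq \min\{\dots\}$" is the routine geometric-series estimate, but the reverse inequality requires showing no miraculous cancellation occurs among the mixed partials. The standard device, as in \cite[Section~2.3]{kedlaya-swan1} and \cite[Chapter~9]{kedlaya-course}, is to argue on the level of spectral norms of the individual commuting operators: since $\partial_0^{\mathrm{geom}}$ (the pure $t$-derivation after the twist), and each $x_j \partial_{u_j}$, are commuting operators whose spectral norms on $\tilde f^* M \otimes \widetilde F_\eta$ are respectively $\eta^{-1}IR_{\partial_0}(M\otimes F_\eta)^{-1}\cdot\omega$ and $\eta_+^{-1}IR_{\partial_j}(M\otimes F_\eta)^{-1}\cdot\omega$, and $\partial_0 = \partial_0^{\mathrm{geom}} + \sum_j x_j\partial_{u_j}$ is their sum, the spectral norm of $\partial_0$ equals the maximum of the spectral norms of the summands precisely when those are distinct (and is bounded above by it in general); combined with Proposition~\ref{P:generic-point} this pins down $R_{\partial_0}$ on the nose. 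Handling the boundary case $\eta = 0$ is by continuity as in Lemma~\ref{L:generic-point}, and the case of general $|u_J|$ is recovered by rescaling, exactly as in the reduction at the start of the proof of Theorem~\ref{T:subsidiary-partial-j}.
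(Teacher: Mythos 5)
Your proposal follows essentially the same line as the paper's (very brief) argument: the chain-rule identity $\partial_0|_{\tilde f^* M} = \partial_0|_M + \sum_{j\in J} x_j \partial_j|_M$, followed by a bookkeeping of normalizations. The normalization computations you record in Step 4 (the spectral norms $\eta^{-1}\omega\,IR_{\partial_0}(M\otimes F_\eta)^{-1}$ and $\eta_+^{-1}\omega\,IR_{\partial_j}(M\otimes F_\eta)^{-1}$, using the reduction to $|u_j|=1$) are correct and are exactly what the paper suppresses under ``accounting for the different normalizations.''

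There is, however, one soft spot worth flagging. In Step 4 you propose to close the reverse inequality by appealing to ``spectral norm of a sum of commuting operators equals the maximum of the individual spectral norms precisely when these are distinct.'' That statement is true as far as it goes, but it does not suffice here: the lemma asserts equality \emph{unconditionally}, including when $\eta\,IR_{\partial_0}(M\otimes F_\eta)$ coincides with $\eta_+\,IR_{\partial_j}(M\otimes F_\eta)$ for some $j$, or when several of the $\eta_+\,IR_{\partial_j}$ tie. The device that actually rules out cancellation is the one you already set up in Step 3: since the $x_j$ are independent transcendentals over $K$ and the $(\eta_+^{-1},\dots,\eta_+^{-1})$-Gauss norm on $\widetilde K$ is multiplicative with distinct monomials $x_J^{e_J}$ mutually orthogonal, the multinomial expansion $\partial_0^n = \sum_{|e|=n} \binom{n}{e}\, x_J^{e_J}\, \partial_0^{e_0}\partial_1^{e_1}\cdots\partial_m^{e_m}\big|_M$ (valid because the derivations commute on $M$ and each $\partial_j$ kills $x_J$) has no cancellation between terms with distinct $e_J$. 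Thus $|\partial_0^n|_{\sp,\tilde f^*M}$ is literally the maximum over $e$ of the individual terms, and the spectral norm of the sum equals the maximum of the individual spectral norms with no distinctness hypothesis. I would replace the ``precisely when distinct'' step with this orthogonality argument (or phrase it as: the estimate in Step 3 is an equality termwise because of the Gauss-norm orthogonality), so that the claimed equality holds in all cases.
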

\begin{proof}
This follows from the fact that
\[
\partial_0|_{\tilde f^* M}  =  \partial_0|_M + \sum_{j \in J} x_j
\partial_j|_M,
\]
after accounting for the different normalizations.
\end{proof}

\begin{notation} \label{N:subsidiary-radii2}
Let $M$ be a $\partial_{J^+}$-differential module of rank $d$ 
on $K \langle \alpha/t, t/\beta \rangle$.  
For $r \in [-\log \beta, -\log \alpha]$ and $i \in \{\serie{}d\}$, denote 
$$
f_i(M,r) = -\log IR(M \otimes F_{e^{-r}}; i), \quad F_i(M,r) = f_1(M,r) + \cdots + f_i(M,r).
$$
Note that we have changed the normalization from
Notation~\ref{N:subsidiary-radii-and-sums}, as we are now using intrinsic
rather than extrinsic radii.
\end{notation}

\begin{theorem}\label{T:1-dim-var}
Let $M$ be a $\partial_{J^+}$-differential module of rank $d$ on $A_K^1[\alpha, \beta]$. 
\begin{enumerate}
\item[(a)]
(Linearity)
For $i=\serie{}d$, the functions $f_i(M,r)$ and $F_i(M,r)$ are 
continuous and piecewise affine.
\item[(b)]
(Integrality)
If $i=d$ or $f_i(M,r_0) > f_{i+1}(M,r_0)$, then 
the slopes of $F_i(M,r)$ in some neighborhood of $r_0$ belong to $\ZZ$. Consequently, the slopes of each $f_i(M,r)$ and $F_i(M,r)$ belong to
$\frac{1}{1} \ZZ \cup \cdots \cup \frac{1}{d} \ZZ$.
\item[(c)]
(Monotonicity)
Suppose that $\alpha = 0$.
Then the slopes of $F_i(M,r)$
are nonpositive, and each $F_i(M,r)$ is constant for $r$ sufficiently large.
\item[(d)]
(Convexity)
For $i=1, \dots, d$, the function $F_i(M,r)$ is convex.
\end{enumerate}
\end{theorem}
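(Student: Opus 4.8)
The plan is to transport the whole statement to the setting of a single ``geometric'' derivation $\partial_0$, where Theorem~\ref{T:subsidiary-partial-0} is available, by means of the generic rotation of Notation~\ref{N:generic-rotation}. First I would make the usual reductions: by Remark~\ref{R:enlarge-K} enlarge $K$ so that $|u_J|=1$; since (a)--(d) are local on the open interval $(-\log\beta,-\log\alpha)$ (the two endpoints being recovered at the end by continuity, or by passing to a slightly smaller closed subannulus), fix a point $r_0$ and rescale $t$ so that $e^{-r_0}$ lies in the interior of the subannulus under consideration; and observe that it is enough to prove the assertions for the truncations $\max\{f_i(M,r),\epsilon\}$ for each $\epsilon>0$, letting $\epsilon\to 0^+$ at the end, which keeps us in the range where the auxiliary $\partial_j$-radius functions are controlled by Theorems~\ref{T:subsidiary-partial-0} and~\ref{T:subsidiary-partial-j}.

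For the core step, choose $\eta_+$ slightly larger than $e^{-r_0}$ and form $\tilde f^{*}M$, a $\partial_0$-differential module over $\widetilde K$ on a closed subannulus containing $e^{-r_0}$; Theorem~\ref{T:subsidiary-partial-0} then gives that the functions $f_i^{(0)}(\tilde f^{*}M,r)$ and their partial sums are continuous, piecewise affine with integral slopes at gaps, convex, and (when $\alpha=0$) behave as in Theorem~\ref{T:subsidiary-partial-0}(c). To transfer this to $M$ I would, for each $r$, decompose $M\otimes F_{e^{-r}}$ by Theorem~\ref{T:multi-decomp-over-field} into the common refinement of the decompositions by $\partial_j$-radii over $j\in J^{+}$; Lemma~\ref{L:generic-rotation}, applied to each pure summand (and to its sub- and quotient modules, which are again pure), identifies the pure $\partial_0$-radius of its rotation with $\min\{e^{-r}\,IR_{\partial_0};\ \eta_+\,IR_{\partial_j}\ (j\in J)\}$, so that $\gothR_{\partial_0}(\tilde f^{*}M\otimes\widetilde F_{e^{-r}})$ becomes an explicit ``tropical'' function (iterated $\min$'s and unions) of the $\partial_j$-radii of $M\otimes F_{e^{-r}}$, while $\gothI\gothR(M\otimes F_{e^{-r}})$ is given by the same function with $\eta_+$ replaced by $e^{-r}$. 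Since $\log(\eta_+/e^{-r})$ is affine in $r$ and can be made as small as one likes on a neighborhood of $r_0$ by letting $\eta_+\downarrow e^{-r_0}$, the continuity of the $f_i(M,r)$, the local piecewise affinity, and the integrality of (b) — now with the full set of denominators $\{1,\dots,d\}$, inherited from the $\partial_0$-case rather than the weaker $p$-power denominators of Theorem~\ref{T:subsidiary-partial-j}(b) — drop out, and the convexity (d) and monotonicity (c) follow by a three-point comparison in the spirit of \cite[Lemma~11.5.1]{kedlaya-course}, comparing the convex functions $F_i^{(0)}(\tilde f^{*}M,\cdot)$ for varying $\eta_+$ (which agree with $i\cdot(-\log\eta_+)+F_i(M,\cdot)$ along the left boundary $r=-\log\eta_+$ of their domains). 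When $p>0$, the $p$-power denominators entering from Theorem~\ref{T:subsidiary-partial-j} are dissolved by $\partial_j$-Frobenius descent, exactly as in the proof of that theorem and using Theorem~\ref{T:Frob-push-sp-norm}; because $\tilde f^{*}M$ is genuinely a $\partial_0$-module, none survives in the end.

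The remaining point (Step~3) is to upgrade ``locally affine'' to honestly piecewise affine by excluding accumulation of break points: this is the argument of \cite[Lemma~11.6.3]{kedlaya-course}, in which integrality of the slopes together with convexity and the boundedness of the slopes forces the breaks to be locally finite, and the same bookkeeping also yields the last clause of (c), that each $F_i(M,r)$ is constant for $r$ large. I expect the main obstacle throughout to be precisely this bookkeeping with the rotation parameter: the rotated $\partial_0$-radii carry the spurious multiplicative factor $\eta_+/e^{-r}$ on the summands where $\partial_0$ is not dominant, and one must check that this affine-in-$r$ discrepancy does not corrupt convexity or integrality once one forms partial sums and lets $\eta_+$ tend to $e^{-r_0}$ — in effect, that $F_i(M,\cdot)$ is realized near $r_0$ as a limit of convex functions pinned to it along the left endpoints of their domains.
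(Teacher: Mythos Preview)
Your overall strategy --- reduce to $|u_J|=1$, localize at $r_0$, and transport the problem to a $\partial_0$-module via the generic rotation of Notation~\ref{N:generic-rotation} so that Theorem~\ref{T:subsidiary-partial-0} applies --- is exactly the paper's strategy. The difficulty you flag at the end (the spurious factor $\eta_+/e^{-r}$ on constituents where $\partial_0$ is not dominant) is also precisely the crux. But your proposed resolution of that difficulty does not work as stated, and this is where the proof lives.

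Concretely, your convexity argument rests on the claim that $F_i^{(0)}(\tilde f^{*}M,\cdot)$ ``agrees with $i\cdot(-\log\eta_+)+F_i(M,\cdot)$ along the left boundary $r=-\log\eta_+$'' and that varying $\eta_+$ then realizes $F_i(M,\cdot)$ as a limit of pinned convex functions. The pinning at the single point $r=-\log\eta_+$ is correct (in the limit), but away from that point Lemma~\ref{L:generic-rotation} only gives the sandwich
\[
F_i^{(0)}(\tilde f^{*}M,r) - ir \;\leq\; F_i(M,r) \;\leq\; F_i^{(0)}(\tilde f^{*}M,r) + i\log\eta_+,
\]
with width $i(r+\log\eta_+)>0$. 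If you push a three-point convexity inequality through this sandwich you pick up an error $i(r_-+r_+ + 2\log\eta_+)$, which is strictly positive for any admissible choice of $\eta_+$ and does not vanish in the limit $\eta_+\downarrow e^{-r_0}$ once $r_-<r_0<r_+$ are fixed. So the bookkeeping you defer is not a routine clean-up; it is the whole content of the step.

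The paper repairs this not by a limit in $\eta_+$ but by first \emph{decomposing} $M$ near $r_0$, using Theorems~\ref{T:subsidiary-partial-0}, \ref{T:subsidiary-partial-j}, \ref{T:part-decomp-annulus}, and \ref{T:j-decomp-open-annulus}, into summands $M_{\lambda,\mu}$ on which each $\partial_j$-radius is pure and affine on one side of $r_0$. On each summand one then knows exactly whether $\partial_0$ is dominant, and hence whether the discrepancy $f_1^{(0)}(\tilde f^{*}M_{\lambda,\mu},r)-f_1(M_{\lambda,\mu},r)$ equals $r$ or $-\log\eta_+$ on a one-sided neighborhood. Summing gives two explicit inequalities between one-sided derivatives,
\[
(F_i^{(0)})'_{\pm}(\tilde f^{*}M,0) - \theta_i(M,0)\ \lessgtr\ (F_i)'_{\pm}(M,0),
\]
where $\theta_i(M,0)$ counts the ranks of constituents with $\partial_0$ dominant; these, together with convexity of $F_i^{(0)}(\tilde f^{*}M,\cdot)$, force $(F_i)'_-(M,0)\leq (F_i)'_+(M,0)$. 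The same decomposition is what gives continuity and piecewise affinity (a), and integrality (b) on summands where $\partial_0$ is not dominant (on the others, Theorem~\ref{T:subsidiary-partial-0}(b) applies directly). There is no need for $\partial_j$-Frobenius descent here: once rotated, the module is genuinely a $\partial_0$-module and the integrality comes straight from Theorem~\ref{T:subsidiary-partial-0}(b).

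Finally, for (c) the paper does not use the rotation at all: by Dwork's transfer theorem (Proposition~\ref{P:generic-point}), $M$ restricted to a sufficiently small disc is the pullback of a $(\partial_J)$-module over $K$, so $F_i(M,r)$ is literally constant for large $r$; nonpositivity of slopes then follows from (d). Your route to (c) via the rotation would again run into the same accounting problem.
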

\begin{proof}
Before proceeding, we reduce to the case $|u_J| = 1$ 
as in the proof of Theorem~\ref{T:subsidiary-partial-j}.
(Note that when enlarging $K$,
we do not retain the derivations with respect to any
added parameters.)

\textbf{\underline{Step 1:}} 
In this step, we prove that 
for $i = \serie{}d$, $f_i(M, r)$ and $F_i(M, r)$ are 
continuous at $r = -\log \beta$.  
Moreover, if $f_i(M, -\log \beta) > 0$, 
we show that there exists $\gamma \in [\alpha, \beta)$ such that 
(a) and (b) hold for $r \in [-\log \beta, -\log \gamma]$.
As in the proof of Theorem~\ref{T:subsidiary-partial-j},
we may reduce to the case $\beta=1$.

Let $R$ denote the completion of $\gotho_K ((t)) \otimes_{\gotho_K} K$ 
for the $1$-Gauss
norm; note that this contains both $F_1$ and
$K \langle \gamma/t, t \rrbracket_0$ for any $\gamma \in [\alpha,1)$.
We first apply 
Theorem~\ref{T:subsidiary-partial-0} (if $j=0$) or
Theorem~\ref{T:subsidiary-partial-j} (if $j \in J$), and
Theorem~\ref{T:part-decomp-annulus}, to decompose 
\[
M \otimes K \langle \gamma / t, t \rrbracket_0
= \bigoplus_{\lambda = 1}^{d'} M_\lambda^{[\gamma, 1]}
\]
for some $\gamma \in [\alpha, 1)$, in such a manner that
the following conditions hold for $j \in J^+$ and $\lambda = \serie{}d'$.
\begin{itemize}
\item[(i)]
The module $M_\lambda^{[\gamma, 1]} \otimes R$
is the base extension to $R$ 
of a differential submodule $M'_\lambda$ of $M \otimes F_1$ of pure intrinsic
$\partial_j$-radii.
\item[(ii)]
For $\mu = \serie{} \rank M_\lambda^{[\gamma, 1]}$ 
the function $f_\mu^{(j)}(M_\lambda^{[\gamma, 1]}, r)$
tends to $-\log IR_{\partial_j}(M'_\lambda)$ as $r \to 0^+$.
If $j=0$ or $IR_{\partial_j}(M'_\lambda) < 1$, then
also $f_\mu^{(j)}(M_\lambda^{[\gamma, 1]}, r)$
is affine for $r \in (0, -\log \gamma]$.
\end{itemize}
This alone suffices to imply continuity of $f_i(M,r)$ and $F_i(M,r)$
at $r = 0$.

Applying Theorem~\ref{T:j-decomp-open-annulus} 
after possibly making $\gamma$ closer to $1$, 
we get a further
decomposition $M_\lambda^{[\gamma, 1]} = \bigoplus_{\mu = 1}^{d_\lambda} M_{\lambda, \mu}^{[\gamma, 1)}$ over $A_K^1[\gamma, 1)$ such that
the following conditions hold for $\lambda = \serie{}d'$.
\begin{itemize}
\item [(iii)] 
For $j \in J^+$, $\mu = \serie{}d_\lambda$, 
if $IR_{\partial_j}(M'_\lambda) < 1$, 
then $M_{\lambda, \mu}^{[\gamma, 1)} \otimes F_{e^{-r}}$ has
pure intrinsic $\partial_j$-radii for $r \in (0, -\log \gamma]$.
\item [(iv)]
If $IR(M'_\lambda) < 1$, then for $j \in J^+$, $\mu = \serie{}d_\lambda$,
$\partial_j$ is dominant for $M_{\lambda, \mu}^{[\gamma, 1)} 
\otimes F_{e^{-r}}$ for some $r \in (0, -\log \gamma]$ 
if and only if the same holds for all $r \in (0, -\log \gamma]$.
\item[(v)]
If $\lambda, \lambda' \in \{\serie{}d'\}$ satisfy
$IR(M'_\lambda) > IR(M'_{\lambda'})$,
then $IR(M_{\lambda, \mu}^{[\gamma, 1)} 
\otimes F_{e^{-r}}) > IR(M_{\lambda', \mu'}^{[\gamma, 1)}, F_{e^{-r}})$ 
for all $\mu \in \{\serie{}d_\lambda \}$, 
$\mu' \in \{\serie{}d_{\lambda'} \}$ and $r \in (0, -\log \gamma]$.
\end{itemize}

The piecewise affinity from (a) in the case $f_i(M, 0) >0$ now
follows from Theorems~\ref{T:subsidiary-partial-0}(a)  
and \ref{T:subsidiary-partial-j}(a) applied to each
$M_{\lambda,\mu}^{[\gamma,1)}$.

To check (b), it suffices to verify integrality of slope
times rank for each component 
$M_{\lambda, \mu}^{[\gamma, 1)}$ 
for which $IR(M'_\lambda) < 1$.
If $\partial_0$ is dominant for $M_{\lambda, \mu}^{[\gamma, 1)} 
\otimes F_{e^{-r}}$ for some (hence all) $r \in (0, -\log \gamma]$, 
(b) follows from Theorem~\ref{T:subsidiary-partial-0}(b).  
Otherwise, pick arbitrary $\eta_-< \eta_+ \in (\gamma, 1)$ such that for $\eta \in (\eta_-, \eta_+)$,
$$
\eta_- / \eta_+ > 
IR(M_{\lambda, \mu}^{[\gamma, 1)} \otimes F_\eta) 
\big/ IR_{\partial_0}(M_{\lambda, \mu}^{[\gamma, 1)} \otimes F_\eta).
$$
Define $\widetilde K$ as in Notation~\ref{N:generic-rotation}.  
By Lemma~\ref{L:generic-rotation},
for $\eta \in (\eta_-, \eta_+)$, we have
\begin{eqnarray*}
R_{\partial_0}(\tilde f^* M_{\lambda, \mu}^{[\gamma, 1)} \otimes \widetilde F_\eta) &=& 
\min \left\{ \eta IR_{\partial_0}(M_{\lambda, \mu}^{[\gamma,1)} 
\otimes F_\eta);\ \eta_+ IR_{\partial_j}(M_{\lambda,\mu}^{[\gamma,1)} \otimes F_\eta) \quad (j \in J) \right\} \\
&=& \eta_+ IR(M_{\lambda,\mu}^{[\gamma,1)} \otimes F_\eta).
\end{eqnarray*}
In particular, $(f_1^{(0)})' (\tilde f^* M_{\lambda,\mu}^{[\gamma,1)}
\otimes \widetilde F_\eta, -\log \eta) = 
f'_1(M_{\lambda,\mu}^{[\gamma,1)}, -\log \eta) = 
(f_1)'_-(M_{\lambda,\mu}^{[\gamma,1)}, 0)$ for $\eta \in (\eta_-, \eta_+)$.
(Note that we showed in the proof of (a) that
$f_1(M_{\lambda,\mu}^{[\gamma,1)}, r)$ extends continuously
to $r=0$, so its left derivative at 0 makes sense.)
Thus, the statement (b) follows by applying  Theorem~\ref{T:subsidiary-partial-0}(b) to $\tilde f^* M_{\lambda, \mu}^{[\gamma, 1)}$.

\vspace{5pt}
\textbf{\underline{Step 1$'$:}} As a corollary of step 1, 
we deduce that for any $r_0 \in [-\log \beta, -\log \alpha]$,
$f_i(M,r)$ and $F_i(M,r)$ are continuous at $r_0$, and
in case $f_i(M,r_0) > 0$ one also has (a) and (b) in a neighborhood of $r_0$.
(In particular, we will then have continuity of $f_i(M,r)$ and $F_i(M,r)$
over all of $[-\log \beta, -\log \alpha]$.)
To make this deduction, we first replace $\beta$ by $\gamma = e^{-r_0}$
in case $ r_0 < -\log \alpha$, to obtain all the desired assertions
in a right neighborhood of $r_0$. By pulling back along $t \mapsto t^{-1}$
and then repeating the argument, we obtain the desired assertions
in a left neighborhood of $r_0$.

\vspace{5pt}
\textbf{\underline{Step 2:}}
In this step,
we prove that (d) holds in a neighborhood of each $r_0 \in (-\log \beta, 
-\log \alpha)$ for which $f_i(M,r_0) > 0$. 
It suffices to check in the case $f_i(M,r_0) > f_{i+1}(M,r_0)$, as the
general case follows by interpolation.

At this point, we may reduce to the case $r_0 = 0$.
As in Step 1, 
for some $\eta_- \in (\alpha, \eta)$,
we have a partial decomposition of $M$ over 
$K \langle \eta_-/t, t \rrbracket_0$
as $M = \bigoplus_{\lambda_- = 1}^{d_-} M_{\lambda_-}^{[\eta_-, 1]}$ satisfying (i) and (ii). 
For some $\eta_+ \in (1, \beta)$,
we also have a partial decomposition
over $K \langle \eta_+^{-1}/t, t \rrbracket_0$
of the pullback of $M$ along $t \mapsto t^{-1}$ as
$M = \bigoplus_{\lambda_+ = 1}^{d_+} M_{\lambda_+}^{[1, \eta_+]}$
satisfying appropriate analogues of (i) and (ii).
By making $\eta_-$ and $\eta_+$ closer to $1$, we may guarantee that for each index
$\lambda_-$ (resp.\ $\lambda_+$)
for which the ratio
$IR (M'_{\lambda_-}) / IR_{\partial_0}(M'_{\lambda_-})$
(resp.\ $IR (M'_{\lambda_+}) / IR_{\partial_0}(M'_{\lambda_+})$)
is less than 1, this ratio is also less than $\eta_- / \eta_+$.

Use Notation~\ref{N:generic-rotation}; by Theorem~\ref{T:subsidiary-partial-0},
$F_i^{(0)}(\tilde f^* M, r)$ is convex at $r = 0$.  
In particular, $(F_i^{(0)})'_-(\tilde f^* M, 0) \leq 
(F_i^{(0)})'_+(\tilde f^* M, 0)$.  
It suffices to show that
\begin{align}\label{E:1-dim-var-1}
(F_i^{(0)})'_+(\tilde f^* M, 0) - \theta_i(M, 0) &\leq (F_i)'_+(M, 0) \\
\label{E:1-dim-var-2}
(F_i^{(0)})'_-(\tilde f^* M, 0) - \theta_i(M, 0) &\geq (F_i)'_-(M, 0),
\end{align}
where $\theta_i(M, 0)$ denotes the sum of the dimensions of the
constituents $N$ of $M \otimes F_1$ for which 
$\partial_0$ is dominant and $f_1(N,0) \geq f_i(M,0)$.

The proofs of \eqref{E:1-dim-var-1} and~\eqref{E:1-dim-var-2} are similar,
so we focus on \eqref{E:1-dim-var-1}.
Decompose $M$ as in Step 1.
For each $\lambda$ such that $\partial_0$ is dominant for $M'_\lambda$,
we have by Lemma~\ref{L:generic-rotation} that in a punctured right 
neighborhood of $r = 0$,
\[
F_1^{(0)}(\tilde f^* M_{\lambda,\mu}^{[\gamma,1)}, r) = 
F_1^{(0)}(M_{\lambda,\mu}^{[\gamma,1)}, r)
\]
and so
\[
(F_1^{(0)})'_+(\tilde f^* M_{\lambda,\mu}^{[\gamma,1)}, 0) - 1 = 
(F_1^{(0)})'_+(M_{\lambda, \mu}^{[\gamma,1)}, 0) - 1 
\leq (F_1)'_+(M_{\lambda, \mu}^{[\gamma,1)}, 0).
\]
(The term $-1$ comes from the
change of normalization from Notation~\ref{N:subsidiary-radii-and-sums}
to Notation~\ref{N:subsidiary-radii2}. The inequality can be strict if
$\partial_j$ is also dominant for $M'_\lambda$ for some $j > 0$.)
For each $\lambda$ such that $\partial_0$ is not dominant for $M'_\lambda$,
we have by Lemma~\ref{L:generic-rotation} (and the choice of $\eta_+, \eta_-$)
that in a punctured right neighborhood of $r = 0$,
\[
F_1^{(0)}(\tilde f^* M_{\lambda,\mu}^{[\gamma,1)}, r) = 
F_1^{(j)}(M_{\lambda,\mu}^{[\gamma,1)}, r) - \log \eta_+
\]
and so
$$
(F_1^{(0)})'_+(\tilde f^* M_{\lambda,\mu}^{[\gamma,1)}, 0)
= (F_1)'_+(M_{\lambda, \mu}^{[\gamma,1)}, 0).
$$
Summing over components yields \eqref{E:1-dim-var-1}.

\vspace{5pt}
\textbf{\underline{Step 3:}}
In this step, we prove (a), (b), (d) in general, by induction on $i$.
Keep in mind that we already have the continuity aspect of (a) in general
(by Step $1'$), and all of (a), (b), (d) in a neighborhood of any 
$r_0 \in [-\log \beta, -\log \alpha]$
for which $f_i(M,r_0) > 0$ (by Steps 1, $1'$, 2).

We first check the piecewise affinity aspect of (a) in a right
neighborhood of some $r_0$ for which $f_i(M,r_0) = 0$.
By the induction hypothesis, we can pick $r_1 > r_0$ such that
$F_{i-1}(M,r)$ is affine on $[r_0, r_1]$.
Suppose that $r_2 \in (r_0,r_1)$ is a value for which $f_i(M,r_2) > 0$. 
By continuity of $f_i$, there exists an open neighborhood of 
$r_2$ on which $f_i(M,r)$ is everywhere positive. Let $U$
be the union of all such neighborhoods in $[r_0, r_1]$; 
then $U$ is an open interval
$(r_3, r_4)$, and $f_i(M,r_3) = 0$. Since (a) and (d) hold in a neighborhood
of each $r \in U$, $F_i(M,r)$ and hence $f_i(M,r)$ are piecewise affine
and convex on $U$.
In order for $f_i(M,r)$ to both be convex and to tend to $0$ as $r \to r_3^+$, 
$f_i(M,r)$ must have no nonpositive slopes; that is, $f_i(M,r)$ is
strictly increasing on $U$. However, we must also have $ f_i(M,r_4) = 0$
unless $r_4 = r_1$. The former possibility leads to a contradiction, so
we must have $r_4 = r_1$.

To sum up the previous paragraph, we now know that if there exists
$r_2 \in (r_0, r_1]$ such that $f_i(M,r_2) > 0$, then
$f_i(M,r) > 0$ for all $r \in [r_2, r_1]$. Consequently, 
on some right neighborhood of $r_0$,
$f_i(M,r)$ is either everywhere zero or everywhere positive.
In the former case, $f_i(M,r)$ is clearly affine on a right neighborhood
of $r_0$.
In the latter case, pick $r_2 \in (r_0,r_1]$ for which $f_i(M,r_2) > 0$;
then the slopes of $f_i(M,r)$ on $(r_0, r_2]$ are nondecreasing,
bounded below by 0, and (by (b)) confined to a discrete subset of $\RR$.
Consequently, there must be a least slope achieved,
occurring on a right neighborhood of $r_0$.
We thus deduce (a) in a right neighborhood of $r_0$. By symmetry,
the same argument applies to left neighborhoods; we may thus deduce (a)
in general.

Since (a) is now known, $f_i(M,r)$ takes only finitely many slopes
on all of $[-\log \beta, -\log \alpha]$. Except possibly for the slope 0,
each slope must occur at some $r$ for which $f_i(M,r) > 0$;
consequently, the knowledge of (b) at such points now implies (b) in general.

Finally, we still need to check (d) in a neighborhood
of a point $r_0$ at which $f_i(M,r_0) = 0$. By (a), $f_i(M,r)$ is affine
on a right neighborhood of $r_0$ and on a left neighborhood of $r_0$;
since $f_i(M,r) \geq 0$ everywhere, the right slope of $f_i(M,r)$  at $r_0$
must be greater than
or equal to the left slope of $f_i(M,r)$ at $r_0$. 
Since the same is true of $F_{i-1}(M,r)$ by the induction hypothesis,
the same must also be true of $F_i(M,r)$.
This yields (d).

\vspace{5pt}
\textbf{\underline{Step 4:}}
In this step, we prove (c).
By Dwork's transfer theorem (see Proposition~\ref{P:generic-point}),
for any $\eta < R_{\partial_0}(M \otimes F_\beta)$, $M \otimes K \langle t/\eta \rangle$
admits a basis in the kernel of $\partial_0$. In other words,
$M\otimes K \langle t/\eta \rangle$ is isomorphic to the pullback
of a $(\partial_J)$-differential module over 
$K$. Consequently, $F_i(M,r)$ is constant for $r$ sufficiently large;
by (d), this implies that $F_i(M,r)$ has all slopes nonpositive.
\end{proof}

\begin{remark} \label{R:rational-breaks}
If $p=0$, then the assertion that $r^{\dim V_r} \in |K^\times|$ in
Theorem~\ref{T:multi-decomp-over-field} implies that
$d! F_i(M,r) \in \log |K^\times| + \ZZ r$.
If $p > 0$, then we only deduce that for $h$ a nonnegative integer,
\[
f_i(M,r) > \frac{p^{-h}}{p-1} \log p
\quad \Longrightarrow \quad
d! F_i(M,r) \in p^{-h} \log |K^\times| + \ZZ r.
\]
In either case, we may conclude that the values of $r$ at which
$F_i(M,r)$ changes slope must belong to $\QQ \log |K^{\times}|$.
\end{remark}

\subsection{Decomposition for multiple variations}
\label{S:decomp-mult-1-dim}

We now obtain decomposition theorems which allow for multiple derivations.
\begin{theorem}
\label{T:decomp-multi-1-dim-annulus}
Let $M$ be a $\partial_{J^+}$-differential module of rank $d$ on $A_K^1(\alpha, \beta)$.  Suppose that the following conditions hold for some $i \in \{\serie{}d-1\}$.
\begin{enumerate}
\item[(a)]
The function 
$F_i(M,r)$ is affine for $-\log \beta < r < -\log \alpha$.
\item[(b)]
We have $f_i(M,r) > f_{i+1}(M,r)$ for $-\log \beta < r < -\log \alpha$.
\end{enumerate}
Then $M$ admits a unique direct sum 
decomposition separating the first $i$ subsidiary radii of $M \otimes F_\eta$ for any $\eta \in (\alpha, \beta)$.
\end{theorem}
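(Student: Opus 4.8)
The plan is to regard this as the $\partial_{J^+}$-analogue of Theorem~\ref{T:j-decomp-open-annulus} and to reduce the multi-derivation situation to the single-derivation results already established, along the lines of Steps~1 and~2 of the proof of Theorem~\ref{T:1-dim-var}. First I would record that the asserted decomposition is unique if it exists: for $\eta \in (\alpha,\beta)$, condition~(b) gives $IR(M\otimes F_\eta;i) < IR(M\otimes F_\eta;i+1)$, so by Theorem~\ref{T:multi-decomp-over-field} (regrouping its summands at this gap) $M\otimes F_\eta$ has a unique direct sum decomposition realizing the first $i$ intrinsic subsidiary radii, and any decomposition of $M$ with the required property must induce it; since a direct summand of $M$ is recovered from its base change to any such $F_\eta$ (using that $M$ embeds into $M\otimes F_\eta$), uniqueness follows, and moreover it is enough to construct the decomposition on the members of an open cover of $(\alpha,\beta)$ by subannuli and then glue via Lemma~\ref{L:proj-intersect}, since on an overlap both local pieces induce the same spectral decomposition over every $F_\eta$ in the overlap. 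As in the proof of Theorem~\ref{T:subsidiary-partial-j} (and Remark~\ref{R:enlarge-K}) I may also enlarge $K$, not retaining derivations with respect to the added parameters, so that $|u_J| = 1$.

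Fix $\gamma \in (\alpha,\beta)$; after rescaling $t$ take $\gamma = 1$. Imitating Step~1 of the proof of Theorem~\ref{T:1-dim-var}, I would use the variation statements (Theorems~\ref{T:subsidiary-partial-0} and~\ref{T:subsidiary-partial-j}) and the single-derivation decomposition theorems (\ref{T:j-decomp-open-annulus} and~\ref{T:part-decomp-annulus}), applied for each $j\in J^+$ at each gap of the corresponding subsidiary-radius spectrum and combined into a common refinement (legitimate because each single-derivation decomposition is unique, hence glues), to obtain on $A_K^1(\gamma_1,\gamma_2)$ for suitable $\gamma_1 < 1 < \gamma_2$ a decomposition $M = \bigoplus_\lambda M_\lambda$ in which each $M_\lambda\otimes F_\eta$ has pure intrinsic $\partial_j$-radii for every $j\in J^+$ and $\eta\in(\gamma_1,\gamma_2)$ --- hence pure intrinsic radii --- and, after shrinking the interval, in which the set of dominant derivations of each $M_\lambda$ is independent of $\eta$. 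This is where condition~(a) enters: affineness of $F_i(M,r)$ on $(-\log\beta,-\log\alpha)$ ensures that the characteristic-polynomial-coefficient functions used in Theorem~\ref{T:j-decomp-open-annulus} (via Lemma~\ref{L:criterion-unit-1}) have no slope in the relevant range, so those single-derivation decompositions are available in an open neighbourhood of $r=0$. On each $M_\lambda$ there is then nothing left to do if its dominant derivation is $\partial_0$; if a dominant derivation lies in $J$, one brings in the rotation of Notation~\ref{N:generic-rotation}: by Lemma~\ref{L:generic-rotation} the pullback $\tilde f^* M_\lambda$ is a $\partial_0$-differential module over $\widetilde K$ whose $\partial_0$-subsidiary radii are, up to the fixed factor $\eta_+$, the intrinsic subsidiary radii of $M_\lambda$, so Theorem~\ref{T:j-decomp-open-annulus} provides the required summand over $\widetilde K$; this summand, being uniquely characterized by its spectral data, is stable under the derivations $\partial_J$ preserved by $\tilde f^*$, and hence descends to a direct summand of $M_\lambda$ on $A_K^1(\gamma_1,\gamma_2)$ (cf.\ Lemma~\ref{L:proj-intersect0}).

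With the pure-intrinsic-radii decomposition $M|_{A_K^1(\gamma_1,\gamma_2)} = \bigoplus_\lambda M_\lambda$ in hand, I would finish by grouping. Let $S$ be the set of those $\lambda$ contributing the first $i$ subsidiary radii (with multiplicity) of $M\otimes F_1$; by condition~(b) this gap falls between two complete $M_\lambda$-blocks, and by continuity of the subsidiary-radius functions together with the strictness in~(b), after one more shrinking of $\gamma_1,\gamma_2$ every $M_\lambda$ with $\lambda\in S$ has intrinsic radius strictly smaller than every $M_\lambda$ with $\lambda\notin S$ throughout $(\gamma_1,\gamma_2)$. Then $\bigoplus_{\lambda\in S} M_\lambda \oplus \bigoplus_{\lambda\notin S} M_\lambda$ separates the first $i$ subsidiary radii of $M\otimes F_\eta$ for every $\eta\in(\gamma_1,\gamma_2)$, and the global decomposition is obtained by the gluing of the first paragraph. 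I expect the main obstacle to be the construction in the second paragraph, and specifically the simultaneous bookkeeping of the per-$\partial_j$ decompositions: exactly as in Theorem~\ref{T:subsidiary-partial-j} and Step~1 of Theorem~\ref{T:1-dim-var}, for $j\in J$ with $p>0$ the relevant variation and decomposition results are available only where $f^{(j)}_k(M,r) \neq -\log|u_j|$, i.e.\ (after the reduction $|u_J|=1$) where $IR_{\partial_j}\neq 1$; one has to observe that on the complementary locus $\partial_j$ cannot be dominant for a piece with $IR(M_\lambda) < 1$, so no splitting along $\partial_j$ is needed there, while a piece with $IR(M_\lambda)=1$ is already as split as required.
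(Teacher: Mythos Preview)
Your approach is more elaborate than the paper's. Rather than first decomposing $M$ into pieces with pure $\partial_j$-radii for every $j$ and then regrouping, the paper applies the rotation $\tilde f^*$ (Notation~\ref{N:generic-rotation}) to all of $M$ at once and cites the inequalities~\eqref{E:1-dim-var-1}, \eqref{E:1-dim-var-2} already established in Step~2 of Theorem~\ref{T:1-dim-var}. The role of hypothesis~(a) is precisely to collapse that chain of inequalities to equalities (since $(F_i)'_-(M,0) = (F_i)'_+(M,0)$), forcing $F^{(0)}_i(\tilde f^* M, r)$ to be affine on $(-\log \eta_+, -\log \eta_-]$. Hypothesis~(b), after imposing the auxiliary condition $-\log \eta_- < f_i(M,0) - f_{i+1}(M,0)$, gives $f^{(0)}_i(\tilde f^* M, 0) > f^{(0)}_{i+1}(\tilde f^* M, 0)$ via Lemma~\ref{L:generic-rotation}, and Theorem~\ref{T:j-decomp-open-annulus} (case $j=0$) then yields the decomposition in one stroke.

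Your identification of where~(a) enters is off: affineness of the intrinsic $F_i(M,r)$ does not control the individual $F^{(j)}_k(M,r)$, and the per-$\partial_j$ decompositions you need on a small neighbourhood of $r_0$ are available simply because each $F^{(j)}_k$ is locally piecewise affine there (Theorems~\ref{T:subsidiary-partial-0} and~\ref{T:subsidiary-partial-j}), independently of~(a). Once your pieces $M_\lambda$ have pure $\partial_j$-radii for every $j$---and hence, as you rightly observe, pure intrinsic radii---the subsequent rotation step you describe is redundant: there is nothing further to split on any $M_\lambda$, regardless of which derivation is dominant. The net effect is that your argument, as written, never genuinely invokes~(a); and your appeal to Lemma~\ref{L:proj-intersect0} for the descent along $\tilde f^*$ is not the right mechanism (that lemma concerns intersections of subrings, not descent along a twisted embedding). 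The refining-and-regrouping idea may well be salvageable, but the paper's single rotation of $M$ is shorter and makes the use of~(a) transparent.
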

\begin{proof}
Before proceeding, we reduce to the case $|u_J| = 1$ as in the proof of Theorem~\ref{T:j-decomp-open-annulus}.  It suffices to prove the decomposition in a neighborhood of each $r_0 \in (-\log \beta, -\log \alpha)$.  Again, we may assume $r_0 = 0$.

We continue with Step 2 in the proof of Theorem~\ref{T:1-dim-var}.  
We may further impose the auxiliary condition that
\begin{equation}\label{E:aux-condition}
- \log (\eta_-) < f_i(M, 0) - f_{i+1}(M, 0).
\end{equation}
By \eqref{E:1-dim-var-1} and the symmetric result, we have
\begin{equation}\label{E:1-dim-decomp-1}
(F_i)'_-(M, 0) \leq (F_i^{(0)})'_-(\tilde f^* M , 0) - \theta_i(M,0) \leq
(F_i^{(0)})'_+(\tilde f^* M , 0) - \theta_i(M,0) \leq (F_i)'_+(M, 0);
\end{equation}
all the inequalities are forced to be equalities as $F_i(M, r)$ is affine in a neighborhood of $r=0$.  In particular, $F_i^{(0)}(\tilde f^*M, r)$ is affine when $r \in (-\log \eta_+, -\log \eta_-]$.  We would get the decomposition by Theorem~\ref{T:j-decomp-open-annulus} if we knew that 
$f^{(0)}_i(\tilde f^*M, r) > f^{(0)}_{i+1}(\tilde f^*M, r)$ for $r$ in a neighborhood of $r=0$.  Indeed, by our auxiliary condition \eqref{E:aux-condition} and Lemma~\ref{L:generic-rotation},
$$
f_i^{(0)}(\tilde f^*M, 0) > \log (\eta_-) + f_i(M, 0) > f_{i+1}(M, 0) \geq f_{i+1}^{(0)}(\tilde f^*M, 0).
$$
The theorem follows.
\end{proof}

\begin{theorem}
\label{T:decomp-multi-1-dim-disc}
Let $M$ be a $\partial_{J^+}$-differential module of rank $d$ on $A_K^1[0, \beta)$.  Suppose that the following conditions hold for some $i \in \{\serie{}d-1\}$.
\begin{enumerate}
\item[(a)]
The function 
$F_i(M,r)$ is affine for $r > -\log \beta$.
(This implies $F_i(M,r)$ is constant by Theorem~\ref{T:1-dim-var}(c).)
\item[(b)]
We have $f_i(M,r) > f_{i+1}(M,r)$ for all (some) $r > -\log \beta$.
\end{enumerate}
Then $M$ admits a unique direct sum 
decomposition separating the first $i$ subsidiary radii of $M \otimes F_\eta$ for any $\eta \in (0, \beta)$.
\end{theorem}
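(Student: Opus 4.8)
This is the disc counterpart of Theorem~\ref{T:decomp-multi-1-dim-annulus}, and the plan is to obtain its decomposition by combining Theorem~\ref{T:decomp-multi-1-dim-annulus} on the punctured disc with a separate analysis near the centre $t=0$. As usual we first enlarge $K$ to reduce to the case $|u_J| = 1$; this is harmless since a decomposition separating the first $i$ subsidiary radii is unique once $f_i(M,r) > f_{i+1}(M,r)$ (condition (b)), so it will descend. Uniqueness also means it suffices to build the decomposition on the members of a cover of $A_K^1[0,\beta)$ and glue; and since conditions (a) and (b) restrict verbatim to $A_K^1(0,\beta)$, Theorem~\ref{T:decomp-multi-1-dim-annulus} already furnishes the (unique) decomposition of $M$ over $A_K^1(0,\beta)$. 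So the only new work is to extend this decomposition across the centre.

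By Theorem~\ref{T:1-dim-var}(c) (valid because $\alpha = 0$), hypothesis (a) forces $F_i(M,r)$ to be constant, so in particular $f_i(M,r) > 0$ for all $r$ and the first $i$ subsidiary radii are everywhere $<1$. Arguing as in Step~4 of the proof of Theorem~\ref{T:1-dim-var} --- i.e.\ using Dwork's transfer theorem for $\partial_0$ (Proposition~\ref{P:generic-point}, or equivalently the equality $f^{(0)}_i(M,r) = r$ for $r$ large from Theorem~\ref{T:subsidiary-partial-0}(c)) --- we may choose $\gamma \in (0,\beta)$ such that $M \otimes K\langle t/\gamma\rangle$ is the pullback along $A_K^1[0,\gamma] \to \Max(K)$ of a $\partial_J$-differential module $\bar M$ over $K$. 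Decompose $\bar M = \bigoplus_s \bar M_s$ into pieces of pure intrinsic radii by Theorem~\ref{T:multi-decomp-over-field}. Since $\partial_0$ acts trivially on $\bar M$ (so its contribution to the intrinsic subsidiary radii near the centre is the constant $1$) while base change along $K \hookrightarrow F_\eta$ leaves the $\partial_j$-radii ($j \in J$) unchanged, the multiset of subsidiary radii of $M \otimes F_\eta$ for $\eta \in (0,\gamma)$ is that of $\bar M$ viewed as a $\partial_J$-module over $K$; as $IR(\bar M; i) < IR(\bar M; i+1)$ by (b), collecting those $\bar M_s$ with $s \le IR(\bar M; i)$ gives a decomposition $\bar M = \bar M' \oplus \bar M''$ with $\dim \bar M' = i$ that separates the first $i$ intrinsic radii. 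Pulling back yields the desired decomposition of $M$ over $A_K^1[0,\gamma)$.

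It remains to glue. The decompositions of $M$ over $A_K^1[0,\gamma)$ and over $A_K^1(0,\beta)$ both restrict, on the overlap $A_K^1(0,\gamma)$, to decompositions separating the first $i$ subsidiary radii of $M \otimes F_\eta$ for every $\eta$ there; by the uniqueness assertion of Theorem~\ref{T:decomp-multi-1-dim-annulus} (applied over $A_K^1(0,\gamma)$) they coincide, so they glue to a decomposition of the coherent sheaf $M$ over $A_K^1[0,\beta)$, which is again the unique one with the stated property. The step I expect to be fiddly is exactly this last patching --- reconciling the module-theoretic decomposition near the centre with the sheaf-theoretic decomposition on the punctured disc, handled as in the proof of Theorem~\ref{T:part-decomp-annulus} by Lemma~\ref{L:proj-intersect} for the appropriate square of rings of analytic functions, with the customary extra care over noetherianity when $K$ is discretely valued (cf.\ Remark~\ref{R:bounded-in-discrete-case}).
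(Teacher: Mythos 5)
Your proof is correct and follows essentially the same route as the paper: reduce to $|u_J|=1$, use the Dwork transfer argument from Step~4 of Theorem~\ref{T:1-dim-var} to realize $M$ near the centre as a pullback of a $\partial_J$-differential module over $K$, decompose that module over $K$ and pull back, then combine with Theorem~\ref{T:decomp-multi-1-dim-annulus} on the punctured disc and glue by uniqueness. The paper simply applies Theorem~\ref{T:decomp-multi-1-dim-annulus} to $A_K^1(\eta',\beta)$ and leaves the gluing implicit, whereas you spell it out; the content is the same.
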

\begin{proof}
Before proceeding, we reduce to the case $|u_J|=1$ as in the proof of Theorem~\ref{T:j-decomp-open-annulus}.  As noted in Step 4 of the proof of Theorem~\ref{T:1-dim-var}, there exists some $\eta \in (0, \beta)$ such that
$M\otimes K \langle t/\eta \rangle$ is isomorphic to the pullback
of a $(\partial_J)$-differential module $M_0$ over 
$K$.  Consequently, we have the
desired decomposition of $M$ over $A_K^1[0, \eta]$ by pulling back the decomposition of $M_0$ in the sense of Theorem~\ref{T:decomp-over-field-complete}.  The theorem follows by applying 
Theorem~\ref{T:decomp-multi-1-dim-annulus}
to $A_K^1(\eta', \beta)$ for some $\eta' \in (0, \eta)$.
\end{proof}

\begin{remark}
We can sometimes verify the hypotheses of
Theorem~\ref{T:decomp-multi-1-dim-disc}
using monotonicity and convexity (Theorem~\ref{T:1-dim-var}(c) and (d)).
For example, if $F'_i(M, r_0) = 0$, 
then $F_i(M, r)$ is constant for $r \geq r_0$.  Moreover, if we also have $f_i(M, r_0) > f_{i+1}(M, r_0)$, then 
condition (b) holds for $r \geq r_0$.
\end{remark}

\begin{remark}
As in Remark~\ref{R:no-closed-decomp}, we cannot state a decomposition theorem
over a closed annulus without assuming $p=0$
(in which case see Theorems~\ref{T:decomp-annulus}
and~\ref{T:decomp-disc}). However, we do get partial decomposition
theorems analogous to Theorems~\ref{T:decomp-annulus-partial-j}
and~\ref{T:decomp-disc-partial-j}, as follows.
\end{remark}

\begin{theorem}
\label{T:decomp-multi-1-dim-annulus-bounded}
Let $M$ be a $\partial_{J^+}$-differential module of rank $d$ on 
$A_K^1(\alpha, \beta]$.  Suppose that the following conditions hold for some $i \in \{\serie{}d-1\}$.
\begin{enumerate}
\item[(a)]
The function 
$F_i(M,r)$ is affine for $-\log \beta \leq r < -\log \alpha$.
\item[(b)]
We have $f_i(M,r) > f_{i+1}(M,r)$ for $-\log \beta \leq r < -\log \alpha$.
\end{enumerate}
Then for any $\gamma \in (\alpha, \beta)$,
$M \otimes K \langle \gamma/t, t/\beta \rrbracket_0$ 
admits a unique direct sum 
decomposition separating the first $i$ subsidiary radii of $M \otimes F_\eta$ for any $\eta \in (\gamma, \beta)$.
\end{theorem}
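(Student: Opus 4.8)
The plan is to follow the pattern by which Theorem~\ref{T:part-decomp-annulus} was deduced from Theorem~\ref{T:j-decomp-open-annulus}: produce the decomposition over $K\langle\delta/t,t/\beta\rrbracket_0$ for a single, uncontrolled $\delta\in(\alpha,\beta)$, and then glue it with the decomposition over the open annulus $A_K^1(\alpha,\beta)$ supplied by Theorem~\ref{T:decomp-multi-1-dim-annulus}. First I would reduce to the case $|u_J|=1$ by enlarging $K$, as in the proof of Theorem~\ref{T:j-decomp-open-annulus}; Lemma~\ref{L:proj-intersect} lets us descend afterward, and also gives uniqueness, since any decomposition over $K\langle\gamma/t,t/\beta\rrbracket_0$ restricts to one over $A_K^1(\gamma,\beta)$ where hypotheses~(a),(b) still hold. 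Because the open-annulus decomposition $M=M_{\le i}\oplus M_{>i}$ and its projector $e\in\End(M)$ are unique, it suffices to show that $e$, a priori a section over every $K\langle\gamma/t,t/\epsilon\rangle$ with $\alpha<\gamma<\epsilon<\beta$, remains bounded as $\epsilon\to\beta^-$, i.e.\ extends to a section over $K\langle\delta/t,t/\beta\rrbracket_0$ for some $\delta$; granting that, Lemma~\ref{L:proj-intersect} applied to the identity $K\langle\gamma/t,t/\epsilon\rangle\cap K\langle\delta/t,t/\beta\rrbracket_0=K\langle\gamma/t,t/\beta\rrbracket_0$ inside $K\langle\delta/t,t/\epsilon\rangle$ (valid for $\gamma<\delta<\epsilon<\beta$, and noting that $M$ is finite locally free over each of these rings) yields the decomposition over $K\langle\gamma/t,t/\beta\rrbracket_0$ for every $\gamma\in(\alpha,\beta)$.

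To get boundedness near $\beta$, I would adapt Steps~1 and~2 of the proof of Theorem~\ref{T:1-dim-var} to a right neighborhood of $r=-\log\beta$, so as to use only the $\beta$-open structure. Using Theorems~\ref{T:subsidiary-partial-0} and~\ref{T:subsidiary-partial-j} together with Theorem~\ref{T:part-decomp-annulus}, decompose $M\otimes K\langle\delta/t,t/\beta\rrbracket_0$, after shrinking $\delta$ toward $\beta$, into summands $M_{\lambda,\mu}$ that are $\partial_j$-pure for every $j\in J^+$, each with a well-defined dominant derivation, and respecting the order of subsidiary radii; the relevant unit criterion here is Lemma~\ref{L:criterion-unit-2}(b), used in a right neighborhood of $-\log\beta$ exactly where Lemma~\ref{L:criterion-unit-1} was used on the open annulus. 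Then apply the rotation construction of Notation~\ref{N:generic-rotation} to $M$ near $\beta$, with $\eta_+=\beta$ and $\eta_-<\beta$ chosen close enough to $\beta$ that $IR(M_{\lambda,\mu}\otimes F_\eta)/IR_{\partial_0}(M_{\lambda,\mu}\otimes F_\eta)<\eta_-/\eta_+$ on every summand not dominated by $\partial_0$, and that the auxiliary inequality $\log(\beta/\eta_-)<f_i(M,-\log\beta)-f_{i+1}(M,-\log\beta)$ (positive by hypothesis~(b)) holds. By Lemma~\ref{L:generic-rotation} and hypotheses~(a),(b), exactly as in the proof of Theorem~\ref{T:decomp-multi-1-dim-annulus}, the pullback $\tilde f^*M$ is a $\partial_0$-differential module on the $\beta$-open annulus over $\widetilde K$ for which $F_i^{(0)}(\tilde f^*M,r)$ is affine and $f_i^{(0)}(\tilde f^*M,r)>f_{i+1}^{(0)}(\tilde f^*M,r)$ in a right neighborhood of $-\log\beta$; so the first step of the proof of Theorem~\ref{T:part-decomp-annulus} (arguing as in Theorem~\ref{T:j-decomp-open-annulus} with Lemma~\ref{L:criterion-unit-2}(b) in place of Lemma~\ref{L:criterion-unit-1}), applied over $\widetilde K$ with $j=0$, produces a decomposition of $\tilde f^*M\otimes\widetilde K\langle\gamma'/t,t/\beta\rrbracket_0$ for some $\gamma'$. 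Its projector $\tilde e$ restricts, on $A_{\widetilde K}^1(\gamma',\beta)$, to the pullback $\tilde f^*e$ — this is where the dominant-derivation decomposition and the auxiliary inequality are used, to identify, via Lemma~\ref{L:generic-rotation}, the decomposition by subsidiary $\partial_0$-radii of $\tilde f^*M$ with the pullback of the decomposition by subsidiary intrinsic radii of $M$ — and $\tilde e$ is bounded near $\beta$; since $\tilde f^*$ is isometric for every $\eta$-Gauss norm, and $\tilde e=\tilde f^*e$ by uniqueness over the open annulus, $e$ itself is bounded near $\beta$, as required.

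The main obstacle is precisely the one-sided boundary behavior at $r=-\log\beta$: there one can no longer invoke Theorem~\ref{T:decomp-multi-1-dim-annulus} or Theorem~\ref{T:j-decomp-open-annulus}, so one must work in the bounded-coefficient rings $K\langle\cdot/t,t/\beta\rrbracket_0$ and use the one-sided unit criterion Lemma~\ref{L:criterion-unit-2}(b). Two points need care. First, one must check that the rotation $\tilde f^*$ is only ever needed on the $\beta$-open annulus; this holds because the target decomposition concerns only radii $<\beta$, and because $K$ is of rational type the Taylor series defining $\tilde f^*$ (with $u_j\mapsto u_j+x_j t$, $|x_j|=\beta^{-1}$) converges on all of $[\eta_-,\beta)$, using the bound $|\partial_j^n/n!|_K\le|u_j|^{-n}$. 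Second, the rotated decomposition lives over $\widetilde K$ and must be brought back to $K$; this is handled, as sketched above, by the already-established uniqueness of the subsidiary-radii decomposition over the open annulus together with the isometry property of $\tilde f^*$ (one may phrase the descent through Lemma~\ref{L:proj-intersect0}, intersecting with a single fibre $F_\eta$). The $p>0$ complications — Frobenius descent and the $p$-power denominators — are entirely absorbed into Theorem~\ref{T:part-decomp-annulus}, so no further induction is needed here; everything else is a routine transcription of the proofs of Theorems~\ref{T:part-decomp-annulus}, \ref{T:1-dim-var}, and~\ref{T:decomp-multi-1-dim-annulus}.
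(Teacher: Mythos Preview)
Your overall strategy—produce the bounded decomposition for a single uncontrolled $\delta$, then glue with Theorem~\ref{T:decomp-multi-1-dim-annulus} via Lemma~\ref{L:proj-intersect}—matches the paper's exactly. The difference lies in the single-$\delta$ step. The paper simply notes that Step~1 of the proof of Theorem~\ref{T:1-dim-var} already supplies it: the first-level decomposition $\bigoplus_\lambda M_\lambda^{[\gamma,1]}$, obtained there by iterating Theorem~\ref{T:part-decomp-annulus} over each $j\in J^+$, lives over $K\langle\gamma/t,t/\beta\rrbracket_0$, each $M'_\lambda$ has pure intrinsic $\partial_j$-radii for every $j$ (hence pure $IR$) at the boundary, and condition~(v) guarantees the resulting separation of the first $i$ radii persists throughout $(0,-\log\gamma]$. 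No rotation is required, and hypothesis~(a) is not even used for this part. Your detour through $\tilde f^*$ and the $\partial_0$-case of Theorem~\ref{T:part-decomp-annulus} is correct in outline, but you skate past one subtlety: the Taylor series defining $\tilde f^*$ need not converge at $|t|=\beta$, so $\tilde f^*M$ is a priori only over the bounded ring, whereas the argument you invoke (via Lemma~\ref{L:lattice}) is set up for modules over the closed annulus. This is fixable—relative to a basis pulled back from $M$, the matrix of $\partial_0$ on $\tilde f^*M$ equals $N_0+\sum_j x_jN_j$ with entries in $\widetilde K\langle\gamma/t,t/\beta\rangle$, so the $\partial_0$-structure does extend over the closed annulus—but the paper's one-line observation that Step~1 already does the job is shorter and avoids the issue. (Also, only the $\lambda$-level decomposition in Step~1 lives over the bounded ring; the further $\mu$-refinement is only over $A_K^1[\gamma,\beta)$, though you use it just to calibrate $\eta_-$, so this is harmless.)
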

\begin{proof}
The fact that this holds for a single $\gamma$, even without hypothesis (a),
is a corollary of Step~1 of the proof of Theorem~\ref{T:1-dim-var}.
The desired conclusion follows by combining this assertion with
Theorem~\ref{T:decomp-multi-1-dim-annulus}.
\end{proof}

\begin{theorem}
\label{T:decomp-multi-1-dim-disc-bounded}
Let $M$ be a $\partial_{J^+}$-differential module of rank $d$ on 
$A_K^1[0, \beta]$.  
Suppose that the following conditions hold for some $i \in \{\serie{}d-1\}$.
\begin{enumerate}
\item[(a)]
The function 
$F_i(M,r)$ is affine for $r \geq -\log \beta$.
\item[(b)]
We have $f_i(M,-\log \beta) > f_{i+1}(M,-\log \beta)$.
\end{enumerate}
Then $M \otimes K \llbracket t/\beta \rrbracket_0$ admits a unique direct sum 
decomposition separating the first $i$ subsidiary radii of 
$M \otimes F_\eta$ for any $\eta \in (0, \beta)$.
\end{theorem}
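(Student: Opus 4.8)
The plan is to follow the proof of Theorem~\ref{T:decomp-multi-1-dim-annulus-bounded}, producing the decomposition over $K \llbracket t/\beta \rrbracket_0$ by gluing a decomposition valid near the outer boundary $|t| = \beta$ to the decomposition over the open disc.

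First I would reduce to the case $|u_J| = 1$ as in the proof of Theorem~\ref{T:j-decomp-open-annulus}; enlarging $K$ is harmless because the decomposition to be produced is unique and hence descends by Lemma~\ref{L:proj-intersect}. Since hypothesis~(b) holds at $r = -\log\beta$, Step~1 of the proof of Theorem~\ref{T:1-dim-var} (which uses only (b), not (a)) yields, for a single sufficiently small $\gamma \in (0,\beta)$, a direct sum decomposition of $M \otimes K\langle \gamma/t, t/\beta \rrbracket_0$; grouping the pure-intrinsic-radii summands it produces according to whether their intrinsic radius at $|t| = \beta$ is among the first $i$ — possible thanks to the gap in~(b) — and invoking conditions (ii) and (v) of that step to see the grouping persists throughout $[\gamma, \beta)$, we obtain a decomposition of $M \otimes K\langle \gamma/t, t/\beta \rrbracket_0$ separating the first $i$ subsidiary radii of $M \otimes F_\eta$ for every $\eta \in [\gamma, \beta)$. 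Separately, hypothesis~(a) gives that $F_i(M,r)$ is affine for $r > -\log\beta$, and (b) together with continuity (Theorem~\ref{T:1-dim-var}(a)) gives $f_i(M,r) > f_{i+1}(M,r)$ for some $r > -\log\beta$; so Theorem~\ref{T:decomp-multi-1-dim-disc} applies and supplies a decomposition of $M \otimes K \{\{ t/\beta \}\}$ separating the first $i$ subsidiary radii of $M \otimes F_\eta$ for all $\eta \in (0,\beta)$.

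To glue, shrink $\gamma$ toward $\beta$ if necessary so that $f_i(M,r) > f_{i+1}(M,r)$ throughout $(-\log\beta, -\log\gamma)$ as well; then Theorem~\ref{T:decomp-multi-1-dim-annulus} applies over $A_K^1(\gamma,\beta)$, showing that a decomposition there separating the first $i$ subsidiary radii is unique. Base extending the two decompositions above along the inclusions $K\langle \gamma/t, t/\beta \rrbracket_0 \hookrightarrow K\{\{ \gamma/t, t/\beta \}\}$ and $K\{\{ t/\beta \}\} \hookrightarrow K\{\{ \gamma/t, t/\beta \}\}$, both become decompositions of $M \otimes K\{\{ \gamma/t, t/\beta \}\}$ separating the first $i$ subsidiary radii of $M \otimes F_\eta$ for $\eta \in (\gamma,\beta)$, hence coincide. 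Now apply Lemma~\ref{L:proj-intersect} with $R = K \llbracket t/\beta \rrbracket_0$, $S = K\langle \gamma/t, t/\beta \rrbracket_0$, $T = K\{\{ t/\beta \}\}$, and $U = K\{\{ \gamma/t, t/\beta \}\}$; the required identity $S \cap T = R$ inside $U$ is the elementary fact that a Laurent series with no negative-index terms which is bounded on $\gamma \le |t| < \beta$ is bounded on $|t| < \beta$, the Gauss norm $|\cdot|_\rho$ of a power series being nondecreasing in $\rho$. This produces the decomposition of $M \otimes K\llbracket t/\beta \rrbracket_0$; it separates the first $i$ subsidiary radii for all $\eta \in (0,\beta)$ because its base extension to $K\{\{ t/\beta \}\}$ is the decomposition furnished by Theorem~\ref{T:decomp-multi-1-dim-disc}, and it is unique because that base extension is.

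The one point that needs genuine care is extracting from Step~1 of the proof of Theorem~\ref{T:1-dim-var} the clean assertion that, using hypothesis~(b) alone, some single $\gamma$ admits a decomposition of $M \otimes K\langle \gamma/t, t/\beta \rrbracket_0$ separating the first $i$ subsidiary radii throughout $[\gamma,\beta)$; the remainder is bookkeeping among the rings $K\langle \gamma/t, t/\beta \rrbracket_0$, $K\{\{ t/\beta \}\}$, $K\llbracket t/\beta \rrbracket_0$, and $K\{\{ \gamma/t, t/\beta \}\}$ together with the uniqueness statements already available for open annuli and open discs.
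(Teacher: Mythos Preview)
Your proposal is correct and follows essentially the same approach as the paper, which simply says the result follows by combining Theorems~\ref{T:decomp-multi-1-dim-disc} and~\ref{T:decomp-multi-1-dim-annulus-bounded}. You have unpacked that combination: the bounded-annulus decomposition near $|t|=\beta$ (your invocation of Step~1 of Theorem~\ref{T:1-dim-var} is exactly the content of the proof of Theorem~\ref{T:decomp-multi-1-dim-annulus-bounded}), the open-disc decomposition from Theorem~\ref{T:decomp-multi-1-dim-disc}, and the gluing over $K\llbracket t/\beta\rrbracket_0$ via Lemma~\ref{L:proj-intersect} with the intersection $K\langle \gamma/t, t/\beta\rrbracket_0 \cap K\{\{t/\beta\}\} = K\llbracket t/\beta\rrbracket_0$.
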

\begin{proof}
This follows by combining 
Theorems~\ref{T:decomp-multi-1-dim-disc} 
and~\ref{T:decomp-multi-1-dim-annulus-bounded}.
\end{proof}

\begin{remark} \label{R:bounded-in-discrete-case-mult1}
As in Remark~\ref{R:bounded-in-discrete-case}, if $K$ is discretely
valued and $\beta \in |K^\times|^{\QQ}$, we can admit
modules in Theorems~\ref{T:decomp-multi-1-dim-annulus-bounded}
and~\ref{T:decomp-multi-1-dim-disc-bounded} defined directly
over the corresponding rings of bounded functions, namely
$K \langle \alpha/t, t/\beta \rrbracket_0$
and $K \llbracket t/\beta \rrbracket_0$.
\end{remark}

\subsection{An application to Swan conductors}

As promised earlier (Remark~\ref{R:includes swan1}), we can use the
results of this section to extend the results of \cite{kedlaya-swan1}
by relaxing \cite[Hypothesis~2.1.3]{kedlaya-swan1} to the hypothesis
that $K$ is of rational type. As this is straightforward to do,
we merely summarize the outcome by stating and deducing
a result which includes \cite[Theorems~2.7.2 and~2.8.2]{kedlaya-swan1}.

\begin{theorem} \label{T:swan1}
Let $M$ be a differential module of rank $d$ on $A^1_K(\eta_0,1)$
for some $\eta_0 \in (0,1)$, such that $IR(M \otimes F_{\rho}) \to 1$
as $\rho \to 1^-$. (That is, $M$ is solvable at $1$.)
Then for some $\eta \in (0,1)$, there exist a decomposition
$M = M_1 \oplus \cdots \oplus M_r$ on $A^1_K(\eta,1)$
and nonnegative
rational numbers $b_1,\dots,b_r$ with $\sum_i b_i \cdot \rank(M_i) \in \ZZ$,
such that 
\[
IR(M_i \otimes F_\rho; j) = \rho^{b_i} \qquad
(i=1,\dots,r; \quad j =1,\dots,\rank(M_i)).
\]
\end{theorem}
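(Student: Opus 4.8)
The plan is to deduce this from the one-dimensional variational results (Theorem~\ref{T:1-dim-var}) together with the decomposition theorem over an open annulus (Theorem~\ref{T:decomp-multi-1-dim-annulus}). Put $r = -\log\rho$, so that $A_K^1(\eta_0,1)$ corresponds to $r\in(0,-\log\eta_0)$, and work with the functions $f_i(M,r) = -\log IR(M\otimes F_{e^{-r}};i)$ and $F_i(M,r) = f_1(M,r)+\cdots+f_i(M,r)$ of Notation~\ref{N:subsidiary-radii2}; here $f_1\geq\cdots\geq f_d\geq 0$, since all subsidiary intrinsic radii lie in $(0,1]$. Because $IR(M\otimes F_\rho)$ is the minimum of the subsidiary intrinsic radii, solvability at $1$ is equivalent to $f_i(M,r)\to 0$ as $r\to 0^+$ for every $i$.

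First I would pin down the $f_i$ near $r=0$. Applying Theorem~\ref{T:1-dim-var} on the closed sub-annuli $A_K^1[\eta_0,e^{-\delta}]$ and letting $\delta\to 0^+$, each $F_i(M,r)$ is continuous and piecewise affine on $(0,-\log\eta_0)$, is convex there, and has all of its slopes in the discrete set $\frac{1}{1}\ZZ\cup\cdots\cup\frac{1}{d}\ZZ$. A convex function on $(0,\epsilon)$ that is nonnegative, tends to $0$ as $r\to 0^+$, and has slopes confined to that discrete set must be linear through the origin on some smaller interval: its right slopes are nondecreasing in $r$, and they are nonnegative near $0$ (otherwise convexity together with $F_i\to 0$ would force $F_i$ to be positive in the limit), so being confined to a discrete set they stabilize to a least value $c_i\geq 0$ on some $(0,\epsilon_i)$, whence $F_i(M,r)=c_i r$ there. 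Setting $b_i = c_i-c_{i-1}$ (with $c_0=0$) gives $f_i(M,r)=b_i r$ on $(0,\min_i\epsilon_i)$ with $b_i\geq 0$; since $b_i$ is then a slope of $f_i$ near $0$, Theorem~\ref{T:1-dim-var}(b) shows $b_i\in\frac{1}{1}\ZZ\cup\cdots\cup\frac{1}{d}\ZZ\subseteq\QQ$, and its case $i=d$ gives $\sum_i b_i = c_d\in\ZZ$.

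Now fix $\eta\in(0,1)$ with $-\log\eta < \min_i\epsilon_i$. The $b_i$ are weakly decreasing, so they split into maximal constant blocks; at every index $i$ separating two blocks we have, on $(0,-\log\eta)$, that $F_i(M,r)=c_i r$ is affine and $f_i(M,r)=b_i r > b_{i+1}r = f_{i+1}(M,r)$. Theorem~\ref{T:decomp-multi-1-dim-annulus} then yields a unique direct sum decomposition of $M$ over $A_K^1(\eta,1)$ separating the first $i$ subsidiary radii. Performing this at every block-separating index and taking the common refinement — legitimate because each such decomposition is unique, hence mutually compatible — produces $M = M_1\oplus\cdots\oplus M_r$ over $A_K^1(\eta,1)$ in which the subsidiary radii of $M_k\otimes F_\rho$ are all equal to $\rho^{b_k}$, where $b_k$ denotes the common value of the $b_i$ in the $k$-th block. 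Each $b_k$ is a nonnegative rational (it is one of the $b_i$), and $\sum_k b_k\cdot\rank(M_k) = \sum_i b_i\in\ZZ$, so $IR(M_k\otimes F_\rho;j) = \rho^{b_k}$ for $j=1,\dots,\rank(M_k)$, as required.

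The only genuinely delicate point is the behavior at the solvable endpoint $\rho=1$: one must check that Theorem~\ref{T:1-dim-var} transfers to the half-open annulus $A_K^1(\eta_0,1)$ — which it does, since the functions and the properties in question are all local and can be read off from closed sub-annuli — and then combine solvability with convexity and the discreteness of the slope set to force exact linearity of the $F_i$ on a one-sided neighborhood of $r=0$. Once that is in hand, the rest is bookkeeping: grouping equal slopes and invoking the open-annulus decomposition theorem.
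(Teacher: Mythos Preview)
Your proof is correct and follows essentially the same line as the paper's: use Theorem~\ref{T:1-dim-var} to see that each $F_i(M,r)$ is convex, piecewise affine with slopes in a discrete set, nonnegative, and tends to $0$ as $r\to 0^+$, hence is linear through the origin on some right neighborhood of $0$; then invoke the open-annulus decomposition theorem to split $M$ according to the distinct slopes, with the integrality of $\sum_i b_i\cdot\rank(M_i)$ coming from the $i=d$ case of Theorem~\ref{T:1-dim-var}(b). The paper's write-up is terser and cites Theorem~\ref{T:decomp-multi-1-dim-disc} at the decomposition step, but your use of Theorem~\ref{T:decomp-multi-1-dim-annulus} (together with the explicit iteration over block-separating indices) is the appropriate citation for the open annulus $A_K^1(\eta,1)$ and is exactly what is needed.
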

\begin{proof}
By Theorem~\ref{T:1-dim-var},
for $l=1,\dots,d$, the function $d! F_i(M, r)$ on $(0, -\log \eta)$
is continuous, convex, and piecewise affine with integer slopes.
By hypothesis, $d! F_i(M,r) \to 0$ as $r \to 0^+$; because of this
and the fact that $d! F_i(M,r) \geq 0$ for all $r$, the slopes of
$F_i(M,r)$ are forced to be nonnegative. Hence there is a least such
slope, that is, $d! F_i(M,r)$ is linear in a right neighborhood of $r=0$.

We can thus choose $\eta$ so that $d! F_i(M,r)$ is linear on
$(0, -\log  \eta)$ for $i=1,\dots,d$. We
obtain the desired decomposition by Theorem~\ref{T:decomp-multi-1-dim-disc};
the integrality of $\sum_i b_i \cdot \rank(M_i)$ follows from the fact that
$F_{d}(M,r)$ has integral slopes, again by Theorem~\ref{T:1-dim-var}.
\end{proof}

\subsection{Subharmonicity for residual characteristic $0$}

When $m=0$, the functions $F_i(M,r)$ obey a certain
subharmonicity property \cite[Theorem~11.3.2]{kedlaya-course}.
When the residual characteristic $p$ is equal to 0, one can obtain a 
similar result even when $K$ carries derivations. (See
Remark~\ref{R:no subharmonicity} for discussion of the case $p>0$.)

\begin{hypo}
Throughout this subsection, we assume $p=0$.
\end{hypo}

\begin{defn}
For $\overline \mu \in (k^\alg)^\times$, let $\mu$ be a lift of $\overline \mu$ in some finite extension $L$ of $K$. Let $E$ be a finite extension of the completion of $\gotho_K[t]_{(t)} \otimes_{\gotho_K} L$
for the $1$-Gauss norm. For $\alpha \leq 1 \leq \beta$, define the substitution
\[
T_{\mu}: K \langle \alpha/t, t/\beta \rangle \to E, \qquad
t \mapsto t + \mu.
\]
\end{defn}

\begin{defn}
Fix $j \in J^+$.
Let $M$ be a $\partial_j$-differential module of rank $d$ on 
$A_K^1[\alpha, \beta]$ for some $\alpha \leq 1 \leq \beta$.
For $i=1,\dots,n$, let $s_{\infty, i}^{(j)}(M)$ and $s_{0,i}^{(j)}(M)$ be the left (if $\beta \neq 1$) and right (if $\alpha \neq 1$) slopes of $F_i^{(j)}(M,r)$ at $r=0$.
For $\overline{\mu} \in (k^\alg)^\times$, 
pick any $\mu \in \gotho_L$ lifting $\overline{\mu}$ in a finite unramified extension $L$ of $K$,
and let $s^{(j)}_{\overline{\mu},i}(M)$ be the right slope of 
$F^{(j)}_i(T_\mu^*(M), r)$ at $r=0$.  Note that $T^*_\mu(M)$ is still a $\partial_j$-differential module by Lemma~\ref{L:H-under-unram-extn}.

If $M$ is a $\partial_{J^+}$-differential module of rank $d$ on 
$A_K^1[\alpha, \beta]$ for some $\alpha \leq 1 \leq \beta$, for $i=1,\dots,n$ and $\overline{\mu} \in k^\alg$, we similarly define $s_{\infty, i}(M)$ and $s_{\overline{\mu},i} (M)$ as the slopes of the corresponding functions $F_i(M, r)$ or $F_i(T_\mu^*(M), r)$.
\end{defn}

\begin{theorem} \label{T:subharmonicity-partial-j}
Fix $j \in J^+$.  Let $M$ be a $\partial_j$-differential module of rank $d$ on 
$A_K^1[\alpha, \beta]$ for some $\alpha < 1 < \beta$.
Choose $i \in \{1,\dots,d\}$ such that $f_i^{(j)}(M,0) > 0$.
\begin{enumerate}
\item[(a)]
The quantity $s_{\overline{\mu},i}^{(j)}(M)$ does not depend on the lift $\mu$ and the unramified extension $L/K$.
\item[(b)]
We have $s_{\overline{\mu},i}^{(j)}(M) \leq 0$ for all $\overline{\mu} \neq 0$,
with equality for all but finitely many $\overline{\mu}$.
\item[(c)]
We have
\[
s_{\infty, i}^{(j)}(M) \leq \sum_{\overline{\mu} \in k^\alg} s_{\overline{\mu},i}^{(j)}(M),
\]
with equality if either $i = n$ and $f^{(j)}_n(M, 0)>0$ or $i<n$ and $f^{(j)}_i(M, 0) \geq f^{(j)}_{i+1}(M, 0)$.
\end{enumerate}
\end{theorem}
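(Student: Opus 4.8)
Here is how I would prove Theorem~\ref{T:subharmonicity-partial-j}.

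The plan is to transcribe the proof of the subharmonicity clause of \cite[Theorem~11.3.2]{kedlaya-course}, with $\partial_0$ replaced by $\partial_j$. When $j=0$ the statement is exactly the $p=0$ case of \cite[Theorem~11.3.2]{kedlaya-course} (only $\partial_0$ being involved, the extra derivations on $K$ are irrelevant), so I would assume $j\in J$. The key simplification is that, since $p=0$, there is no $\partial_j$-Frobenius pushforward to manage; moreover, after reducing to $|u_j|=1$ the bound $-\log|u_j|=0$ is constant in $r$, so the ``safe region'' $f^{(j)}_i(M,r)>0$ is a horizontal strip rather than the tilted strip $f^{(0)}_i(M,r)>r$ that complicates the $\partial_0$ argument. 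First I would make the reductions from the proof of Theorem~\ref{T:subsidiary-partial-j}: enlarging $K$ (which changes neither $f^{(j)}_i$ nor the hypotheses) we may take $|u_j|=1$, and we may freely pass to finite unramified extensions of $K$ using Lemma~\ref{L:H-under-unram-extn} and the base-change invariance of intrinsic $\partial_j$-radii recorded after Theorem~\ref{T:decomp-over-field-complete}.

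Next I would set up the Newton-polygon dictionary, exactly as in the proof of Theorem~\ref{T:subsidiary-partial-j}. By Lemma~\ref{L:cyclic vector} a cyclic vector for $M\otimes F$, with $F=\Frac K\langle\alpha/t,t/\beta\rangle$, gives $M\otimes F\cong F\{T\}/F\{T\}P$ with $P$ a monic twisted polynomial of degree $d$. By Propositions~\ref{P:spec-norm-from-NP} and~\ref{P:NP-decomp-field} together with Remark~\ref{R:part-decomp-over-field}, on the locus where $f^{(j)}_i(M,r)>0$ one has $F^{(j)}_i(M,r)=-F_i(P,r)$, where $F_i(P,r)$ is the sum of the $i$ smallest slopes of $\NP_r(P)$ (all then negative, hence ``visible''); by Proposition~\ref{P:newton} this function is continuous, piecewise affine and concave in $r$, and has nonnegative slopes when the underlying space is a disc. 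The same description holds near $r=0^+$ for the translate $T^*_\mu M\cong E\{T\}/E\{T\}(T^*_\mu P)$, which remains a $\partial_j$-differential module by Lemma~\ref{L:H-under-unram-extn}.

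With this dictionary, part (a) follows because two lifts $\mu,\mu'$ of $\overline\mu$ satisfy $|\mu-\mu'|<1$, so $T_{\mu'}$ is $T_\mu$ post-composed with $t\mapsto t+(\mu-\mu')$, which fixes the Gauss point of the disc $|t|\le1$ and hence the right slope at $r=0$; independence of the unramified extension is the base-change invariance already invoked. For part (b): since $|\mu|=1$, the open residue disc $|t-\mu|<1$ is literally contained in $\{|t|=1\}\subset A^1_K[\alpha,\beta]$, so $T^*_\mu M$ genuinely lives on a (full open) disc, and applying Theorem~\ref{T:subsidiary-partial-j}(c) on closed subdiscs forces $s^{(j)}_{\overline\mu,i}(M)=-F_i(T^*_\mu P)'(0^+)\le0$; finiteness of the set of $\overline\mu$ with nonzero slope follows as in \cite[Theorem~11.3.2]{kedlaya-course}, only residue discs meeting the finite critical locus of the relevant Newton-polygon data of $P$ being able to contribute. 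Part (c) is then precisely the superharmonicity of the Newton-polygon-height function $F_i(P,r)$ at the Gauss point of $|t|=1$: writing out the outgoing slopes toward $\infty$ and toward each residue disc $\overline\mu\in k^\alg$ (the term $\overline\mu=0$ recovering $s^{(j)}_{0,i}(M)$, since $T_0$ is the identity), the inequality to prove becomes
\[
\sum_{\overline\mu\in(k^\alg)^\times}F_i(T^*_\mu P)'(0^+)\ \le\ F_i(P)'(0^-)-F_i(P)'(0^+),
\]
with equality exactly when $x=-(d-i)$ is a vertex of $\NP_0(P)$; this is the subharmonicity estimate for Newton polygons over an annulus used in the proof of \cite[Theorem~11.2.1]{kedlaya-course} and \cite[Theorem~11.3.2]{kedlaya-course}, applied to $P$ and its translates (the algebraic closure $k^\alg$ causes no trouble because $p=0$, so $k$ is perfect).

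The step I expect to be the main obstacle is the last one, and within it two points. First, one must confirm that the dictionary $F^{(j)}_i(M,r)=-F_i(P,r)$ (and its translate-analogue) is an \emph{exact} equality on the relevant loci, so that no stray affine normalization survives and (c) reduces verbatim to the Newton-polygon superharmonicity above; this is where the $p=0$ hypothesis and the reduction $|u_j|=1$ are really used, since they are what make the ``safe'' region and the identification of visible slopes with subsidiary radii behave uniformly. Second, one must check that the substitutions $T_\mu$ are compatible with the $\partial_j$-action so that $T^*_\mu M$ is a $\partial_j$-differential module with the expected radii — this is the role of Lemma~\ref{L:H-under-unram-extn}, as recorded in the definition of $s^{(j)}_{\overline\mu,i}$. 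Beyond these, the passage from $\partial_0$ to $\partial_j$ for $p=0$ introduces nothing new: none of the Frobenius-descent obstructions discussed in Remark~\ref{R:no subharmonicity} intervene, so the argument of \cite{kedlaya-course} carries over.
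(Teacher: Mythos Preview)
Your proposal is correct and follows essentially the same approach as the paper. The paper's proof is extremely terse: for $j=0$ it simply cites \cite[Theorem~11.3.2(d)]{kedlaya-course}, and for $j\in J$ it says that the setup from the proof of Theorem~\ref{T:subsidiary-partial-j} (cyclic vector, Newton-polygon dictionary) reduces the problem to \cite[Theorem~11.2.1(c)]{kedlaya-course}, noting that no Frobenius pushforward is needed since $p=0$; you have unpacked exactly this reduction in detail.
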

\begin{proof}
When $j = 0$, this is \cite[Theorem~11.3.2(d)]{kedlaya-course}.  When $j \in J$, the proof of Theorem~\ref{T:subsidiary-partial-j} reduces the problem to \cite[Theorem~11.2.1(c)]{kedlaya-course}.  Note that we do not have to use the Frobenius push\-forward.
\end{proof}

\begin{remark}\label{R:s_mu-independent-of-BC}
Let $L$ be a complete extension of $K$ such that $\partial_j$ extends to $L$.  Then $M \otimes L$ becomes a $\partial_j$-differential module over $A_L^1[\alpha, \beta]$. For $\overline \mu \notin k^{\alg}$, we always have
$s_{\overline \mu, i}^{(j)}(M) = 0$; this can be seen either by inspecting
the proof of Theorem~\ref{T:subharmonicity-partial-j}, or by 
deducing the claim directly from (b). Namely, 
(b) implies that the equality $s^{(j)}_{\overline{\mu},i}(M) = 0$ holds with
only finitely many exceptions; on the other hand, if $\overline{\mu}$
were an exception not in $k^{\alg}$, then so would be each of its infinitely
many conjugates in an algebraic closure
of the residue field of $L$.
\end{remark}

\begin{theorem} \label{T:subharmonicity}
Let $M$ be a $\partial_{J^+}$-differential module of rank $d$ on 
$A_K^1[\alpha, \beta]$ for some $\alpha < 1 < \beta$.
Choose $i \in \{1,\dots,d\}$ such that $f_i(M,0) > 0$.
\begin{enumerate}
\item[(a)]
The quantity $s_{\overline{\mu},i}(M)$ does not depend on the lift $\mu$ and the unramified extension $L/K$.
\item[(b)]
We have $s_{\overline{\mu},i}(M) \leq 0$ for all $\overline{\mu} \neq 0$,
with equality for all but finitely many $\overline{\mu}$.
\item[(c)]
We have
\[
s_{\infty, i}(M) \leq \sum_{\overline{\mu} \in k^\alg} s_{\overline{\mu},i}(M).
\]
\end{enumerate}
\end{theorem}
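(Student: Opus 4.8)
The plan is to deduce Theorem~\ref{T:subharmonicity} from the single-derivation subharmonicity Theorem~\ref{T:subharmonicity-partial-j} in exactly the way that Theorem~\ref{T:1-dim-var} was deduced from Theorems~\ref{T:subsidiary-partial-0} and~\ref{T:subsidiary-partial-j}: decompose $M$ near $r=0$ into pieces with pure intrinsic radii and a well-defined dominant-derivation structure, then treat each piece either by invoking Theorem~\ref{T:subharmonicity-partial-j} directly or by first passing through the generic rotation of Notation~\ref{N:generic-rotation}. As always, first reduce to $|u_J|=1$. Since all three assertions are local at $r=0$, and since $T_\mu$ is a base change that is isometric for the $1$-Gauss norm (so that, by Lemma~\ref{L:H-under-unram-extn} and Remark~\ref{R:base-change-sp-norm}, $T_\mu^*M$ is again a $\partial_{J^+}$-module with the same intrinsic radii and the same dominant derivations at $r=0$), it suffices to work with $M$ and with each $T_\mu^*M$ simultaneously in a neighborhood of the generic point of the circle $|t|=1$. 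Apply Steps~1 and~2 of the proof of Theorem~\ref{T:1-dim-var} to obtain, for $\gamma$ sufficiently close to $1$, a decomposition of $M$ over $K\langle \gamma/t, t\rrbracket_0$ (and of its pullback along $t\mapsto t^{-1}$) into pieces $M_{\lambda,\mu}$ with pure intrinsic radii on a punctured neighborhood of $r=0$, constant set of dominant derivations, and the interlacing properties (iii)--(v) of that proof. Because $s_{\infty,i}(M)$ and $s_{\overline{\mu},i}(M)$ are additive over Jordan--H\"older constituents when $f_i(M,0)>f_{i+1}(M,0)$ (the general case following by interpolation, as in Theorem~\ref{T:subharmonicity-partial-j}), it is enough to prove the assertions for a single such piece $N$.

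For part (a), the argument is that of Theorem~\ref{T:subharmonicity-partial-j}(a): the right slope of $F_i(T_\mu^*N,r)$ at $r=0$ depends only on $\overline{\mu}$, since two lifts of $\overline{\mu}$ differ by an element of norm $<1$ and a further unramified base change leaves intrinsic radii unchanged by Remark~\ref{R:base-change-sp-norm}. For parts (b) and (c): if a single $\partial_j$ with $j\in J$ is dominant for $N$, then near $r=0$ (using $|u_j|=1$, and $\omega=1$ since $p=0$) the intrinsic function $F_i(N,r)$ of Notation~\ref{N:subsidiary-radii2} coincides with the extrinsic function $F_i^{(j)}(N,r)$ of Notation~\ref{N:subsidiary-radii-and-sums}, and likewise for $T_\mu^*N$, so (b) and (c) for $N$ are literally Theorem~\ref{T:subharmonicity-partial-j}(b),(c). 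If instead $\partial_0$ lies in the dominant set of $N$, choose $\eta_-<\eta_+$ near $1$ satisfying the gap condition as in Step~2 of Theorem~\ref{T:1-dim-var} and apply the generic rotation $\tilde f^*$ of Notation~\ref{N:generic-rotation}: by Lemma~\ref{L:generic-rotation}, $\tilde f^*N$ is a $\partial_0$-differential module over $\widetilde K$, its function $F_i^{(0)}(\tilde f^*N,r)$ is related to $F_i(N,r)$ near $r=0$ by the $\theta_i$-normalization correction of Step~2 of Theorem~\ref{T:1-dim-var}, and $\tilde f^*$ commutes with $T_\mu$ at the level of the associated $\partial_0$-modules (the discrepancy $u_j\mapsto u_j+x_j\mu$ being a $\partial_0$-Taylor isomorphism of the base). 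One then reads off (b) and (c) for $N$ from the $j=0$ case of Theorem~\ref{T:subharmonicity-partial-j} applied to $\tilde f^*N$, tracking the correction just as the convexity of $F_i(M,r)$ was tracked via \eqref{E:1-dim-var-1} and~\eqref{E:1-dim-var-2}; in particular, away from the residue disc the $\partial_0$-radius strictly improves, so inside such a disc some $\partial_j$ ($j\in J$) becomes dominant and the full intrinsic radius is locally constant, which is why $s_{\overline{\mu},i}(N)=0$ for all but finitely many $\overline{\mu}$. Reassembling the pieces, using the interlacing properties so that the slope contributions at $\infty$ and at each residue disc aggregate correctly, yields the theorem.

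The main obstacle is precisely this normalization bookkeeping: correctly matching up the $\theta_i$-correction relating the intrinsic functions $F_i$ to the extrinsic functions $F_i^{(j)}$ on a piece $N$ where $\partial_0$ is dominant, both at $r=0$ on the circle and at $r=0$ on each residue disc, and checking that the directions of the resulting inequalities are compatible with the $\partial_0$-subharmonicity so that the aggregate inequality $s_{\infty,i}(M)\le\sum_{\overline{\mu}}s_{\overline{\mu},i}(M)$ survives, along with the ``cofinitely many equalities'' of (b). This is the same analysis carried out in Step~2 of the proof of Theorem~\ref{T:1-dim-var} for convexity, adapted from the convexity inequality to the subharmonicity inequality; everything else (the decomposition machinery, the rotation, the single-derivation subharmonicity, the base-change invariance of intrinsic radii) is already available.
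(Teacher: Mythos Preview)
Your strategy uses the same ingredients as the paper (the decomposition of Step~1 and the rotation of Step~2 from the proof of Theorem~\ref{T:1-dim-var}, feeding into Theorem~\ref{T:subharmonicity-partial-j}), but the organization is different. The paper does not decompose and then case-split on the dominant derivation. It first treats directly the case where $\partial_0$ is dominant for every constituent of $M\otimes F_1$ contributing to $F_i(M,0)$: since $IR(V)\le IR_{\partial_0}(V)$ always, with equality at $r=0$ under the dominance hypothesis, one obtains inequalities between $s_{\infty,i}(M)$, $s_{\overline\mu,i}(M)$ and the corresponding $\partial_0$-quantities, and (a)--(c) reduce to the classical $m=0$ case of \cite[Theorem~11.3.2(c)]{kedlaya-course}. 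It then reduces the general case to this one by the rotation of Step~2. Your case A (a single $\partial_j$ with $j\in J$ dominant, handled directly by Theorem~\ref{T:subharmonicity-partial-j}) is a legitimate alternative that avoids rotating that piece; your case B, on the other hand, rotates precisely when rotation is unnecessary---if $\partial_0$ is already dominant, the direct comparison just described is both simpler and what the paper actually does.

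There is one genuine point you do not address. Whenever one rotates (your case B, or the paper's global reduction), the base field enlarges to $\widetilde K$, and the subharmonicity inequality furnished by the $j=0$ case of Theorem~\ref{T:subharmonicity-partial-j} has its sum running over the algebraic closure of the residue field of $\widetilde K$, not over $k^{\alg}$. The paper closes this gap explicitly: once (b) is established, the Galois-orbit argument of Remark~\ref{R:s_mu-independent-of-BC} shows that any $\overline\mu$ not algebraic over $k$ has infinitely many conjugates, each giving the same value of $s_{\overline\mu,i}$, so by (b) all of these vanish and the sum collapses back to one over $k^{\alg}$. You need this step.
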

\begin{proof}
Suppose first that $\partial_0$ is dominant for each irreducible component of
$M \otimes F_1$ which contributes to $F_i(M,0)$. Then
$s_{\infty,i}(M)$ is less than or equal to the left slope of
$F_i^{(0)}(M,r)$ at $r=0$,
whereas $s_{\overline{\mu},i}(M)$ is greater than or equal to the right
slope of $F_i^{(0)}(T_\mu^*(M),r)$ at $r=0$.
We may thus reduce to the case $m=0$, which is
\cite[Theorem~11.3.2(c)]{kedlaya-course}.

It suffices to reduce to the case where $\partial_0$ is dominant for
each irreducible
component of $M \otimes F_1$ which contributes to $F_i(M,0)$.
This proceeds as in Step~2 of the proof of
Theorem~\ref{T:1-dim-var}, except that we may end up working over an
enlargement of $K$. This causes no harm in (a) or (b), but in (c)
the sum may end up running over a larger field.
However, the argument of Remark~\ref{R:s_mu-independent-of-BC} 
shows that the extra terms do not contribute: that is, we may use (b)
to show that $s_{\overline{\mu},i}(M) = 0$ 
if $\overline{\mu} \notin k^{\alg}$, so (c) holds as written.
\end{proof}

\begin{remark}
The proof given above does not achieve the equality in (c) for $m>0$,
because the reduction in the last paragraph does not maintain
equality.
\end{remark}

As in \cite[Subsection~12.2]{kedlaya-course}, we can study decomposition theorems over closed annuli or discs using subharmonicity. 

\begin{defn}
Fix $j \in J^+$.  Let $M$ be a $\partial_j$-differential module over $K \langle \alpha/t, t/\beta \rangle$ with $\alpha \leq 1 \leq \beta$.  Define the \emph{$i$-th $\partial_j$-discrepancy} of $M$ at $r = 0$ as
\[
\disc^{(j)}_i(M, 0) = - \sum_{\overline \mu \in (k^\alg)^\times} s^{(j)}_{\overline \mu, i}(M);
\]
it is nonnegative by Theorem~\ref{T:subharmonicity-partial-j}. 
By Remark~\ref{R:s_mu-independent-of-BC}, this definition is invariant under
enlarging $K$.
 We may extend the definition to general $r \in [-\log \beta, -\log \alpha]$ by pulling back $M$ along 
\[
K \langle \alpha/t, t/\beta \rangle \rar K(c)^\wedge \langle \alpha e^r/t, t/\beta e^r \rangle, \quad t \mapsto ct,
\]
where $c$ is transcendental over $K$ and $K(c)^\wedge$ is the completion with respect to the $e^{-r}$-Gauss norm. 

If $M$ is a finite $\partial_{J^+}$-differential module over $K \langle \alpha/t, t/\beta \rangle$ with $\alpha \leq 1 \leq \beta$, we similarly define the \emph{$i$-th discrepancy} $\disc_i(M, 0)$ of $M$ at $r = 0$ as the sum of $-s_{\overline \mu, i}(M)$ over $\overline \mu \in (k^\alg)^\times$. 
This quantity is again nonnegative, and is again invariant under enlarging 
$K$ (this time by the final remark in the proof of 
Theorem~\ref{T:subharmonicity}).
This definition can similarly
be extended to $r \in [-\log \beta, -\log \alpha]$.
\end{defn}

\begin{remark}
If $r \notin \QQ \log |K^\times|$, then
Remark~\ref{R:rational-breaks} implies that $F_i(M,r)$ is affine in a 
neighborhood of $r$. By Theorem~\ref{T:subharmonicity},
it follows that $\disc_i(M,r) = 0$.
\end{remark}

\begin{theorem}\label{T:decomp-annulus-partial-j}
Fix $j \in J^+$.  Let $M$ be a $\partial_j$-differential module over $K \langle \alpha/ t, t / \beta \rangle$ of rank $d$. Suppose that the following conditions hold for some $i \in \{\serie{}d-1\}$. 
\begin{itemize}
\item[(a)] We have $f_i^{(j)}(M, r) > f_{i+1}^{(j)}(M, r)$ for $r \in [-\log \beta, -\log \alpha]$.
\item[(b)] The function $F_i^{(j)}(M, r)$ is affine for $r \in [-\log \beta, -\log \alpha]$.
\item[(c)] We have $\disc_i^{(j)}(M, -\log \alpha) = \disc_i^{(j)}(M, -\log \beta) = 0$.
\end{itemize}
Then there is a direct sum decomposition of M inducing, for each $\eta \in [\alpha, \beta]$, the decomposition of $M \otimes F_\eta$ separating the first $i$ subsidiary $\partial_j$-radii from the others.
\end{theorem}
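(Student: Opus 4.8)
The plan is to mimic the proof of Theorem~\ref{T:j-decomp-open-annulus} (itself modeled on \cite[Theorem~12.2.2]{kedlaya-course}), the new feature being that hypothesis (c) lets the construction be carried out over the \emph{closed}-annulus ring $K\langle\alpha/t,t/\beta\rangle$ rather than only over the open annulus. As there, I would first reduce to the case $|u_j|=1$: enlarge $K$ by adjoining a variable with the $|u_j|$-Gauss norm and replace $u_j$ by $u_j$ divided by that variable, then descend at the end using uniqueness together with Lemma~\ref{L:proj-intersect}. Since $p=0$ we have $\omega=1$, so no $\partial_j$-Frobenius pushforward is needed; moreover hypothesis (a) forces $f^{(j)}_i(M,r) > f^{(j)}_{i+1}(M,r) \geq 0$, hence $f^{(j)}_i(M,r) > 0$, on all of $[-\log\beta,-\log\alpha]$.

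Before the main construction I would record a consequence of (b): at any \emph{interior} point $r_0$, affineness of $F^{(j)}_i(M,r)$ forces $s^{(j)}_{\infty,i} = s^{(j)}_{0,i}$ there, and the equality case of Theorem~\ref{T:subharmonicity-partial-j}(c) (applicable since $f^{(j)}_i > f^{(j)}_{i+1}$ and $f^{(j)}_i > 0$) then forces $\sum_{\overline\mu\neq 0} s^{(j)}_{\overline\mu,i}(M) = 0$, i.e.\ $\disc^{(j)}_i(M,r_0) = 0$; together with (c) this gives $\disc^{(j)}_i(M,r)=0$ for every $r\in[-\log\beta,-\log\alpha]$. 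Now I would run the matrix argument of Theorem~\ref{T:j-decomp-open-annulus} directly over $K\langle\alpha/t,t/\beta\rangle$: choose a cyclic vector for $M\otimes\Frac K\langle\alpha/t,t/\beta\rangle$, use the lattice lemma (Lemma~\ref{L:lattice}) to obtain a basis $B_c$ of $M$ on which $\partial_j$ acts by a matrix $N_c$ whose characteristic polynomial $Q(T)$ has its coefficient $q$ of $T^{d-i}$ computing $F^{(j)}_i(M,r)$. By (b), $\log|q|_{e^{-r}}$ is then affine on $[-\log\beta,-\log\alpha]$, so by Lemma~\ref{L:criterion-unit-1} $q$ is a unit in $K\{\{\alpha/t,t/\beta\}\}$, i.e.\ has no zero in the open annulus. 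The place where hypothesis (c) enters is in upgrading this to a unit in $K\langle\alpha/t,t/\beta\rangle$: the vanishing of $\disc^{(j)}_i(M,\cdot)$ at $r=-\log\alpha$ and $r=-\log\beta$ precludes zeros of $q$ on the two boundary circles $|t|=\alpha$, $|t|=\beta$ — a boundary zero at residual position $\overline\mu$ would, after the substitution $T_\mu$, register as a strictly negative contribution $s^{(j)}_{\overline\mu,i}$ to the discrepancy. (In the proof of Theorem~\ref{T:j-decomp-open-annulus} this upgrade was instead effected by shrinking the annulus slightly, which is unavailable here since the decomposition is required up to the boundary.) Granting unit-ness of $q$, hypothesis (a) lets \cite[Theorem~2.2.2]{kedlaya-course} produce a factorization $Q = Q_2 Q_1$ over $K\langle\alpha/t,t/\beta\rangle[T]$ with the roots of $Q_1$ the $i$ largest roots of $Q$ under $|\cdot|_\gamma$ for all $\gamma\in[\alpha,\beta]$ simultaneously.

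From this factorization I would extract the short exact sequence
\[
\exact 0 {\Ker(Q_1(N_c))} M {\Coker(Q_2(N_c))} 0
\]
of free $K\langle\alpha/t,t/\beta\rangle$-modules (the quotient is free by Lemma~\ref{L:slope-decomp-coprime} after rescaling), apply Lemma~\ref{L:lattice} to both factors to pass to a basis in block form whose ``dominant'' block $A_c$ is invertible with $|A_c^{-1}|_\gamma\cdot\max\{|\partial_j|_\gamma,|B_c|_\gamma,|C_c|_\gamma,|D_c|_\gamma\}<1$ for all $\gamma\in[\alpha,\beta]$ and whose Newton slopes account for the first $i$ subsidiary radii of $M\otimes F_\gamma$, and then invoke \cite[Lemma~6.7.1]{kedlaya-course} to obtain a $\partial_j$-differential submodule of $M$ accounting for the last $d-i$ subsidiary radii at every $\gamma\in[\alpha,\beta]$. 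Running the same argument on $M^\dual$ and dualizing produces the complementary submodule, hence the desired splitting; that it induces the stated decomposition of $M\otimes F_\eta$ for every $\eta\in[\alpha,\beta]$, endpoints included, is built in through the uniform choice of $Q_1$, and uniqueness follows from uniqueness over $A_K^1(\alpha,\beta)$ (Theorem~\ref{T:j-decomp-open-annulus}), since a decomposition over the closed-annulus ring is determined by its restriction to the open annulus. Finally one descends from the enlarged field back to $K$.

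The main obstacle, and the only genuinely new point relative to Theorem~\ref{T:j-decomp-open-annulus}, is the assertion that $q$ is a unit in the closed-annulus ring: one must translate the numerical hypothesis $\disc^{(j)}_i(M,-\log\alpha) = \disc^{(j)}_i(M,-\log\beta) = 0$ into the geometric statement that $q$ has no zeros on the two boundary circles, which I expect to require tracking a hypothetical boundary zero of $q$ through the substitutions $T_\mu$ of the subharmonicity formalism (the same bookkeeping that underlies Theorem~\ref{T:subharmonicity-partial-j}). An alternative that localizes this difficulty is to establish the decomposition separately over $K\langle\alpha/t,t/\beta_1\rangle$ and over $K\langle\alpha_1/t,t/\beta\rangle$ for some $\alpha<\alpha_1<\beta_1<\beta$ — each involving only one true boundary circle, the other radius being interior and handled via Theorem~\ref{T:j-decomp-open-annulus} — and then glue the two pieces via Lemma~\ref{L:proj-intersect}, using that $K\langle\alpha/t,t/\beta\rangle = K\langle\alpha/t,t/\beta_1\rangle \cap K\langle\alpha_1/t,t/\beta\rangle$ inside $K\langle\alpha_1/t,t/\beta_1\rangle$ and that both pieces restrict to the unique decomposition over the open annulus.
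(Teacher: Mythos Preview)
Your proposal follows essentially the same route as the paper: the paper's proof is the one-liner ``Similar to Theorem~\ref{T:j-decomp-open-annulus} but invoking \cite[Lemma~12.1.3]{kedlaya-course} instead,'' and you have correctly reconstructed what that replacement must accomplish. The cited lemma is precisely the closed-annulus unit criterion you isolate as the crux---it converts the vanishing of $\disc^{(j)}_i$ at the endpoints into the statement that the relevant coefficient $q$ of $T^{d-i}$ has no zeros on the boundary circles $|t|=\alpha,\beta$, hence is a unit in $K\langle\alpha/t,t/\beta\rangle$ (your affineness argument via Lemma~\ref{L:criterion-unit-1} already rules out interior zeros). Your heuristic ``a boundary zero at residual position $\overline{\mu}$ would register as a strictly negative $s^{(j)}_{\overline{\mu},i}$'' is the correct mechanism behind that lemma; the paper simply cites it rather than reproving it. Once $q$ is a unit over the closed annulus, the remainder of your argument (factorization via \cite[Theorem~2.2.2]{kedlaya-course}, Lemma~\ref{L:slope-decomp-coprime}, block form, \cite[Lemma~6.7.1]{kedlaya-course}, dualize) is exactly the template of Theorem~\ref{T:j-decomp-open-annulus} and is correct.
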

\begin{proof}
Similar to Theorem~\ref{T:j-decomp-open-annulus} but invoking \cite[Lemma~12.1.3]{kedlaya-course} instead.
\end{proof}

\begin{theorem}\label{T:decomp-disc-partial-j}
Fix $j \in J^+$.  Let $M$ be a $\partial_j$-differential module over $K \langle t / \beta \rangle$ of rank $d$. Suppose that the following conditions hold for some $i \in \{\serie{}d-1\}$. 
\begin{itemize}
\item[(a)] We have $f_i^{(j)}(M, -\log \beta) > f_{i+1}^{(j)}(M, -\log \beta)$.
\item[(b)] The function $F_i^{(j)}(M, r)$ is constant for $r$ in a neighborhood of $-\log \beta$.
\item[(c)] We have $\disc^{(j)}_i(M, -\log \beta) = 0$.
\end{itemize}
Then there is a direct sum decomposition of M inducing, for each $\eta \in (0, \beta]$, the decomposition of $M \otimes F_\eta$ separating the first $i$ subsidiary $\partial_j$-radii from the others.
\end{theorem}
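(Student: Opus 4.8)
The plan is to obtain the decomposition by combining two decompositions already available from earlier results --- one on the ``open'' part $K\llbracket t/\beta\rrbracket_0$ and one on a thin closed annulus hugging the outer circle $|t| = \beta$ --- and to splice them with the fibre-product Lemma~\ref{L:proj-intersect}. After enlarging $K$ and rescaling $u_j$ to reduce to the case $|u_j| = 1$ (so that $f^{(j)}_k(M,r) \ge 0$ for all $k$ and $r$), note that hypotheses (a) and (b) are exactly the hypotheses of Theorem~\ref{T:part-decomp-disc}; that theorem therefore already yields a direct sum decomposition of $M \otimes K\llbracket t/\beta\rrbracket_0$ separating the first $i$ subsidiary $\partial_j$-radii of $M \otimes F_\eta$ for every $\eta \in (0,\beta)$. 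It remains to decompose $M$ near $|t| = \beta$ and to check compatibility.

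Since $F_i^{(j)}(M,r)$ is constant in a neighbourhood of $r = -\log\beta$ by (a), choose $\gamma \in (0,\beta)$ close enough to $\beta$ that $F_i^{(j)}(M,r)$ is constant on $[-\log\beta, -\log\gamma]$ and, by continuity together with (b), $f_i^{(j)}(M,r) > f_{i+1}^{(j)}(M,r)$ there; in particular $f_i^{(j)}(M,-\log\gamma) > 0$. I then apply Theorem~\ref{T:decomp-annulus-partial-j} to $M \otimes K\langle\gamma/t, t/\beta\rangle$. Its hypotheses (a) and (b) hold by construction, and the second half of its hypothesis (c), namely $\disc_i^{(j)}(M,-\log\beta) = 0$, is our hypothesis (c). For the first half, restrict $M$ to a small closed annulus about the circle $|t| = \gamma$ strictly containing it and apply Theorem~\ref{T:subharmonicity-partial-j}(c) at $r = -\log\gamma$ (legitimate as $f_i^{(j)}(M,-\log\gamma) > 0$): it gives $s_{\infty,i}^{(j)}(M) \le s_{0,i}^{(j)}(M) - \disc_i^{(j)}(M,-\log\gamma)$, whence $\disc_i^{(j)}(M,-\log\gamma) \le s_{0,i}^{(j)}(M) - s_{\infty,i}^{(j)}(M)$, the right slope minus the left slope of $F_i^{(j)}(M,r)$ at $r = -\log\gamma$. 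As $F_i^{(j)}$ is affine there this difference vanishes, and since $\disc_i^{(j)} \ge 0$ (Theorem~\ref{T:subharmonicity-partial-j}) we conclude $\disc_i^{(j)}(M,-\log\gamma) = 0$. Theorem~\ref{T:decomp-annulus-partial-j} now provides a decomposition of $M \otimes K\langle\gamma/t, t/\beta\rangle$ separating the first $i$ subsidiary $\partial_j$-radii of $M \otimes F_\eta$ for all $\eta \in [\gamma,\beta]$.

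It remains to glue. Within $U := K\langle\gamma/t, t/\beta\rrbracket_0$ one has $K\langle\gamma/t, t/\beta\rangle \cap K\llbracket t/\beta\rrbracket_0 = K\langle t/\beta\rangle$, since a Laurent series lying in both rings has no negative-order terms and satisfies $|a_i|\beta^i \to 0$. The two decompositions constructed above, of $M \otimes K\langle\gamma/t, t/\beta\rangle$ and of $M \otimes K\llbracket t/\beta\rrbracket_0$, restrict to decompositions of $M \otimes U$; both induce, over the ring of functions on the open annulus $A_K^1(\gamma,\beta)$, the (unique, by Theorem~\ref{T:j-decomp-open-annulus}) decomposition separating the first $i$ subsidiary $\partial_j$-radii, and since $U$ embeds into that ring the corresponding idempotents over $U$ agree. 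Applying Lemma~\ref{L:proj-intersect} to the commuting square of inclusions with corners $K\langle t/\beta\rangle$, $K\langle\gamma/t, t/\beta\rangle$, $K\llbracket t/\beta\rrbracket_0$, $U$, we obtain a unique direct sum decomposition of $M$ over $K\langle t/\beta\rangle$ inducing both; it separates the first $i$ subsidiary $\partial_j$-radii of $M \otimes F_\eta$ for $\eta \in (0,\beta)$ (from the disc factor) and for $\eta = \beta$ (from the annulus factor), which is the assertion.

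The only genuinely new ingredient beyond the cited theorems is the vanishing of $\disc_i^{(j)}$ at the auxiliary radius $-\log\gamma$, a short consequence of convexity and subharmonicity; the rest is bookkeeping with the rings $K\langle t/\beta\rangle$, $K\langle\gamma/t, t/\beta\rangle$, $K\llbracket t/\beta\rrbracket_0$, $U$. An alternative is to imitate the proof of Theorem~\ref{T:j-decomp-open-annulus} directly over $K\langle t/\beta\rangle$: monotonicity and convexity (Theorem~\ref{T:subsidiary-partial-j}(c),(d)) promote hypothesis (a) to ``$F_i^{(j)}(M,r)$ constant for all $r \ge -\log\beta$'', so the coefficient of $T^{d-i}$ in the characteristic polynomial attached to a good basis has constant $|\cdot|_{e^{-r}}$ across the disc and hence no zeros with $|t| < \beta$, while hypothesis (c) excludes zeros with $|t| = \beta$; this coefficient is then a unit in $K\langle t/\beta\rangle$, after which one factors and builds the summand and its dual as in that proof. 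I expect the gluing argument to give the shorter write-up, with no serious obstacle in either route.
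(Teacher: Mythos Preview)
Your proposal is correct and takes essentially the same approach as the paper: combine Theorem~\ref{T:part-decomp-disc} (for the bounded-disc part) with Theorem~\ref{T:decomp-annulus-partial-j} (for a closed annulus at the boundary), using Theorem~\ref{T:subharmonicity-partial-j} to verify the missing discrepancy condition at the inner radius $-\log\gamma$. The paper's proof is terser --- it simply cites those three results --- whereas you spell out the subharmonicity computation and the gluing via Lemma~\ref{L:proj-intersect}; your alternative sketch at the end is likewise the paper's first suggested route (imitating Theorem~\ref{T:j-decomp-open-annulus} with \cite[Lemma~12.1.2]{kedlaya-course}). Two cosmetic points: you have swapped the labels (a) and (b) when invoking the hypotheses in the second paragraph, and the reduction ``$|u_j|=1$ so $f_k^{(j)}\ge 0$'' is only literally meaningful for $j\in J$; for $j=0$ the relevant positivity comes instead from $f_k^{(0)}(M,r)\ge r$ after rescaling $t$, but the conclusion $f_i^{(j)}(M,-\log\gamma)>0$ (or its rescaled analogue) holds either way.
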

\begin{proof}
One can prove this similarly to Theorem~\ref{T:j-decomp-open-annulus} by invoking \cite[Lemma~12.1.2]{kedlaya-course} instead.  It is also an immediate corollary of Theorems~\ref{T:decomp-annulus-partial-j} and \ref{T:part-decomp-disc}; note that Theorem~\ref{T:subharmonicity-partial-j} verifies the condition (c) in Theorem~\ref{T:decomp-annulus-partial-j}.
\end{proof}

\begin{theorem}\label{T:decomp-annulus}
Let $M$ be a $\partial_{J^+}$-differential module over $K \langle \alpha/ t, t / \beta \rangle$ of rank $d$. Suppose that the following conditions hold for some $i \in \{\serie{}d-1\}$.
\begin{itemize}
\item[(a)] We have $f_i(M, r) > f_{i+1}(M, r)$ for $r \in [-\log \beta, -\log \alpha]$.
\item[(b)] The function $F_i(M, r)$ is affine for $r \in [-\log \beta, -\log \alpha]$.
\item[(c)] We have $\disc_i(M, -\log \alpha) = \disc_i(M, -\log \beta) = 0$.
\end{itemize}
Then there is a direct sum decomposition of M inducing, for each $\eta \in [\alpha, \beta]$, the decomposition of $M \otimes F_\eta$ separating the first $i$ subsidiary radii from the others.
\end{theorem}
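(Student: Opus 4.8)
The plan is to mimic the derivation of Theorem~\ref{T:decomp-multi-1-dim-annulus} from Theorem~\ref{T:j-decomp-open-annulus}, but with the open-annulus decomposition replaced by its closed-annulus counterpart Theorem~\ref{T:decomp-annulus-partial-j}. First I would reduce to the case $|u_J| = 1$ as in the proof of Theorem~\ref{T:j-decomp-open-annulus}, and observe that, since conditions (a) and (b) hold in particular on the open interval, Theorem~\ref{T:decomp-multi-1-dim-annulus} already supplies the decomposition of $M$ over $A_K^1(\alpha, \beta)$. As this decomposition is unique, it remains only to extend it across the two boundary circles $|t| = \alpha$ and $|t| = \beta$, after which one glues by Lemma~\ref{L:proj-intersect} exactly as in the proof of \cite[Theorem~12.2.2]{kedlaya-course}. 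Pulling back along $t \mapsto t^{-1}$ interchanges the two circles, so it suffices to construct, for some $\gamma \in (\alpha, \beta)$, the decomposition over $K\langle \gamma/t, t/\beta \rangle$ compatible with the one already obtained over the open annulus.

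To do this I would run the argument of Step~2 of the proof of Theorem~\ref{T:1-dim-var} (and of the proof of Theorem~\ref{T:decomp-multi-1-dim-annulus}) on the one-sided neighborhood $[-\log\beta, -\log\gamma]$ instead of a two-sided neighborhood of an interior point. Fix $\eta_+ \in (\beta, +\infty)$ and set up $\widetilde K$, $\widetilde F_\eta$ and $\tilde f^*$ as in Notation~\ref{N:generic-rotation}; since $\eta_+ > \beta$, the Taylor series defining $\tilde f^*$ converge on the closed annulus, so $\tilde f^*$ carries $M$ to a $\partial_0$-differential module $\tilde f^* M$ over $\widetilde K \langle \gamma/t, t/\beta \rangle$. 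By shrinking $\gamma$ toward $\beta$ (and choosing $\eta_+$ correspondingly close to $\beta$) one may, using compactness of $[\gamma, \beta]$, arrange simultaneously that $\eta_+^{-1}$ exceeds $IR(N \otimes F_\eta)/IR_{\partial_0}(N \otimes F_\eta)$ for every Jordan-H\"older constituent $N$ of $M \otimes F_\eta$ on which this ratio is everywhere $< 1$ (uniformly over $\eta \in [\gamma,\beta]$), and that $-\log\eta_+$ is less than the minimum over $[-\log\beta,-\log\gamma]$ of $f_i(M,r) - f_{i+1}(M,r)$, which is positive by condition (b). Then Lemma~\ref{L:generic-rotation}, combined with the estimates used to prove~\eqref{E:1-dim-var-1}, \eqref{E:1-dim-var-2} and~\eqref{E:1-dim-decomp-1}, shows that $F_i^{(0)}(\tilde f^* M, r)$ is affine on $[-\log\beta, -\log\gamma]$ and that $f_i^{(0)}(\tilde f^* M, r) > f_{i+1}^{(0)}(\tilde f^* M, r)$ there; these are conditions (a) and (b) of Theorem~\ref{T:decomp-annulus-partial-j} for $\tilde f^* M$ with $j = 0$.

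The remaining input, and what I expect to be the main obstacle, is condition (c) of Theorem~\ref{T:decomp-annulus-partial-j} for $\tilde f^* M$, i.e.\ the vanishing of $\disc_i^{(0)}(\tilde f^* M, \cdot)$ at $r = -\log\gamma$ and $r = -\log\beta$. At the interior endpoint $r = -\log\gamma$ this follows from the affinity of $F_i(M, \cdot)$ given by condition (b) together with Theorem~\ref{T:subharmonicity} (this is precisely the mechanism recorded in the remark preceding Theorem~\ref{T:decomp-annulus-partial-j}), transported to $\tilde f^* M$ via Lemma~\ref{L:generic-rotation}. At the boundary endpoint $r = -\log\beta$ one must compare $s_{\overline\mu, i}^{(0)}(\tilde f^* M)$ with $s_{\overline\mu, i}(M)$: arguing as for~\eqref{E:1-dim-var-1}, these differ for each residue class $\overline\mu$ only by the correction counting the $\partial_0$-dominant constituents of $T_\mu^* M$ that contribute to the $i$-th radius, and that correction vanishes for all but finitely many $\overline\mu$ because $T_\mu^* M$ is solvable near the residue disc at $\mu$ for almost all $\mu$; hence $\disc_i^{(0)}(\tilde f^* M, -\log\beta)$ differs from $\disc_i(M, -\log\beta)$ by a sum of finitely many nonpositive terms which, by Theorem~\ref{T:subharmonicity}, is forced to vanish once $\disc_i(M, -\log\beta) = 0$ by hypothesis (c). Granting this, Theorem~\ref{T:decomp-annulus-partial-j} decomposes $\tilde f^* M$ over $\widetilde K\langle \gamma/t, t/\beta \rangle$; since $\tilde f^*$ is isometric for every $\eta$-Gauss norm with $\eta \in [\gamma, \beta]$, the corresponding idempotents descend to idempotents of $M$ over $K\langle \gamma/t, t/\beta \rangle$, which agree with the open-annulus decomposition by uniqueness. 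Gluing over the two circles completes the proof.
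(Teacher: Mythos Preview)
Your overall plan matches the paper's: obtain the decomposition on the open annulus from Theorem~\ref{T:decomp-multi-1-dim-annulus}, then at each boundary circle use the rotation of Notation~\ref{N:generic-rotation} to reduce to the single-derivation closed-annulus result Theorem~\ref{T:decomp-annulus-partial-j}, and glue. That is exactly what the paper's one-line proof intends.

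There are, however, two genuine gaps in your execution. First, your verification of $\disc_i^{(0)}(\tilde f^* M,-\log\beta)=0$ is not correct as written. The correction term $\theta_i(T_\mu^* M,0)$ in the comparison of $s_{\overline\mu,i}^{(0)}(\tilde f^*M)$ with $s_{\overline\mu,i}(M)$ depends only on $M\otimes F_\beta$ (since $T_\mu^* M\otimes F_1\cong M\otimes F_1$ after base change to $L$), so it is the \emph{same} nonnegative integer $\theta_i(M,0)$ for every $\overline\mu$; it does not ``vanish for all but finitely many $\overline\mu$''. Your appeal to solvability is also off: the constituents contributing to the first $i$ subsidiary radii have $IR<1$ by hypothesis (a), so $T_\mu^* M$ is never solvable on those pieces. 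What one actually needs is that $s_{\overline\mu,i}^{(0)}(\tilde f^* M)\leq 0$ (from Theorem~\ref{T:subsidiary-partial-0}(c), since $T_\mu^*\tilde f^*M$ lives on a disc) together with a matching lower bound; the inequality \eqref{E:1-dim-var-1} alone only gives an upper bound by $\theta_i$, and Theorem~\ref{T:subharmonicity} cannot be invoked at $-\log\beta$ because $\tilde f^*M$ is not defined on both sides there. You need a sharper one-sided analysis of the rotation at the boundary point to close this.

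Second, the descent of idempotents from $\tilde f^*M$ over $\widetilde K\langle\gamma/t,t/\beta\rangle$ to $M$ over $K\langle\gamma/t,t/\beta\rangle$ is not a consequence of $\tilde f^*$ being isometric. The correct mechanism is the one hinted at in the proof of Theorem~\ref{T:j-decomp-open-annulus} (``we may enlarge $K$ as needed''): transport the decomposition through the automorphism $u_j\mapsto u_j+x_jt$ to a decomposition of $M\otimes_K\widetilde K\langle\gamma/t,t/\beta\rangle$, observe it is the unique one separating the first $i$ intrinsic subsidiary radii, and then descend along $K\hookrightarrow\widetilde K$ via Lemma~\ref{L:proj-intersect} using a second transcendental extension.
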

\begin{proof}
Similar to Theorem~\ref{T:decomp-multi-1-dim-annulus} but invoking Theorem~\ref{T:decomp-annulus-partial-j} instead on the boundary.
\end{proof}

\begin{theorem}\label{T:decomp-disc}
Let $M$ be a $\partial_{J^+}$-differential module over $K \langle t / \beta \rangle$ of rank $d$. Suppose that the following conditions hold for some $i \in \{\serie{}d-1\}$. 
\begin{itemize}
\item[(a)] We have $f_i(M, -\log \beta) > f_{i+1}(M, -\log \beta)$.
\item[(b)] The function $F_i(M, r)$ is constant for $r$ in a neighborhood of $-\log \beta$.
\item[(c)] We have $\disc_i(M, -\log \beta) = 0$.
\end{itemize}
Then there is a direct sum decomposition of M inducing, for each $\eta \in (0, \beta]$, the decomposition of $M \otimes F_\eta$ separating the first $i$ subsidiary radii from the others.
\end{theorem}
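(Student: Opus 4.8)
The plan is to deduce this from two decomposition results already in hand --- the closed-annulus decomposition Theorem~\ref{T:decomp-annulus} and the bounded-disc decomposition Theorem~\ref{T:decomp-multi-1-dim-disc-bounded} --- glued along the annulus on which both apply. This is the exact analogue of how Theorem~\ref{T:decomp-disc-partial-j} is obtained from Theorems~\ref{T:decomp-annulus-partial-j} and~\ref{T:part-decomp-disc}: apply Theorem~\ref{T:decomp-multi-1-dim-disc-bounded} to $M$ itself to split $M\otimes K\llbracket t/\beta\rrbracket_0$, apply Theorem~\ref{T:decomp-annulus} to the restriction of $M$ to $K\langle\alpha/t,t/\beta\rangle$ for an arbitrary $\alpha\in(0,\beta)$ to split $M\otimes K\langle\alpha/t,t/\beta\rangle$, and glue the two decompositions via Lemma~\ref{L:proj-intersect}, using uniqueness over the open punctured disc $A_K^1(0,\beta)$ to see that they agree on the overlap.

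The one preliminary point is to promote hypothesis~(b). On $A_K^1[0,\beta)$ the function $F_i(M,r)$ is convex and has nonpositive slopes (Theorem~\ref{T:1-dim-var}(c),(d)); if it is constant on a right neighborhood of $-\log\beta$, its right slope there is $0$, so by convexity every later slope is $\geq 0$, and being also $\leq 0$ they all vanish, i.e.\ $F_i(M,r)$ is constant for all $r\geq-\log\beta$. Thus Theorem~\ref{T:decomp-multi-1-dim-disc-bounded} applies --- its hypothesis~(a) is this constancy and its hypothesis~(b) is our hypothesis~(a) --- producing a decomposition of $M\otimes K\llbracket t/\beta\rrbracket_0$ separating the first $i$ subsidiary radii of $M\otimes F_\eta$ for every $\eta\in(0,\beta)$; in particular $f_i(M,r)>f_{i+1}(M,r)$ for all $r>-\log\beta$. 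Fixing $\alpha\in(0,\beta)$, the hypotheses of Theorem~\ref{T:decomp-annulus} for $M\otimes K\langle\alpha/t,t/\beta\rangle$ now read off: conditions~(a) and~(b) on $[-\log\beta,-\log\alpha]$ hold by the above, condition~(c) at $-\log\beta$ is our hypothesis~(c), and condition~(c) at $-\log\alpha$ holds because $M$ is defined over the whole disc, so $-\log\alpha$ is an interior radius at which $F_i(M,r)$ is affine, whence $\disc_i(M,-\log\alpha)=0$ by the residual-characteristic-zero subharmonicity Theorem~\ref{T:subharmonicity} (affinity of $F_i$ near a point forces the discrepancy there to vanish). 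This yields a decomposition of $M\otimes K\langle\alpha/t,t/\beta\rangle$ separating the first $i$ radii of $M\otimes F_\eta$ for every $\eta\in[\alpha,\beta]$.

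For the gluing: for any $\delta\in[\alpha,\beta)$ one has the identity $K\langle t/\beta\rangle = K\langle\alpha/t,t/\beta\rangle\cap K\llbracket t/\beta\rrbracket_0$ inside $K\langle\alpha/t,t/\delta\rangle$, since a bounded power series that also converges on the closed annulus $[\alpha,\beta]$ has coefficients tending to $0$. Base-changed to $K\langle\alpha/t,t/\delta\rangle$, the two decompositions above both restrict the unique decomposition of $M$ over $A_K^1(0,\beta)$ given by Theorem~\ref{T:decomp-multi-1-dim-annulus} (whose hypotheses hold by the constancy of $F_i$ and the strict inequality just obtained), so they agree; hence Lemma~\ref{L:proj-intersect} descends them to a unique direct sum decomposition of $M$ over $K\langle t/\beta\rangle$. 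It induces the asserted separation of the first $i$ radii of $M\otimes F_\eta$ for every $\eta\in(0,\beta]$: the range $(0,\beta)$ is supplied by the $K\llbracket t/\beta\rrbracket_0$-factor and the endpoint $\beta$ by the $K\langle\alpha/t,t/\beta\rangle$-factor. I expect no genuinely hard step: the sole non-formal ingredient is the vanishing of the discrepancy at the interior radius $-\log\alpha$, which is exactly the use of subharmonicity --- and this is precisely why (cf.\ Remark~\ref{R:no-closed-decomp}) no closed-disc decomposition theorem of this kind can be expected when $p>0$; the convexity/monotonicity promotion of hypothesis~(b) and the intersection-of-rings gluing are both routine.
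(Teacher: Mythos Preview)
Your proposal is correct and follows essentially the same approach as the paper: obtain a decomposition over $K\llbracket t/\beta\rrbracket_0$ from the bounded-disc result, a decomposition over $K\langle\alpha/t,t/\beta\rangle$ from Theorem~\ref{T:decomp-annulus} (using Theorem~\ref{T:subharmonicity} to verify the discrepancy condition at the interior endpoint $-\log\alpha$), and glue. The paper's proof is terse and cites Theorem~\ref{T:part-decomp-disc} for the disc part, whereas you more precisely invoke its $\partial_{J^+}$-analogue Theorem~\ref{T:decomp-multi-1-dim-disc-bounded}; otherwise the arguments coincide.
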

\begin{proof}
It follows from Theorems~\ref{T:decomp-annulus} and \ref{T:part-decomp-disc}; note also that Theorem~\ref{T:subharmonicity} verifies the condition (c) in Theorem~\ref{T:decomp-annulus}.
\end{proof}

\section{Differential modules on higher-dimensional spaces}
\setcounter{equation}{0}

We now study the
variation of subsidiary radii of differential modules on some simple
higher-dimensional spaces. 
Rather than derive these directly, we deduce these from the corresponding
results on one-dimensional spaces from the previous section, using some
properties of convex functions.

Throughout this section, we retain Hypothesis~\ref{H:K}.

\subsection{Convex functions}

In this subsection, we set some terminology for convex functions,
as in \cite[Section~2]{kedlaya-part3}.

\begin{defn}
For a subset $C \subseteq \RR^n$, we denote its interior by $\inte(C)$.  We say it is \emph{convex} if for all $x,y \in C$ and all $t \in [0,1]$,
$tx + (1-t)y \in C$. For $C \subseteq \RR^n$ convex, a function $f: C \to 
\RR$ is \emph{convex}
if for all $x,y \in C$ and all $t \in [0,1]$,
\begin{equation} \label{eq:convexity def}
tf(x) + (1-t)f(y) \geq f(tx + (1-t)y).
\end{equation}
Such a function is continuous on $\inte(C)$.
\end{defn}

\begin{defn}
An \emph{affine functional} on $\RR^n$ is a map $\lambda: \RR^n \to \RR$ of the
form $\lambda(x_1,\dots,x_n) = a_1 x_1 + \cdots + a_n x_n + b$ for some
$a_1,\dots, a_n, b \in \RR$. If $a_1,\dots,a_n \in \ZZ$, we say $\lambda$
is \emph{transintegral} (short for ``integral after translation'');
if also $b \in \ZZ$, we say $\lambda$ is \emph{integral}.
For $\lambda: \RR^n \to \RR$ an affine functional, define the \emph{slope}
of $\lambda$ as the linear functional 
$\tilde{\lambda}(x) = \lambda(x) - \lambda(0)$.
\end{defn}

\begin{defn}
For $f: C \to \RR^n$ convex, a \emph{domain of affinity} of $f$ is a 
subset $U$ of $C$
with nonempty interior (in $\RR^n$) on which $f$ agrees 
with an affine functional $\lambda$.
The nonempty interior condition ensures that $\lambda$ is uniquely 
determined; we call it the \emph{ambient functional} on $U$.
\end{defn}

\begin{lemma} \label{L:slopes}
Let $f: C \to \RR^n$ be a convex function, and
let $\lambda: \RR^n \to \RR$ be an affine functional
which agrees with $f$ on a subset
of $C$ with nonempty interior in $\RR^n$.
\begin{enumerate}
\item[(a)]
We have $f(x) \geq \lambda(x)$ for all $x \in C$.
\item[(b)]
The set of $x \in C$ for which $f(x) = \lambda(x)$
is a convex subset of $C$.
\item[(c)]
If $\lambda'$ is another affine functional with the same slope as $\lambda$,
and $\lambda'$ occurs as the ambient functional of some domain of affinity
of $f$, then $\lambda = \lambda'$.
\end{enumerate}
\end{lemma}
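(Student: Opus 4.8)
The plan is to establish (a) first, since both (b) and (c) reduce to it almost immediately. For (a), fix a point $x_0$ in the interior (in $\RR^n$) of the subset $S \subseteq C$ on which $f$ and $\lambda$ agree; by hypothesis such an $x_0$ exists, and a whole ball around it lies in $S$. Given an arbitrary $x \in C$, I would run a short segment from $x_0$ \emph{away} from $x$: for $t > 0$ sufficiently small the point $y_t = x_0 + t(x_0 - x)$ lies in that ball, hence $y_t \in S$ and $f(y_t) = \lambda(y_t)$; note $y_t \in C$ automatically since $S \subseteq C$. Because $x_0 = \tfrac{1}{1+t} y_t + \tfrac{t}{1+t} x$ is a convex combination, convexity of $f$ gives $f(x_0) \le \tfrac{1}{1+t} f(y_t) + \tfrac{t}{1+t} f(x)$. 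Replacing $f(x_0)$ by $\lambda(x_0)$ and $f(y_t)$ by $\lambda(y_t)$, and using that $\lambda$ is affine and respects the same convex combination, the $y_t$-terms cancel and one is left with $\lambda(x) \le f(x)$, as desired.

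Part (b) is then formal: if $x, y \in C$ satisfy $f(x) = \lambda(x)$ and $f(y) = \lambda(y)$, and $z = tx + (1-t)y$ with $t \in [0,1]$, then convexity of $f$ together with affinity of $\lambda$ gives $f(z) \le t\lambda(x) + (1-t)\lambda(y) = \lambda(z)$, while (a) gives $f(z) \ge \lambda(z)$; hence $f(z) = \lambda(z)$, so the locus $\{x \in C : f(x) = \lambda(x)\}$ is stable under convex combinations.

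For (c), write $\lambda' = \lambda + c$ for some $c \in \RR$, using that $\lambda$ and $\lambda'$ have the same slope. Apply (a) to the pair $(\lambda, S)$: this yields $f \ge \lambda$ on all of $C$, and evaluating on the domain of affinity $U$ of $\lambda'$, where $f = \lambda'$, gives $\lambda' \ge \lambda$, i.e.\ $c \ge 0$. Now apply (a) again, this time to the pair $(\lambda', U)$ — legitimate since $U$ has nonempty interior in $\RR^n$ and $\lambda'$ agrees with $f$ there — to obtain $f \ge \lambda'$ on $C$; evaluating on $S$, where $f = \lambda$, gives $\lambda \ge \lambda'$, i.e.\ $c \le 0$. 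Hence $c = 0$ and $\lambda = \lambda'$.

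The only step requiring genuine care is the interior-point argument in (a) (in particular checking that $y_t$ lands in $C$, which is why we push away from $x$ rather than toward it); the rest is a purely formal consequence of convexity, and I do not anticipate any real obstacle.
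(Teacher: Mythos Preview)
Your proof is correct and follows essentially the same approach as the paper. For (a), the paper also fixes an interior point of the agreement set and pushes slightly away from the given $x$ to a nearby point (your $y_t$, the paper's $z$) where $f=\lambda$, then uses convexity at the resulting convex combination; the only difference is cosmetic (the paper parametrizes by $\epsilon = t/(1+t)$). For (b) and (c) the paper simply says they follow immediately from (a), and your arguments are precisely the natural way to spell this out.
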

\begin{proof}
For (a), choose $y$ in the interior of a domain of affinity $U$ of $f$
with ambient functional $\lambda$. For $\epsilon > 0$ sufficiently small,
the quantity $z$ defined by $\epsilon x + (1-\epsilon) z = y$ will also
belong to $U$. By convexity of $f$,
$\epsilon f(x) + (1-\epsilon) \lambda(z) \geq \lambda(y)$, so
\[
f(x) \geq \frac{\lambda(y) - (1-\epsilon) \lambda(z)}{\epsilon} = \lambda(x).
\]
We may deduce (b) and (c) immediately from (a).
\end{proof}

\begin{defn}
A subset $C \subseteq \RR^n$ is \emph{polyhedral} if there exist 
finitely many
affine functionals $\lambda_1, \dots, \lambda_r$ such that
\begin{equation}\label{E:TRP-set}
C = \{x \in \RR^n: \lambda_i(x) \geq 0 \qquad (i=1,\dots,r)\}.
\end{equation}
(We do not require $C$ to be bounded.)
If the $\lambda_i$ can all be taken to be (trans)integral, we say that $C$ is 
\emph{(trans)rational polyhedral}.
(We use \emph{RP} and \emph{TRP} 
as shorthand for \emph{rational polyhedral}
and \emph{transrational polyhedral}.)
For $C \subseteq \RR^n$ a convex subset of $\RR^n$,
a continuous convex function $f: C \to \RR^n$ is \emph{polyhedral} 
if there exist finitely many 
affine functionals 
$\lambda'_1, \dots, \lambda'_s$ such that
\begin{equation} \label{eq:polyhedral-fn}
f(x) = \max\{\lambda'_1(x), \dots, \lambda'_s(x)\} \qquad (x \in C).
\end{equation}
(In particular, such a function extends continuously to a convex
function on the closure of $C$, or even to all of $\RR^n$.)
Similarly, if $C$ is (trans)rational polyhedral, we
say $f$ is \emph{(trans)integral polyhedral} if 
\eqref{eq:polyhedral-fn} holds for some (trans)integral affine
functionals $\lambda'_1, \dots, \lambda'_s$.
\end{defn}

\begin{remark} \label{R:fin-many-affdoms}
If $C$ is a convex subset of $\RR^n$, then a continuous convex function
$f: C \to \RR^n$ is polyhedral if and only if $C$ is covered by 
finitely many domains of affinity
for $f$, by \cite[Lemma~2.2.6]{kedlaya-part3}.
Moreover, if $C$ is compact, then it suffices to check that 
every point in $C$ has a neighborhood covered by finitely many
domains of affinity for $f$, as then compactness will imply the
existence of finitely many domains of affinity which cover $C$.
\end{remark}

\subsection{Detecting polyhedral functions}

In this subsection, we establish a theorem that can be used to detect
polyhedrality of certain convex functions based on integrality properties
of certain values of the functions. We start with a weaker result
in the same spirit, from \cite[Section~2]{kedlaya-part3}.

\begin{notation}
In this subsection, for a point $x \in \QQ^n$, we write 
$\serie{x_}n$ for the coordinates of $x$.
\end{notation}

\begin{theorem} \label{T:integral-polyhedral}
Let $C$ be a bounded RP subset of $\RR^n$, and let
$f: C \to \RR$ be a continuous
convex function. Then $f$ is integral polyhedral if and only if 
\begin{equation} \label{eq:integral values}
f(x) \in \ZZ + \ZZ x_1 + \cdots + \ZZ x_n \qquad (x \in C \cap \QQ^n).
\end{equation}
\end{theorem}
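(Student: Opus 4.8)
The ``only if'' direction is immediate: if $f = \max\{\lambda'_1,\dots,\lambda'_s\}$ with each $\lambda'_j$ integral, then for $x \in C \cap \QQ^n$ the maximum is attained by some $\lambda'_j$, and $\lambda'_j(x) \in \ZZ + \ZZ x_1 + \cdots + \ZZ x_n$ since the coefficients of $\lambda'_j$ are integers. The content is the converse, which I would prove in two stages: first that \eqref{eq:integral values} forces $f$ to be polyhedral, and then that a polyhedral $f$ satisfying \eqref{eq:integral values} is automatically integral polyhedral.

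The second stage is the routine one. Suppose $f$ is polyhedral, and let $U$ be a domain of affinity with ambient functional $\lambda(x) = a_1 x_1 + \cdots + a_n x_n + b$. Since $U$ has nonempty interior, $U \cap \QQ^n$ is dense in $\inte(U)$; fix such a point $x = c/N$ with $c \in \ZZ^n$ and $N$ large, so that $x + e_k/N$ again lies in $U$. Then $a_k/N = \lambda(x+e_k/N) - \lambda(x)$ lies in $(\ZZ + \sum_i \ZZ x_i) + (\ZZ + \sum_i \ZZ(x+e_k/N)_i) = \tfrac{1}{N}\ZZ$, so $a_k \in \ZZ$ for every $k$; consequently $b = \lambda(x) - \sum_i a_i x_i \in \ZZ + \sum_i \ZZ x_i \subseteq \tfrac1N\ZZ$, and applying this to two points of $U \cap \QQ^n$ whose denominators are coprime forces $b \in \ZZ$. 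Thus every ambient functional is integral, and, covering $C$ by finitely many domains of affinity (Remark~\ref{R:fin-many-affdoms}, using that $\overline{C}$ is compact), we conclude $f$ is integral polyhedral as in \eqref{eq:polyhedral-fn}.

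For the first stage the crucial elementary observation is this. Suppose $\ell$ is a line through a rational point of $C$ in a primitive integral direction $v \in \ZZ^n$, and write two grid points of $\ell$ as $w/N$ and $(w+v)/N$ with $w \in \ZZ^n$. Since any common divisor of all the entries of $w$ and of $w+v$ divides $\gcd$ of the entries of $v$, which is $1$, the lattices $\ZZ + \sum_i \ZZ(w/N)_i$ and $\ZZ + \sum_i \ZZ((w+v)/N)_i$ together generate $\tfrac1N\ZZ$, and hence $f((w+v)/N) - f(w/N) \in \tfrac1N\ZZ$. Therefore the restriction of $f$ to $\ell$, parametrized so that the grid step is $1/N$, has \emph{integral} consecutive chord slopes; since it is convex, on any compact subsegment contained in $\inte(C)$ those slopes lie in a fixed bounded interval (a convex function has locally bounded one-sided derivatives in the interior), so they take finitely many integral values, with a bound independent of $N$. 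Letting $N \to \infty$, the piecewise-affine grid interpolants converge uniformly to $f$ on that subsegment; and a uniform limit of convex piecewise-affine functions with a bounded number of pieces and integral slopes is again of that form (a short compactness argument: slopes lie in a fixed finite set, breakpoints in a fixed compact set, so extract a convergent subsequence of the data). Hence $f$ restricted to such a subsegment is convex, piecewise affine, with integral slopes. One then globalizes by induction on $n$: restricting $f$ to rational hyperplanes $x_n = \text{const}$ and applying the theorem in dimension $n-1$, together with the along-lines information just obtained, shows that every point of $\inte(C)$ has a neighborhood meeting only finitely many domains of affinity of $f$.

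The step I expect to be the genuine obstacle is extending this last conclusion across $\partial C$. The integral-slope bound above carries a constant built from one-sided derivatives of restrictions of $f$, which are controlled only on compact subsets of $\inte(C)$; near a point of $\partial C$ a continuous convex function can in principle have unbounded gradient (e.g.\ $x \mapsto -\sqrt{x}$ on $[0,1]$, which is not polyhedral), and the whole force of hypothesis \eqref{eq:integral values} is that this cannot happen here. Making that rigorous requires combining the boundedness of $f$ (continuity on the compact polytope $\overline{C}$), the polyhedral shape of $C$ near a boundary point, convexity, and the rigidity that integral slopes and rational breakpoints impose on the pieces, in order to rule out infinitely many domains of affinity accumulating at $\partial C$. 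Once $f$ is known to have finitely many domains of affinity covering $C$, Remark~\ref{R:fin-many-affdoms} gives polyhedrality, and the second stage upgrades it to integral polyhedrality, completing the proof.
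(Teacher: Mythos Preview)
The paper does not prove this statement: the entire proof is the citation ``See \cite[Theorem~2.4.2]{kedlaya-part3}.'' So there is no argument in the present paper to compare against, and I can only evaluate your sketch on its own terms.

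Your ``only if'' direction is correct, and your Stage~2 argument (upgrading polyhedral to \emph{integral} polyhedral) is sound: the computation showing $a_k \in \ZZ$ and then $b \in \ZZ$ via two rational points with coprime denominators is clean. Your core observation in Stage~1 is also correct and is really the heart of the matter: along any line through a rational point in a primitive integral direction, the chord slopes between consecutive grid points of $\tfrac{1}{N}\ZZ^n$ are integers, they are monotone by convexity, and they are bounded on compacta in $\inte(C)$ by local Lipschitzness of convex functions; so the restriction of $f$ to such a segment is piecewise affine with integer slopes.

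Where the sketch is incomplete is in two places you partly flag yourself. First, the inductive step ``restrict to hyperplanes $x_n = c$ and apply the theorem in dimension $n-1$'' does not work as written: for $c \in \QQ \setminus \ZZ$ the restricted function only satisfies $f(x',c) \in \ZZ + \ZZ x'_1 + \cdots + \ZZ x'_{n-1} + \ZZ c$, which is not \eqref{eq:integral values} in $n-1$ variables; and there may be no integer value of $c$ in the range. One really needs a different mechanism to pass from the along-lines statement to local polyhedrality on $\inte(C)$ (for instance, an analysis of the subdifferential $\partial f(x)$ showing its extreme points lie in $\ZZ^n$), and you have not supplied one. Second, as you acknowledge, the extension across $\partial C$ is genuinely the delicate point: your own example $-\sqrt{x}$ shows that continuity, convexity, and boundedness do not by themselves prevent infinitely many affine pieces from accumulating at the boundary. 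The integrality hypothesis is what rules this out (e.g.\ each supporting affine functional $mx+c$ has $m,c\in\ZZ$, so their values at the rational boundary point lie in a fixed discrete set, forcing stabilisation), but you do not carry out any such argument. As submitted, the proposal is therefore a plausible plan with the two crucial steps left open.
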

\begin{proof}
See \cite[Theorem~2.4.2]{kedlaya-part3}.
\end{proof}

One cannot hope to similarly detect transintegral polyhedral functions
by sampling them at individual points, i.e., on zero-dimensional TRP subsets
of $\RR^n$. The best one can do is detect them by sampling on one-dimensional
TRP subsets of $\RR^n$, as follows.

\begin{defn}
Let $C$ be a convex subset of $\RR^n$. We say a function
$f: C \to \RR$ is \emph{convex transintegral polyhedral in dimension $1$}
if its restriction to the intersection of $C$ with any one-dimensional TRP
subset of $\RR^n$ is continuous, convex, and transintegral polyhedral.
In other words,
for any $x \in C, a \in \QQ^n$,
if  we put $I_{x,a} = \{t \in \RR: x+ta \in C \}$,
then the function $g: I_{x, a} \to \RR$ defined by
$g(t) = f(x+ta)$ is continuous, convex,
piecewise affine with slopes in $a_1 \ZZ + \cdots + a_n \ZZ$, 
and has only finitely many slopes. (The latter is automatic if
$I_{x,a}$ is closed and bounded, which always occurs if $C$ is compact.)
\end{defn}

\begin{theorem} \label{T:trans}
Let $C$ be a TRP subset of $\RR^n$.
Let $f: C \to \RR$ be a function which is convex transintegral polyhedral
in dimension $1$. Then $f$ itself 
is convex and transintegral polyhedral (hence continuous).
\end{theorem}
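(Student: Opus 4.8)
The plan is to prove directly that $f$ is a finite maximum of transintegral affine functionals on $C$; once this is established, convexity and continuity of $f$ follow for free. I would proceed in three stages: reduce to a manageable $C$; extract a ``model subdifferential'' at each interior point from the one-dimensional hypothesis together with Theorem~\ref{T:integral-polyhedral}; and assemble a global polyhedral representation. For the reductions, exhausting $C$ by an increasing sequence of compact TRP subsets (and noting that the finite list of affine pieces produced below stabilizes) lets us assume $C$ compact; replacing $\RR^n$ by the affine hull of $C$ — which is the common zero set of those transintegral functionals among the defining data of $C$ that vanish identically on $C$, hence a translate of a rational subspace, hence transintegrally coordinatizable via a Smith-normal-form change of basis — lets us assume $C$ has nonempty interior. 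By Remark~\ref{R:fin-many-affdoms}, once $f$ is known to be convex it then suffices to cover a neighborhood of each point of $C$ by finitely many domains of affinity with transintegral ambient functionals.

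For the local models, fix $p$ in the interior, translated to $p=0$. For each primitive $a\in\ZZ^n$, the set $C\cap\RR a$ is a one-dimensional TRP subset, so $t\mapsto f(ta)$ is continuous and convex with only finitely many slopes, all lying in $a_1\ZZ+\cdots+a_n\ZZ$; hence it is affine near $t=0$, and the one-sided directional derivative $\sigma(a):=\lim_{t\to0^+}t^{-1}(f(ta)-f(0))$ exists and lies in $a_1\ZZ+\cdots+a_n\ZZ$. Granting (see below) that $f$ is convex, $\sigma$ is the support function of $\partial f(0)$, so it is a continuous convex positively homogeneous function on $\RR^n$; restricted to $[-1,1]^n$ it takes values in $\ZZ+\ZZ x_1+\cdots+\ZZ x_n$ at every rational point (by homogeneity and the displayed integrality of $\sigma$ on primitive vectors), so Theorem~\ref{T:integral-polyhedral} shows it is integral polyhedral, and homogeneity forces $\sigma(x)=\max_{v\in S_p}\langle v,x\rangle$ for a finite set $S_p\subset\ZZ^n$. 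Thus $\partial f(0)=\mathrm{conv}(S_p)$ is a lattice polytope. Since $f$ is convex on the compact set $C$ it is Lipschitz there, so all the $S_p$ lie in a single finite set $S=\ZZ^n\cap B$ for a suitable ball $B$.

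For the assembly, put $C_v=\{p\in C: f(q)\ge f(p)+\langle v,q-p\rangle\ \text{for all}\ q\in C\}$ for $v\in S$. Each $C_v$ is closed and convex, and when $C_v\neq\emptyset$ the function $f$ agrees on $C_v$ with the transintegral functional $\lambda_v(x)=\langle v,x\rangle+c_v$, where $c_v$ is determined by any point of $C_v$; moreover $f\ge\lambda_v$ on all of $C$ with equality exactly on $C_v$, which is the content of Lemma~\ref{L:slopes}(a),(b). Every interior point $p$ lies in some $C_v$, since an extreme point of $\partial f(p)=\mathrm{conv}(S_p)$ lies in $S_p\subseteq S$; passing to closures handles the boundary. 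Therefore $f=\max_{v\in S}\lambda_v$, so $f$ is transintegral polyhedral and, a fortiori, convex and continuous, completing the proof via Remark~\ref{R:fin-many-affdoms}.

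The main obstacle is that all of the above is conditional on knowing $f$ is convex (or at least continuous), while the hypothesis controls $f$ only along segments of rational direction — and a point with ``generically irrational'' coordinates need not lie on any rational-direction line meeting $C\cap\QQ^n$, so $f$ cannot be pinned down at such a point by one-dimensional convexity alone. Propagating control from the rational locus to all of $C$ is the real heart of the argument. I would handle it by induction on the \emph{rational rank} of a point, namely $\dim_\QQ\big(\QQ\text{-span of }1,x_1,\dots,x_n\big)-1$: if $x=x'+s\,a$ with $x'$ of strictly smaller rational rank, $a\in\QQ^n$, and $s\in\RR$, then $f$ restricted to the rational-direction line $x'+\RR a$ is a transintegral polyhedral function of the parameter whose value and one-sided slopes at the lower-rank point $x'$ are already determined by the inductive hypothesis, hence whose finitely many affine pieces — and in particular its value at $x$ — are determined; carrying this through identifies $f$ with the candidate $\max_v\lambda_v$ manufactured from the rational points, after which convexity and continuity hold a posteriori. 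One must be somewhat careful to propagate agreement on open sets rather than at isolated points (so that the local models at lower-rank points really are known on a full neighborhood), which is what makes this step the technically delicate one.
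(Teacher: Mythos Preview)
Your overall strategy---extract the subdifferential at each interior point, show via Theorem~\ref{T:integral-polyhedral} that its support function is integral polyhedral (hence $\partial f(p)$ is a lattice polytope), and then assemble a finite global maximum---is genuinely different from the paper's route, which proceeds by induction on $n$ through the directional-derivative function $z\mapsto f'(x,z)$ on the angle $\angle_x C$ (Lemmas~\ref{L:dir deriv trp} and~\ref{L:trans compact}) and then handles the unbounded case by controlling slopes from boundary facets (Lemma~\ref{L:trans closed}).  Your approach, if it went through, would bypass the rather delicate Lemma~\ref{L:dir deriv trp}.  However, there are two real gaps.

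\emph{Convexity.}  You correctly flag this as the crux, but the rational-rank induction you sketch does not pin down $f$: knowing the germ of $f$ along $x'+\RR a$ at the single lower-rank point $x'$ does not determine the finitely many affine pieces of $f$ on that whole line (the breakpoints are not encoded in the germ), and the lower-rank points on that line need not be dense or even numerous.  The paper's argument (Lemma~\ref{L:trp1 convex}) is both simpler and complete: given arbitrary $x,y\in C$ and $\epsilon>0$, perturb $x$ to $x'=x+\sum x'_i \bbe_i$ by small positive $x'_i$ chosen successively so that $x'-y\in\QQ^n$ and, using only the \emph{one-dimensional continuity} of $f$ along each coordinate direction (which is part of the hypothesis), so that $|f(x')-f(x)|<\epsilon$ and $|f(tx'+(1-t)y)-f(tx+(1-t)y)|<\epsilon$.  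The segment from $x'$ to $y$ is then one-dimensional TRP, so convexity holds there, and letting $\epsilon\to 0$ gives convexity on all of $C$.  This is the missing idea.

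\emph{Uniform bound on subgradients.}  The assertion ``since $f$ is convex on the compact set $C$ it is Lipschitz there'' is false in general (e.g.\ $-\sqrt{x}$ on $[0,1]$), and the claim that the finite list of $\lambda_v$ ``stabilizes'' under exhaustion of an unbounded $C$ is not justified.  What you actually need is that the integer vectors arising as vertices of $\partial f(p)$, as $p$ ranges over $\inte(C)$, lie in a fixed finite set; this requires bounding $f'(p,\pm\bbe_i)$ uniformly in $p$, including as $p$ approaches $\partial C$.  The paper achieves the analogous control by induction on $n$: the restriction of $f$ to each boundary facet is transintegral polyhedral by the inductive hypothesis, and for generic $p$ near the boundary the one-sided slope $f'(p,a)$ is trapped between slopes at boundary points, hence takes only finitely many values (see the arguments in Lemmas~\ref{L:dir deriv trp} and~\ref{L:trans closed}).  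Your framework would need a comparable step.
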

The proof is somewhat complicated, and will occupy the rest of this section.
We first tackle the case where $C$ is compact, for which
we assemble several lemmas.

\begin{defn}
Let $C$ be a TRP subset of $\RR^n$.
For $x \in C$,
define the \emph{angle} of $C$ at $x$, denoted $\angle_x C$, to be the set of
$z \in \RR^n$ such that for some $t_0 > 0$, 
$x + tz \in C$ for $t \in [0,t_0]$.  It is clear that $\angle_x C$ is an
RP subset of 
$\RR^n$ stable under multiplication by $\RR_{>0}$.
\end{defn}

\begin{lemma} \label{L:trp1 convex}
Let $C$ be a TRP subset of $\RR^n$, and let
$f: C \to \RR$ be a function which is convex transintegral polyhedral
in dimension $1$. Then $f$ is convex.
\end{lemma}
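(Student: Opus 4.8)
The plan is to prove convexity of $f$ directly from the definition, by reducing the two-point inequality \eqref{eq:convexity def} to the one-dimensional hypothesis. First I would fix $x, y \in C$ and $t \in [0,1]$, and observe that it suffices to treat the case where both $x, y$ (and hence $tx + (1-t)y$) lie in $\QQ^n$ and $t \in \QQ$: indeed, since $C$ is TRP and $f$ is convex (hence continuous) on the relative interior of each face when restricted to rational lines, the rational points are dense enough, and one can take limits along a rational direction to recover the general inequality. Actually, the cleanest route is simply to apply the hypothesis to the one-dimensional TRP set obtained by intersecting $C$ with the line through $x$ and $y$: if $x \ne y$ and the direction $y - x$ happens to be rational, then writing $a = y - x \in \QQ^n$ and invoking the definition with basepoint $x$, the function $g(s) = f(x + sa)$ on $I_{x,a}$ is convex, and since $0, 1 \in I_{x,a}$ we get $g(t) \le t g(1) + (1-t)g(0)$, which is exactly \eqref{eq:convexity def}.

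The main obstacle is the case where $y - x$ is \emph{not} a rational direction, since then the segment from $x$ to $y$ need not lie on any one-dimensional TRP subset of $\RR^n$, and the hypothesis says nothing directly. To handle this, I would approximate: pick a sequence of rational directions $a^{(k)} \to y - x$. For each $k$, the point $x + a^{(k)}$ need not lie in $C$, but since $C$ is convex with nonempty interior (or, if $C$ is lower-dimensional, working within its affine span, which is itself a rational affine subspace because $C$ is TRP), for $k$ large the point $x + a^{(k)}$ lies in $C$, or at least a small positive multiple does. More carefully, I would instead fix a rational point $x' \in C$ near $x$ and a rational point $y' \in C$ near $y$ with $y' - x'$ rational—this is possible because the rational points of a TRP set with nonempty interior (within its span) are dense—apply the rational-direction case to get $f(tx' + (1-t)y') \le t f(y') + (1-t) f(x')$, and then pass to the limit $x' \to x$, $y' \to y$ using continuity of $f$. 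Continuity of $f$ on the interior of $C$ is part of the standing facts about convex functions quoted in the excerpt; continuity up to the boundary along rational segments is built into the dimension-$1$ hypothesis, which suffices to reach boundary points as limits.

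Concretely, the steps in order: (1) reduce to the affine span of $C$, which is a rational (indeed transrational) affine subspace, so that without loss of generality $C$ has nonempty interior in $\RR^n$; (2) prove the convexity inequality \eqref{eq:convexity def} for pairs $x, y$ with $y - x \in \QQ^n$ by a direct appeal to the dimension-$1$ hypothesis along the line $x + \RR(y-x)$; (3) observe that $f$ restricted to $\inte(C)$ is then convex, hence continuous there, because convexity on all rational-direction segments through interior points forces the full convexity inequality by a density argument on the direction (the set of $(x,y)$ satisfying \eqref{eq:convexity def} is closed and contains a dense set of directions through each fixed midpoint ratio); (4) extend to boundary points of $C$ by approximating from the interior along rational segments, using the piecewise-affine-with-finitely-many-slopes clause of the hypothesis to control $f$ near the boundary. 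I expect step (3)—upgrading "convex along every rational line" to "convex"—to require the most care, but it follows from the standard fact that a midpoint-convex function that is bounded on a neighborhood (here, bounded because it is convex and piecewise affine along rational segments) is convex, combined with the observation that midpoint-convexity only needs to be checked for rational midpoint directions, which are dense.
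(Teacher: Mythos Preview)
Your overall strategy (reduce to rational directions, then approximate) matches the paper's, but there is a genuine gap in step~(3). You want to pass from ``convex along every rational-direction segment'' to ``convex'' by a density/limit argument, and for that you invoke continuity of $f$. But continuity on $\inte(C)$ is a \emph{consequence} of convexity, which is exactly what you are trying to prove; so this is circular. Your patch via ``midpoint-convex plus bounded implies convex'' (Bernstein--Doetsch) does not apply either: the hypothesis only gives $f\bigl(\tfrac{x+y}{2}\bigr) \le \tfrac12(f(x)+f(y))$ when $y-x \in \QQ^n$, not for arbitrary pairs $x,y$, so $f$ is not known to be midpoint-convex in the usual sense. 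And the claim that ``the set of $(x,y)$ satisfying \eqref{eq:convexity def} is closed'' already presupposes continuity of $f$.

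The paper breaks this circularity by a more careful perturbation. Fix $x,y \in C$ and $t$; leave $y$ untouched, and move $x$ to a nearby $x'$ \emph{one coordinate at a time}, along the standard basis directions $\bbe_1,\dots,\bbe_n$ (after a $\GL_n(\ZZ)$ change of basis putting them into $\angle_x C$). At each step one is moving along a rational direction, so the one-dimensional hypothesis directly furnishes continuity of $f$ along that segment; this lets one choose each increment small enough that both $|f(x')-f(x)|$ and $|f(tx'+(1-t)y) - f(tx+(1-t)y)|$ are below $\epsilon$. One also arranges each increment so that the resulting coordinate of $x'-y$ lands in $\QQ$, hence $x'-y \in \QQ^n$ and the segment from $x'$ to $y$ is one-dimensional TRP. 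Now the hypothesis gives the convexity inequality for $x',y$, and the $\epsilon$-control transfers it to $x,y$. The point is that only continuity \emph{along rational segments} is ever used, and that is part of the hypothesis, not a consequence of the conclusion.
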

\begin{proof}
We may assume $\dim(C) = n$, by replacing
$\RR^n$ by a plane of the appropriate dimension.
It suffices to verify \eqref{eq:convexity def} for any $x,y \in C$
and any $t \in [0,1]$.
By applying a change of basis in $\GL_n(\ZZ)$,
we may reduce to the case where the standard basis vectors $\bbe_1,\dots,\bbe_n$
belong to $\angle_x C$. 

We now choose $x'_1,\dots,x'_n > 0$ in turn so that for $i=1,\dots,n$,
$x_i + x'_i - y_i \in \QQ$,
$x + x'_1 \bbe_1 + \cdots + x'_i \bbe_i \in \inte(C)$, and
\begin{gather*}
|f(x + x'_1 \bbe_1 + \cdots + x'_i \bbe_i) - 
f(x + x'_1 \bbe_1 + \cdots + x'_{i-1} \bbe_{i-1})| 
< \epsilon/n \\
|f(t(x + x'_1 \bbe_1 + \cdots + x'_i \bbe_i) + (1-t)y) - 
f(t(x + x'_1 \bbe_1 + \cdots + x'_{i-1} \bbe_{i-1}) + (1-t)y)| 
< \epsilon/n.
\end{gather*}
Namely, given $x'_1,\dots,x'_{i-1}$, 
the eligible choices
of $x'_i$ form a dense subset of an open interval with left endpoint $0$.
(Here we are using the continuity of the restriction of $f$ to TRP
sets of dimension 1.)

Put $x' = x + x'_1 \bbe_1 + \cdots + x'_n \bbe_n$. Since $x' - y \in \QQ^n$,
the segment from $x'$ to $y$ is T
RP. Hence
\[
tf(x') + (1-t) f(y) \geq f(tx' + (1-t)y),
\]
and so 
\[
tf(x) + (1-t)f(y) \geq f(tx + (1-t)y) - 2\epsilon.
\]
Since  $\epsilon$ was arbitrary, this implies 
\eqref{eq:convexity def}, yielding convexity of $f$.
\end{proof}

\begin{defn}
Let $C$ be a TRP subset of $\RR^n$.
For $f: C \to \RR$ a convex function, $x \in C$, and $z \in \angle_x C$, define
$f'(x, z)$ to be the directional derivative of $f$ at $x$
in the direction of $z$, i.e.,
\[
f'(x, z) = \lim_{t \to 0^+} \frac{f(x + tz) - f(x)}{t}.
\]
Note that this is a limit 
taken over a decreasing sequence; for it to exist in all 
cases, we must allow it to take the value $-\infty$.
\end{defn}

\begin{lemma} \label{L:dir deriv convex}
Let $C$ be a TRP subset of $\RR^n$,
and let $f: C \to \RR$ be a convex function.
For any fixed $x \in C$, the function $z \mapsto f'(x,z)$ is convex
as a function from $\angle_x C$ to $\RR \cup \{-\infty\}$
(in the sense of satisfying \eqref{eq:convexity def}).
\end{lemma}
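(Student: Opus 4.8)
The plan is to reduce the claimed convexity of $z \mapsto f'(x,z)$ to the convexity of $f$ itself. Fix $x \in C$, take $z_1, z_2 \in \angle_x C$ and $t \in [0,1]$, and set $z = tz_1 + (1-t)z_2$; by the definition of $\angle_x C$, each $z_i$ (hence $z$, by convexity of $\angle_x C$ as a cone) satisfies $x + sz_i \in C$ for $s \in [0, s_0]$ with some $s_0 > 0$. For such small $s > 0$ I would write $x + sz = t(x + sz_1) + (1-t)(x + sz_2)$, which is a convex combination of two points of $C$, and apply \eqref{eq:convexity def} for $f$ to get
\[
f(x + sz) \leq t f(x + sz_1) + (1-t) f(x + sz_2).
\]
Subtracting $f(x) = t f(x) + (1-t) f(x)$ from both sides and dividing by $s > 0$ gives
\[
\frac{f(x+sz) - f(x)}{s} \leq t \cdot \frac{f(x+sz_1)-f(x)}{s} + (1-t) \cdot \frac{f(x+sz_2)-f(x)}{s}.
\]
Now I would take the limit as $s \to 0^+$. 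Each of the three difference quotients is monotone in $s$ (it is a standard fact, which I would either cite or recall in one line, that for a convex function the slope $s \mapsto (f(x+sz)-f(x))/s$ is nondecreasing), so the limits exist in $\RR \cup \{-\infty\}$, and passing to the limit through the inequality yields
\[
f'(x, z) \leq t f'(x, z_1) + (1-t) f'(x, z_2),
\]
which is exactly \eqref{eq:convexity def} for the function $z \mapsto f'(x,z)$ on $\angle_x C$.

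The only subtlety is the handling of the value $-\infty$ and of the monotonicity claim. I would address both by noting: (i) the difference quotient $q_z(s) = (f(x+sz) - f(x))/s$ is nondecreasing in $s$ on the interval where it is defined, because for $0 < s' < s$ the point $x + s'z$ lies on the segment from $x$ to $x+sz$, so convexity of $f$ gives $f(x+s'z) \leq \frac{s-s'}{s} f(x) + \frac{s'}{s} f(x+sz)$, i.e. $q_z(s') \leq q_z(s)$; hence $\lim_{s\to 0^+} q_z(s) = \inf_{s>0} q_z(s)$ exists in $\RR \cup \{-\infty\}$. (ii) The inequality above between difference quotients holds for all small $s>0$ with real (finite) values, and taking infima over $s$ commutes with the finite convex combination on the right in the sense that $\inf_s (a_s + b_s) \geq \inf_s a_s + \inf_s b_s$ when the right side is not of the form $(-\infty)+(+\infty)$ — and here all terms are bounded above, so no such indeterminate form arises. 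This gives the desired inequality even when some $f'(x, z_i)$ equals $-\infty$, since then the right-hand side is $-\infty$ and there is nothing to prove. There is no real obstacle here; the proof is a direct unwinding of definitions, and the main thing to be careful about is simply being explicit that the limit defining $f'(x,z)$ is an infimum of a monotone net and that the inequality survives the passage to $-\infty$.
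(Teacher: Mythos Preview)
Your proof is correct and follows essentially the same route as the paper: both arguments write $x+sz = t(x+sz_1)+(1-t)(x+sz_2)$, apply convexity of $f$, and pass to the limit in the difference quotients (the paper phrases the limiting step via an $\epsilon$-argument rather than monotonicity, but this is cosmetic). One small wrinkle: in your point (ii) you invoke $\inf_s(a_s+b_s) \geq \inf_s a_s + \inf_s b_s$, but this inequality points the wrong way for what you need; the clean justification is simply that each $q_{z_i}(s)$ is monotone and hence has a limit in $\RR\cup\{-\infty\}$, so $\lim_{s\to 0^+}\bigl(t\,q_{z_1}(s)+(1-t)\,q_{z_2}(s)\bigr) = t f'(x,z_1)+(1-t)f'(x,z_2)$ directly, with no indeterminate form since everything is bounded above.
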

\begin{proof}
Take any $z_1, z_2 \in \angle_x C$.  We assume first that $f'(x, z_1), f'(x, z_2) > -\infty$.
Pick $u \in [0,1]$ and put $z_3 = uz_1 + (1-u)z_2$.
Given $\epsilon > 0$, choose $t > 0$ for which 
\[
x + tz_i \in C \quad (i=1,2,3), \qquad
f'(x,z_i) \geq \frac{f(x+tz_i)-f(x)}{t} - \epsilon
\quad (i=1,2).
\]
Then
\begin{align*}
u f'(x,z_1) + (1-u) f'(x,z_2) & \geq
u \frac{f(x + tz_1) - f(x)}{t} + (1-u) 
\frac{f(x + tz_2) - f(x)}{t} - \epsilon \\
&\geq \frac{f\big(u(x+tz_1) + (1-u)(x+tz_2)\big) - f(x)}{t} - \epsilon \\
&\geq f'(x,z_3) - \epsilon.
\end{align*}
Since $\epsilon$ was arbitrary, this proves the claim when both $f'(x, z_1)$ and $f'(x, z_2)$ are not $-\infty$.  If one of them is $-\infty$, the same argument would imply that $f'(x, z_3) = -\infty$; this completes the proof.
\end{proof}

\begin{lemma} \label{L:dir deriv trp}
Assume that Theorem~\ref{T:trans} holds for compact 
$C$ with $n$ replaced by $n-1$.
Let $C$ be a compact TRP subset of $\RR^n$,
and let $f: C \to \RR$ be a function which is convex transintegral
polyhedral in dimension $1$. Then for any $x \in C$, the function
$z \mapsto f'(x,z)$ on $\angle_x C$ is itself convex transintegral polyhedral in
dimension $1$.
\end{lemma}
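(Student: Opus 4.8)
The plan is to verify, for $g := f'(x,\cdot)$, the defining property of ``convex transintegral polyhedral in dimension $1$'': we must control the restriction of $g$ to each segment $\{z_0 + ta : t \in I\}$ with $z_0 \in \angle_x C$, $a \in \QQ^n$, and $I = \{t : z_0 + ta \in \angle_x C\}$. By Lemma~\ref{L:dir deriv convex}, $g$ is convex on $\angle_x C$; since along any rational direction $a$ the function $t \mapsto f(x+ta)$ is convex, continuous, and piecewise affine with slopes in the discrete subgroup $\Lambda_a := a_1\ZZ + \cdots + a_n\ZZ$ of $\RR$, its one-sided slopes lie in $\Lambda_a$, and one deduces (using convexity and induction on the smallest face of $\angle_x C$ containing a given point) that $g$ is finite everywhere. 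We may assume $\dim C = n$. For $z_0, a$ as above, convexity of $t \mapsto g(z_0+ta)$ is automatic, and if $z_0 + \RR a$ passes through the origin the restriction is linear in $t$ with slope in $\Lambda_a$ by homogeneity of $g$; so we may assume $0 \notin z_0 + \RR a$.

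For rational $s > 0$ small enough that $x + sz_0 \in C$, put $h_s(t) = s^{-1}\bigl(f(x + s(z_0+ta)) - f(x)\bigr)$. Applying the hypothesis on $f$ with base point $x + sz_0 \in C$ and direction $sa \in \QQ^n$ shows that $h_s$ is convex and piecewise affine in $t$ with finitely many slopes, all lying in $s^{-1}(sa_1\ZZ + \cdots + sa_n\ZZ) = \Lambda_a$; moreover $h_s(t) \searrow g(z_0 + ta)$ as $s \searrow 0$, by the standard monotonicity of difference quotients of a convex function. Since $\angle_x C$ is polyhedral, any compact subinterval $[c,d]$ of $\inte(I)$ lies in the domain of $h_s$ for all small $s$; choosing $c' < c < d < d'$ in $\inte(I)$, the values $h_s(c')$ and $h_s(d')$ converge to finite limits, so convexity bounds the slopes of $h_s$ on $[c,d]$ uniformly in $s$, forcing them into a \emph{fixed finite} set $S = \Lambda_a \cap [-B,B]$. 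A convex piecewise affine function on $[c,d]$ with slopes in $S$ has at most $|S|$ affine pieces, and such functions are stable under pointwise limits (pass to a subsequence fixing the combinatorial type; the breakpoints in $[c,d]$ and one ordinate then converge). Hence $g(z_0 + \cdot\,a)$ is convex and piecewise affine on $\inte(I)$ with at most $|S|$ pieces and slopes in $S \subseteq \Lambda_a$; in particular it has only finitely many slopes and extends continuously over $\inte(I)$.

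What remains --- and this is the main obstacle --- is continuity of $g(z_0 + \cdot\,a)$ at the endpoints of $I$, equivalently continuity of $g$ along rational segments at points of the relative boundary of $\angle_x C$; convexity and the upper semicontinuity of $g$ (an infimum over $t > 0$ of the continuous functions $z \mapsto (f(x+tz)-f(x))/t$) yield only an inequality in one direction there. The remedy is to descend to the faces of the cone and invoke the lower-dimensional cases of Theorem~\ref{T:trans}: each proper face of $\angle_x C$ is of the form $\angle_x(C')$ for a face $C'$ of $C$ through $x$, the restriction $f|_{C'}$ is again convex transintegral polyhedral in dimension $1$ on the compact TRP set $C'$, and $f'(x,w) = (f|_{C'})'(x,w)$ for $w \in \angle_x(C')$; applying Theorem~\ref{T:trans} in dimension $n-1$ to the facets of $C$ through $x$ (and, by downward induction on dimension, to their faces) shows that each $f|_{C'}$ is transintegral polyhedral, so that $g$ agrees on each proper face of $\angle_x C$ with a maximum of finitely many integral linear functionals, and in particular is continuous (indeed polyhedral) on every proper face. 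A semicontinuity-and-sandwiching argument then propagates this continuity to all of $\angle_x C$, so that $g$ restricted to every one-dimensional TRP set is continuous, convex, and piecewise affine with finitely many slopes in the appropriate lattice --- precisely the assertion that $g$ is convex transintegral polyhedral in dimension $1$. The elementary rigidity of convex piecewise affine functions with finitely many prescribed slopes does all the work in the interior; the boundary analysis, and the bookkeeping of faces it requires, is where the real difficulty lies.
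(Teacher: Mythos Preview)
Your interior argument is sound and gives a genuine alternative to the paper's use of Theorem~\ref{T:integral-polyhedral}: the functions $h_s(t)=s^{-1}\bigl(f(x+s(z_0+ta))-f(x)\bigr)$ are convex, piecewise affine with slopes in $\Lambda_a$, decrease to $g(z_0+ta)$, and on compact subintervals of $\inte(I)$ a uniform slope bound traps the slopes in a fixed finite subset of $\Lambda_a$, so the limit is again convex piecewise affine with slopes in $\Lambda_a$.

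The gap is at the boundary. The phrase ``a semicontinuity-and-sandwiching argument then propagates this continuity'' is not an argument, and in fact the properties you have assembled do \emph{not} force continuity. On the quadrant $\angle_x C=\{(u,v):u,v\ge 0\}$, the function
\[
g(u,v)=\begin{cases} 0 & v>0,\\ u & v=0 \end{cases}
\]
is positively homogeneous, convex, upper semicontinuous (so it is an infimum of continuous functions), finite everywhere, and polyhedral on each proper face, yet it is discontinuous along the face $v=0$. So knowing that $g$ is u.s.c., convex, finite, and polyhedral on faces is insufficient; your uniform slope bound on $h_s$ was obtained only on compacta in $\inte(I)$ and does not extend to the endpoint (you would need $c'<0$, which is unavailable precisely because $z_0$ sits on the boundary of the cone). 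Some further input from the structure of $f$ is required.

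What the paper supplies is exactly this missing ingredient. With $z$ on the boundary of $\angle_x C$, the ray $x+tz$ lies in a facet $D$ of $C$; by the induction hypothesis $f|_D$ is transintegral polyhedral, so after rescaling $z$ one has $f(x+tz)=f(x)+tf'(x,z)$ for $t\in[0,1+\epsilon]$. The key step is then to observe that
\[
h(t):=f'(x+tz,a)
\]
is \emph{convex} in $t$ (because the difference quotient $\bigl(f(x+tz+ua)-f(x+tz)\bigr)/u$ is convex in $t$, the subtracted term being affine) and takes values in the discrete set $\Lambda_a$; a convex function with discrete image is constant, say equal to $c$. From this one builds a genuine two-dimensional triangle on which $f$ is affine, and reads off that $g(z+ua)=f'(x,z)+uc$ for $u\in[0,1]$. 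This ``convex with discrete values, hence constant'' trick is what actually pins down the boundary behaviour; your sketch invokes the induction hypothesis at the right moment but does not use it to extract this consequence.
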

\begin{proof}
By Lemma~\ref{L:trp1 convex}, $f$ is convex.
By Lemma~\ref{L:dir deriv convex}, $f'(x,z)$ is convex on
$\angle_x C$, hence continuous on $\inte(\angle_x C)$.
By hypothesis, for $z \in \angle_x C \cap \QQ^n$,
$f'(x,z) \in \ZZ z_1 + \cdots + \ZZ z_n$.
By Theorem~\ref{T:integral-polyhedral}, 
$f'(x,z)$ is integral polyhedral on any bounded RP subset of
$\inte(\angle_x C)$.

By subdividing $C$ by hyperplanes, we may reduce to the case where
$\angle_x C$ admits a bounded cross-section by a rational hyperplane.
Pick any $z \in \angle_x C$ and $a \in \QQ^n$ such that the
set $I_{z, a} = \{u \in \RR: z + ua \in \angle_x C\}$ is bounded.
We must show that the function $g(u) = f'(x, z + ua)$
is continuous, convex, and transintegral polyhedral on $I_{z, a}$.
(This suffices because we can recover all values of $f'(x,z)$ from the values
on a bounded cross-section by a rational hyperplane.)
By what we know about $f'$, we already know all of these on
$\inte(I_{z, a})$. Consequently, it suffices to check that $g$ is affine
in a neighborhood of an endpoint of $I_{z, a}$.

For this, we may assume that the endpoint in question is a left
endpoint at $u=0$. Then $z$ lies on the boundary of $\angle_x C$,
so we can choose a codimension 1 facet $D$ of $C$ containing $x$,
such that the ray from $x$ in the direction of $z$ has nontrivial
intersection with $D$. By the hypothesis that Theorem~\ref{T:trans}
holds on compact TRP subsets of 
dimension $n-1$, the restriction of $f$ to $D$ must be 
transintegral polyhedral. In particular, we can rescale $z$ so that
for $t \in [0,1+\epsilon]$ for some $\epsilon > 0$, $x+tz \in C$
and $f(x+tz) = f(x) + t f'(x,z)$.

Consider the function $h(t) = f'(x+tz, a)$ for $t \in [0, 1+\epsilon]$.
Since the difference quotient $(f(x+tz+ua)-f(x+tz))/u$ is convex
in $t$ (the term $f(x+tz+ua)$ is convex, the term $-f(x+tz)$
is affine, and dividing by $u$ has no effect), so is $h(t)$. However,
$h(t) \in \ZZ a_1 + \cdots + \ZZ a_n$ for all $t$. This means 
that
for $t \in (0,1+\epsilon)$, 
$h(t)$ is continuous but takes values in a discrete subset
of $\RR$; this can only happen if $h(t)$
is equal to a constant value $c$ on $(0, 1+\epsilon)$.

Rescale $a$ if necessary so that 
$x + z + a \in C$ and 
$f(x + z + ua) = f(x) + f'(x,z) + uc$ for $u \in [0,1]$.
We now claim that $f(x + tz + ua) = f(x) + t f'(x,z) + uc$
for $t \in [0,1], u \in [0,t]$. 
Since equality holds at $(t,u) = (0,0), (1,0), (1,1)$, we have
by convexity of $f$ that $f(x + tz + ua) \leq f(x) + t f'(x,z) + uc$
in the entire region. On the other hand, for any $t \in [0,1]$,
the function $f(x+tz+ua)$ in $u$ is convex, and equals
$f(x) + t f'(x,z) + uc$ for $u$ in a right neighborhood of $0$.
Consequently, $f(x+tz+ua) \geq f(x) + tf'(x,z) + uc$ for $u \in [0,t]$,
yielding the desired equality.

We may rewrite the last claim as
$f(x + tz + tua) = f(x) + t f'(x,z) + tuc$
for $t \in [0,1], u \in [0,1]$. From this, we may deduce that
$g(u) = f'(x,z+ua) = f'(x,z) + uc$ for $u \in [0,1]$.
This proves affinity of $g$ near an endpoint, completing the argument.
\end{proof}

We now establish the compact case of Theorem~\ref{T:trans}.
\begin{lemma} \label{L:trans compact}
The conclusion of Theorem~\ref{T:trans} holds if $C$ is compact.
\end{lemma}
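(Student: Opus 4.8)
The plan is to prove Lemma~\ref{L:trans compact} by induction on $n = \dim(C)$, using the directional-derivative machinery just established. The base case $n=0$ (or $n=1$) is trivial since a point carries no content and the one-dimensional hypothesis directly gives the conclusion. So assume $n \geq 2$ and that Theorem~\ref{T:trans} holds for compact TRP subsets of dimension $< n$; in particular Lemma~\ref{L:dir deriv trp} is available. By Lemma~\ref{L:trp1 convex}, $f$ is already known to be convex, hence continuous on $\inte(C)$; the real work is (i) continuity of $f$ up to the boundary of $C$ and (ii) exhibiting $f$ as a finite max of (trans)integral affine functionals.

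First I would handle the interior. By Lemma~\ref{L:dir deriv trp}, for each $x \in \inte(C)$ the directional derivative $z \mapsto f'(x,z)$ is convex transintegral polyhedral in dimension $1$ on $\angle_x C = \RR^n$; applying the inductive hypothesis (via Theorem~\ref{T:trans} in dimension $n-1$, after slicing by a rational hyperplane as in the proof of Lemma~\ref{L:dir deriv trp}) or more directly Theorem~\ref{T:integral-polyhedral} on a bounded rational cross-section, $z \mapsto f'(x,z)$ is transintegral polyhedral. In particular it is piecewise linear with finitely many pieces, so it agrees with a single linear functional $\tilde\lambda_x$ on a full-dimensional cone; then $\lambda_x(y) := f(x) + \tilde\lambda_x(y-x)$ is an integral affine functional, and by convexity (Lemma~\ref{L:slopes}(a)) $f \geq \lambda_x$ everywhere, with equality on a neighborhood of $x$ in that cone. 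This shows $f$ has a domain of affinity through every interior point. To get \emph{finitely many} domains, I would invoke Remark~\ref{R:fin-many-affdoms}: since $C$ is compact it suffices to find, around each point of $C$ (including boundary points), a neighborhood covered by finitely many domains of affinity.

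Next, the boundary. For $x$ on a codimension-$1$ facet $D$ of $C$, the inductive hypothesis says $f|_D$ is transintegral polyhedral, hence continuous on $D$; combined with convexity of $f$ on $C$ this forces continuity of $f$ at $x$ from inside (a convex function that is bounded above near a boundary point and whose restriction to the face is continuous extends continuously). Iterating down the faces handles lower-dimensional strata. For the polyhedral structure near a boundary point $x$, I would argue as in Lemma~\ref{L:dir deriv trp}: $z \mapsto f'(x,z)$ is convex transintegral polyhedral in dimension $1$ on the cone $\angle_x C$, so by the inductive hypothesis applied to a bounded rational cross-section of that cone it is transintegral polyhedral, hence covered by finitely many linear domains; each such domain yields an integral affine functional agreeing with $f$ on a neighborhood of $x$ in $C$, and by Lemma~\ref{L:slopes}(a) these functionals lie below $f$ globally. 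Thus every point of $C$ has a neighborhood covered by finitely many domains of affinity, and Remark~\ref{R:fin-many-affdoms} (with $C$ compact) upgrades this to finitely many domains of affinity covering all of $C$; taking the max of the corresponding integral affine functionals and using Lemma~\ref{L:slopes}(a) again gives $f(x) = \max_i \lambda_i(x)$, so $f$ is transintegral polyhedral.

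The main obstacle I anticipate is the boundary analysis: the cone $\angle_x C$ at a boundary point need not be all of $\RR^n$, so one cannot directly quote Theorem~\ref{T:integral-polyhedral} (which needs a bounded RP \emph{neighborhood}); one must slice $\angle_x C$ by a rational hyperplane to get a bounded cross-section and feed that into the inductive hypothesis, and then reconstruct $f$ near $x$ from the directional-derivative data — this is exactly the delicate rescaling argument (choosing $z$ so that $f(x+tz) = f(x) + tf'(x,z)$ along a facet, then propagating affinity into a wedge) carried out in the second half of the proof of Lemma~\ref{L:dir deriv trp}, and the same technique should apply here. A secondary subtlety is making sure the integrality of the slopes of $f'(x,\cdot)$ (guaranteed pointwise by the dimension-$1$ hypothesis) is preserved when passing to $f$ itself, which is what forces the functionals to be transintegral rather than merely real affine.
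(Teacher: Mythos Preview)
Your proposal is correct and follows essentially the same induction-on-$n$ strategy as the paper: establish convexity via Lemma~\ref{L:trp1 convex}, analyze the directional derivative $z \mapsto f'(x,z)$ on $\angle_x C$ using Lemma~\ref{L:dir deriv trp} and the inductive hypothesis, upgrade it to integral polyhedral via Theorem~\ref{T:integral-polyhedral}, and then show $f$ is affine near $x$ on each cone where $f'(x,\cdot)$ is linear.

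Two points of comparison. First, your separation into interior and boundary cases is unnecessary: the paper treats every $x \in C$ uniformly, since the cone $\angle_x C$ and the directional-derivative analysis make sense at any point. Second, and more substantively, the ``propagating affinity into a wedge'' step you flag as the main obstacle is handled in the paper by a cleaner argument than the two-dimensional iteration from Lemma~\ref{L:dir deriv trp} that you propose to reuse. After reducing (by subdividing $C$) to the case where $f'(x,\cdot)$ is a single integral linear functional on $\angle_x C$, the paper picks rational generators $z_1,\dots,z_l$ of the cone, rescales each so that $f(x+tz_i) = f(x) + tf'(x,z_i)$ on $[0,1]$, and then observes that for any $z$ in the convex hull of $z_1,\dots,z_l$, convexity of $f$ gives $f(x+z) \leq f(x) + f'(x,z)$ directly (since $x+z$ is a convex combination of the $x+z_i$ and $f'(x,\cdot)$ is linear); combined with the opposite inequality $f(x+tz) \geq f(x) + tf'(x,z)$ from convexity in $t$, this forces equality on the whole simplex at once. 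This one-shot convex-hull argument is simpler than iterating the two-variable wedge argument across $n$ directions.
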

\begin{proof}
We may assume that $C$ has nonempty
interior, by replacing $\RR^n$ by a plane containing $C$ of the appropriate
dimension. With this extra hypothesis, we proceed by induction on $n$, with
trivial base case $n=1$.

We have convexity of $f$ by Lemma~\ref{L:trp1 convex}.
It thus suffices to prove that $f$ is transintegral polyhedral
(and hence continuous)
in a neighborhood of any $x \in C$.
By Lemma~\ref{L:dir deriv trp}, the restriction of 
$f'(x,z)$ to any compact TRP subset of $\angle_x C$
is convex transintegral polyhedral in dimension 1.
By applying the induction hypothesis to the intersection of $\angle_x C$
with a rational hyperplane, we may deduce that $f'(x,z)$ is 
continuous, convex, and transintegral polyhedral. By
Theorem~\ref{T:integral-polyhedral}, $f'(x,z)$ is in fact integral
polyhedral.

To prove that $f$ is transintegral polyhedral in a neighborhood of $x$,
it suffices to do so after cutting $C$ into finitely many pieces.
We may thus reduce to the case where $f'(x,z)$ is affine on $\angle_x C$.
Since $\angle_x C$ is a rational polyhedral cone, we may
pick $z_1,\dots,z_l \in \angle_x C \cap \QQ^n$ such that
$\angle_x C$ is the convex hull of the rays from $0$ through $z_1,\dots,z_l$.
We may then rescale $z_1,\dots,z_l$ so that
$f(x + tz_i) = f(x) + tf'(x,z_i)$ for $i=1,\dots,l$ and $t \in [0,1]$.

For any $z$ in the convex hull of $z_1,\dots,z_l$, we now deduce 
(using the affinity of $f'(x,z)$) that $f(x + z) \leq f(x) + f'(x,z)$.
Since $f(x+tz)$ is convex in $t$,
this is only possible if 
$f(x+tz) = f(x) + tf'(x,z)$ for $t \in [0,1]$.
We conclude that $f$ agrees with an integral affine functional
on the convex hull of 
$x, x+z_1,\dots,x+z_l$. As noted above, this completes the proof.
\end{proof}

We now allow allow $C$ which are no longer necessarily bounded.
\begin{defn}
Let $C$ be a TRP subset of $\RR^n$. Define the \emph{small cone} of $C$
at $x$, denoted $\angle'_x C$, to be the set of $z \in \RR^n$ such that
$x + tz \in C$ for \emph{all} $t > 0$; this is again a convex rational polyhedral
cone in $\RR^n$. Moreover, it does not depend on $x$ by the 
following reasoning.
Write $C = \{x \in \RR^n: \lambda_1(x), \dots, \lambda_m(x) \geq 0\}$
for some transintegral affine functionals $\lambda_1,\dots, \lambda_m$.
Write $\lambda_i(x) = \lambda_{i,0}(x) + c_i$ with $\lambda_{i,0}$ linear.
Then $z \in \angle'_x C$ if and only if $x \in C$ and 
$\lambda_{i,0}(z) \geq 0$ for $i=1,\dots,m$. In particular, $\angle'_x C$
does not depend on the choice of $x \in C$; we thus notate it also by 
$\angle' C$.
\end{defn}

\begin{lemma} \label{L:trans closed}
The conclusion of Theorem~\ref{T:trans} holds.
\end{lemma}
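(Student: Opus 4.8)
The plan is to prove Lemma~\ref{L:trans closed} by induction on $n = \dim C$, the cases $n \leq 1$ being immediate from the definitions; so assume $n \geq 2$ and that Theorem~\ref{T:trans} holds in dimensions $< n$. By Lemma~\ref{L:trp1 convex}, $f$ is convex, so it remains to show $f$ is transintegral polyhedral, and (replacing $\RR^n$ by the affine hull of $C$) we may assume $\dim C = n$. First I would record that $f$ is \emph{locally} transintegral polyhedral: for any $x \in C$ and $\epsilon > 0$, the box $\{y : |y_i - x_i| \leq \epsilon\}$ is a compact TRP set (its defining functionals have slopes $\pm e_i$), hence so is its intersection with $C$, and Lemma~\ref{L:trans compact} shows $f$ restricted to it is transintegral polyhedral. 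Thus every point of $C$ has a neighbourhood in $C$ covered by finitely many domains of affinity of $f$, each with transintegral ambient functional. If $C$ is compact, Remark~\ref{R:fin-many-affdoms} now finishes; the remaining task is to pass from this local statement to a global one when $C$ is unbounded.

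To control the behaviour at infinity I would use the recession function. Fix $x_0 \in C$ and, for $z$ in the recession cone $\angle' C$, set $r_f(z) = \lim_{t \to \infty}\bigl(f(x_0 + tz) - f(x_0 + (t-1)z)\bigr)$. This limit exists and is finite because $t \mapsto f(x_0 + tz)$ is convex and transintegral polyhedral in dimension $1$, so its slopes are nondecreasing, finitely many, and integral combinations of the coordinates of $z$; the limit is independent of $x_0$ by convexity. The function $r_f$ is then convex and positively homogeneous on $\angle' C$ and takes values in $\ZZ z_1 + \cdots + \ZZ z_n$ at rational points, so by Theorem~\ref{T:integral-polyhedral} (applied on bounded RP subsets of $\inte \angle' C$) together with homogeneity, $r_f$ is integral polyhedral on $\angle' C$: there are finitely many integral linear functionals $\ell_1, \dots, \ell_s$ and a subdivision $\angle' C = K_1 \cup \cdots \cup K_s$ into rational polyhedral cones with $r_f|_{K_a} = \ell_a$.

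Next, by the Minkowski--Weyl structure theorem write $C = P + \angle' C$ with $P$ a rational polytope, so that $C = \bigcup_a (P + K_a)$. On each piece $C_a := P + K_a$ (a TRP set with recession cone $K_a$) consider $g_a := f - \ell_a$, which is convex, continuous, locally transintegral polyhedral, and whose recession function along $K_a$ vanishes. Consequently $g_a$ is nonincreasing along every ray of $K_a$, so $\sup_{C_a} g_a = \sup_P g_a < \infty$; and since the epigraph of $g_a$ has no recession direction pointing downward over $K_a$, $g_a$ also attains its infimum, so $g_a$ is bounded. It then suffices to prove: a bounded, convex, locally transintegral polyhedral function on a TRP set is transintegral polyhedral. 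For this I would induct on dimension, splitting on the lineality space $L_a$ of $K_a$ (equivalently of $C_a$): if $L_a \neq 0$, then boundedness and convexity force $g_a$ to be constant along each line in direction $L_a$, so $g_a$ descends to a bounded, convex, locally transintegral polyhedral function on the TRP set $C_a / L_a$ of smaller dimension, and we conclude by the induction hypothesis; if $L_a = 0$, one must bound the ambient functionals $\lambda$ of the domains of affinity of $g_a$ directly, using that each $\tilde\lambda$ satisfies $\tilde\lambda \leq r_{g_a} = 0$ on the pointed cone $K_a$ (by Lemma~\ref{L:slopes}(a)) and that $\lambda$ actually meets the bounded function $g_a$. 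Granting this, collecting the finitely many ambient functionals from all the $C_a$ gives finitely many transintegral affine functionals, each $\leq f$ on $C$ by Lemma~\ref{L:slopes}(a), whose maximum agrees with $f$ on every $C_a$ and hence on all of $C$; thus $f$ is transintegral polyhedral.

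The main obstacle is the last step: promoting local polyhedrality to global polyhedrality over an unbounded domain. Local polyhedrality only produces, near each point, finitely many affine pieces whose ambient functionals are bounded above by $f$ everywhere, and one must show that across all of $C$ only finitely many such pieces occur — equivalently, that $f$ has only finitely many distinct subgradients. Finite generation of the recession cone is exactly what should make this true, but turning it into a genuine bound requires the dimension induction above, the crucial case being a pointed recession cone, where the bound on the admissible $\tilde\lambda$ must be extracted from the interplay between the inequality $\tilde\lambda \leq r_{g_a} = 0$ on $K_a$, the boundedness of $g_a$, and the fact that each affine piece genuinely meets $g_a$. Everything else (convexity, local polyhedrality, the recession function, and the Minkowski--Weyl reduction) is routine given Lemma~\ref{L:trans compact} and Theorem~\ref{T:integral-polyhedral}.
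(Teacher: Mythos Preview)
Your approach via the recession function and the Minkowski--Weyl decomposition is a natural and genuinely different strategy from the paper's, and the reduction to a bounded function $g_a$ on each piece $C_a = P + K_a$ is clean. However, as you yourself flag in the final paragraph, the argument is incomplete precisely at the decisive step: in the pointed-cone case you must show that only finitely many integral slopes $\tilde\lambda$ occur among the ambient functionals of domains of affinity of $g_a$. The information you have is that $\tilde\lambda \leq r_{g_a} = 0$ on $K_a$, which confines $\tilde\lambda$ to the lattice points of the cone $-K_a^*$, an infinite set; boundedness of $g_a$ gives $\lambda \leq M$ everywhere on $C_a$ and $\lambda \geq m$ only on its (moving) domain of affinity, and there is no way to extract from this a lower bound on $\tilde\lambda(v)$ for any fixed direction $v$. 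Your inner induction on dimension does not help here either, since the lineality reduction only feeds back into the pointed case in lower dimension; the pointed case itself must be handled directly, and you have not done so.

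The paper closes exactly this gap by a different mechanism, and it is worth seeing why boundedness is not the relevant leverage. After slicing $C$ by rational hyperplanes to make $\angle' C$ strictly convex (pointed), the paper applies the induction hypothesis of Theorem~\ref{T:trans} to each codimension-one facet $D$ of $C$: the restriction $f|_D$ is then transintegral polyhedral, so the inward directional derivative $x \mapsto f'(x,a)$ is constant on the interior of each domain of affinity of $f|_D$, hence takes only finitely many values for $x$ on the boundary away from a measure-zero set. Now choose a rational basis $a_1,\dots,a_n$ with each $a_i \notin \angle' C \cup (-\angle' C)$ (possible since $\angle' C$ is pointed and $n \geq 2$); then every line in direction $a_i$ meets $C$ in a compact segment with both endpoints on $\partial C$, and convexity traps the slope $\tilde\lambda(a_i)$ of any interior ambient functional between the two endpoint directional derivatives, each of which lies in a fixed finite set. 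This pins down $\tilde\lambda$ to finitely many possibilities. The key point is that the finiteness comes from the \emph{boundary facets}, via the induction hypothesis in dimension $n-1$, not from any growth control on $f$; your recession-function reduction, while elegant, does not bypass the need for this boundary argument.
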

\begin{proof}
We may again assume that $C$ has nonempty interior in $\RR^n$;
by slicing $C$ with hyperplanes, we may further assume that 
the small cone $\angle' C$ is strictly convex (i.e., $\angle' C \cap -\angle' 
C = \{0\}$). We now induct on $n$, where we may assume $n \geq 2$
because the case $n=1$ is trivial. 
By the induction hypothesis, the restriction of $f$ to each
boundary facet of $C$ is convex transintegral polyhedral.

As in the proof of Lemma~\ref{L:dir deriv trp}, for each
boundary facet $D$ of $C$, each $i \in \{1,\dots,n\}$, 
and each $a \in \QQ^n$, the function $x \mapsto f'(x, a)$
is constant on the interior of each domain of affinity of the
restriction of $f$ to $D$. In particular, for $x \in D$ outside of a set
of measure zero, $f'(x,a)$ takes only finitely many values.

By Lemma~\ref{L:trans compact}, $f$ is polyhedral on any compact TRP
subset of $C$. In particular, $C$ is covered by
domains of affinity of $f$; to prove that $f$ is polyhedral on all of 
$C$, it suffices to show that $C$ can be covered by finitely many
domains of affinity of $f$ (see Remark~\ref{R:fin-many-affdoms}).
By Lemma~\ref{L:slopes}, it suffices to check that the ambient functionals
on domains of affinity of $f$ can have only finitely many slopes. 

Let $U$ be a domain of affinity of $f$ with ambient functional $\lambda$.
Choose a basis $a_1,\dots,a_n$ of $\QQ^n$ none of whose elements
is contained in $\angle' C \cup (-\angle' C)$ (this is possible because
$\angle' C$ is strictly convex and $n \geq 2$).
For $x \in U$ and $i \in \{1,\dots,n\}$,
the function $f(x+ta_i)$ on $I_{x,a_i}$
is convex transintegral polyhedral, so has a limiting slope at each
endpoint of $I_{x,a_i}$. 
(Note that our hypothesis that $a_i \notin \angle' C \cup (-\angle' C)$
ensures that $I_{x,a_i}$ is compact.)
By the previous paragraph, for $x$ away from a set
of measure zero, these limiting slopes are themselves confined to a finite
set. Since $f$ is convex, the slope of $f(x+ta_i)$ at $t=0$ is now also
constrained to a finite set. This conclusion for $i=1,\dots,n$
constrains the slope of $\lambda$ to a finite set, proving the claim.
\end{proof}

\subsection{Variation of subsidiary radii}
\label{S:multi-var}

In this subsection, we will extend Theorem~\ref{T:1-dim-var} into a
higher-dimensional generalization (Theorem~\ref{T:main-thm}).  We keep Hypothesis~\ref{H:K} and Notation~\ref{N:K-m-J}.
We begin by introducing the setup of \cite[Section~4.1]{kedlaya-part3}.

\begin{notation}
Throughout this subsection, we put $I = \{\serie{}n\}$ for notational simplicity.
\end{notation}

\begin{notation}
For $X$ an $n$-tuple:
\begin{itemize}
\item
for $A$ an $n \times n$ matrix, write $X^A$ for the $n$-tuple
whose $j$-th entry is $\prod_{i=1}^n x_i^{A_{ij}}$;
\item
for $c$ a number, put $X^c = (x_1^c, \dots, x_n^c)$.
\end{itemize}
\end{notation}

\begin{defn}
For a subset $C \subset \RR^n$, let $e^{-C}$ denote the subset $\{e^{-r_I}: r_I \in C\} \subset (0, +\infty)^n$.
A subset $S$ of $[0, +\infty)^n$ is \emph{log-(T)RP} if $S$ is the closure of
$\stackrel\circ{S} = e^{-C}$ for some (T)RP subset $C$ of $\RR^n$.  We say $S$ is \emph{ind-log-(T)RP} if it is a union of an increasing sequence of log-(T)RP sets $S_\alpha$; we denote $\stackrel\circ{S} = \cup_\alpha \stackrel\circ{S}_\alpha$.  For instance, any open subset of $[0, +\infty)^n$ is covered by ind-log-RP subsets.
\end{defn}

\begin{caution}
The subset $(0, 1]$ is an ind-log-TRP subset but not a log-TRP subset.
By contrast, $[0,1]$ is a log-TRP subset.
\end{caution}

\begin{defn}
Let $C \subset \RR^n$ be a TRP subset defined by \eqref{E:TRP-set}, where $\lambda_s(x_I) = a_{s, 1}x_1 + \cdots + a_{s, n}x_n + b_s$ for $a_{s, i} \in \ZZ$ and $s= \serie{}r$.  Denote the closure of $e^{-C}$ in $[0, +\infty)^n$ by $S$.  Define $A_K(S)$ to be the 
subspace of the (Berkovich) analytic $n$-space with coordinates $t_1, 
\dots, t_n$ satisfying the condition $(|t_1|, \dots, |t_n|) \in S$.  Precisely, 
\[
\Gamma(A_K(S), \calO) = K \langle t_I^{a_{1, I}} / e^{-b_1}, \dots, 
t_I^{a_{r, I}} / e^{-b_r} \rangle.
\]
For an ind-log-TRP subset $S = \cup_\alpha S_\alpha$, we define $A_K(S) = \cap_\alpha A_K(S_\alpha)$.
\end{defn}

\begin{defn} \label{D:total diff mod}
Let $S$ be an ind-log-TRP subset of $[0, +\infty)^n$.  A ($\partial_{I\cup J}$-)\emph{differential module} $M$ over $X = A_K(S)$ is a locally free coherent sheaf together with an integrable connection
$$
\nabla: M \rar M \otimes \left( \bigoplus_{j=1}^m \calO_X \cdot du_j \oplus \bigoplus_{i=1}^n \calO_X \cdot dt_i \right).
$$
We label the derivations $\serie{\partial_}m$ as usual,
and put
$\partial_{m+1} = \partial_{t_1}, \dots, \partial_{m+n} = \partial_{t_n}$.
\end{defn}

\begin{notation}
For $\eta_I = (\serie{\eta_}n) \in \stackrel\circ{S}$, let $F_{\eta_I}$ be the completion of 
$K(t_I)$ with respect to the $\eta_I$-Gauss norm.  Write $f_l(M, r_I) = 
-\log IR(M \otimes F_{e^{-r_I}}; l)$ and 
$F_l(M, r_I) = f_1(M, r_I) + \cdots + 
f_l(M, r_I)$ for $l = 1, \dots, \rank M$.
\end{notation}

\begin{lemma}\label{L:toroidal-transform}
Given $\eta_I \in (0,+\infty)^n$ and $A \in \GL_n (\ZZ)$, 
let $M$ be a differential module over $F_{\eta_I^A}$, 
and let $h_A^*: F_{\eta_I^A} \rar F_{\eta_I}$ be given by 
$t_I \mapsto t_I^A$.  Then $IR(M) = IR(h_A^*M)$.
\end{lemma}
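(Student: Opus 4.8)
The plan is to transport the differential structure through $h_A^*$, which is an isometric isomorphism of fields, and then to isolate the combinatorial heart of the statement as a stability property of a spectral invariant of commuting operators under unimodular integral substitutions.

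First I would check that $h_A^*$ is an isometric isomorphism of $K$-algebras: it is $K$-linear, it sends the $l$-th coordinate of $F_{\eta_I^A}$ to the monomial $t_I^{A_l}=\prod_i t_i^{A_{il}}$ of $\eta_I$-Gauss norm $\prod_i \eta_i^{A_{il}}=(\eta_I^A)_l$, and it has inverse $h_{A^{-1}}^*$ since $A\in\GL_n(\ZZ)$. It commutes with $\partial_{u_1},\dots,\partial_{u_m}$, and it carries the derivation $\partial_l$ in the $l$-th coordinate of $F_{\eta_I^A}$ to $\partial_l^A:=h_A^*\circ\partial_l\circ(h_A^*)^{-1}$ on $F_{\eta_I}$. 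A short computation with condition (b) of Definition~\ref{D:admissible-operator}, using that $h_A^*$ is isometric, shows $\partial_l^A$ is again of rational type, with rational parameter $h_A^*(t_I^{A_l})$; evaluating on the generators $t_1,\dots,t_n$ shows moreover that its logarithmic derivation $h_A^*(t_I^{A_l})\,\partial_l^A$ equals $\sum_i (A^{-1})_{li}\,t_i\partial_{t_i}$ (the logarithmic derivations transform by $A$). Hence $h_A^*$ is an isometric isomorphism of differential fields from $(F_{\eta_I^A};\partial_{u_k},\partial_l)$ onto $(F_{\eta_I};\partial_{u_k},\partial_l^A)$, so Remark~\ref{R:base-change-sp-norm} gives $IR_{\partial_{u_k}}(M)=IR_{\partial_{u_k}}(h_A^*M)$ and $IR_{\partial_l}(M)=IR_{\partial_l^A}(h_A^*M)$ for all $k,l$. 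Writing $N=h_A^*M$, it remains to prove $\min_l IR_{\partial_l^A}(N)=\min_i IR_{\partial_{t_i}}(N)$.

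Next I would reduce this to a statement about operators. Using the identity $\partial^m=v^{-m}\prod_{k=0}^{m-1}(v\partial-k)$ valid for any rational parameter $v$ of $\partial$, one gets $|\partial|_{\sp,N}=|v|^{-1}\rho(v\partial)$, where $\rho(X):=\lim_m|\prod_{k=0}^{m-1}(X-k)|_N^{1/m}$; since $|\partial|_{\sp,F_{\eta_I}}=\omega|v|^{-1}$, this yields $IR_\partial(N)=\omega/\rho(v\partial)$. Thus, with $D_i:=t_i\partial_{t_i}$ and $\tilde D_l:=\sum_i(A^{-1})_{li}D_i$, we have $IR_{\partial_{t_i}}(N)=\omega/\rho(D_i)$ and $IR_{\partial_l^A}(N)=\omega/\rho(\tilde D_l)$, so the goal becomes $\max_l\rho(\tilde D_l)=\max_i\rho(D_i)$. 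I would prove this via the Vandermonde--Chu identity $\binom{X+Y}{m}=\sum_{a+b=m}\binom{X}{a}\binom{Y}{b}$ for commuting operators: together with the ultrametric inequality and $|m!|^{1/m}\to\omega$, it shows the upper limit $\sigma(X):=\limsup_m|\prod_{k=0}^{m-1}(X-k)|_N^{1/m}$ satisfies $\sigma(X+Y)\le\max\{\sigma(X),\sigma(Y)\}$ for commuting $X,Y$ and $\sigma(-X)\le\max\{\sigma(X),\omega\}$. Since $\sigma(D_i)=\rho(D_i)\ge\omega$ (as $IR_{\partial_{t_i}}(N)\le1$) and the $D_i$ mutually commute, iterating these inequalities over the integer coefficients of $A^{-1}$ gives $\sigma(\tilde D_l)\le\max_i\sigma(D_i)$ for each $l$, hence $\max_l\rho(\tilde D_l)=\max_l\sigma(\tilde D_l)\le\max_i\rho(D_i)$; the reverse inequality follows symmetrically, using $\sigma(\tilde D_l)=\rho(\tilde D_l)\ge\omega$ because $\tilde D_l$ is the logarithmic derivation of the rational-type $\partial_l^A$.

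The step I expect to be the main obstacle is the last one — establishing that $\rho$ (and $\sigma$) behaves well under $\ZZ$-linear combinations of commuting operators, in particular the bound $\sigma(-X)\le\max\{\sigma(X),\omega\}$ used to handle negative coefficients. This is precisely where the hypothesis $A\in\GL_n(\ZZ)$, rather than $\GL_n(\QQ)$, enters: integrality of the coefficients is what keeps the binomial coefficients $\binom{m-1}{b}$ appearing in the Vandermonde expansion inside the valuation ring, and without it one genuinely cannot expect the minima of the intrinsic radii to be preserved. The remaining ingredients — the isometry of $h_A^*$, the rational type of $\partial_l^A$, the transformation law for logarithmic derivations, and the formula $IR_\partial(N)=\omega/\rho(v\partial)$ — are routine.
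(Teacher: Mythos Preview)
Your argument is correct. The paper's own proof is a one-line citation of \cite[Proposition~4.2.7]{kedlaya-part3} (itself deduced from \cite[Lemma~4.1.5]{kedlaya-part3}) applied to $A$ and $A^{-1}$; what you have written is essentially a self-contained unpacking of those cited results. The core of both is the same: after observing that $h_A^*$ is an isometric isomorphism commuting with the $\partial_{u_k}$, one must show that $\max_l \rho(\tilde D_l)=\max_i \rho(D_i)$ for the logarithmic derivations, and this is exactly the content of the cited lemma. Your Vandermonde--Chu bound $\sigma(X+Y)\le\max\{\sigma(X),\sigma(Y)\}$ and the negation bound $\sigma(-X)\le\max\{\sigma(X),\omega\}$ (which follows from $\binom{-X}{m}=(-1)^m\sum_{a}\binom{X}{a}\binom{m-1}{m-a}$ together with $|a!|\ge\omega^a$ and $|m!|^{1/m}\to\omega$) are precisely the estimates that drive that lemma, and your observation that $\rho(D_i),\rho(\tilde D_l)\ge\omega$ is what lets the $\omega$ in the negation bound be absorbed. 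So the two proofs coincide in substance; yours simply supplies the details the paper delegates to the reference.
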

\begin{proof}
This follows from \cite[Proposition~4.2.7]{kedlaya-part3}
(which is itself an immediate consequence of 
\cite[Lemma~4.1.5]{kedlaya-part3}) applied to $A$ and $A^{-1}$.
\end{proof}

\begin{theorem}\label{T:main-thm}
Let $S$ be an ind-log-TRP subset of $[0, +\infty)^n$,
and let $M$ a differential module of rank $d$ over $A_K(S)$.
\begin{enumerate}
\item[(a)]
(Continuity)
For $l=\serie{}d$, the functions $f_l(M, r_I)$ and 
$F_l(M, r_I)$ are continuous.
\item[(b)]
(Convexity)
For $l=1, \dots, d$, the function $F_l(M, r_I)$ is convex.
\item[(c)]
(Polyhedrality)
For $r_I \in -\log \stackrel\circ{S}$,
if $l=d$ or $f_l(M, r_I) > f_{l+1}(M, r_I)$, then 
$F_l(M, r_I)$ is transintegral polyhedral in some neighborhood of $r_I$. 
Moreover, 
on any TRP subset of $-\log \stackrel\circ{S}$,
$d! F_l(M, r_I)$ and $F_d(M, r_I)$ are transintegral polyhedral functions.

\item[(d)] (Monotonicity)
Assume that $S$ is log-TRP.  Then for any $r_I, r'_I \in -\log \stackrel\circ{S}$, if $r_i \leq r'_i$ for $i \in I$ and $(1-t)r_I + tr'_I \in -\log \stackrel\circ{S}$ for any $t \in [0, +\infty)$, then $F_l(M, r_I) \geq F_l(M, r'_I)$ for $l = \serie{}d$.
\end{enumerate}
\end{theorem}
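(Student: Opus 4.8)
The plan is to reduce every assertion to the one-dimensional results of Theorem~\ref{T:1-dim-var} by restricting $F_l$ to lines of rational slope, and then to reassemble this information using the polyhedrality criterion Theorem~\ref{T:trans}. As in earlier proofs we may assume $|u_J| = 1$; and since continuity and convexity are local on $-\log\stackrel\circ{S}$ and may be checked along segments, while $-\log\stackrel\circ{S}$ is an increasing union of TRP sets, it suffices to treat a single log-TRP $S$, writing $C = -\log\stackrel\circ{S}$ for the corresponding TRP subset of $\RR^n$. \emph{(Step~1: one-dimensional slices.)} Fix $x \in C$ and $a \in \QQ^n$; after rescaling we may assume $a$ is a primitive integer vector and complete it to a $\ZZ$-basis, obtaining $A \in \GL_n(\ZZ)$ such that the toroidal coordinate change $t_I \mapsto t_I^A$ carries the direction $a$ to $\bbe_1$ in the transformed log-coordinates. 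This change is an isometric isomorphism of differential fields, so it transports $M$ to a differential module $\tilde M$ over $A_K(\tilde S)$ (with $\tilde S$ again log-TRP), and by Lemma~\ref{L:toroidal-transform} applied to the Jordan--H\"older constituents over each $F_{\eta_I}$ it preserves the full list of subsidiary radii; hence $F_l(M,-)$ equals $F_l(\tilde M,-)$ precomposed with the ($\GL_n(\ZZ)$-linear) change of log-coordinates. We are thus reduced to studying $F_l(\tilde M,-)$ along the coordinate direction $\bbe_1$. Freezing $r_2,\dots,r_n \in \QQ$ and letting $K'$ be the completion of $K(t_2,\dots,t_n)$ for the $(e^{-r_2},\dots,e^{-r_n})$-Gauss norm, which is of rational type with respect to $u_1,\dots,u_m,t_2,\dots,t_n$ by iterating Remark~\ref{R:enlarge-K}, the restriction of $\tilde M$ to the slice $|t_i| = e^{-r_i}$ ($i \geq 2$) is a $\partial_{\{0,\dots,m+n-1\}}$-differential module over a one-dimensional annulus over $K'$ with geometric derivation $\partial_{t_1}$, whose subsidiary radii at $|t_1| = e^{-r_1}$ coincide with those of $\tilde M$ at $(e^{-r_1},\dots,e^{-r_n})$, since the relevant Gauss-norm completions agree. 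Theorem~\ref{T:1-dim-var} now tells us that along this line $F_l$ is continuous, convex, and piecewise affine, with all slopes of $F_d$ in $\ZZ$ and those of each $F_l$ in $\frac11\ZZ\cup\cdots\cup\frac1d\ZZ$, with slopes of $F_l$ in $\ZZ$ wherever $f_l > f_{l+1}$, and with nonpositive slopes when the corresponding annulus is a disc (Theorem~\ref{T:1-dim-var}(c)).

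\emph{(Step~2: globalization.)} Fix $l$ and set $g = d!\,F_l(M,-)$ on $C$, or $g = F_d$ when $l = d$. By Step~1, the restriction of $g$ to any one-dimensional TRP slice is continuous, convex, and piecewise affine with slopes in the appropriate lattice $a_1\ZZ + \cdots + a_n\ZZ$; it has finitely many slopes when the slice is compact, and in general one invokes the monotonicity in Theorem~\ref{T:1-dim-var}(c) along directions in the small cone $\angle' C$ to see that $g$ is eventually affine there, hence still has finitely many slopes. Thus $g$ is convex transintegral polyhedral in dimension $1$, so by Theorem~\ref{T:trans} it is convex and transintegral polyhedral on $C$; running the same argument with $C$ replaced by an arbitrary TRP subset of $-\log\stackrel\circ{S}$ gives the second assertion of (c). Since $F_l = g/d!$ this yields (b), and since transintegral polyhedral functions are continuous, $F_l$ and hence $f_l = F_l - F_{l-1}$ are continuous, which is (a). For the first assertion of (c): if $l = d$ or $f_l(M,r_I) > f_{l+1}(M,r_I)$, continuity makes the strict inequality persist on a TRP neighborhood $U$ of $r_I$, so by Step~1 the function $F_l$ has integer slopes along every rational line meeting $U$; being also convex, continuous, and of finitely many slopes, $F_l$ is convex transintegral polyhedral in dimension $1$ on $U$, whence Theorem~\ref{T:trans} makes it transintegral polyhedral near $r_I$.

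\emph{(Step~3: monotonicity.)} Now $S$ is log-TRP. Let $r_I \leq r'_I$ coordinatewise with $(1-t)r_I + tr'_I \in C$ for all $t \geq 0$. As $F_l$ is continuous by (a) and the desired inequality is a closed condition, we may assume $v := r'_I - r_I$ has nonnegative rational entries; rescaling $v$ to a primitive integer vector and completing to a $\GL_n(\ZZ)$-basis as in Step~1, the ray $\{r_I + sv : s \geq 0\}$ becomes a coordinate ray after the toroidal change, and the hypothesis that this ray stays in $C$ translates precisely into the statement that, in the transformed geometric variable, the module is defined over a region whose radii decrease to $0$. Theorem~\ref{T:1-dim-var}(c) then shows $F_l$ has nonpositive slope along this ray, and comparing its values at $s = 0$ and at the $s$ corresponding to $r'_I$ gives $F_l(M,r_I) \geq F_l(M,r'_I)$.

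The step I expect to be the main obstacle is Step~1: one must set up the slicing carefully enough that the frozen-coordinate field $K'$ is genuinely of rational type (handled by iterating Remark~\ref{R:enlarge-K} together with precise bookkeeping of which derivations remain), and that the $n$-variable subsidiary radii are correctly identified with the one-variable ones after the $\GL_n(\ZZ)$ toroidal transform, so that Theorem~\ref{T:1-dim-var} may be applied to the full list of subsidiary radii and not merely to $IR$. The finiteness-of-slopes issue for non-compact $C$ in Step~2 is a lesser technical point, resolved via the monotonicity in Theorem~\ref{T:1-dim-var}(c).
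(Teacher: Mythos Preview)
Your proof is correct and follows essentially the same route as the paper: slice along rational directions via a $\GL_n(\ZZ)$ toroidal change of coordinates, invoke Theorem~\ref{T:1-dim-var} on each slice, and reassemble via Theorem~\ref{T:trans}. The paper's write-up is terser---it reduces to the base point $x=0$ by first enlarging $K$ (so that after the toroidal transform all frozen coordinates sit at $0$), whereas you freeze $r_2,\dots,r_n$ directly---but the substance is the same.

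One small slip: in Step~1 you write ``freezing $r_2,\dots,r_n\in\QQ$,'' but since your starting point $x\in C$ is arbitrary, the frozen coordinates after the $\GL_n(\ZZ)$ transform need not be rational. This is harmless: Remark~\ref{R:enlarge-K}, which you already cite, imposes no rationality hypothesis on the Gauss-norm parameter, so $K'$ is of rational type for arbitrary real $r_2,\dots,r_n$ and Theorem~\ref{T:1-dim-var} applies. Alternatively, you can adopt the paper's device of enlarging $K$ to contain elements of norm $e^{-x_i}$ and rescaling $t_i$ to reduce to $x=0$; then the frozen coordinates are all zero. Either way the restriction to $\QQ$ should simply be dropped. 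Your handling of the finitely-many-slopes issue for unbounded $C$ (via Theorem~\ref{T:1-dim-var}(c)) and your explicit appeal to Jordan--H\"older constituents in invoking Lemma~\ref{L:toroidal-transform} are both more careful than the paper's account, which leaves these points implicit.
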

\begin{proof}
We first prove (a)-(c).  We need only verify that, for $l = \serie{}d$, 
$d! F_l(M, r_I)$ and $F_d(M,r_I)$
satisfy the conditions of Theorem~\ref{T:trans}.  
Moreover, by translating and enlarging $K$ if necessary, it suffices to 
check the hypothesis of Theorem~\ref{T:trans} 
for $I_{x,a}$ in the case $x=0$.

It suffices to consider $a = a_I \in \ZZ^n$ with $\gcd(a_I) = 1$.  
Let us describe $f_l(M, a_It)$ and $F_l(M, a_It)$ for $l = \serie{}d$ and 
$t \in I_{0, a_I}$.  
Pick an $n \times n$ invertible integral matrix $A$ with $(a_I)$ as the 
first row. 
Equip  $A_K(S^{A^{-1}})$ with the coordinates $(s_I)$,
and define the toroidal transform $\phi: A_K(S^{A^{-1}}) \rar A_K(S)$
by $\phi^*(t_I) = s_I^A$, where $S^{A^{-1}} = \{X^{A^{-1}}|X \in S\}$.
By Lemma~\ref{L:toroidal-transform}, $f_l(M, a_It) = 
f_l(\phi^*M, (a_IA^{-1})t)$.
The theorem follows from Theorem~\ref{T:1-dim-var}.

To prove (d), by continuity, we may assume that $r_I - r'_I$ are all rational numbers.  By an argument as in the previous paragraph,
we may reduce to the one-dimensional case.
In this case, we get a differential module over a disc,
so the desired statement follows from Theorem~\ref{T:1-dim-var}(c).
\end{proof}

\subsection{Decomposition by subsidiary radii}

To conclude, we extend the theorems of \S\ref{S:decomp-mult-1-dim}
to higher-dimensional spaces.

\begin{lemma}\label{L:decomp-simplicial}
Suppose $r \in \{0, \dots, n\}$.  
Put $C = \{(x_I)| x_I \geq 0, x_1 + \cdots + x_r \leq 1\} \subset \RR^n$,
and let $C_\epsilon$ be any TRP subset of $\RR^n$ containing $C$ in
its interior.
Let $S$ (resp. $S_\epsilon$) denote the closure of $e^{-C}$ (resp. $e^{-C_\epsilon}$) in $[0, +\infty)^n$, which is a log-TRP subset.  
Let $M$ be a differential module of rank $d$ over $A_K(S_\epsilon)$.  Suppose that the following conditions hold for some $l \in \{\serie{}d-1\}$.
\begin{itemize}
\item [(a)] The function $F_l(M, r_I)$ is affine for $(r_I) \in C_\epsilon$.
\item [(b)] We have $f_l(M, r_I) > f_{l+1}(M, r_I)$ for $(r_I) \in C_\epsilon$.
\end{itemize}
Then $M$ admits a unique direct sum 
decomposition over $A_K(S)$ separating the first $l$ subsidiary radii of $M \otimes F_{e^{-r_I}}$ for any $(r_I) \in C$.
\end{lemma}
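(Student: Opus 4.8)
The plan is to reduce to the one-dimensional decomposition theorems of \S\ref{S:decomp-mult-1-dim} by slicing the simplex $C$ with rational lines, and then to glue the resulting one-dimensional decompositions using the intersection property of Lemma~\ref{L:proj-intersect}. As usual, reduce first to the case $|u_J| = 1$; the decomposition is unique once it exists (this will follow from the uniqueness in the one-dimensional results and the gluing argument), so it suffices to produce it on pieces of an open cover of $\stackrel{\circ}{S}$ and then patch.

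First I would set up the slicing. Induct on $r$ (the dimension of the "active" part of the simplex), the base case $r = 0$ being trivial since then $C$ is a point and $A_K(S)$ is a zero-dimensional space, where Theorem~\ref{T:multi-decomp-over-field} applies. For $r \geq 1$, fix a vertex, say $v = 0$, and for each rational direction $a_I \in \ZZ^n$ with $\gcd(a_I) = 1$ pointing into $C_\epsilon$, consider the line $\{t a_I\}$. Using a toroidal transform as in the proof of Theorem~\ref{T:main-thm} (pick $A \in \GL_n(\ZZ)$ with $a_I$ as first row, set $\phi^*(t_I) = s_I^A$), the restriction of $M$ to this line pulls back to a $\partial_{J^+}$-differential module on a disc or annulus in one variable $s_1$, with all other $s_i$ frozen at generic Gauss-norm values. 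Hypotheses (a) and (b), being statements about the functions $F_l$ and $f_l$ which are compatible with toroidal transform by Lemma~\ref{L:toroidal-transform}, transfer to the one-dimensional situation; so Theorem~\ref{T:decomp-multi-1-dim-disc} (when the line runs to a vertex of the simplex, i.e.\ reaches the origin of the frozen coordinates, giving a disc) or Theorem~\ref{T:decomp-multi-1-dim-annulus} (for a generic line, giving an annulus) produces a direct sum decomposition along that line separating the first $l$ subsidiary radii.

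Next I would organize these one-dimensional decompositions into a global one. The key is that on overlaps these decompositions agree: any two of them, restricted to a common lower-dimensional rational affinoid, induce the same decomposition of the fiber $M \otimes F_{e^{-r_I}}$ at each rational point, hence agree by the uniqueness in the one-dimensional theorems together with the density of rational points. Concretely, one covers $\stackrel{\circ}{S}$ by affinoid subdomains $A_K(S')$ each of which is a product of a one-dimensional annulus/disc (in a suitable toroidal coordinate) with a lower-dimensional base; on the base one applies the inductive hypothesis (a decomposition over a lower-dimensional simplex $A_K(S'')$), and then Lemma~\ref{L:proj-intersect} — applied in the commuting square relating $K$-analytic functions on the annulus, on the base, on their product, and on the "doubly generic" completion — upgrades the compatible decompositions into a single decomposition of $M$ over $A_K(S')$. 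Patching these over the cover, again by Lemma~\ref{L:proj-intersect} applied to the idempotents, yields the decomposition over $A_K(S)$. Its uniqueness is immediate since it is determined on the dense set of rational one-dimensional slices.

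The main obstacle I expect is the bookkeeping in the gluing step: ensuring that the cover of $\stackrel{\circ}{S}$ by product-shaped affinoids is fine enough that each piece genuinely factors (after a $\GL_n(\ZZ)$ change of toroidal coordinates) as annulus-times-base with the base a simplex of one lower dimension, and that the intersections of these pieces are again of the form covered by Lemma~\ref{L:proj-intersect} (a commuting square of inclusions of integral domains whose $S \cap T$ is exactly $R$). This is a convexity/polyhedral-geometry argument — one needs that the simplex $C$ can be triangulated compatibly with the rational structure and with the "freezing one coordinate at a time" procedure — but it is the kind of argument already implicit in the passage from Theorem~\ref{T:1-dim-var} to Theorem~\ref{T:main-thm}, and the convexity of $F_l$ (Theorem~\ref{T:main-thm}(b)) together with hypothesis (a) guarantees that the one-dimensional hypotheses (affinity of $F_l$, strict separation of $f_l$ from $f_{l+1}$) hold along every slice, not just generic ones. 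Everything else is routine given the one-dimensional theorems and the two intersection lemmas.
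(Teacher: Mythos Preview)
Your proposal has the right shape (reduce to one-dimensional problems, glue via Lemma~\ref{L:proj-intersect}), but it misses the simple observation that makes the paper's argument work, and your substitute for it has real gaps.

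First, a concrete error: your base case is wrong. When $r=0$ the constraint $x_1+\cdots+x_r\leq 1$ is vacuous, so $C=[0,+\infty)^n$ and $S=[0,1]^n$ is the closed polydisc, not a point; $A_K(S)$ is certainly not zero-dimensional, and Theorem~\ref{T:multi-decomp-over-field} does not apply. So the induction never gets started.

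Second, and more importantly, your gluing scheme is vague where the paper is explicit. You propose to cover $\stackrel{\circ}{S}$ by product-shaped affinoids (annulus $\times$ lower-dimensional base) after toroidal transforms, then induct. But the simplex is not a product, and you would have to say precisely which cover, why each piece factors, and why the pairwise intersections fit the square in Lemma~\ref{L:proj-intersect}. The paper avoids all of this with a single observation: the coordinate ring $\Gamma(A_K(S),\calO)=K\langle t_I,\,e^{-1}/t_1\cdots t_r\rangle$, viewed inside the common overfield $F_{1,\dots,1}$, is exactly the intersection of the $n$ one-variable rings
\[
F^{(i)}_{1,\dots,1}\langle e^{-1}/t_i,\,t_i\rangle\quad(i=1,\dots,r),\qquad
F^{(i)}_{1,\dots,1}\langle t_i\rangle\quad(i=r+1,\dots,n),
\]
where $F^{(i)}_{1,\dots,1}$ is the completion of $K(t_1,\dots,\widehat{t_i},\dots,t_n)$ for the $(1,\dots,1)$-Gauss norm. (Geometrically: $C$ is the convex hull of its edges through the origin.) Each of these is literally an annulus or disc ring over a \emph{field} of rational type, so Theorems~\ref{T:decomp-multi-1-dim-annulus} and~\ref{T:decomp-multi-1-dim-disc} apply directly, with hypotheses inherited from (a) and (b) along the $i$-th coordinate segment through the origin (no toroidal transform needed). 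Then Lemma~\ref{L:proj-intersect} glues the $n$ decompositions into one over $\Gamma(A_K(S),\calO)$. There is no induction, no cover, and no triangulation; the whole argument is one paragraph. Your proposal is trying to manufacture this intersection description by hand via a cover, but never states the algebraic identity that makes it work.
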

\begin{proof}
Note that $\Gamma(A_K(S), \calO) = 
K \langle t_I, e^{-1} /t_1\cdots t_r \rangle$ 
may be embedded into the completion $F_{1,\dots,1}$ of $K(t_1,\dots,t_n)$
for the $(1,\dots,1)$-Gauss norm.
For $i=1,\dots,n$, let $F^{(i)}_{1,\dots,1}$ 
be the completion of $K(t_1,\dots,\widehat{t_i},\dots,t_n)$ for the
$(1,\dots,1)$-Gauss norm; then the image of
$\Gamma(A_K(S), \calO)$ also belongs to each of the subrings
\[
F^{(i)}_{1, \dots, 1} \langle e^{-1} /t_i, t_i \rangle \quad (i = 1, \dots, r); 
\qquad
F^{(i)}_{1, \dots, 1} \langle t_i \rangle \quad (i = r+1, \dots, n),
\]
In fact, it is equal to the intersection of these subrings; this is true
because $C$ is the convex hull of the union of the segments
\begin{gather*}
\{(x_1,\dots,x_n): 0 \leq x_i \leq 1; \quad x_j = 0 \quad (j \neq i)\} \qquad
(i=1,\dots,r) \\
\{(x_1,\dots,x_n): 0 \leq x_i; \quad x_j = 0 \quad (j \neq i)\} \qquad
(i=r+1,\dots,n).
\end{gather*}
Consequently, by Lemma~\ref{L:proj-intersect}, it suffices 
to prove the decomposition over the
rings $F^{(i)}_{1, \dots, 1} \langle e^{-1}/t_i, t_i 
\rangle$ for $i = 1, \dots, r$
and $F^{(i)}_{1, \dots, 1} \langle t_i \rangle$ for $i = r+1, \dots, n$.
The former case follows by applying Theorem~\ref{T:decomp-multi-1-dim-annulus} to $M \otimes F_{1, \dots, 1} \langle e^{-1-\epsilon} / t_i, t_i / e^\epsilon \rangle$ for $i = 1, \dots, r$ for some $\epsilon > 0$; the latter case follows by applying Theorem~\ref{T:decomp-multi-1-dim-disc} to $F_{1, \dots, 1} \langle t_i / e^\epsilon \rangle$ for $i = r+1, \dots, n$ for some $\epsilon > 0$.
\end{proof}

\begin{theorem}\label{T:decomp-full}
Let $S$ be a ind-log-TRP subset of $[0, +\infty)^n$, and let $M$ a differential module of rank $d$ over $A_K(\inte(S))$.  Suppose that the following conditions hold for some $l \in \{\serie{}d-1\}$.
\begin{itemize}
\item [(a)] The function $F_l(M, r_I)$ is affine for $(r_I) \in \inte( -\log \stackrel\circ{S})$.
\item [(b)] We have $f_l(M, r_I) > f_{l+1}(M, r_I)$ for $(r_I) \in \inte(-\log \stackrel\circ{S})$.
\end{itemize}
Then $M$ admits a unique direct sum 
decomposition over $A_K(\inte(S))$ separating the first $l$ subsidiary radii of $M \otimes F_{e^{-r_I}}$ for any $(r_I) \in \inte(-\log \stackrel\circ{S})$.
\end{theorem}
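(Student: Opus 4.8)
The plan is to deduce the theorem from Lemma~\ref{L:decomp-simplicial} by a purely local argument followed by a patching step; the hypotheses (a) and (b), being inherited by every subregion of $\inte(-\log\stackrel\circ{S})$, will feed directly into that lemma.

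\emph{Uniqueness and reduction to local existence.} For any log-TRP $S'$ with $\stackrel\circ{S'}\neq\emptyset$ and any $\eta_I\in\stackrel\circ{S'}$, the $\eta_I$-Gauss norm realizes $\Gamma(A_K(S'),\calO)$ as a subring of $F_{\eta_I}$, so a direct sum decomposition of $M$ over $A_K(S')$ is determined by the associated horizontal idempotents in $\End(M)$, hence by the decomposition it induces on $M\otimes F_{\eta_I}$. Because $f_l>f_{l+1}$ on $\inte(-\log\stackrel\circ{S})$, at each $\eta_I$ the decomposition of $M\otimes F_{\eta_I}$ separating the first $l$ subsidiary radii is itself canonical: regroup the summands provided by Theorem~\ref{T:decomp-over-field-complete} according to whether their intrinsic radius is at most $IR(M\otimes F_{\eta_I};l)$ or not. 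Consequently any two decompositions of $M$ over $A_K(\inte(S))$ of the asserted type coincide, and it is enough to produce, for each $\rho_I\in\inte(-\log\stackrel\circ{S})$, such a decomposition over $A_K(S_\rho)$ for some log-TRP neighborhood $S_\rho$ of $e^{-\rho_I}$; these automatically agree on overlaps (again by the injection into $F_{\eta_I}$, applied over the intersection, which is again log-TRP with nonempty interior), and so glue to the desired decomposition of the locally free sheaf $M$ over $A_K(\inte(S))$.

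\emph{The local decomposition.} Fix $\rho_I\in\inte(-\log\stackrel\circ{S})$. Since $\inte(-\log\stackrel\circ{S})$ is open, we may pick a real $c>0$ and $v_I\in\RR^n$ so small that the simplex $\Delta=\{r_I:\ r_i\geq v_i\ (i\in I),\ \sum_{i\in I}(r_i-v_i)\leq c\}$ contains $\rho_I$ in its interior and admits a TRP neighborhood $\Delta_\epsilon\supset\Delta$ still contained in $\inte(-\log\stackrel\circ{S})$. Now $\Delta$ is a translate and rescaling of the standard $n$-simplex, and the proof of Lemma~\ref{L:decomp-simplicial} (the case $r=n$) carries over to it: the bound ``$1$'' and the base point in that lemma play no role, since $\Gamma(A_K(S_\Delta),\calO)$, with $S_\Delta$ the closure of $e^{-\Delta}$, is still the intersection of the one-dimensional annulus rings $F^{(i)}\langle e^{-v_i-c}/t_i,\, t_i/e^{-v_i}\rangle$ over $i\in I$ (with $F^{(i)}$ the completion of $K(t_1,\dots,\widehat{t_i},\dots,t_n)$ at the relevant Gauss norm), by the same convex-hull computation; and conditions (a), (b) of Lemma~\ref{L:decomp-simplicial} on $\Delta_\epsilon$ are precisely our hypotheses (a), (b) restricted to $\Delta_\epsilon$. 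Invoking Theorems~\ref{T:decomp-multi-1-dim-annulus} and~\ref{T:decomp-multi-1-dim-disc} on each factor exactly as in Lemma~\ref{L:decomp-simplicial} and then applying Lemma~\ref{L:proj-intersect} yields the decomposition of $M$ over $A_K(S_\rho)$ with $S_\rho=S_\Delta$. (If one prefers to quote Lemma~\ref{L:decomp-simplicial} verbatim, one can instead enlarge $K$ so that $e^{-v_i}\in|K^\times|$, rescale $t_i\mapsto\lambda_i^{-1}t_i$ to move $\Delta$ into standard position, and descend the resulting decomposition via Lemma~\ref{L:proj-intersect}; a toroidal transform from Lemma~\ref{L:toroidal-transform} would likewise cover non-standard simplices, though none are needed here.)

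\emph{Conclusion and main point.} Gluing the local decompositions over the cover $\{A_K(S_\rho)\}_{\rho_I\in\inte(-\log\stackrel\circ{S})}$ as in the first paragraph produces the decomposition over $A_K(\inte(S))$, whose uniqueness was already shown. Given Lemma~\ref{L:decomp-simplicial}, no step is deep; the only thing requiring real care is the patching — verifying that the locally constructed summands $M'_\rho$ agree on all overlaps (which is exactly the uniqueness statement over the intersection rings) so that they glue to a global subsheaf, and doing so over a base $A_K(\inte(S))$ that need not be quasi-compact.
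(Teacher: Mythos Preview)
Your proof is correct and follows essentially the same route as the paper's: cover $\inte(-\log\stackrel\circ{S})$ by small simplicial log-TRP neighborhoods, invoke Lemma~\ref{L:decomp-simplicial} (after translation/rescaling or a toroidal transform) on each, and glue via uniqueness. Your writeup is more explicit than the paper's about the uniqueness and patching steps; the one minor slip is that the pointwise decomposition over $F_{\eta_I}$ should be cited from Theorem~\ref{T:multi-decomp-over-field} (the multi-derivation version) rather than Theorem~\ref{T:decomp-over-field-complete}.
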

\begin{proof}
We can cover $\inte(S)$ by log-TRP subsets $S_\alpha \subset \inte(S)$ 
such that for each point of $x \in \inte(S)$,
there exists a neighborhood of $x$ contained in some $S_\alpha$.  
Moreover, we can choose those $S_\alpha$ to be simplicial, i.e., under a toroidal transform and rescaling, each $S_\alpha$ can be transformed
into the form desired for Lemma~\ref{L:decomp-simplicial}.  Since $S_\alpha$ lies in the interior of $S$, the decomposition follows from Lemma~\ref{L:decomp-simplicial} by gluing 
the decompositions obtained on each of the $S_\alpha$.
\end{proof}

\begin{lemma} \label{L:decomp-partial}
Suppose $r \in \{0, \dots, n\}$.  
Put $C = \{(x_I)| x_I \geq 0, x_1 + \cdots + x_r < 1\} \subset \RR^n$,
and let $C_\epsilon$ be any TRP subset of $\RR^n$ containing $C$ in
its interior.
Let $S_\epsilon$ denote the closure of $e^{-C_\epsilon}$ in $[0, +\infty)^n$, 
which is a log-TRP subset.  
Let $S$ be the set of points $(s_I) \in S_\epsilon$ such that
$s_I \leq 1$ and $s_1 \cdots s_r > e^{-1}$.
Let $R$ be the subring of $\Gamma(A_K(S_\epsilon), \calO)$ consisting of 
those $f$ for which
$|f|_{s_I}$ is bounded over $(s_I) \in S$.
Let $M$ be a differential module of rank $d$ over $A_K(S_\epsilon)$.  
Suppose that the following conditions hold for some $l \in \{\serie{}d-1\}$.
\begin{itemize}
\item [(a)] The function $F_l(M, r_I)$ is affine for $(r_I) \in C_\epsilon$.
\item [(b)] We have $f_l(M, r_I) > f_{l+1}(M, r_I)$ for $(r_I) \in C_\epsilon$.
\end{itemize}
Then $M \otimes R$ admits a unique direct sum 
decomposition separating the first $l$ subsidiary radii of $M \otimes F_{e^{-r_I}}$ for any $(r_I) \in C$.
\end{lemma}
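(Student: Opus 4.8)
The plan is to adapt the proof of Lemma~\ref{L:decomp-simplicial}, replacing the one-dimensional decomposition theorems over closed and open discs and annuli by their ``bounded function'' counterparts, Theorems~\ref{T:decomp-multi-1-dim-disc-bounded} and~\ref{T:decomp-multi-1-dim-annulus-bounded}, and assembling the pieces with Lemma~\ref{L:proj-intersect}.

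First I would set up the algebra as in Lemma~\ref{L:decomp-simplicial}. For $i \in I$ let $F^{(i)}_{1,\dots,1}$ be the completion of $K(t_1,\dots,\widehat{t_i},\dots,t_n)$ for the $(1,\dots,1)$-Gauss norm, and let $F_{1,\dots,1}$ be the completion of $K(t_I)$ for that norm; since $(1,\dots,1) \in S$ there is an embedding $R \hookrightarrow F_{1,\dots,1}$. The set $C$ is the convex hull of the half-open segments $\{x\bbe_i : 0 \le x < 1\}$ for $i = 1,\dots,r$ together with the rays $\{x\bbe_i : x \ge 0\}$ for $i = r+1,\dots,n$. As in Lemma~\ref{L:decomp-simplicial}, a coefficientwise comparison in each variable then shows that, inside $F_{1,\dots,1}$, the ring $R$ is the intersection of the rings $F^{(i)}_{1,\dots,1}\langle t_i \rangle$ for $i = r+1,\dots,n$ together with, for each $i = 1,\dots,r$, the ring $R_i$ of functions that converge and are bounded on the half-open annulus $e^{-1} < |t_i| \le 1$ (the open end being the one that abuts the face $x_1 + \cdots + x_r = 1$); concretely $R_i = \bigcap_{\gamma \in (e^{-1},1)} F^{(i)}_{1,\dots,1}\langle \gamma/t_i,\, t_i/1 \rrbracket_0$. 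By Lemma~\ref{L:proj-intersect}, applied iteratively, it then suffices to produce over each of these one-dimensional rings a direct sum decomposition inducing on $M \otimes F_{1,\dots,1}$ the decomposition separating the first $l$ subsidiary radii; since the latter decomposition is unique, any such pieces automatically agree over $F_{1,\dots,1}$ and glue.

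For $i \in \{r+1,\dots,n\}$ the relevant direction is a closed disc in $t_i$, and the argument is verbatim that of Lemma~\ref{L:decomp-simplicial}: apply Theorem~\ref{T:decomp-multi-1-dim-disc} (or Theorem~\ref{T:decomp-multi-1-dim-disc-bounded}) to the base change of $M$ to $F^{(i)}_{1,\dots,1}\langle t_i/e^{\epsilon}\rangle$ for small $\epsilon > 0$, which is legitimate since $F_l(M,r_I)$ is affine and $f_l(M,r_I) > f_{l+1}(M,r_I)$ throughout $C_\epsilon$ (hence along the relevant one-dimensional slice). For $i \in \{1,\dots,r\}$ the direction is the half-open annulus $A_{F^{(i)}_{1,\dots,1}}^1(e^{-1},1]$ in $t_i$; enlarging the coefficient field does not change subsidiary radii, and hypotheses (a), (b) of Theorem~\ref{T:decomp-multi-1-dim-annulus-bounded} hold along the slice $\{r_j = 0\ (j \neq i),\ r_i \in [0,1)\}$ because $F_l(M, r_I)$ is affine and $f_l(M,r_I) > f_{l+1}(M,r_I)$ on $C_\epsilon$. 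That theorem then gives a decomposition over $F^{(i)}_{1,\dots,1}\langle \gamma/t_i,\, t_i/1\rrbracket_0$ for each $\gamma \in (e^{-1},1)$; these are mutually compatible by uniqueness, so their common refinement is a decomposition over $R_i$. Gluing the $n$ one-dimensional decompositions via Lemma~\ref{L:proj-intersect} yields the asserted decomposition of $M \otimes R$; its uniqueness follows from that of the induced decomposition of $M \otimes F_{e^{-r_I}}$ at a single $(r_I) \in C$, using the injection $R \hookrightarrow F_{e^{-r_I}}$.

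The main obstacle is bookkeeping rather than analysis: one must correctly identify $R$ with the above intersection of one-dimensional bounded-function rings, and, edge by edge, keep straight which endpoint of each annulus (resp.\ disc) is open --- the one abutting $x_1 + \cdots + x_r = 1$ --- versus closed, so that the boundary hypotheses of Theorem~\ref{T:decomp-multi-1-dim-annulus-bounded} are verified at the correct radius. All of the differential-module content is already packaged in the one-dimensional bounded decomposition theorems, so once the combinatorial reduction is in place the proof is routine.
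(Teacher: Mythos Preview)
Your overall strategy matches the paper's proof exactly: identify $R$ as an intersection of one-variable rings inside a large completed field, apply the bounded one-dimensional decomposition theorems in each variable, and glue via Lemma~\ref{L:proj-intersect}. The disc directions $i>r$ are handled correctly.

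The gap is in the annulus directions $i\le r$, and it is precisely the orientation bookkeeping you warn about in your last paragraph. Your verbal description of $R_i$ as the ring of functions convergent and bounded on $e^{-1}<|t_i|\le 1$ is correct, but your formula $R_i=\bigcap_\gamma F^{(i)}_{1,\dots,1}\langle \gamma/t_i,\,t_i/1\rrbracket_0$ is not that ring. By the paper's convention, $K\langle\alpha/t,t/\beta\rrbracket_0$ is the ring of series convergent and bounded on $\alpha\le|t|<\beta$; so each $F^{(i)}_{1,\dots,1}\langle\gamma/t_i,t_i/1\rrbracket_0$ is the bounded ring for the annulus $[\gamma,1)$, open at $|t_i|=1$, not at $|t_i|=e^{-1}$. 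Their intersection contains, for instance, $\sum_{m\ge 0}t_i^m=1/(1-t_i)$, which is not in $R$ since it fails to converge at $|t_i|=1\in S$. Thus your intersection is strictly larger than $R$, and Lemma~\ref{L:proj-intersect} only yields a decomposition over that larger ring, not over $R$.

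The same orientation issue infects your application of Theorem~\ref{T:decomp-multi-1-dim-annulus-bounded}. Applying it on $A^1(e^{-1},1]$ gives a decomposition over $K\langle\gamma/t_i,t_i/1\rrbracket_0$, whose bounded end is at $\beta=1$; but you need the bounded end at $|t_i|=e^{-1}$. The paper handles this by passing to the variable $t_i^{-1}$: the correct one-variable ring is $F^{(i)}_{1,\dots,1}\langle 1/t_i^{-1},\,t_i^{-1}/e\rrbracket_0$ (i.e.\ $|t_i^{-1}|\in[1,e)$), and Theorem~\ref{T:decomp-multi-1-dim-annulus-bounded} is applied on $A^1(\alpha,e]$ in $t_i^{-1}$ for some $\alpha<1$ (which is legitimate because $C_\epsilon$ contains the face $x_i=1$), yielding directly a decomposition over that ring with no need to intersect over $\gamma$. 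Once you make this substitution, your argument coincides with the paper's.
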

\begin{proof}
Let $F$ be the completion of $\Frac R$ for the 
$(1,\dots,1)$-Gauss norm.
Define $F^{(i)}_{1,\dots,1}$ as in the proof of 
Lemma~\ref{L:decomp-simplicial}. Then inside $F$,
$R$ is the intersection of the rings
\[
F^{(i)}_{1,\dots,1} \langle 1/t_i^{-1}, t_i^{-1}/e \rrbracket_0
\qquad (i=1,\dots,r); \qquad
F^{(i)}_{1, \dots, 1} \langle t_i \rangle \quad (i = r+1, \dots, n).
\]
We may thus argue as in Lemma~\ref{L:decomp-simplicial}, but using
Theorem~\ref{T:decomp-multi-1-dim-annulus-bounded} instead
of Theorems~\ref{T:decomp-multi-1-dim-annulus}
and~\ref{T:decomp-multi-1-dim-disc}.
\end{proof}

\begin{theorem} \label{T:decomp-partial}
Let $S$ be a log-TRP subset of $[0, +\infty)^n$.
Let $R$ be the subring of $\Gamma(A_K(\inte(S)), \calO)$ consisting of those $f$ for which
$|f|_{s_I}$ is bounded over $s_I \in \inte(S)$.
Let $M$ be a differential module of rank $d$ over $A_K(S)$.
Suppose that the following conditions hold for some $l \in \{\serie{}d-1\}$.
\begin{itemize}
\item [(a)] The function $F_l(M, r_I)$ is affine for $(r_I) \in -\log \stackrel\circ{S}$.
\item [(b)] We have $f_l(M, r_I) > f_{l+1}(M, r_I)$ for $(r_I) \in -\log \stackrel\circ{S}$.
\end{itemize}
Then $M \otimes R$ admits a unique direct sum 
decomposition separating the first $l$ subsidiary radii of $M \otimes F_{e^{-r_I}}$ for any $(r_I) \in \inte(-\log \stackrel\circ{S})$.
\end{theorem}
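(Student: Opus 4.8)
The plan is to obtain this from Theorem~\ref{T:decomp-full} (which already produces the decomposition over $A_K(\inte(S))$) by checking that the resulting idempotents are \emph{bounded}, using Lemma~\ref{L:decomp-partial} as the local input near the boundary; this is the higher-dimensional analogue of the way Theorem~\ref{T:decomp-multi-1-dim-annulus-bounded} was deduced by gluing a single ``bounded'' decomposition near an endpoint to the open decomposition of Theorem~\ref{T:decomp-multi-1-dim-annulus}, and structurally it parallels the derivation of Theorem~\ref{T:decomp-full} from Lemma~\ref{L:decomp-simplicial}.

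Concretely, I would first note that $\inte(S)$ is ind-log-TRP (write $-\log\stackrel\circ{S}$ as the increasing union of the TRP sets obtained by translating each defining inequality inward) and that hypotheses (a), (b) are inherited on $\inte(-\log\stackrel\circ{S})$, so Theorem~\ref{T:decomp-full} gives a unique decomposition $M\otimes\Gamma(A_K(\inte(S)),\calO)=\bigoplus_k M_k$ separating the first $l$ subsidiary radii at every $r_I\in\inte(-\log\stackrel\circ{S})$. Writing $\pi_k\in\End(M)\otimes\Gamma(A_K(\inte(S)),\calO)$ for the corresponding projectors (idempotents in $M^\dual\otimes M$), it then suffices to show each $\pi_k$ lies in $\End(M)\otimes R$, for then the $\pi_k$ furnish the asserted decomposition of $M\otimes R$ and uniqueness is inherited from Theorem~\ref{T:decomp-full}. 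To this end I would cover $S$ by finitely many log-TRP subsets $S_\beta\subseteq S$ whose designated open parts satisfy $\stackrel\circ{S}\subseteq\bigcup_\beta\stackrel\circ{S}_\beta$, arranged so that each $S_\beta$, after a toroidal transform in $\GL_n(\ZZ)$ (Lemma~\ref{L:toroidal-transform}) and a rescaling, is either (i) contained in $\inte(S)$ and of the simplicial form of Lemma~\ref{L:decomp-simplicial}, or (ii) a ``corner'' modeled on the region $C=\{x_I\ge 0,\ x_1+\cdots+x_r<1\}$ of Lemma~\ref{L:decomp-partial}, with the open facet lying along $\partial(-\log\stackrel\circ{S})$ and the closed facets pointing into the interior (so that the enlargement $C_\epsilon$ required by Lemma~\ref{L:decomp-partial} may be taken inside $-\log\stackrel\circ{S}$, where (a) and (b) hold, and where $M$ is already defined). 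For pieces of type (i), Lemma~\ref{L:decomp-simplicial} gives a decomposition over $A_K(S_\beta)$; for pieces of type (ii), Lemma~\ref{L:decomp-partial} gives a decomposition of $M\otimes R_\beta$, where $R_\beta$ is the corresponding ring of functions bounded over $\stackrel\circ{S}_\beta$. Each of these local decompositions separates the first $l$ radii over $A_K(\stackrel\circ{S}_\beta)$, hence by the uniqueness part of Theorem~\ref{T:decomp-full} coincides there with the restriction of the global decomposition; thus each $\pi_k$, restricted to $\stackrel\circ{S}_\beta$, extends to a bounded element. Since there are finitely many $\beta$ and $\stackrel\circ{S}\subseteq\bigcup_\beta\stackrel\circ{S}_\beta$, the Gauss norms $|\pi_k|_{s_I}$ are bounded over $s_I\in\stackrel\circ{S}$ and hence over $\inte(S)$, so $\pi_k\in\End(M)\otimes R$, completing the proof.

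The main obstacle is the polyhedral bookkeeping in the penultimate step: one must show that a neighborhood (in $S$) of each point of $\partial(-\log\stackrel\circ{S})$ and of each coordinate-hyperplane part of $S$ can be subdivided into finitely many transformed copies of the Lemma~\ref{L:decomp-partial} region, with the correct facet along $\partial S$ (for boundary points) or with the right coordinates confined to closed discs (for the parts at infinity), and with enough room on the interior side to accommodate $C_\epsilon$ inside $-\log\stackrel\circ{S}$. This is a routine, if slightly tedious, consequence of the finite combinatorial complexity of a TRP set together with simplicial subdivision, and is of the same nature as the subdivisions already invoked in the proofs of Lemma~\ref{L:decomp-partial} and Theorem~\ref{T:decomp-full}; everything else is formal.
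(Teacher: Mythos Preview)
Your proposal is correct and takes essentially the same approach as the paper: partition $-\log\stackrel\circ{S}$ into finitely many pieces which, after a toroidal transform and rescaling, have the shape required by Lemma~\ref{L:decomp-partial}, produce the decomposition on each piece, and glue by uniqueness (the paper's Remark following the theorem makes this explicit in an example). The only cosmetic difference is that the paper uses Lemma~\ref{L:decomp-partial} for all pieces rather than mixing in Lemma~\ref{L:decomp-simplicial}, and phrases the gluing directly rather than via boundedness of the idempotents, but these are equivalent formulations of the same argument.
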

\begin{proof}
Analogous to Theorem~\ref{T:decomp-full}, except using
Lemma~\ref{L:decomp-partial} instead of Lemma~\ref{L:decomp-simplicial}.
\end{proof}

\begin{remark}
It may be helpful to illustrate the argument needed to reduce
Theorem~\ref{T:decomp-partial} to Lemma~\ref{L:decomp-partial} with
an explicit example. Take $S = [0, 1]^2$, so that
$R = \gotho_K \llbracket x,y \rrbracket \otimes_{\gotho_K} K$.
We must partition $\inte(-\log \stackrel{\circ}{S}) = (0, +\infty)^2$
into regions to which Lemma~\ref{L:decomp-partial} may be applied.
One such partition consists of
\begin{gather*}
\{(x,y) \in \RR^2: 0 < x, 0 < y \leq \min\{x, 1\}\}, \\
\{(x,y) \in \RR^2: 0 < y, 0 < x \leq \min\{y, 1\}\}, \\
\{(x,y) \in \RR^2: 1 \leq x, 1 \leq y \}.
\end{gather*}
Since the parts all contain $(1,1)$, we can 
glue the three resulting decompositions together by matching them
on $M \otimes F_{e^{-1},e^{-1}}$.
\end{remark}

\begin{remark}
Note that Lemma~\ref{L:decomp-partial} is not a special case
of Theorem~\ref{T:decomp-partial}. We leave the formulation and proof
of a common generalization as 
a somewhat awkward exercise for the reader.
\end{remark}

\begin{remark} \label{R:bounded-in-discrete-case-mult}
By Remark~\ref{R:bounded-in-discrete-case-mult1},
in Theorem~\ref{T:decomp-partial},
if $\log |K^\times| \subseteq \QQ$ and $-\log 
\stackrel{\circ}{S}$ is RP,
we may also take $M$ to be defined over
$R$. For example, if $K$ carries the trivial valuation (forcing $p=0$)
and 
\[
S = \{ (x,y) \in (0, 1]^2: xy = e^{-1} \},
\]
then $R = K \llbracket x,y \rrbracket [x^{-1}, y^{-1}]$. This
example can be used in the study of good formal structures for
flat holomorphic connections; 
however, one needs to refine Theorem~\ref{T:decomp-partial} slightly
in case $p=0$,
to remove the need for strict
inequality on the boundary of $-\log \stackrel{\circ}{S}$.
For this, we defer to \cite{kedlaya-goodformal1}.
\end{remark}

\end{document}